\newcommand{\gNorm}[1]{{\left\|\kern-0.24ex\left|{ #1 } \right|\kern-0.24ex\right\|}}
\newcommand{\DifferenceTensor}[1]{  {({}^{#1\!}D)} }
\newcommand{\dVol}[1]{ {dV_{#1} } }
\newcommand{\Nabla}[1]{ {({}^{#1}{\nabla})} }
\newcommand{\KN}{\oast}
\renewcommand{\bar}{\overline}
\renewcommand{\tilde}{\widetilde}
\renewcommand{\epsilon}{\varepsilon}
\newcommand{\D}{d}
\newcommand{\B}{\mathcal{H}}
\newcommand{\phg}{\textup{\normalfont phg}}
\newcommand{\WAH}{\mathscr{M}_\textup{weak}}
\newcommand{\bbR}{\mathbb{R}}
\newcommand{\bbC}{\mathbb{C}}
\newcommand{\bbH}{\mathbb{H}}
\renewcommand{\Re}{\mathrm{Re}}
\renewcommand{\subset}{\subseteq}
\renewcommand{\setminus}{\!\smallsetminus\!}
\renewcommand{\)}{\textup{)}}
\newcommand{\Defn}[1]{{\boldmath\it\bfseries #1}}
\DeclareMathOperator{\Riem}{Riem}
\DeclareMathOperator{\Ric}{Ric}
\DeclareMathOperator{\R}{R}
\DeclareMathOperator{\grad}{grad}
\DeclareMathOperator{\Hess}{Hess}
\DeclareMathOperator{\Div}{div}
\DeclareMathOperator{\tr}{tr}
\DeclareMathOperator{\supp}{supp}
\DeclareMathOperator{\Int}{int}
\DeclareMathOperator{\ran}{ran}
\DeclareMathOperator{\Id}{Id}
\theoremstyle{plain}
\newtheorem{theorem}{Theorem}
\newtheorem{lemma}[theorem]{Lemma}
\newtheorem{proposition}[theorem]{Proposition}
\newtheorem{corollary}[theorem]{Corollary}
\newtheorem{remark}[theorem]{Remark}
\newtheorem{assumption}{Assumption}
\numberwithin{theorem}{section}
\numberwithin{equation}{section}
\title{Weakly asymptotically hyperbolic manifolds}
\author[Allen -- Isenberg -- Lee -- Stavrov Allen]{Paul T.~Allen, James Isenberg, John M.~Lee, Iva Stavrov Allen}
\begin{document}

 \begin{abstract}
 We introduce a class of ``weakly asymptotically hyperbolic'' geometries whose sectional curvatures tend to $-1$ and are $C^0$, but are not necessarily $C^1$, conformally compact.
We subsequently investigate the rate at which curvature invariants decay at infinity, identifying a conformally invariant tensor which serves as an obstruction to ``higher order decay'' of the Riemann curvature operator.
Finally, we establish Fredholm results for geometric elliptic operators, extending the work of Rafe Mazzeo \cite{Mazzeo-Edge} and John M.~Lee \cite{Lee-FredholmOperators} to this setting.
As an application, we show that any weakly asymptotically hyperbolic metric is conformally related to a weakly asymptotically hyperbolic metric of constant negative scalar curvature. 
\end{abstract}

\maketitle


\section*{Introduction}

The mapping properties of elliptic operators on asymptotically hyperbolic manifolds have been studied in  \cite{Andersson-EllipticSystems}, \cite{Lee-FredholmOperators}, \cite{Mazzeo-Edge}, \cite{MazzeoMelrose-Meromorphic}, among others.
These studies have all required that the metric be conformally compact of at least class $C^2$; indeed in these works the notion of asymptotic hyperbolicity is defined in terms of conformal compactification.
However, it is not clear whether a complete manifold with asymptotically negative curvature necessarily admits such a compactification; to our knowledge, the best results available are those of \cite{Gicquaud-Conformal2} (see also \cite{BahuaudGicquaud-Conformal},\cite{HuQingShi-AH}), where it is shown that if the sectional curvatures of a complete manifold approach $-1$ to second order at infinity then the manifold is $C^{1,\beta}$ conformally compact for every $\beta\in (0,1)$.
(In fact, the work \cite{Bahuaud-Intrinsic} presents an example of a manifold for which the curvature  operator approaches 
the negative identity operator to first order, but for which no Lipschitz conformal compactification exists.)
These works are part of a body of evidence suggesting that for problems in geometric analysis in the asymptotically hyperbolic setting, it is desirable to have a theory applicable to metrics with sufficient ``interior'' regularity for PDE theory (such as interior elliptic regularity), but with somewhat limited regularity at the conformal boundary.

Our primary purpose here is to introduce a condition we call ``weakly asymptotically hyperbolic," which does not necessarily imply that the geometry is $C^1$ conformally compact, but under which we are nevertheless able to establish Fredholm results for geometric elliptic operators; see Theorem \ref{WeaklyFredholmTheorem}.
Roughly, a complete Riemannian metric is weakly asymptotically hyperbolic if the curvature operator tends to $-\Id$ at infinity, and if the metric is an element of certain weighted H\"older spaces; see \S\ref{Results} below for a formal definition and for additional details.
We emphasize that the definition is intrinsic in the sense that we do not assume {\it a priori} that the metric is conformally compact, 
but metrics that are weakly asymptotically hyperbolic do indeed admit Lipschitz-continuous conformal compactifications, thus excluding the example in \cite{Bahuaud-Intrinsic}.
We further remark that the class of weakly asymptotically hyperbolic metrics is considerably larger than, for example, the class of  asymptotically hyperbolic metrics with smooth conformal compactifications; this is due to the fact that smooth functions are not dense in the space of H\"older continuous functions.
(The closure of smooth functions with respect to the $C^{0,\alpha}$ norm is a proper subset of $C^{0,\alpha}$ called the ``little H\"older space.'')

In the first part of our work here we show several properties of weakly asymptotically hyperbolic metrics, followed by some results  that highlight the importance of the extrinsic curvature of the boundary, and which are in some sense complementary to those of \cite{AnderssonChruscielFriedrich} and \cite{Gicquaud-Conformal2}.
Under a slightly stronger regularity assumption, which implies that the metric is $C^{1,1}$ conformally compact but not necessarily $C^2$, we introduce a conformally invariant tensor that agrees with the trace-free extrinsic curvature along the boundary.
We show in Theorem \ref{FH} that if the scalar curvature of a weakly asymptotically hyperbolic metric approaches a constant at the ``second order'' rate of \cite{Gicquaud-Conformal2}, then the invariant tensor vanishes along the boundary if and only if the full curvature operator, or its derivative, vanishes along the boundary at the second order rate.

We then prove Fredholm results for geometric elliptic operators arising from weakly asymptotically hyperbolic metrics.
As an application, we prove that the Yamabe problem can be solved in this class of metrics, without loss of regularity; see Theorem \ref{YamabeTheorem}.
This extends the results of \cite{AnderssonChruscielFriedrich}, where  the case of smoothly conformally compact asymptotically hyperbolic metrics is considered.

We conclude this introduction by remarking that the class of weakly asymptotically hyperbolic metrics includes an important class of smooth metrics whose conformal compactifications are not smooth:~the polyhomogeneous metrics, for which the formal expansion along the conformal boundary involves powers of both the distance to the boundary and its logarithm. 
Such boundary regularity  is, in fact, a feature typical of problems involving the much more general class of elliptic edge operators developed in \cite{Mazzeo-Edge}, and such metrics arise naturally in a variety of contexts; see \cite{AnderssonChrusciel-Dissertationes}, \cite{ChruscielDelayLeeSkinner}, \cite{FeffermanGraham}, among others.
For completeness, and to display the manner in which the present work is situated among the existing literature, we include  an appendix containing a self-contained account of the boundary regularity of elliptic problems in the polyhomogeneous setting.
We emphasize that the polyhomogeneity results are not new, but follow from a straightforward adaptation of results in \cite{Mazzeo-Edge}; see also \cite{AnderssonChrusciel-Dissertationes}.
As the results in the appendix don't appear in the literature in the form presented here, however, we take this opportunity to present a self-contained exposition.

\subsection*{Acknowledgements}
We thank Michael Eastwood for bringing the works \cite{BairdEastwood} and \cite{EastwoodGraham} to our attention.
We furthermore thank Eric Bahuaud for helpful conversations.
This work was partially supported by NSF grants PHYS-1306441 and DMS-63431.

\section{Statement of results}
\label{Results}

Let $\bar M$ be a smooth, compact $(n+1)$-dimensional manifold with boundary, with $n\geq 1$; let $M$ be the interior of $\bar M$ and denote by $\partial M$ the boundary of $\bar M$.
Let $\rho\colon \bar M\to [0,\infty)$ be a smooth function with $\rho^{-1}(0) = \partial M$ and $\D\rho \neq 0$ on $\partial M$; such a function is called a \Defn{defining function}.
A Riemannian metric $g$ on $M$ is called \Defn{conformally compact} if the metric $\bar g := \rho^2g$ extends continuously to a (non-degenerate) metric on $\bar M$.
A conformally compact metric $g$ is said to be \Defn{asymptotically hyperbolic of class $C^{l,\beta}$} if $\bar g$ is of class $C^{l,\beta}$ on $\bar M$ and $|\D\rho|_{\bar g} =1$ on $\partial M$.
In view of the notion of weakly asymptotically hyperbolic introduced below, we henceforth refer to asymptotically hyperbolic metrics of class $C^{l,\beta}$ as \Defn{strongly asymptotically hyperbolic}.

The definition of strongly asymptotically hyperbolic metrics  is motivated by the fact that if $\bar g$ extends to a metric of class $C^2$ on $\bar M$, then the sectional curvatures of $(M,g)$ approach $-|\D\rho|^2_{\bar g}$ as $\rho \to 0$.
To see this, consider the ``raised index'' version of the Kulkarni-Nomizu product, defined as follows: 
For $(1,1)$-tensor fields $u$ and $v$, we define  $u \KN v\colon \Lambda^2(TM) \to \Lambda^2(TM)$ by setting
\begin{equation*}
(u\KN v) (x\wedge y) = u(x) \wedge v(y) - u(y) \wedge v(x)
\end{equation*}
for decomposables, and then extending the map to all of $\Lambda^2(TM)$ by linearity.
In coordinates, we have the expressions 
\begin{equation*}
(u \KN v)_{ij}^{kl} = \tfrac{1}{2}\left(u_i^k v_j^l + u_j^l v_i^k - u_i^l v_j^k - u_j^k v_i^l\right), 
\end{equation*}
$\Id_{ij}^{kl} = (\delta\KN\delta)_{ij}^{kl} = \delta_i^k \delta_j^l - \delta_i^l\delta_j^k $,
 and $((\Hess_{\bar g}\rho)^\sharp)_i^j = \bar g^{jk}(\Hess_{\bar g}\rho)_{ik}$.
Here $\Id$ is the identity  of $\Lambda^2(TM)$, considered as a $(2,2)$ tensor, and $\delta$ is the identity of $TM$, viewed as a $(1,1)$ tensor.

The Riemann curvature operator $\Riem[g]\colon\Lambda^2(TM)\to \Lambda^2(TM)$ is related to that of $\bar g$ by
\begin{equation}
\label{FirstRiemID}
\Riem[g] 
= -|\D\rho|^2_{\bar g} \Id
+ 2\rho \, \delta \KN (\Hess_{\bar g}\rho)^\sharp
+ \rho^2 \Riem[\bar g],
\end{equation}
from which we immediately read off the asymptotic behavior of the sectional curvatures.
Contraction of \eqref{FirstRiemID} yields the following expressions for the Ricci operator, viewed as a $(1,1)$ tensor, and the scalar curvature:
\begin{align}
\label{RicciID}
\Ric[g] &= -n\,|\D\rho|^2_{\bar g} \delta 
+ \rho (\Delta_{\bar g}\rho) \delta 
+ (n-1)\rho (\Hess_{\bar g}\rho)^\sharp
+ \rho^2\Ric[\bar g] ,
\\
\label{ScalarID}
\R[g] &= -n(n+1)|\D\rho|_{\bar g}^2 + 2n \rho(\Delta_{\bar g}\rho) + \rho^2 \R[\bar g].
\end{align}

In order to describe the boundary regularity condition in our definition of 
weakly asymptotically hyperbolic metrics, we introduce several notations.
First, $C^{k,\alpha}(M)$ is an intrinsic H\"older space of tensors on $M$,
and similarly $H^{k,p}(M)$ is an intrinsic Sobolev space; see 
\S\ref{RegularityClasses} for definitions.
We also use weighted spaces $C^{k,\alpha}_\delta(M) = \rho^\delta C^{k,\alpha}(M)$
and $H^{k,p}_\delta(M) = \rho^\delta H^{k,p}(M)$.

There is an alternative characterization of these spaces in terms
of Lie derivatives that helps to shed
light on them. 
Let $\mathscr V = \mathfrak X(\bar M)$, the space of smooth
vector fields on $\bar M$, and let $\mathscr V_0$ be the subspace
of $\mathscr V$ consisting of vector fields that vanish on $\partial M$.
If a metric $g\in C^{k,\alpha}(M)$, then $\bar g = \rho^2 g\in C^{k,\alpha}_2(M)$,
which is equivalent to saying that 
$\mathcal L_{X_1}\dots \mathcal L_{X_j} \bar g\in C^{k-j,\alpha}_2(M)$
whenever $0\le j\le k$ and $X_1,\dots,X_j\in \mathscr V_0$.
On the other hand, if $g$ has a $C^{k,\alpha}$ conformal compactification,
then
$\mathcal L_{X_1}\dots \mathcal L_{X_j} \bar g\in C^{k-j,\alpha}_2(M)$
for any vector fields $X_1,\dots,X_j\in \mathscr V$, not just ones
that vanish at the boundary.

The purpose of this paper is to show that 
much of the theory of elliptic operators on conformally compact 
manifolds can be extended to metrics satisfyng the following boundary regularity condition, which is much weaker than being $C^{2,\alpha}$ conformally compact:
\begin{equation}
\label{FirstTriple}
\bar g\in C^{k,\alpha}_2(M)
\quad\text{ and }\quad
\mathcal L_X\bar g\in C^{k-1,\alpha}_2(M) \quad \text{for all $X\in \mathscr V$.}
\end{equation}

We remark that these regularity conditions imply that $\bar g$ extends to a Lipschitz continuous metric on $\bar M$; see Lemma \ref{lemma:properties-of-scriptC}\eqref{part:intermediate} below.
But even if \eqref{FirstTriple} holds for all $k$, it need not be the case that $\bar g$ extend to a $C^1$ metric on $\bar M$; see Remark \ref{Remark-TripleExample}.

Our first theorem shows that, just as for $C^2$ conformally compact metrics,
the asymptotic behavior of the curvature of a metric satisfying \eqref{FirstTriple}
is determined by the value of $|d\rho|_{\bar g}$ along $\partial M$.

\begin{theorem}
\label{thm:WAH-TFAE}
Let $k\geq 2$ and $\alpha\in [0,1)$, and let $g = \rho^{-2}\bar g$ be a Riemannian metric on $M$ satisfying 
\eqref{FirstTriple}.
The following are equivalent:
\begin{enumerate}
\item\label{NEWWAH1} $\Riem[g] \to -\Id$ as $\rho \to 0$.

\item\label{NEWWAH2} $\Ric[g]\to -n \delta$ as $\rho \to 0$.

\item\label{NEWWAH3} $\R[g]\to -n(n+1)$ as $\rho \to 0$.

\item\label{NEWWAH4} $|d\rho|_{\bar g} = 1$ on $\partial M$.

\end{enumerate}

\end{theorem}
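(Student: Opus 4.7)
My plan is to close the cycle
\eqref{NEWWAH1}$\Rightarrow$\eqref{NEWWAH2}$\Rightarrow$\eqref{NEWWAH3}$\Rightarrow$\eqref{NEWWAH4}$\Rightarrow$\eqref{NEWWAH1}
directly from the pointwise identities \eqref{FirstRiemID}--\eqref{ScalarID}. The forward implications \eqref{NEWWAH1}$\Rightarrow$\eqref{NEWWAH2}$\Rightarrow$\eqref{NEWWAH3} are immediate: since $\Ric[g]$ and $\R[g]$ are iterated contractions of $\Riem[g]$, pointwise convergence of the Riemann operator to $-\Id$ forces $\Ric[g]\to -n\,\delta$, and one further trace gives $\R[g]\to -n(n+1)$.

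For \eqref{NEWWAH3}$\Rightarrow$\eqref{NEWWAH4}, rearrange \eqref{ScalarID} as
\[
\R[g] + n(n+1)|d\rho|^2_{\bar g} = 2n\,\rho\,\Delta_{\bar g}\rho + \rho^2\,\R[\bar g],
\]
and argue that the right-hand side extends continuously to zero along $\partial M$. Combined with the continuity of $|d\rho|^2_{\bar g}$ up to $\partial M$---a consequence of the Lipschitz extension of $\bar g$ provided by Lemma \ref{lemma:properties-of-scriptC}\eqref{part:intermediate} together with the smoothness of $d\rho$ on $\bar M$---hypothesis \eqref{NEWWAH3} then forces $|d\rho|_{\bar g}=1$ on $\partial M$. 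The implication \eqref{NEWWAH4}$\Rightarrow$\eqref{NEWWAH1} is symmetric: writing \eqref{FirstRiemID} as
\[
\Riem[g] + |d\rho|^2_{\bar g}\,\Id = 2\rho\,\delta\KN(\Hess_{\bar g}\rho)^\sharp + \rho^2\,\Riem[\bar g],
\]
one again reduces matters to showing that the right-hand side vanishes at $\partial M$ and then invokes \eqref{NEWWAH4}.

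The main obstacle is therefore uniform across both steps: under the weak regularity \eqref{FirstTriple}, one must show that
\[
\rho\,\Hess_{\bar g}\rho,\quad \rho\,\Delta_{\bar g}\rho,\quad \rho^2\,\Riem[\bar g],\quad \rho^2\,\R[\bar g]
\]
all extend continuously to zero along $\partial M$. The strategy is to work in a boundary coordinate chart and observe that each coordinate derivative of $\bar g$ appearing in the Christoffel symbols or curvature expressions is a component of $\mathcal L_X\bar g$ for a smooth vector field $X\in\mathscr V$; by \eqref{FirstTriple} this lies in $C^{k-1,\alpha}_2(M)$. After combining with the explicit factors of $\rho$ and $\rho^2$ in the identities, the resulting remainder terms should live in weighted spaces $C^{0,\alpha}_\delta(M)$ with $\delta>0$ and therefore vanish at $\partial M$ by the basic properties of these spaces.

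The delicate piece is $\rho^2\Riem[\bar g]$, which a priori involves second coordinate derivatives of $\bar g$ that need not exist pointwise up to the boundary. Here I would exploit that the condition $\mathcal L_X\bar g\in C^{k-1,\alpha}_2(M)$ holds for \emph{all} $X\in\mathscr V$, not merely for those vanishing on $\partial M$, so that a second application of $\mathcal L_X$ (using $k\geq 2$) intrinsically controls the commutators of derivatives of $\bar g$ that appear in $\Riem[\bar g]$. This is precisely the boundary-regularity improvement built into \eqref{FirstTriple} over the bare hypothesis $\bar g\in C^{k,\alpha}_2(M)$, and it is what makes the theorem available at this weakened level of regularity.
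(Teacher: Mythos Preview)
Your plan is correct and essentially matches the paper's proof: both reduce the equivalences to \eqref{FirstRiemID}--\eqref{ScalarID} after showing that the remainder terms $\rho\,(\Hess_{\bar g}\rho)^\sharp$ and $\rho^2\Riem[\bar g]$ tend to zero under \eqref{FirstTriple}, the latter estimate being packaged in the paper as Lemma~\ref{BarRiemLemma} (namely $\Riem[\bar g]\in C^{k-2,\alpha}_{-1}(M)$). One clarification on your handling of $\rho^2\Riem[\bar g]$: the ``second application of $\mathcal L_X$'' you invoke cannot be along a general $X\in\mathscr V$---hypothesis \eqref{FirstTriple} grants only one such derivative---but must instead use the intrinsic $C^{k-1,\alpha}_2$ regularity of $\mathcal L_X\bar g$, which for $k\geq2$ controls one further $\rho\partial$ derivative (equivalently, a Lie derivative along $\mathscr V_0$), yielding $\rho\,\partial^2\bar g$ bounded and hence $\rho^2\Riem[\bar g]=\mathcal O(\rho)$; your parenthetical ``using $k\geq2$'' suggests you already intend this, and the paper makes it explicit via Lemma~\ref{lemma:properties-of-scriptC}\eqref{part:nablabar-mapping}.
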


For $k\geq 2$ we define a metric  $g$ on $M$ to be \Defn{weakly $C^{k,\alpha}$ asymptotically hyperbolic} if
$g$ is conformally compact and $\bar g = \rho^2 g$ satisfies the regularity conditions \eqref{FirstTriple}
and one (and hence all) of the conditions \eqref{NEWWAH1}--\eqref{NEWWAH4} in the above theorem.
We denote by $\WAH^{k,\alpha;1}$ the collection of all weakly $C^{k,\alpha}$ asymptotically hyperbolic metrics on $M$; here the superscript $1$ indicates that we have imposed the improved regularity condition on one derivative of the metric.

\begin{theorem}\label{thm:properties-of-WAH}
Suppose $g\in \WAH^{k,\alpha;1}$  for $k\geq 2$ and $\alpha \in [0,1)$.
Then we have the following:
\begin{enumerate}
\item\label{PropWAH2}
 $\Riem[g]+\Id\in  C^{k-2,\alpha}_1(M)$.

\item\label{PropWAH3}
 $\Ric[g]+n \delta \in  C^{k-2,\alpha}_1(M)$.

\item\label{PropWAH4}
 $\R[g]+n(n+1)\in  C^{k-2,\alpha}_1(M)$.

\item\label{PropWAH1}
  $|\D\rho|_{\bar g}^2 - 1\in  C^{k,\alpha}_1(M)$.
  
\item\label{PropWAH5}
For all $1\le j\le k-2$, the tensor $\rho^j ({}^g\nabla)^j\Riem[g]$ extends continuously
to $\overline M$ and is $\mathcal O(\rho)$ as $\rho\to 0$.
\end{enumerate}
\end{theorem}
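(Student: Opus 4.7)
The strategy is to derive parts \eqref{PropWAH2}--\eqref{PropWAH5} from the three curvature identities \eqref{FirstRiemID}, \eqref{RicciID}, \eqref{ScalarID} in conjunction with the regularity conditions \eqref{FirstTriple}, once part \eqref{PropWAH1} is in hand. The natural order is: first prove \eqref{PropWAH1}; then obtain \eqref{PropWAH2}--\eqref{PropWAH4} by direct substitution into the identities; and finally derive \eqref{PropWAH5} by covariantly differentiating the output of \eqref{PropWAH2}.

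For part \eqref{PropWAH1}, set $u := |\D\rho|^2_{\bar g} - 1 = \bar g^{-1}(d\rho,d\rho) - 1$. Since $d\rho$ is smooth on $\bar M$ and $\bar g^{-1}\in C^{k,\alpha}_{-2}(M)$, the contraction places $u\in C^{k,\alpha}(M)$, while condition \eqref{NEWWAH4} of Theorem~\ref{thm:WAH-TFAE} supplies $u|_{\partial M}=0$. To promote vanishing-plus-boundedness to $u\in C^{k,\alpha}_1(M)$, I would invoke the improvement in \eqref{FirstTriple}: the Leibniz rule
\begin{equation*}
Xu = (\mathcal L_X \bar g^{-1})(d\rho,d\rho) + 2\,\bar g^{-1}(\mathcal L_X d\rho, d\rho),
\end{equation*}
together with $\mathcal L_X\bar g\in C^{k-1,\alpha}_2$ (which forces $\mathcal L_X\bar g^{-1}\in C^{k-1,\alpha}_{-2}$), gives $Xu\in C^{k-1,\alpha}(M)$ for \emph{every} $X\in\mathscr V$, not only those tangent to $\partial M$. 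Integrating with a transverse $X$ from the boundary yields the pointwise rate $u=\mathcal O(\rho)$; iterating with arbitrary $X_1,\dots,X_j\in\mathscr V$ and invoking the Lie-derivative characterization of weighted H\"older spaces (discussed in the paragraph preceding \eqref{FirstTriple}) upgrades this to $u\in C^{k,\alpha}_1(M)$.

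For parts \eqref{PropWAH2}--\eqref{PropWAH4}, rewrite \eqref{FirstRiemID} as
\begin{equation*}
\Riem[g]+\Id = -u\,\Id + 2\rho\,\delta\KN(\Hess_{\bar g}\rho)^\sharp + \rho^2\Riem[\bar g].
\end{equation*}
The first term lies in $C^{k-2,\alpha}_1$ by part \eqref{PropWAH1}. For the middle term, $\Hess_{\bar g}\rho$ combines second derivatives of the smooth function $\rho$ with one $\mathscr V$-derivative of $\bar g$; using \eqref{FirstTriple} and $\bar g^{-1}\in C^{k,\alpha}_{-2}$, we obtain $(\Hess_{\bar g}\rho)^\sharp\in C^{k-2,\alpha}_0(M)$, and the explicit factor of $\rho$ supplies the weight. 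For $\rho^2\Riem[\bar g]$, two $\mathscr V$-derivatives of $\bar g$ combined with the conformal invariance of the operator norm on $\Lambda^2(TM)$ give $\Riem[\bar g]\in C^{k-2,\alpha}_0(M)$ as a $(2,2)$-tensor, so $\rho^2\Riem[\bar g]\in C^{k-2,\alpha}_2\subset C^{k-2,\alpha}_1$. Parts \eqref{PropWAH3} and \eqref{PropWAH4} follow from the corresponding decompositions of \eqref{RicciID} and \eqref{ScalarID}.

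For part \eqref{PropWAH5}, use that $\Id$ is parallel with respect to $({}^g\nabla)$ on $\Lambda^2(TM)$, so $({}^g\nabla)^j\Riem[g]=({}^g\nabla)^j(\Riem[g]+\Id)$. Part \eqref{PropWAH2} together with the fact that $({}^g\nabla)$ drops H\"older order by one while preserving intrinsic weight gives $({}^g\nabla)^j\Riem[g]\in C^{k-2-j,\alpha}_1(M)$ for $1\le j\le k-2$. A $(2,2)$-tensor-valued covariant $j$-tensor has $|T|_g=\rho^j|T|_{\bar g}$, so $\rho^j({}^g\nabla)^j\Riem[g]$ has $\bar g$-norm equal to the intrinsic $g$-norm of $({}^g\nabla)^j\Riem[g]$, which is $\mathcal O(\rho)$; this furnishes both the continuous extension to $\bar M$ and the decay rate. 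The principal obstacle throughout is Step 1: the bare vanishing $u|_{\partial M}=0$ does not force any pointwise $\mathcal O(\rho)$ rate (consider $\rho^{1/2}$, which lies in every intrinsic $C^{k,\alpha}$ and vanishes on $\partial M$ but is not $\mathcal O(\rho)$), so the entire leverage comes from the transverse-derivative clause in \eqref{FirstTriple}, which is precisely the defining feature of $\WAH^{k,\alpha;1}$.
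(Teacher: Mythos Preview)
Your proof is essentially the paper's proof: establish \eqref{PropWAH1} first via the one--transverse--derivative improvement in \eqref{FirstTriple}, then read off \eqref{PropWAH2}--\eqref{PropWAH4} from the conformal curvature identities, and obtain \eqref{PropWAH5} by differentiating \eqref{PropWAH2}. The paper packages your argument for \eqref{PropWAH1} as Lemma~\ref{lemma:properties-of-scriptC}\eqref{part:BoundaryLemma} (membership in $\mathscr C^{k,\alpha;1}$ plus vanishing at $\partial M$ forces $C^{k,\alpha}_1$), and packages the $\Riem[\bar g]$ estimate as Lemma~\ref{BarRiemLemma}.

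There is one overclaim you should correct. You assert $\Riem[\bar g]\in C^{k-2,\alpha}_0(M)$, justified by ``two $\mathscr V$-derivatives of $\bar g$.'' But \eqref{FirstTriple} controls only \emph{one} $\mathscr V$-derivative; the second derivative costs a factor of $\rho^{-1}$. Concretely, $\bar g\in\mathscr C^{k,\alpha;1}$ gives $\bar\nabla\bar g\in C^{k-1,\alpha}_3$, hence $\bar\nabla{}^2\bar g\in C^{k-2,\alpha}_3$ (not $C^{k-2,\alpha}_4$), so the $A^{-4}(\bar g)\otimes\bar\nabla{}^2\bar g$ contribution lands in $C^{k-2,\alpha}_{-1}$, and thus $\Riem[\bar g]\in C^{k-2,\alpha}_{-1}(M)$; this is exactly Lemma~\ref{BarRiemLemma}. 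The conclusion you need, $\rho^2\Riem[\bar g]\in C^{k-2,\alpha}_1(M)$, still follows, so the argument survives, but the intermediate step and its justification are wrong as written. (The stronger claim $\Riem[\bar g]\in C^{k-2,\alpha}_0$ genuinely requires $\bar g\in\mathscr C^{k,\alpha;2}$.)

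A smaller remark on \eqref{PropWAH1}: your phrase ``iterating with arbitrary $X_1,\dots,X_j\in\mathscr V$'' is misleading, since only one $\mathscr V$-derivative of $u$ is controlled. What you actually need (and what the paper's Lemma~\ref{lemma:properties-of-scriptC}\eqref{part:BoundaryLemma} does) is: integrate the bounded $\bar\nabla u\in C^{k-1,\alpha}_1$ from $\partial M$ to get $u\in C^0_1$, and then combine this with $\bar\nabla u\in C^{k-1,\alpha}_1$ and the norm equivalence \eqref{BasicHolderNormEquivalences} to conclude $u\in C^{k,\alpha}_1$. No higher $\mathscr V$-iteration is available or required.
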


Before introducing further results concerning the decay of curvature at infinity, let us recall the results of Andersson, Chru\'sciel, and Friedrich 
\cite{AnderssonChruscielFriedrich}, which show that if $\bar g \in C^\infty(\bar M)$ there exist smooth functions $\psi, r\in C^\infty(\bar M)$, with $\psi >0$ and $\psi =1$ along $\partial M$, such that 
\begin{equation*}
\R[\psi^{4/(n-1)} g] = -n(n+1) + \rho^{n+1} r.
\end{equation*}
They further show that unless $r=0$ on $\partial M$, it is not possible to make the scalar curvature approach $-n(n+1)$ to higher order
with a conformal factor in
$C^\infty(\bar M)$. 
In particular, the metric $g$ is conformally related to a smoothly conformally compact metric of constant scalar curvature, and thus the Yamabe problem admits a smoothly conformally compact solution, if and only if $r=0$ on $\partial M$.
As well, they show that if the dimension of $M$ is three, then $r=0$ if and only if the trace-free part of the second fundamental form induced on $\partial M$ by $\bar g$ vanishes.

Our next results are somewhat complementary to the results in \cite{AnderssonChruscielFriedrich} in that they highlight the importance of the traceless part of the extrinsic curvature of the conformal boundary.
First, using \eqref{RicciID} we  write \eqref{FirstRiemID} as
\begin{multline}
\label{SecondRiemID}
\Riem[g] 
+\Id
=  \big(\R[g]+n(n+1)\big)\frac{1}{n(n+1)} \Id
\\
+ 2\rho \,\delta \KN \left(\Hess_{\bar g}\rho - \frac{1}{n+1}(\Delta_{\bar g}\rho)\bar g\right)^\sharp
\\
+ \rho^2 \left( \Riem[\bar g] - \frac{1}{n(n+1)}\R[\bar g]\Id \right);
\end{multline}
we emphasize that the full contraction of the identity operator is $n(n+1)$ and that, as before, the musical isomorphism is with respect to $\bar g$.

From \eqref{SecondRiemID} we see that the rate at which the curvature operator $\Riem[g]$ approaches $-\Id$ is governed by the rate at which the scalar curvature $\R[g]$ approaches $-n(n+1)$, and by the extent to which the trace-free Hessian of $\rho$, with respect to $\bar g$, vanishes as $\rho \to 0$.

We are able to obtain more refined results concerning the asymptotic behavior of the curvature provided we assume slightly more regularity than is provided by the weakly asymptotically hyperbolic condition.
The reason is that for $g\in \WAH^{k,\alpha;1}$  one can only conclude that $\big\lvert\Riem[\bar g]\big\rvert_{\bar h}=\mathcal O(\rho^{-1})$ as $\rho \to 0$, but  under a stronger regularity hypothesis we can conclude that the norm of the curvature operator $\Riem[\bar g]$ is bounded; see Lemma \ref{BarRiemLemma}.
Consequently, we introduce the class $\WAH^{k,\alpha;2}$ of metrics $g\in \WAH^{k,\alpha;1}$ such that
\begin{equation}
\begin{gathered}
\label{FirstHAHNorm}
\bar g\in  C^{k,\alpha}_2(M),
\quad
\mathcal L_{X_1} \bar g \in  C^{k-1,\alpha}_2(M),
\quad
\mathcal L_{X_1}\mathcal L_{X_2} \bar g \in C^{k-2,\alpha}_2(M)
\\
\text{ for all } X_1,X_2\in \mathscr V.
\end{gathered}
\end{equation}
We note that if $g\in \WAH^{k,\alpha;2}$, then $\bar g$ extends to a metric of class $C^{1,1}$ on $\bar M$, but not necessarily to a metric of class $C^2$. 
The next theorem gives additional properties of metrics in $\WAH^{k,\alpha;2}$.

\begin{theorem}
\label{thm:HAH-TFAE}
Let $k\geq 2$ and $\alpha \in [0,1)$, and suppose that $g\in \WAH^{k,\alpha;2}$.
Then the following are equivalent:
\begin{enumerate}
\item\label{HAH(2)}
$|d\rho|^2_{\bar g}-1-\dfrac{2}{n+1}\rho\Delta_{\bar g}\rho\in  C^{k-1,\alpha}_2(M)$.\vspace{.5ex}

\item\label{HAH(3)}
$|d\rho|^2_{\bar g}-1-\dfrac{2}{n+1}\rho\Delta_{\bar g}\rho=O(\rho^2)$ as $\rho\to 0$.

\item\label{HAH(4)} $\R[g]+n(n+1)\in  C^{k-2,\alpha}_2(M)$.

\item\label{HAH(0)}
$\R[g]+n(n+1)=O(\rho^2)$ as $\rho\to 0$.

\end{enumerate}
\end{theorem}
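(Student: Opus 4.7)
My plan is to exploit the scalar curvature identity \eqref{ScalarID}, rearranged as
\begin{equation*}
\R[g] + n(n+1) \;=\; -n(n+1)\left(|d\rho|^2_{\bar g} - 1 - \tfrac{2}{n+1}\rho\Delta_{\bar g}\rho\right) + \rho^2 \R[\bar g].
\end{equation*}
Writing $f$ for the bracketed quantity and $s = \R[g]+n(n+1)$, this reads $s = -n(n+1)f + \rho^2\R[\bar g]$. The first step is to verify that the remainder $\rho^2\R[\bar g]$ lies in $C^{k-2,\alpha}_2(M)$; this is where the second Lie-derivative condition in \eqref{FirstHAHNorm} is essential, since $\R[\bar g]$ depends on two derivatives of $\bar g$. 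Combined with the pointwise boundedness of $\R[\bar g]$ furnished by Lemma \ref{BarRiemLemma}, that condition places $\R[\bar g]$ in $C^{k-2,\alpha}(M)$, whence the weighted membership follows.

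With this in hand, the majority of the equivalences is immediate. The trivial inclusion $C^{m,\alpha}_2 \subseteq O(\rho^2)$ gives (a) $\Rightarrow$ (b) and (c) $\Rightarrow$ (d). The identity yields (a) $\Rightarrow$ (c), since $n(n+1)f \in C^{k-1,\alpha}_2 \subseteq C^{k-2,\alpha}_2$ and $\rho^2\R[\bar g] \in C^{k-2,\alpha}_2$. The same identity combined with $\rho^2\R[\bar g] = O(\rho^2)$ gives (b) $\Leftrightarrow$ (d). To close the loop it remains to prove one genuinely nontrivial implication; the most natural choice is (b) $\Rightarrow$ (a), namely to upgrade the pointwise bound $f = O(\rho^2)$ to the weighted H\"older statement $f \in C^{k-1,\alpha}_2$.

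The key structural fact behind this upgrade is that $f$ involves only one derivative of $\bar g$ (entering through $\Delta_{\bar g}\rho$), whereas $s$ involves two. Combining Theorem \ref{thm:properties-of-WAH}\eqref{PropWAH1} (which gives $|d\rho|^2_{\bar g} - 1 \in C^{k,\alpha}_1$) with the first Lie-derivative condition $\mathcal L_X\bar g \in C^{k-1,\alpha}_2$ in \eqref{FirstHAHNorm} places $f$ \emph{a priori} in $C^{k-1,\alpha}_1(M)$, one power of $\rho$ short of the goal. Under $\WAH^{k,\alpha;2}$ the metric $\bar g$ extends to a $C^{1,1}$ metric on $\bar M$, so $f$ extends to a $C^{1,1}$ function on $\bar M$; the hypothesis $f = O(\rho^2)$ then encodes vanishing of both the zeroth- and first-order Taylor coefficients of $f$ along the normal direction to $\partial M$. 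A boundary-collar Taylor-expansion argument, reconciled with the intrinsic regularity $f \in C^{k-1,\alpha}_1$, promotes $f/\rho$ from $C^{k-1,\alpha}$ to $C^{k-1,\alpha}_1$, giving $f \in C^{k-1,\alpha}_2$. The main obstacle will be this promotion step: the intrinsic weighted H\"older scale on $M$ and the classical H\"older scale on $\bar M$ interact subtly near $\partial M$, so extracting the extra factor of $\rho$ requires careful comparison of the two.
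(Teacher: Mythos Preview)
Your overall architecture matches the paper's: both proofs rewrite \eqref{ScalarID} as $\R[g]+n(n+1) = -n(n+1)f + \rho^2\R[\bar g]$, invoke Lemma~\ref{BarRiemLemma} to place $\rho^2\R[\bar g]\in C^{k-2,\alpha}_2(M)$, read off the easy implications, and isolate \eqref{HAH(3)}$\Rightarrow$\eqref{HAH(2)} as the only substantive step.

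The gap is in that last step. You assert that since $\bar g$ extends to a $C^{1,1}$ metric on $\bar M$, the function $f$ does as well. But $f$ contains the term $\rho\Delta_{\bar g}\rho$, and $\Delta_{\bar g}\rho$ involves \emph{first} derivatives of $\bar g$; from $\bar g\in C^{1,1}(\bar M)$ you get only $\Delta_{\bar g}\rho\in C^{0,1}(\bar M)$, hence $f\in C^{0,1}(\bar M)$, not $C^{1,1}$. In terms of the spaces $\mathscr C^{k,\alpha;m}$, one has $\Delta_{\bar g}\rho\in\mathscr C^{k-1,\alpha;1}(M)$, and multiplication by $\rho$ promotes this to $\mathscr C^{k-1,\alpha;2}(M)$ only when $k\ge 3$ (the definition requires $m\le k$). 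So your ``second-order Taylor coefficient vanishes'' argument cannot be applied to $f$ directly when $k=2$, and even for $k\ge 3$ the justification you give is wrong.

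The paper circumvents this by exploiting the fact that $u:=|d\rho|^2_{\bar g}-1$, unlike $f$, \emph{does} lie in $\mathscr C^{k,\alpha;2}(M)$ for all $k\ge 2$. It applies a Taylor-type decomposition (Lemma~\ref{lemma:taylor}, which in turn rests on the regularization Theorem~\ref{thm:Regularization}) to $u$, yielding $u=\rho\langle d\rho,du\rangle_{\bar g}+v$ with $v\in C^{k-1,\alpha}_2(M)$. Then $f=\rho w+v$ where $w=\langle d\rho,du\rangle_{\bar g}-\tfrac{2}{n+1}\Delta_{\bar g}\rho$; crucially $w\in\mathscr C^{k-1,\alpha;1}(M)$, so the hypothesis $f=O(\rho^2)$ forces $w=O(\rho)$, and Lemma~\ref{lemma:properties-of-scriptC}\eqref{part:BoundaryLemma} with $m=1$ gives $w\in C^{k-1,\alpha}_1(M)$. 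The point is that the second-order expansion is performed on the more regular piece $u$, and the first-derivative-of-$\bar g$ piece $\Delta_{\bar g}\rho$ is handled by a first-order argument only.
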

\noindent
The proofs of Theorems \ref{thm:WAH-TFAE}--\ref{thm:HAH-TFAE} appear in \S\ref{Section-WAH}.

In \S\ref{IvaTensorSection} below, we define a tensor $\B_{\bar g}(\rho)$ that is a conformally invariant version of the trace-free Hessian of $\rho$.
It follows from Theorem \ref{thm:properties-of-WAH} that if $g$ is in  $\WAH^{k,\alpha;1}$, then the scalar curvature $\R[g]$ satisfies $\R[g]+n(n+1)=\mathcal O(\rho)$ as $\rho \to 0$.
If we assume that 
$g\in \WAH^{k,\alpha;2}$ and in addition that 
$R[g]+n(n+1) \in  C^{k-2,\alpha}_2(M)$, we have
\begin{equation}
\label{BatB}
\left.\B_{\bar g}(\rho)\right|_{\partial M} 
=
 \left[\Hess_{\bar g}\rho - \frac{1}{n+1}(\Delta_{\bar g}\rho)\bar g\right]_{\partial M};
\end{equation}
see Proposition \ref{B-DefiningFunctionProperties}.
We remark that while we have independently constructed the tensor $\B_{\bar g}(\rho)$, it has since come to our attention that a general procedure exists for constructing such invariants; see \cite{BairdEastwood}, \cite{EastwoodGraham}.

The following theorem shows that if the scalar curvature of a metric in $\WAH^{k,\alpha;2}$ has faster decay, then  the tensor $\B_{\bar g}(\rho)$ serves as an obstruction to faster decay of the full curvature operator to $-\Id$.
\begin{theorem}
\label{FH}
Suppose that $g\in \WAH^{k,\alpha;2}$ for $k\geq 2$ and $\alpha \in [0,1)$.
If $\R[g] + {n(n+1)}\in  C^{k-2,\alpha}_2(M)$, then the following are equivalent:
\begin{enumerate}
\item\label{FH1}
 $\Riem[g]+\Id\in  C^{k-2,\alpha}_2(M)$,

\item\label{FH2}
 $\Ric[g]+n\,\delta\in  C^{k-2,\alpha}_2(M)$,

\item\label{FH3}
 $\B_{\bar g}(\rho)=0$ along $\partial M$, and

\item\label{FH4}
 $\B_{\bar g}(\rho)\in C^{k-1, \alpha}_3(M)$.

\end{enumerate}

\end{theorem}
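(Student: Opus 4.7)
The plan is to establish the cycle of implications $\eqref{FH1}\Rightarrow\eqref{FH2}\Rightarrow\eqref{FH3}\Rightarrow\eqref{FH4}\Rightarrow\eqref{FH1}$.  The workhorses will be identities \eqref{RicciID}, \eqref{SecondRiemID}, and \eqref{BatB}, together with Lemma \ref{BarRiemLemma}, which, for $g\in\WAH^{k,\alpha;2}$, ensures that $\Riem[\bar g]$ is bounded; consequently $\rho^2\Riem[\bar g]$, $\rho^2\Ric[\bar g]$, and $\rho^2\R[\bar g]$ all belong to $C^{k-2,\alpha}_2(M)$.

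The step $\eqref{FH1}\Rightarrow\eqref{FH2}$ is immediate: the Ricci operator is obtained from $\Riem[g]+\Id$ by a fixed contraction, which continuously maps $C^{k-2,\alpha}_2$ into itself and sends $\Id$ to $n\delta$.  For $\eqref{FH2}\Rightarrow\eqref{FH3}$, I would form the trace-free Ricci and write
\begin{equation*}
\widehat\Ric[g] \;=\; \bigl(\Ric[g]+n\delta\bigr) - \tfrac{1}{n+1}\bigl(\R[g]+n(n+1)\bigr)\delta,
\end{equation*}
so that the scalar curvature hypothesis together with \eqref{FH2} places $\widehat\Ric[g]$ in $C^{k-2,\alpha}_2(M)$.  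Taking the trace-free part of \eqref{RicciID} collapses every $\delta$-term, leaving
\begin{equation*}
(n-1)\rho\,\bigl(\Hess_{\bar g}\rho-\tfrac{1}{n+1}(\Delta_{\bar g}\rho)\bar g\bigr)^{\!\sharp} \;=\; \widehat\Ric[g] - \rho^2\widehat\Ric[\bar g],
\end{equation*}
whose right-hand side lies in $C^{k-2,\alpha}_2(M)$ by Lemma \ref{BarRiemLemma}.  Dividing by $\rho$ shows the trace-free Hessian of $\rho$ lies in $C^{k-2,\alpha}_1(M)$ and so vanishes along $\partial M$; combining with \eqref{BatB} yields $\B_{\bar g}(\rho)|_{\partial M}=0$.

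For $\eqref{FH3}\Rightarrow\eqref{FH4}$, I would argue that from the construction of $\B_{\bar g}(\rho)$ in \S\ref{IvaTensorSection}, the tensor already lies in $C^{k-1,\alpha}_2(M)$ for every $g\in\WAH^{k,\alpha;2}$, and that vanishing on $\partial M$ promotes this to $C^{k-1,\alpha}_3(M)$ via a Taylor-expansion argument in the boundary-defining coordinate $\rho$ (this is a standard regularity gain for weighted Hölder spaces whose elements have trivial leading term).  Finally, for $\eqref{FH4}\Rightarrow\eqref{FH1}$, I would apply \eqref{SecondRiemID} term by term: the first summand is in $C^{k-2,\alpha}_2(M)$ by hypothesis on $\R[g]$, and the third is in $C^{k-2,\alpha}_2(M)$ by Lemma \ref{BarRiemLemma}.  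For the middle summand $2\rho\,\delta\KN(\Hess_{\bar g}\rho-\tfrac{1}{n+1}(\Delta_{\bar g}\rho)\bar g)^{\sharp}$, I would invoke the construction of $\B_{\bar g}(\rho)$ to write the trace-free Hessian as $\B_{\bar g}(\rho)$ plus an explicit correction whose regularity is controlled by $\bar g$ and $\R[g]$, ensuring both pieces contribute to $C^{k-2,\alpha}_2(M)$ once \eqref{FH4} is assumed.

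The main obstacle is $\eqref{FH3}\Rightarrow\eqref{FH4}$: it is not merely that the tensor vanishes on $\partial M$, but that one also gains a full order of regularity ($k-1$ rather than $k-2$) together with the extra weight.  This relies on the precise conformally invariant form of $\B_{\bar g}(\rho)$ built in \S\ref{IvaTensorSection}, and will require tracking how derivatives of the correction terms behave in the weighted Hölder scale.  The remaining implications are direct algebraic manipulations of \eqref{RicciID} and \eqref{SecondRiemID}, once one has the boundedness of $\Riem[\bar g]$ in hand.
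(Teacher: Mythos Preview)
Your approach is essentially the same as the paper's: the same cycle of implications, the same use of \eqref{SecondRiemID} and Lemma~\ref{BarRiemLemma}, and the same reduction of \eqref{FH2}$\Rightarrow$\eqref{FH3} to showing that the trace-free Hessian of $\rho$ vanishes on $\partial M$ (your trace-free version of \eqref{RicciID} is algebraically equivalent to contracting \eqref{SecondRiemID}).

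The one place where your sketch is incomplete is precisely the step you flag as the obstacle, \eqref{FH3}$\Rightarrow$\eqref{FH4}. Your phrase ``a standard regularity gain for weighted H\"older spaces whose elements have trivial leading term'' is not quite right: a weight-$2$ tensor in $C^{k-1,\alpha}_2(M)$ that vanishes on $\partial M$ does \emph{not} automatically land in $C^{k-1,\alpha}_3(M)$; one needs a separate bound on its $\bar\nabla$-derivative. The paper supplies this via Lemma~\ref{B-in-WAH}, which shows directly from the structure of $\B_{\bar g}(\rho)$ that when $g\in\WAH^{k,\alpha;2}$ one has both $\B_{\bar g}(\rho)\in C^{k-1,\alpha}_2(M)$ \emph{and} $\bar\nabla\B_{\bar g}(\rho)\in C^{k-2,\alpha}_3(M)$. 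With both of these in hand, Lemma~\ref{lemma:properties-of-scriptC}\eqref{part:BoundaryLemma} (the integration-from-the-boundary lemma) converts the vanishing of $\B_{\bar g}(\rho)$ on $\partial M$ into membership in $C^{k-1,\alpha}_3(M)$. Your final paragraph gestures at exactly this (``tracking how derivatives of the correction terms behave''), so you have the right instinct; the missing ingredient is simply to observe that the schematic form of $\B_{\bar g}(\rho)$ in \eqref{B-Schema} makes the derivative bound immediate once $\bar g\in\mathscr C^{k,\alpha;2}(M)$.

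For \eqref{FH4}$\Rightarrow$\eqref{FH1}, the ``explicit correction'' you allude to is exactly Proposition~\ref{B-DefiningFunctionProperties}, which gives $\B_{\bar g}(\rho)-(\text{trace-free Hessian})\in C^{k-1,\alpha}_3(M)$; this is what the paper invokes as well.
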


We emphasize that conditions \eqref{FH3} and \eqref{FH4} in Theorem \ref{FH} are manifestly conformally invariant.
We furthermore note that it is an immediate consequence of Theorem \ref{FH} that if the metric $g$ is Einstein, then the tensor $\B_{\bar g}(\rho)$ vanishes at $\partial M$.
The proof of Theorem \ref{FH} can be found in \S\ref{IvaTensorSection}.

We also note that the tensor  $\B_{\bar g}(\rho)$  has further applications in general relativity, where it gives rise to a conformally invariant description of the ``shear-free condition'' for asymptotically hyperbolic solutions to the Einstein constraint equations.
In this context, the conformal invariance of $\B_{\bar g}(\rho)$ is particularly useful for constructing solutions to the constraint equations via conformal deformation; construction of shear-free solutions using the tensor $\B_{\bar g}(\rho)$ is carried out in \cite{AHEM-Preliminary}.

\begin{center}
---------------------------
\end{center}

One motivation for defining the weakly asymptotically hyperbolic condition is to establish Fredholm results for geometric elliptic operators arising from a metric $g$ that is sufficiently regular on the interior $M$ for establishing interior elliptic regularity results, but whose conformal compactification $\bar g$ is less regular at $\partial M$ than is typically assumed in the literature.
Such metrics include the polyhomogeneous metrics; see Appendix \ref{appendix} for a detailed discussion of polyhomogeneity.

We now consider a linear elliptic operator $\mathcal P$ acting on sections of a tensor bundle $E$ having weight $r$.
(The \Defn{weight} of a tensor bundle is the covariant rank less the contravariant rank.)
Following \cite{Lee-FredholmOperators}, we make the following assumptions on $\mathcal P$.

\setcounter{assumption}{15}
\begin{assumption}
\label{Assume-P}
We assume $\mathcal P = \mathcal P[g]$ is a second-order linear elliptic operator acting on sections of a tensor bundle $E$.
Furthermore
\begin{enumerate}
\item 
\label{Assume-P-Basic}
We assume that $\mathcal P$ is \Defn{geometric} in the sense of \cite{Lee-FredholmOperators}: In any coordinate frame the components of $\mathcal P u$ are linear functions of $u$ and its derivatives, whose coefficients are universal polynomials in the components of $g$, their partial derivatives, and $\sqrt{\det g_{ij}}$, such that the coefficient of the $j$th derivative of $u$ involves no more than $2-j$ derivatives of the metric.

\item We assume that $\mathcal P$ is formally self-adjoint, and that there is a compact set $K\subset M$ and a constant $C>0$ such that
\begin{equation}
\label{BasicL2Estimate}
\|u\|_{L^2(M)}\leq C\|\mathcal P u\|_{L^2(M)} 
\quad\text{ for all }\quad
 u\in C^\infty_c(M\setminus K).
\end{equation} 

\end{enumerate}
\end{assumption}

\begin{remark}
\label{Generalize}

It is possible to weaken the hypothesis that $\mathcal P$ be geometric in the sense described above.
For example, Theorem \ref{WeaklyFredholmTheorem} below easily generalizes to operators $\mathcal P = \mathcal P[g,\rho]$ whose coefficients, in any smooth chart, are universal polynomials in both $\rho$ and components of $g$, and their derivatives.
\end{remark}

If $(M,g)$ is strongly asymptotically hyperbolic of class $C^{k,\alpha}$ for $k\geq 2$, then Lemma 4.1 of \cite{Lee-FredholmOperators} shows that operators $\mathcal P$ satisfying Assumption \ref{Assume-P} are uniformly degenerate at $\partial M$, meaning that in background coordinates (see \S\ref{RegularityClasses}) we may write
\begin{equation}
\label{OperatorInCoordinates}
\mathcal P = a^{ij}(\rho\partial_i)(\rho\partial_j) + b^i\rho\partial_i + c,
\end{equation}
where the matrix-valued functions $a^{ij}$, $b^i$, $c$ extend continuously to $\bar M$.
If $g\in \WAH^{k,\alpha;1}$ this remains true; see Lemma \ref{I-Lemma} below.

In the strongly asymptotically hyperbolic setting, it is known that the mapping properties of operators $\mathcal P$ satisfying Assumption \ref{Assume-P} can, to a great extent, be understood via the mapping properties of the \Defn{indicial map} 
$$I_s(\mathcal P)\colon \left.(E\otimes \mathbb C)\right|_{\partial M}\to \left.(E\otimes \mathbb C)\right|_{\partial M},
$$ defined for each $s\in \mathbb C$  by
\begin{equation}
\label{DefineIndicialMap}
I_s(\mathcal P)\bar u = \left.\rho^{-s}\mathcal P (\rho^s \bar u)\right|_{\rho=0}.
\end{equation}
In Lemma \ref{I-Lemma} we show that the indicial map is still well-defined in the case that $\mathcal P$ arises from a weakly asymptotically hyperbolic metric, and that in this weaker setting $I_s(\mathcal P)$ is a $C^0$ bundle map.

In \cite{Lee-FredholmOperators} it is shown that the  \Defn{characteristic exponents} of $\mathcal P$, defined as the set of $s\in \bbC$ for which $I_s(\mathcal P)$ has nontrivial kernel at some point on $\partial M$, are located symmetrically around the line $\Re(s)=n/2-r$, where $r$ is the weight of the tensor bundle $E$. 
Of particular relevance here is the distance between this line and the closest characteristic exponent, called the \Defn{indicial radius} and denoted by $R$.

The following theorem shows that the affirmative Fredholm results of \cite{Lee-FredholmOperators} hold in the weakly asymptotically hyperbolic setting.

\begin{theorem}
\label{WeaklyFredholmTheorem}
Suppose $g\in \WAH^{l,\beta;1}$ for some $l\geq 2$ and $\mathcal P$ satisfies Assumption \ref{Assume-P}. 
Then the indicial radius $R$ of $\mathcal P$ is positive. 
Furthermore,
\begin{enumerate}
\item if $\beta\in [0,1)$, then
$$\mathcal P\colon H^{k,p}_\delta(M)\to H^{k-2,p}_\delta(M)$$ 
is Fredholm for $1<p<\infty$, $2\le k\le l$, and $|\delta+\tfrac{n}{p}-\tfrac{n}{2}|<R$; and

\item if $\beta\in (0,1)$, then
$$\mathcal P\colon C^{k,\alpha}_\delta(M)\to C^{k-2,\alpha}_\delta(M)$$ is Fredholm for $0<\alpha<1$, $2< k+\alpha\leq l+\beta$, and $|\delta-\tfrac{n}{2}|<R$.
\end{enumerate}
In both cases the operators are of index zero, and the kernel is equal to the $L^2$ kernel of $\mathcal P $. 
\end{theorem}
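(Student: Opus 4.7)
The plan is to verify that the arguments of Lee \cite{Lee-FredholmOperators} and Mazzeo \cite{Mazzeo-Edge} extend to the weakly asymptotically hyperbolic setting, using only the structural inputs they require: uniform degeneracy with boundary-continuous coefficients, positivity of the indicial radius, and a weighted elliptic a priori estimate. The first two are furnished essentially for free. Lemma \ref{I-Lemma} puts $\mathcal P$ in the form \eqref{OperatorInCoordinates} with $a^{ij}, b^i, c\in C^0(\overline M)$, so the indicial map $I_s(\mathcal P)$ is a $C^0$ bundle map on $\partial M$; the WAH regularity gives these coefficients additional intrinsic $C^{l-2,\beta}$ regularity in the interior. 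Because the boundary values of $a^{ij}, b^i, c$ are determined by boundary data of $\bar g$ and by Lie derivatives of $\bar g$ captured by vector fields in $\mathscr V$, they coincide with the data one would obtain from a strongly asymptotically hyperbolic metric; the computation of $R>0$ for geometric operators from \cite{Lee-FredholmOperators} therefore applies without change.

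The heart of the proof is establishing the a priori estimate
\[
  \|u\|_{X^{k}_\delta(M)} \;\le\; C\bigl(\|\mathcal P u\|_{X^{k-2}_\delta(M)} + \|u\|_{L^2(K)}\bigr)
\]
for some compact $K\subset M$, with $X = H^{k,p}$ or $X = C^{k,\alpha}$ as appropriate. I would do this via the freezing-coefficients and parametrix method of \cite{Lee-FredholmOperators, Mazzeo-Edge}: localize near each boundary point $x_0 \in \partial M$ by a cutoff supported in a small background chart, and replace $\mathcal P$ there by the constant-coefficient model operator $\mathcal P_{x_0}$ on hyperbolic space. Sharp mapping properties of $\mathcal P_{x_0}$ on weighted spaces, available from the analysis of the model in \cite{Lee-FredholmOperators} precisely when $|\delta+n/p-n/2|<R$ or $|\delta-n/2|<R$, control the local piece. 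The error $(\mathcal P-\mathcal P_{x_0})u$ is absorbable on a sufficiently small chart thanks to continuity of the coefficients at $\partial M$, while interior elliptic regularity handles the bulk.

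Once this estimate is in hand, the passage to Fredholmness is essentially formal. Combining it with \eqref{BasicL2Estimate} and compactness of the embedding $X^k_\delta \hookrightarrow L^2(K)$ yields the semi-Fredholm property. Formal self-adjointness of $\mathcal P$, together with the symmetry of the characteristic exponents about $\Re s = n/2 - r$, ensures via the duality arguments of \cite{Lee-FredholmOperators} that both $\mathcal P$ and its formal adjoint are semi-Fredholm between the appropriate weighted spaces, so $\mathcal P$ is Fredholm; constancy of the index on the connected Fredholm region in $(p,\delta)$ space, combined with its vanishing at the self-adjoint point $p=2$, $\delta = 0$, forces the index to be zero throughout. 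A standard weighted bootstrap applied to $\rho^\sigma u$ for varying $\sigma$ then shows $\ker\mathcal P \subset L^2(M)$, identifying the kernel.

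The main obstacle is adapting the boundary estimate to the reduced regularity of $\bar g$ at $\partial M$. In the strongly asymptotically hyperbolic case the coefficients of $\mathcal P$ are H\"older continuous up to $\partial M$, and the absorption of $(\mathcal P-\mathcal P_{x_0})u$ is routine in both Sobolev and H\"older norms; here the boundary values are merely continuous, so one must manage the error via its modulus of continuity. This suffices for the $L^p$-based estimates, which accounts for the allowance $\beta = 0$ in part (a); the H\"older estimates of part (b) require a H\"older modulus of continuity for the coefficients at $\partial M$, which explains why part (b) insists on $\beta > 0$.
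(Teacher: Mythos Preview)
Your overall strategy matches the paper's: verify that Lee's parametrix machinery goes through once the right near-boundary estimate is established, then invoke the formal self-adjointness and duality arguments for index zero and kernel identification. But you misidentify the mechanism behind the crucial absorption step, and this is where the actual work lies.

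The error term $(\mathcal P - \mathcal P_{x_0})u$ cannot be absorbed merely by continuity of the coefficients $a^{ij}, b^i, c$ at $\partial M$. The parametrix construction of \cite{Lee-FredholmOperators} requires that the pulled-back metric be close to the hyperbolic model in the \emph{full} intrinsic $C^{l,\beta}$ norm on a boundary M\"obius chart, with the closeness tending to zero as the chart shrinks. Concretely, the paper introduces boundary M\"obius parametrizations $\Psi_r\colon Y\to M$ (where $Y$ is a fixed rectangle in $\mathbb H$) and proves the key Lemma~\ref{BoundaryMobiusLemma}:
\[
\|\Psi_r^* g - \breve g\|_{C^{l,\beta}(Y)} \le Cr.
\]
This linear decay in $r$ is what replaces Lemma~6.1 of \cite{Lee-FredholmOperators} and makes the error absorbable. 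The proof uses the WAH hypothesis in an essential way: the $C^0$ part comes from the mean value theorem together with boundedness of $\partial_{\tilde\Theta}\bar g_{ij}$ (i.e., the condition $\mathcal L_X\bar g\in C^{l-1,\beta}_2(M)$ for all $X\in\mathscr V$), while the higher-order H\"older estimates come from Lemma~\ref{lemma:properties-of-scriptC}\eqref{part:TripleComponents}, which exploits the $\mathscr C^{l,\beta;1}$ structure. Mere $C^0$ continuity of the coefficients at $\partial M$ would give no control on the higher-order part of the norm, and the parametrix estimates in $H^{k,p}_\delta$ or $C^{k,\alpha}_\delta$ for $k\ge 2$ would fail.

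Your explanation for the $\beta>0$ restriction in part~(b) is also off. The boundary M\"obius estimate above holds for all $\beta\in[0,1)$; the distinction between parts (a) and (b) comes instead from interior elliptic regularity (Lemma~\ref{EllipticRegularity}), where Schauder estimates require H\"older continuous coefficients and hence $\beta>0$, while $L^p$ estimates do not.
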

\noindent
The proof of Theorem \ref{WeaklyFredholmTheorem} consists of adapting results of \cite{Lee-FredholmOperators} to the weakly  asymptotically hyperbolic setting, and is the content of \S\ref{Fredholm} below.

To further illustrate the utility of the weakly asymptotically hyperbolic condition, we now consider the Yamabe problem, which is the question of whether an asymptotically hyperbolic metric can be conformally deformed to another such metric of constant scalar curvature.
In the case that $\bar g\in C^\infty(\bar M)$, it is known that there exists a smooth, positive function $\phi \in C^\infty(M)$ such that the scalar curvature of $\phi^{4/(n-1)} g$ is identically $-n(n+1)$; see \cite[Theorem 1.2]{AnderssonChruscielFriedrich}, as well as \cite{AnderssonChrusciel-Dissertationes}.
In the weakly asymptotically hyperbolic setting, we prove the following.

\begin{theorem}
\label{YamabeTheorem}
Suppose $g\in \WAH^{k,\alpha;1}$ for $k\geq 2$ and $\alpha \in (0,1)$.
Then there exists a unique positive function $\phi$ with $\phi -1\in C^{k,\alpha}_1(M)$ such that $\hat g = \phi^{4/(n-1)}g\in \WAH^{k,\alpha;1}$ and $\R[\hat g] = -n(n+1)$.
Furthermore, if $g\in \WAH^{k,\alpha;2}$, then $\hat g\in \WAH^{k,\alpha;2}$.
If $g$ is also polyhomogeneous, then $\hat g$ is polyhomogeneous as well.
\end{theorem}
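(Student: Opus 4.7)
The approach is to recast $\R[\hat g] = -n(n+1)$ as a semilinear PDE with decaying datum and invoke the Fredholm theory of Theorem~\ref{WeaklyFredholmTheorem}. Writing $\hat g = \phi^{4/(n-1)} g$, the scalar curvature equation becomes
\begin{equation*}
F(\phi) := -\tfrac{4n}{n-1}\Delta_g \phi + \R[g]\,\phi + n(n+1)\phi^{(n+3)/(n-1)} = 0.
\end{equation*}
Substituting $\phi = 1 + u$ and expanding rearranges this as $\mathcal P u = -f - Q(u)$, where $\mathcal P := -\tfrac{4n}{n-1}\Delta_g + \R[g] + \tfrac{n(n+1)(n+3)}{n-1}$ is the linearization of $F$ at $\phi=1$, $f := \R[g] + n(n+1)$, and $Q(u) := n(n+1)\bigl[(1+u)^{(n+3)/(n-1)} - 1 - \tfrac{n+3}{n-1}u\bigr]$ vanishes quadratically at $u=0$. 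By Theorem~\ref{thm:properties-of-WAH}\eqref{PropWAH4}, $f \in C^{k-2,\alpha}_1(M)$, so the problem is to invert $\mathcal P$ modulo a controlled nonlinear error.

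The next step is to verify that $\mathcal P$ is an isomorphism $C^{k,\alpha}_1(M) \to C^{k-2,\alpha}_1(M)$. The operator is geometric and formally self-adjoint, and its zeroth-order coefficient tends to $\tfrac{4n(n+1)}{n-1} > 0$ as $\rho\to 0$; integration by parts for test functions supported outside a large compact set then yields the $L^2$ coercivity \eqref{BasicL2Estimate}. Its indicial equation reduces to $s(n-s)+(n+1)=0$, with roots $s=-1$ and $s=n+1$, giving indicial radius $R=\tfrac{n}{2}+1$. Since $|1-\tfrac{n}{2}|<R$ for all $n\ge 1$, Theorem~\ref{WeaklyFredholmTheorem} shows $\mathcal P$ is Fredholm of index zero on these spaces, and a standard maximum-principle argument exploiting the asymptotic positivity of the zeroth-order coefficient shows the $L^2$ kernel is trivial, so $\mathcal P$ is invertible.

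For the nonlinear problem I adapt the sub/super-solution scheme of \cite{AnderssonChruscielFriedrich}: one constructs barriers $0 < \phi_- \le \phi_+$ with $\phi_\pm \to 1$ at $\partial M$ and iterates monotonically in the order interval $[\phi_-,\phi_+]$ to produce a positive solution $\phi$; elliptic bootstrap using the inversion of $\mathcal P$ and interior regularity then shows $u := \phi - 1 \in C^{k,\alpha}_1(M)$. Uniqueness within the class $\phi>0$, $\phi-1\in C^{k,\alpha}_1(M)$ follows from the maximum principle: for two such solutions $\phi_1,\phi_2$, the difference $w := \phi_1 - \phi_2 \in C^{k,\alpha}_1(M)$ solves a linear equation whose zeroth-order coefficient, computed via the mean value theorem applied to the power $\phi^{(n+3)/(n-1)}$, is asymptotically equal to $\tfrac{4n(n+1)}{n-1}>0$, forcing $w\equiv 0$.

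That $\hat g \in \WAH^{k,\alpha;1}$ is seen by noting $\psi := \phi^{4/(n-1)}$ satisfies $\psi - 1 \in C^{k,\alpha}_1(M)$, so $\hat{\bar g} := \psi \bar g \in C^{k,\alpha}_2(M)$ and the Leibniz rule gives $\mathcal L_X \hat{\bar g} = (X\psi)\bar g + \psi\,\mathcal L_X \bar g \in C^{k-1,\alpha}_2(M)$ for $X \in \mathscr V$, verifying \eqref{FirstTriple}; since $\psi|_{\partial M}=1$ and $g\in\WAH^{k,\alpha;1}$, we have $|d\rho|^2_{\hat{\bar g}} = \psi^{-1}|d\rho|^2_{\bar g} = 1$ on $\partial M$, so condition \eqref{NEWWAH4} of Theorem~\ref{thm:WAH-TFAE} holds. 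The $\WAH^{k,\alpha;2}$ case proceeds analogously for second Lie derivatives, and polyhomogeneity is propagated via the regularity results of Appendix~\ref{appendix} applied to the quasilinear equation for $u$. The principal obstacle is the nonlinear existence step: because $f$ is not assumed small, the implicit function theorem cannot be applied directly, so one must construct global barriers compatible with the weighted H\"older structure and preserve positivity throughout the iteration; the asymptotic positivity of the zeroth-order coefficient of $\mathcal P$ is the essential input both here and in the uniqueness argument.
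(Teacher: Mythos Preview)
Your overall strategy for the $\WAH^{k,\alpha;1}$ case parallels the paper's: linearize at $\phi=1$, establish invertibility of the linearization via the Fredholm theorem, and then run a sub/super-solution iteration. The paper adds one preparatory step you omit (first conformally changing to a metric with strictly negative scalar curvature, Lemma~\ref{WeakYamabe}), which makes the barrier construction clean; but this is a detail you could supply.

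The genuine gap is in your treatment of the $\WAH^{k,\alpha;2}$ case. Your claim that it ``proceeds analogously for second Lie derivatives'' fails: from the existence argument you only obtain $\phi-1\in C^{k,\alpha}_1(M)$, hence $\bar\nabla^2\psi\in C^{k-2,\alpha}_1(M)$ as a weight-$2$ tensor, and therefore the term $(\bar\nabla^2\psi)\,\bar g$ in the expansion of $\bar\nabla^2(\psi\bar g)$ lies only in $C^{k-2,\alpha}_3(M)$, not in $C^{k-2,\alpha}_4(M)$ as required for membership in $\mathscr C^{k,\alpha;2}(M)$. To get $\phi-1\in C^{k,\alpha}_2(M)$ you would need $\R[g]+n(n+1)\in C^{k-2,\alpha}_2(M)$, but for a generic $g\in\WAH^{k,\alpha;2}$ Theorem~\ref{thm:properties-of-WAH} gives only decay of order $1$. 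The paper resolves this with an additional preliminary conformal change (Lemma~\ref{Finally!}): one first multiplies by a carefully constructed factor $\theta\in\mathscr C^{k,\alpha;2}(M)$ (built using the regularization Theorem~\ref{thm:Regularization}) to arrange $\R[\tilde g]+n(n+1)\in C^{k-2,\alpha}_2(M)$, and only then solves the Yamabe equation, now obtaining the requisite $\tilde\phi-1\in C^{k,\alpha}_2(M)$. This step is not a formality and cannot be absorbed into ``proceeds analogously.''
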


\noindent
The proof of Theorem \ref{YamabeTheorem}, which appears in \S\ref{Yamabe},  
 relies on the identity
\begin{equation}
\label{ChangeSC}
\R[\phi^{4/(n-1)}g] =\left( -\frac{4n}{n-1}\Delta_g\phi + \R[g]\phi\right)\phi^{-(n+3)/(n-1)},
\end{equation}
where our sign convention for the Laplacian is $\Delta_g\phi = \tr_g \Hess_g\phi$.
Thus $\hat g = \phi^{4/(n-1)}g$ has constant scalar curvature $-n(n+1)$ if $\phi$ satisfies
\begin{equation}
\label{YP}
\Delta_g\phi - \frac{n-1}{4n}\R[g]\phi = \frac{n^2-1}{4}\,\phi^{(n+3)/(n-1)},
\qquad \left.\phi\right|_{\partial M} =1,
\qquad \phi>0.
\end{equation}
We show the existence of a function $\phi$ satisfying \eqref{YP} in \S\ref{Yamabe}.

Combining Theorem \ref{YamabeTheorem} with Theorem \ref{FH}, we observe the following: If $g\in \WAH^{k,\alpha;2}$, then the tensor $\B_{\bar g}(\rho)$ determines whether $g$ is conformally related to a metric in $\WAH^{k,\alpha;2}$ whose curvature operator tends towards $-\Id$ to higher order.

\section{Regularity classes}
\label{RegularityClasses}

In this section we define weighted H\"older and Sobolev spaces of geometric tensor fields on $M$, and relate them to the construction given in \cite{Lee-FredholmOperators}.
While the definitions of these spaces are independent of any Riemannian structure, it is often convenient to work with equivalent norms defined using a background metric $h$ introduced below.
Some of our results also concern polyhomogeneous tensor fields.
We furthermore refer the reader to \S\ref{S-DefinePolyhomogeneity} for a careful definition of $C^k_\phg(\bar M)$, the class of polyhomogeneous tensor fields on $M$ which extend to fields of class $C^k$ on $\bar M$.

In order to construct H\"older and Sobolev spaces on $M$, we introduce a collection of coordinate charts covering a neighborhood of $\partial M$ in $\bar M$ as follows.
Choose a collar neighborhood $\underline{\mathcal C}$ of $\partial M$ in $\bar M$ and a diffeomorphism $\underline{\mathcal C}\to \partial M \times [0,\rho_*)$ whose last coordinate function is $\rho$; for convenience we hereafter implicitly identify $\underline{\mathcal C}$ with $\partial M\times [0,\rho_*)$.
For any $a\in (0,\rho_*]$, denote by $\underline{\mathcal C}_a $ the subset $\partial M \times [0,a)$, and define $\mathcal C = \Int(\underline{\mathcal C})\approx \partial M \times (0,\rho_*)$ and $\mathcal C_a = \Int(\underline{\mathcal C}_a)\approx \partial M \times (0,a)$.

Fix a finite collection of coordinate charts for $\partial M$ such that for each $(U,\theta)$ in the collection, $\theta$ extends smoothly to  a coordinate chart containing $\bar U$.
For each $(U,\theta)$ we extend $\theta$ to $\underline{\mathcal U} := U\times[0,\rho_*)$ by declaring it to be independent of $\rho$ and define coordinates $\Theta = (\theta,\rho)$ on $\underline{\mathcal U}$.
Following the nomenclature of \cite{Lee-FredholmOperators}, we refer to $\Theta$ as \Defn{background coordinates}.
For any $k\in \mathbb N_0$ and $\alpha\in [0,1)$ we define the H\"older spaces $C^{k,\alpha}(\bar M)$ using these background coordinate charts together with a finite number of charts covering the complement of $\underline{\mathcal C}$.

We furthermore  use the coordinates $\Theta$ to identify $\underline{\mathcal U}$ and $\mathcal U:=\Int \underline{\mathcal U}$ with subsets of the half space $\mathbb R^{n}\times[0,\infty)$.
These identifications allow one to compare the geometry of $(M,g)$ near $\partial M$ to that of hyperbolic space; to make this precise we use the following construction from \cite{Lee-FredholmOperators}.

Let $(\mathbb H,\breve g)$ be the upper half-space model of $(n+1)$-dimensional hyperbolic space, with coordinates $(x,y) = (x^1,\dots, x^n,y)\in \mathbb R^n\times (0,\infty)$ and with the hyperbolic metric $\breve g=y^{-2}((dx^1)^2+\dots+(dx^n)^2+dy^2)$.
For any $r>0$, define $B_r\subset\bbH$ to be the ball of radius $r$, with respect to $\breve g$, centered at $(0,\dots, 0,1)$.
Using background coordinates to identify subsets of $\mathcal U$ with subsets of $\mathbb R^{n+1}$, we may for each point $p_0 = (\theta_0,\rho_0)\in \mathcal C$ construct a  \Defn{M\"obius parametrization} $\Phi \colon B_2\to M$ centered at $p_0$ by $\Phi(x,y) = (\theta_0 + \rho_0x, \rho_0y)$.
(The complement of $\mathcal C$ in $M$, which is compact, we also cover by finitely many parametrizations $B_2\to M$, which we include in the collection of M\"obius parametrizations.)
We fix countably many M\"obius parametrizations $\Phi_i$ such that $\{\Phi_i(B_1)\}$ covers $M$ and $\{\Phi_i(B_2)\}$ is uniformly locally finite.

We define the H\"older norm $\| u\|_{C^{k,\alpha}(M;E)}$ of a section $u$ of a tensor bundle $E$ by
\begin{equation}
\label{DefineHolderNorms}
\| u\|_{C^{k,\alpha}(M;E)} := \sup_i \| \Phi_i^*u\|_{C^{k,\alpha}(B_2)};
\end{equation}
 the H\"older space $C^{k,\alpha}(M;E)$ is the space of sections for which this norm is finite.
For $\delta\in \mathbb R$, we define the weighted H\"older spaces by $C^{k,\alpha}_\delta(M;E) = \rho^\delta C^{k,\alpha}(M;E)$ using the norms
\begin{equation}
\label{DefineWeightedHolderNorms}
\| u\|_{C^{k,\alpha}_\delta(M;E)} = \| \rho^{-\delta} u\|_{C^{k,\alpha}(M;E)}.
\end{equation}
The Sobolev spaces ${H^{k,p}(M;E)}$ are defined analogously; for $k\in \mathbb N_0$  and $p\in (1,\infty)$ we have
\begin{equation*}
\|u\|_{H^{k,p}(M;E)}^p 
= \sum_i \|\Phi_i^* u\|^p_{H^{k,p}(B_2)}.
\end{equation*}

As defined here, the H\"older and Sobolev norms are independent of any Riemannian structure on $M$.
To simplify the analysis below, we fix a smooth ($C^\infty$) background metric $\bar h$ on $\bar M$ such that $|\D\rho|_{\bar h} =1$ along $\partial M$, and let $h = \rho^{-2}\bar h$ be the corresponding asymptotically hyperbolic metric on $M$.
Throughout the remainder of this paper we adopt the following convention:
\begin{equation*}
\text{
\emph{
$\bar\nabla$ and $\nabla$ denote the Levi-Civita connections of $\bar h$ and $h$, respectively.
}
}
\end{equation*}

A detailed account of H\"older and Sobolev spaces, including various embeddings and equivalent norms that make use of a sufficiently regular asymptotically hyperbolic metric and its Levi-Civita connection, is given in Chapter 3 of \cite{Lee-FredholmOperators}.
In particular, the background metric $h$ gives rise to the following norm equivalences:
\begin{equation}
\label{BasicHolderNormEquivalences}
\frac{1}{C}\|u\|_{C^{k,\alpha}(M;E)}
\leq
\sum_{0\leq j\leq k} \sup_M |\nabla^j u|_h
+ \|\nabla^ku\|_{C^{0,\alpha}(M;E)}
\leq 
C\|u\|_{C^{k,\alpha}(M;E)}
\end{equation}
and
\begin{equation}
\label{BasicSobolevNormEquivalences}
\begin{gathered}
\frac{1}{C}\|u\|_{H^{k,p}(M;E)}^p
\leq
\sum_{0\leq j \leq k} \int_M |\nabla^j u|_h^p\, \D V_h
\leq
C \|u\|_{H^{k,p}(M;E)}^p.
\end{gathered}
\end{equation}
Note that \cite{Lee-FredholmOperators} contains a small error; see Appendix \ref{CorrectFred} for a description of the error and necessary corrections.

We record the following elementary facts about H\"older spaces on $M$; recall that the {weight $r$ of a tensor bundle} is its covariant rank less its contravariant rank.

\begin{lemma}[Lemmas 3.3, 3.6, and 3.7 of \cite{Lee-FredholmOperators}]
\label{DLemma-ListOfHolderFacts}
Suppose $\bar h$ is a smooth metric on $\bar M$ as described above.

\begin{enumerate}
\item 
\label{DLemma-Include}
If $E$ is a geometric tensor bundle of weight $r$ over $(\bar M,\bar h)$, and if $\alpha\in (0,1)$ and $k\in \mathbb N_0$, then the following inclusions are continuous
\begin{gather*}
C^{k,\alpha}(\bar M;E) \hookrightarrow C^{k,\alpha}_r(M;E)
\\
C^{k,\alpha}_{k+\alpha+r}(M;E)\hookrightarrow C^{k,\alpha}(\bar M;E).
\end{gather*}
Note that the first inclusion holds for $\alpha\in [0,1)$.

\item 
\label{HolderMultiplication}
Let $E_1,E_2$ be geometric tensor bundles over $(M,h)$.
For all $\alpha\in [0,1)$, $k\in \mathbb N_0$, and $\delta_1,\delta_2\in \mathbb R$, the pointwise tensor product is a continuous map
\begin{equation*}
C^{k,\alpha}_{\delta_1}(M;E_1)
\times
C^{k,\alpha}_{\delta_2}(M;E_2)
\to
C^{k,\alpha}_{\delta_1+\delta_2}(M;E_1\otimes E_2).
\end{equation*}

\item We have $\D\rho\in C^{k,\alpha}_1(M;TM^*)$ for all $k\in \mathbb N_0$ and $\alpha\in [0,1)$.

\item
\label{DLemma-DifferenceTensorH}
The difference tensor $D = \nabla - \bar\nabla$ is in $C^{k,\alpha}_0(M;T^2M^*\otimes TM)$ for all $k\in \mathbb N_0$ and $\alpha\in [0,1)$, and therefore $\bar\nabla\colon C^{k+1,\alpha}_\delta(M;E)\to C^{k,\alpha}_\delta(M;E\otimes TM^*)$.

\end{enumerate}

\end{lemma}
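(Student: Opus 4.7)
The plan is to exploit the structure of the M\"obius parametrizations $\Phi(x,y) = (\theta_0+\rho_0 x,\rho_0 y)$, which act as dilations of the half-space. Under such a parametrization, coordinate covectors $d\theta^i$ and $d\rho$ pull back with a factor of $\rho_0$, while coordinate vector fields $\partial_{\theta^i}$ and $\partial_\rho$ pull back with a factor of $\rho_0^{-1}$. Hence a section of a geometric bundle $E$ of weight $r$ has its components multiplied by exactly $\rho_0^r$ under $\Phi^*$. Since $\rho_0$ is uniformly comparable to $\rho$ on $\Phi(B_2)$ by the fixed covering, every statement reduces to a scaling check plus the Euclidean calculus on $B_2$.

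For part \eqref{DLemma-Include}, a section $u\in C^{k,\alpha}(\bar M;E)$ has $C^{k,\alpha}$-bounded coordinate components on $\underline{\mathcal U}$, and the scaling computation above shows that $\|\rho^{-r}\Phi_i^*u\|_{C^{k,\alpha}(B_2)}$ is uniformly bounded in $i$, yielding the first inclusion; the second inclusion follows from the fact that an element of $C^{k,\alpha}_{k+\alpha+r}$ has its $j$th coordinate derivative $O(\rho^{k+\alpha+r-j})$ in the M\"obius charts, which when translated back to $\bar M$ coordinates and combined with the standard Hardy-type interpolation along integral curves of $\partial_\rho$ gives continuous extension with the required Hölder regularity up to $\partial M$. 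Part \eqref{HolderMultiplication} is then immediate from $\rho^{-\delta_1-\delta_2}(u\otimes v) = (\rho^{-\delta_1}u)\otimes(\rho^{-\delta_2}v)$ and the classical product estimate on $C^{k,\alpha}(B_2)$. Part \eqref{DLemma-DifferenceTensorH} for $d\rho$ reduces to applying the first inclusion in \eqref{DLemma-Include} with $r=1$, since $d\rho$ is smooth on $\bar M$.

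The only genuinely computational step is the difference-tensor statement \eqref{DLemma-DifferenceTensorH}. Starting from the standard conformal-change formula with $h = e^{2\omega}\bar h$ and $\omega=-\log\rho$, one has
\begin{equation*}
D_{ij}^k = \delta_i^k\,\partial_j\omega + \delta_j^k\,\partial_i\omega - \bar h_{ij}\bar h^{k\ell}\partial_\ell\omega
= -\rho^{-1}\bigl(\delta_i^k\,\partial_j\rho + \delta_j^k\,\partial_i\rho - \bar h_{ij}\bar h^{k\ell}\partial_\ell\rho\bigr).
\end{equation*}
The tensor in parentheses is smooth on $\bar M$ and has weight $r=1$ (covariant rank $2$ minus contravariant rank $1$), hence lies in $C^{k,\alpha}_1(M;T^2M^*\otimes TM)$ by \eqref{DLemma-Include}; multiplying by $\rho^{-1}\in \rho^{-1}C^\infty(\bar M)$ via \eqref{HolderMultiplication} places $D$ in $C^{k,\alpha}_0$. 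Given this, the mapping property $\bar\nabla\colon C^{k+1,\alpha}_\delta\to C^{k,\alpha}_\delta$ is checked directly in each M\"obius chart (smooth coefficients, zero net weight loss), and then the identity $\nabla = \bar\nabla + D$ together with \eqref{HolderMultiplication} transfers the same bounded-ness to $\nabla$.

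The main obstacle—still routine, but requiring care—is the bookkeeping that reconciles the pointwise weight $\rho^r$ with the M\"obius scale factor $\rho_0^r$, and verifying that the comparability constants are uniform over the countable family $\{\Phi_i\}$. Once this is set up, each of \eqref{DLemma-Include}--\eqref{DLemma-DifferenceTensorH} is a one-line consequence of scaling plus the Euclidean calculus on the fixed ball $B_2$, exactly as in Lemmas 3.3, 3.6, and 3.7 of \cite{Lee-FredholmOperators}.
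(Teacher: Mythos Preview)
The paper does not supply its own proof of this lemma: it is stated with a bare citation to Lemmas~3.3, 3.6, and~3.7 of \cite{Lee-FredholmOperators}, and no argument is given in the text. Your sketch is consistent with the approach in that reference, which is built on exactly the M\"obius-parametrization scaling you describe: the weight~$r$ produces a factor $\rho_0^r$ under $\Phi^*$, and the Euclidean calculus on $B_2$ does the rest.

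A few small points. In your second paragraph you write ``Part \eqref{DLemma-DifferenceTensorH} for $d\rho$'', but you mean part~(c); \eqref{DLemma-DifferenceTensorH} is the difference-tensor statement. In your last paragraph the sentence about transferring boundedness from $\bar\nabla$ to $\nabla$ is unnecessary---the lemma only asks for $\bar\nabla$, and indeed the more natural logical flow (and the one in \cite{Lee-FredholmOperators}) is the reverse: $\nabla$ is bounded $C^{k+1,\alpha}_\delta\to C^{k,\alpha}_\delta$ essentially by definition of the norm via \eqref{BasicHolderNormEquivalences}, and then $\bar\nabla=\nabla-D$ inherits the same mapping property once $D\in C^{k,\alpha}_0$ is established. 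Your direct verification of $\bar\nabla$ in M\"obius charts is also correct, but it duplicates work. Finally, your justification of the second inclusion in \eqref{DLemma-Include} (``Hardy-type interpolation'') is a bit loose; the actual argument is a straightforward integration of $|\partial_\Theta^j u_I|\le C\rho^{k+\alpha-j}$ in the $\rho$-direction to obtain continuous extension of the lower-order derivatives and H\"older continuity of the top ones, as in Lemma~3.7 of \cite{Lee-FredholmOperators}.
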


The weight of a tensor bundle  is important for understanding the behavior of sections near $\partial M$: If $u$ is a section of a tensor bundle $E$ with weight $r$, then $|u|_h = \rho^r |u|_{\bar h}$.
For notational convenience, however, we frequently omit explicit reference to the relevant tensor bundle, writing $\|u\|_{C^{k,\alpha}_\delta(M)}$ for $\|u\|_{C^{k,\alpha}_\delta(M;E)}$, etc.
We nevertheless encourage the reader to be mindful of the weight of the relevant bundle.

In preparation for a discussion of the properties of weakly asymptotically hyperbolic metrics, we introduce spaces of tensor fields with additional regularity near the boundary.
Let $k\geq 0$, $\alpha\in [0,1)$, and $0\leq m \leq k$.
By definition, a tensor field $u$ of weight $r$ is in $\mathscr C^{k,\alpha;m}(M)$ if for all $0\leq j\leq m$  we have
\begin{equation*}
\mathcal L_{X_1}\dots\mathcal L_{X_j} u\in C^{k-j,\alpha}_r(M)
\quad\text{ for all }X_1,\dots,X_j\in \mathscr V.
\end{equation*}
(Closely related spaces, in which the additional derivatives are taken
only with respect to vector fields tangent to the boundary,
have been considered by many authors, and we use such spaces in 
Appendix \ref{appendix} for proving polyhomogeneity results. But 
the spaces we introduce here are novel in that we require additional
regularity in all directions. To the best of our knowledge,
this is the first time that a
detailed analysis of elliptic operators has been carried out under
the assumption that the metric has boundary regularity as weak as we require here.)

\begin{lemma}
Let $k\geq0$, $\alpha \in [0,1)$, and $0\leq m \leq k$.
\begin{enumerate}
\item A  tensor field $u$ of weight $r$ is an element of $\mathscr C^{k,\alpha;m}(M)$ if and only if
\begin{equation*}
\bar\nabla{}^j u\in C^{k-j,\alpha}_{r +j}(M)
\quad\text{ for all }0\leq j\leq m.
\end{equation*}

\item Endowed with the norm
\begin{equation}
\label{DefineTripleNorm}
\gNorm{u}_{k,\alpha;m} = 
\sum_{l=0}^m \| \bar\nabla\,{}^l u\|_{C^{k-l,\alpha}_{r+l}(M)},
\end{equation}
the collection $\mathscr C^{k,\alpha;m}(M)$ is a Banach space.
\end{enumerate}
\end{lemma}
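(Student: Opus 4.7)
The plan is to handle the two parts separately; part (2) will follow relatively routinely from Banach space properties already in hand for each weighted H\"older space appearing in the definition, while part (1) requires an interconversion between Lie derivatives along smooth vector fields on $\bar M$ and covariant derivatives with respect to $\bar\nabla$. The core identity for part (1) is that for any vector field $X$ and any tensor $T$,
\begin{equation*}
\mathcal L_X T = \bar\nabla_X T + Q(T,\bar\nabla X),
\end{equation*}
where $Q$ is a universal algebraic bilinear contraction depending only on the tensor type of $T$. Iterating this identity gives, by a straightforward induction, a formula of the schematic form
\begin{equation*}
\mathcal L_{X_1}\cdots\mathcal L_{X_j} u = \bar\nabla_{X_1}\cdots\bar\nabla_{X_j} u + \sum_{l<j} Q_l\bigl(\bar\nabla^l u;\,\bar\nabla^{p_1} X_{i_1},\ldots,\bar\nabla^{p_s} X_{i_s}\bigr),
\end{equation*}
with each $Q_l$ a universal algebraic contraction. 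Since $X_i \in \mathscr V$ is a smooth vector field on $\bar M$ (hence of tensor weight $-1$), each factor $\bar\nabla^p X_i$ is smooth on $\bar M$ into a bundle of weight $p-1$ and so lies in $C^{k,\alpha}_{p-1}(M)$ for all $k,\alpha$ by Lemma \ref{DLemma-ListOfHolderFacts}\eqref{DLemma-Include}.

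For the $(\Leftarrow)$ direction I would induct on $j$: assuming $\bar\nabla^l u \in C^{k-l,\alpha}_{r+l}(M)$ for $0\leq l\leq m$, the iterated identity exhibits each $\mathcal L_{X_1}\cdots\mathcal L_{X_j} u$ as a sum of algebraic contractions of the known $\bar\nabla^l u$'s with smooth-on-$\bar M$ factors, and the multiplication statement Lemma \ref{DLemma-ListOfHolderFacts}\eqref{HolderMultiplication} together with straightforward weight bookkeeping places the sum in $C^{k-j,\alpha}_r(M)$. For the $(\Rightarrow)$ direction I again induct on $j$; given $\bar\nabla^l u \in C^{k-l,\alpha}_{r+l}(M)$ for $l<j$ and the full family of iterated Lie derivatives up to order $m$, solving the identity for the top-order term yields $\bar\nabla_{X_1}\cdots \bar\nabla_{X_j}u \in C^{k-j,\alpha}_r(M)$ for every choice of $X_1,\ldots,X_j\in\mathscr V$. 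To upgrade this collection of iterated directional derivatives to the full tensor $\bar\nabla^j u \in C^{k-j,\alpha}_{r+j}(M)$, I would use a smooth local frame $\{X_a\}$ on $\bar M$ supplied by background coordinates and its dual coframe $\{\omega^a\}$, patched together by a partition of unity: each $\omega^a$ lies in $C^{k,\alpha}_1(M)$, and writing $\bar\nabla^j u = \sum \omega^{a_1}\otimes\cdots\otimes\omega^{a_j}\otimes \bar\nabla_{X_{a_1}}\cdots \bar\nabla_{X_{a_j}}u$ yields the claimed weight via the multiplication lemma. The main obstacle will be organizing the iterated identity cleanly enough that at each stage the top-order covariant derivative can be isolated modulo strictly lower-order terms already controlled by the induction.

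For part (2), the expression $\gNorm{\cdot}_{k,\alpha;m}$ is immediately a norm since its $l=0$ summand already majorizes $u$ in $C^{k,\alpha}_r(M)$. For completeness, given a Cauchy sequence $\{u_n\}\subset \mathscr C^{k,\alpha;m}(M)$, each $\bar\nabla^l u_n$ is Cauchy in the Banach space $C^{k-l,\alpha}_{r+l}(M)$ and hence converges to some $v_l$. To identify $v_l$ with $\bar\nabla^l v_0$, I would combine the continuous inclusion $C^{k-l,\alpha}_{r+l}(M)\hookrightarrow C^{k-l,\alpha}_{r+l-1}(M)$ (valid because nonnegative powers of $\rho$ are bounded multipliers in M\"obius parametrizations) with the boundedness of $\bar\nabla\colon C^{k-l+1,\alpha}_{r+l-1}(M)\to C^{k-l,\alpha}_{r+l-1}(M)$ supplied by Lemma \ref{DLemma-ListOfHolderFacts}\eqref{DLemma-DifferenceTensorH}; inducting on $l$ and invoking uniqueness of limits then gives $v_l = \bar\nabla^l v_0$, completing the proof.
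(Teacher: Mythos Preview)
Your proof is correct and follows essentially the same approach as the paper: both rest on the identity $\mathcal L_X u = \bar\nabla_X u + u\ast\bar\nabla X$, iterate it, and induct on the order of differentiation, with completeness reduced to that of the weighted H\"older spaces. The only tactical difference is in the passage from directional control to control of the full tensor $\bar\nabla^j u$: the paper uses an $h$-orthonormal frame of vector fields in $\mathscr V_0$ (each of the form $\rho X$ with $X\in\mathscr V$), which immediately yields the correct weight $r+j$, whereas your frame-and-coframe route achieves the same end by tensoring with $\omega^a\in C^{k,\alpha}_1(M)$; just note that your displayed frame expansion should carry $(\bar\nabla^j u)(X_{a_1},\ldots,X_{a_j})$ rather than the iterated directional derivative $\bar\nabla_{X_{a_1}}\cdots\bar\nabla_{X_{a_j}}u$, the discrepancy consisting of lower-order covariant derivatives already controlled by your induction.
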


\begin{proof}
The first claim relies on the formula 
\begin{equation}
\label{I-love-Lie}
\mathcal L_Xu = \bar\nabla _X u + u\ast \bar\nabla X,
\end{equation}
where $\ast$ represents a contraction of the tensor product.
For $m=0$ there is nothing to show.
Consider the case $m=1$ and suppose that $u\in \mathscr C^{k,\alpha;1}(M)$.
For any $X\in \mathscr V$, the tensor $\bar\nabla X$ has weight zero and is smooth on $\bar M$.
Hence $\bar\nabla X\in C^{k,\alpha}(M)$.
Therefore, \eqref{I-love-Lie} implies that $\bar\nabla_Xu \in C^{k-1,\alpha}_r(M)$ for all $X\in \mathscr V$.
Because every vector field in $\mathscr V_0$ can be written $Y=\rho X$ for some $X\in \mathscr V$,
this implies $\bar\nabla_Yu \in C^{k-1,\alpha}_{r+1}(M)$ for all $Y\in \mathscr V_0$.
Using the finite collection of background coordinate charts, we can choose a finite set of vector fields in $\mathscr V_0$ that contains an orthonormal basis (with respect to $h$) in a neighborhood of each point.
Therefore $\bar\nabla u\in C^{k-1,\alpha}_{r+1}(M)$.
Conversely, formula \eqref{I-love-Lie} implies that if $u\in C^{k,\alpha}_r(M)$ and $\bar\nabla u\in C^{k-1,\alpha}_{r+1}(M)$, then for any $X\in \mathscr V$ we have $\mathcal L_Xu\in C^{k,\alpha}_r(M)$.

Repeated application of \eqref{I-love-Lie} shows that
\begin{equation*}
\mathcal L_{X_1}\dots\mathcal L_{X_m}u
= X_1\ast\dots\ast X_m \ast \bar\nabla{}^m u
+ \sum_{l<m}B_l \ast \bar\nabla{}^l u
\end{equation*}
for some tensors $B_l$, which are in $ C^\infty(\bar M)$ if $X_i\in \mathscr V$.
The first claim then follows by induction.

That $\mathscr C^{k,\alpha;m}(M)$ is complete, and thus a Banach space, follows from the completeness of the spaces $C^{k,\alpha}_\delta(M)$.
\end{proof}

The following lemma describes some important properties of the spaces $ \mathscr C^{k,\alpha;m}(M)$; in particular, parts \eqref{part:script-inclusions} and \eqref{part:intermediate} show that $\mathscr C^{k,\alpha;1}(M)$ is intermediate between $C^{k,\alpha}(\bar M)$ and $C^{k,\alpha}_r(M)$.

\begin{lemma}
\label{lemma:properties-of-scriptC}
Suppose $\alpha\in [0,1)$ and $0\le m\le k$.

\begin{enumerate}

\item\label{part:algebras}
$\mathscr C^{k,\alpha;m}(M)$ is an algebra under the tensor product, and is invariant
under contraction.

\item \label{part:script-inclusions}
If $u\in C^{k,\alpha}_{r+m}(M)$ is a tensor field of weight $r$, then $u\in \mathscr C^{k,\alpha;m}(M)$.
All tensor fields of weight $r$ in $\mathscr C^{k,\alpha;m}(M)$ are in $C^{k,\alpha}_r(M)$.

\item\label{part:intermediate}
The following inclusions are continuous:
\begin{align}
&C^{k,\alpha}(\bar M) \subseteq \mathscr C^{k,\alpha;m}(M), && 0\le m\le k,\label{eq:smooth-to-bdry}
\\
&\mathscr C^{k,\alpha;m}(M) \subseteq C^{m-1,1}(\overline M), &&1\le m \le k, \label{eq:scriptC-Lipschitz}
\end{align}
where $C^{m-1,1}(\overline M)$ denotes 
the space of tensor fields  on $\bar M$ with Lipschitz continuous derivatives up to order $m-1$.

\item\label{part:BoundaryLemma}
If $u\in \mathscr C^{k,\alpha;m}(M)$ is a tensor field of weight $r$ and 
\begin{equation*}
|\bar\nabla{}^j u|_{\bar h} \to 0 \text{ as }\rho \to 0, \quad 0\leq j \leq m-1,
\end{equation*}
then $u\in C^{k,\alpha}_{r+m}(M)$, with 
$\| u \|_{C^{k,\alpha}_{r+m}(M)} \leq 
C \gNorm{u}_{k,\alpha;m}$ for some constant $C$ depending only on universal parameters.

\item\label{part:TripleComponents}
If $u\in\mathscr C^{k,\alpha;m}(M)$, then the functions $u_{i_1\dots i_p}^{j_1\dots j_q}$ describing the components of $u$ in background coordinates $(\mathcal U,\Theta)$ satisfy 
\begin{equation*}
(\partial_\Theta)^\beta(\rho\partial_\Theta)^\gamma u_{i_1\dots i_p}^{j_1\dots j_q} \in L^\infty(\mathcal U),
\qquad |\beta|\le m, \ |\beta|+|\gamma|\le  k.
\end{equation*}
Furthermore, if $\Phi\colon B_2\to M$ is a M\"obius parametrization centered at $(\theta_0, \rho_0)$ then  
\begin{equation*}
\begin{gathered}
\| \partial^\beta(u_{i_1\dots i_p}^{j_1\dots j_q}\circ \Phi)\|_{C^{k-|\beta|,\alpha}(B_2)}\leq  \rho_0^{|\beta|}\gNorm{u}_{k,\alpha;m},
\qquad 0\le|\beta|\le m.
\end{gathered}
\end{equation*}

\item\label{part:nablabar-mapping}
The following maps are continuous:
\begin{align}
\label{eq:nablabar}
\overline\nabla&\colon \mathscr C^{k,\alpha;m}(M)\to \mathscr C^{k-1,\alpha;m-1}(M), \qquad 1\le m\le k,
\\
\label{eq:rho-nablabar}
\rho\overline\nabla&\colon \mathscr C^{k,\alpha;m}(M)\to \mathscr C^{k-1,\alpha;m}(M), \qquad 0\le m\le k-1.
\end{align}
Furthermore, multiplication by $\rho$ is a continuous map from $\mathscr C^{k,\alpha;m}(M)$ to $\mathscr C^{k,\alpha;m+1}(M)$.

\end{enumerate}
\end{lemma}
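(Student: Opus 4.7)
The plan is to leverage the characterization $u\in\mathscr C^{k,\alpha;m}(M) \iff \bar\nabla^j u\in C^{k-j,\alpha}_{r+j}(M)$ for $0\le j\le m$ from the preceding lemma, together with the inclusion and multiplication properties from Lemma \ref{DLemma-ListOfHolderFacts} and the weight identity $|u|_h=\rho^r|u|_{\bar h}$. Throughout I would freely induct on $m$ and transfer pointwise bounds in the $\bar h$-norm (which extends to $\bar M$) into weighted H\"older norms based on the M\"obius parametrization.

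Parts (a) and (b) are the easiest. The algebra and contraction properties in (a) follow by a Leibniz-rule induction combined with Lemma \ref{DLemma-ListOfHolderFacts}\eqref{HolderMultiplication}. For (b), Lemma \ref{DLemma-ListOfHolderFacts}\eqref{DLemma-DifferenceTensorH} gives $\bar\nabla^j u\in C^{k-j,\alpha}_{r+m}(M)\subseteq C^{k-j,\alpha}_{r+j}(M)$ whenever $u\in C^{k,\alpha}_{r+m}(M)$ (the final inclusion uses faster decay), and the reverse inclusion $\mathscr C^{k,\alpha;m}(M)\subseteq C^{k,\alpha}_r(M)$ is immediate from the $j=0$ case of the definition. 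The first inclusion of (c) follows by a similar induction: smooth vector fields on $\bar M$ map $C^{k,\alpha}(\bar M)$ to $C^{k-1,\alpha}(\bar M)$ with the same weight, and $C^{k,\alpha}(\bar M)\hookrightarrow C^{k,\alpha}_r(M)$ by Lemma \ref{DLemma-ListOfHolderFacts}\eqref{DLemma-Include}.

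The crux lies in the Lipschitz inclusion in (c) and in (d), which are naturally intertwined. For the former, the top-order condition $\bar\nabla^m u\in C^{0}_{r+m}(M)$ (with $\bar\nabla^m u$ of weight $r+m$) translates via the weight identity into $|\bar\nabla^m u|_{\bar h}$ being uniformly bounded on $M$; since $\bar h$ is smooth on $\bar M$, this gives Lipschitz continuity of $\bar\nabla^{m-1}u$ up to the boundary by the mean value theorem, and a downward induction produces the full $C^{m-1,1}(\bar M)$ bound. For (d), I would induct on $m$, using the mapping property (f), which can be proved first and independently: given the inductive hypothesis at level $m-1$, the vanishing of $|\bar\nabla^j u|_{\bar h}$ for $j\le m-1$ implies by (f) that $\bar\nabla u\in\mathscr C^{k-1,\alpha;m-1}(M)$ satisfies the inductive hypothesis, so $\bar\nabla u\in C^{k-1,\alpha}_{r+m}(M)$. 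Integrating the resulting bound $|\bar\nabla u|_{\bar h}\le C\rho^{m-1}$ along $\rho$-curves against the boundary value $u|_{\partial M}=0$ yields $|u|_{\bar h}\le C\rho^m$, hence $u\in C^0_{r+m}(M)$; the full H\"older control is recovered by expressing the same estimate in M\"obius parametrizations and using the interior H\"older control already present in $\mathscr C^{k,\alpha;m}$.

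Parts (e) and (f) follow by direct computation from the characterization. For (e), the components in background coordinates relate to covariant derivatives by smooth-coefficient combinations involving Christoffel symbols of $\bar h$, and the factors $\rho_0^{|\beta|}$ in the M\"obius parametrization estimate arise from the chain rule under $\Phi(x,y)=(\theta_0+\rho_0 x,\rho_0 y)$. For (f), $\bar\nabla\colon \mathscr C^{k,\alpha;m}\to\mathscr C^{k-1,\alpha;m-1}$ is immediate from the characterization, while $\rho\bar\nabla$ and multiplication by $\rho$ preserve or improve the index because $\rho\in C^\infty(\bar M)$ vanishes at $\partial M$, absorbing one unit of boundary regularity loss via (a). The main obstacle is orchestrating the mutual dependence of (d) and (f) without circularity, and carefully translating the $\bar h$-norm vanishing hypothesis in (d) into the correctly weighted $C^{k,\alpha}_{r+m}$-bound defined through the M\"obius parametrization.
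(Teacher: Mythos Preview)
Your proposal is correct and follows essentially the same route as the paper. The only organizational difference is in part \eqref{part:BoundaryLemma}: you induct on $m$ by applying the mapping property \eqref{eq:nablabar} to pass to $\bar\nabla u\in\mathscr C^{k-1,\alpha;m-1}$, whereas the paper proves the $m=1$ case first (integration plus the norm equivalence \eqref{BasicHolderNormEquivalences} and Lemma~\ref{DLemma-ListOfHolderFacts}\eqref{DLemma-DifferenceTensorH}) and then iterates that case on the tensors $\bar\nabla^{l}u$ for $l=m-1,m-2,\dots,0$; the two inductions are equivalent, and your explicit remark that \eqref{eq:nablabar} can be established first, independently of \eqref{part:BoundaryLemma}, correctly dispels any circularity.
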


\begin{proof}
The first claim follows from the product rule, and the fact that contraction preserves the weight of a tensor field.
For the second claim, \eqref{eq:smooth-to-bdry} follows from 
Lemma \ref{DLemma-ListOfHolderFacts}\eqref{DLemma-Include}
and the fact that if $u\in C^{k,\alpha}(\bar M)$ is a tensor of weight $r$, then
$\overline\nabla{}^l u$ is a tensor of weight $r+l$ in $C^{k-l,\alpha}(\overline M)$.
To prove \eqref{eq:scriptC-Lipschitz},
it suffices to consider the case where $m=1$.
We have $|u|_{\bar h}$ and  $|\bar\nabla\, u|_{\bar h}$ bounded on $M$.
Thus $u$ is uniformly continuous on $M$ and extends uniquely to a Lipschitz continuous tensor field on $\bar M$.

For \eqref{part:BoundaryLemma}, consider first the case $m=1$.
In the case, we have that $\bar\nabla u \in C^{k-1,\alpha}_{r+1}(M)$ and that $|u|_{\bar h}$ vanishes along $\partial M$.
Integrating $\bar\nabla_{\grad \rho}u$ from $\rho =0$, where $u$ vanishes, we see that $u\in C^{0}_{r+1}$.
The desired estimate follows from \eqref{BasicHolderNormEquivalences} and Lemma \ref{DLemma-ListOfHolderFacts}\eqref{DLemma-DifferenceTensorH}.
Iteratively applying this same argument to $\bar\nabla{}^l u$, $1\leq l \leq m-1$ yields the desired result.

The remaining claims follow directly from the definition.
\end{proof}

\begin{remark}
\label{Remark-TripleExample}
Lemma \ref{lemma:properties-of-scriptC}\eqref{part:intermediate} is essentially sharp in view of the following example: Let $ u = \rho\sin{(\log{\rho})}$.
It is easy to see that $u \in \mathscr C^{k,\alpha;1}(M)$
for all $k\geq 0$ and $\alpha\in [0,1)$.
However, $\bar\nabla u$ does not extend continuously to $\bar M$.
\end{remark}

\begin{remark}
\label{WeaklyPolyhomogeneous}
If $u\in \mathscr C^{k,\alpha;m}(M)$ with $1\leq m \leq k$ and $u$ is polyhomogeneous, then $u\in C^m_\phg(\bar M)$; see Lemma \ref{InclusionLemma}.
\end{remark}

We now establish the following regularization theorem.

\begin{theorem}
\label{thm:Regularization}
Suppose
$\tau$ is a tensor field of weight $r$ in $\mathscr{C}^{l,\beta;m}(M)$
for some $0\le m\le l$ and $\beta\in[0,1)$. 
Then there exists a tensor $\widetilde \tau$, de\-pending linearly on $\tau$, such that $\widetilde\tau\in\mathscr C^{k,\alpha;m}(M)$ for all $k\geq0$ and $0\leq \alpha <1$ 
and such that 
$\widetilde \tau-\tau\in C^{l,\beta}_{r+m}(M)$.
For each $k$ and $\alpha$ there is a constant $C$ such that
$\gNorm{\widetilde\tau}_{k,\alpha;m} 
\leq C \gNorm{\tau}_{l,\beta;m}$.
\end{theorem}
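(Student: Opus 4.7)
The plan is to construct $\widetilde\tau$ by a mollification implemented separately in each M\"obius parametrization, at a fixed Euclidean scale in the parametrization (which corresponds to a hyperbolically scaled mollification on $M$). By linearity and a partition of unity on $\bar M$ subordinate to finitely many background coordinate charts, I first reduce to the case where $\tau$ is compactly supported in a single background chart. Next, fix a countable sequence of M\"obius parametrizations $\Phi_i\colon B_2\to M$ centered at $p_i=(\theta_{0,i},\rho_{0,i})$, with $\{\Phi_i(B_1)\}$ covering $M$ uniformly locally finitely, together with a subordinate partition of unity $\{\chi_i\}$ satisfying $\|\chi_i\circ\Phi_i\|_{C^N(B_2)}\le C_N$ uniformly in $i$. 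Fix a mollifier $\eta\in C_c^\infty(\mathbb R^{n+1})$ with $\supp\eta\subset B_{1/4}(0)$, $\int\eta=1$, and vanishing moments $\int z^\gamma\eta(z)\,dz=0$ for $1\le|\gamma|\le m-1$.

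For each $i$, set $f_{i,I,J}=(\chi_i\tau)^J_I\circ\Phi_i$, which is smooth and compactly supported in $B_2$. Mollify component-wise by $\widetilde f_{i,I,J}=\eta*f_{i,I,J}$, let $\widetilde{\chi_i\tau}$ be the tensor with background-coordinate components $\widetilde f_{i,I,J}\circ\Phi_i^{-1}$ (extended by zero), and set $\widetilde\tau=\sum_i\widetilde{\chi_i\tau}$; the sum is locally finite. To verify that $\widetilde\tau\in\mathscr C^{k,\alpha;m}(M)$ for all $k,\alpha$, I use Lemma \ref{lemma:properties-of-scriptC}\eqref{part:TripleComponents}: the norm $\gNorm{\cdot}_{k,\alpha;m}$ is controlled by the $C^{k-|\gamma|,\alpha}(B_2)$ norms of $\partial^\gamma(\cdot^J_I\circ\Phi_i)$ with scaling factor $\rho_{0,i}^{-\min(|\gamma|,m)}$. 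Distributing derivatives through the convolution as $\partial^\gamma\widetilde f_{i,I,J}=\eta*\partial^\gamma f_{i,I,J}$ for $|\gamma|\le l$ and $\partial^\gamma\widetilde f_{i,I,J}=(\partial^{\gamma-\mu}\eta)*\partial^\mu f_{i,I,J}$ with $|\mu|=l$ for $|\gamma|>l$, combined with the scaling $\|\partial^\gamma f_{i,I,J}\|_{L^\infty}\le C\rho_{0,i}^{|\gamma|}\gNorm{\tau}_{l,\beta;m}$ for $|\gamma|\le m$, yields the required estimate for every $k,\alpha$.

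The main step is the error estimate $\widetilde\tau-\tau\in C^{l,\beta}_{r+m}(M)$, which in each M\"obius frame becomes
\begin{equation*}
\|\widetilde f_{i,I,J}-f_{i,I,J}\|_{C^{l,\beta}(B_2)}\le C\rho_{0,i}^m\gNorm{\tau}_{l,\beta;m}.
\end{equation*}
For derivative orders $0\le|\gamma|\le m$, a Taylor expansion of $\partial^\gamma f_{i,I,J}$ to order $m-|\gamma|-1$, combined with the vanishing moments of $\eta$, gives $\partial^\gamma(\eta*f-f)=O\bigl(\sup|\partial^m f|\bigr)=O(\rho_{0,i}^m\gNorm{\tau})$. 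For $m<|\gamma|\le l$ the scaling $\rho_{0,i}^{|\gamma|}$ is no longer available, but writing $\partial^\gamma\widetilde f=(\partial^{\gamma-\mu}\eta)*\partial^\mu f$ with $|\mu|=m$ and $\partial^\gamma f=\partial^{\gamma-\mu}(\partial^\mu f)$ shows that both pieces are bounded by $C\rho_{0,i}^m$, since $\partial^\mu f\in C^{l-m,\beta}(B_2)$ has norm $\le\rho_{0,i}^m\gNorm{\tau}$. The top-order $\beta$-Hölder seminorm is handled the same way.

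The principal obstacle is this error estimate. The vanishing moments of $\eta$ are indispensable for promoting the naive fixed-scale mollification error from $O(1)$ to the required $O(\rho_{0,i}^m)$, and the treatment of derivative orders $|\gamma|>m$ requires moving derivatives from $f$ onto $\eta$ inside the convolution, exploiting the fact that $f_{i,I,J}$ has only $m$ derivatives controlled by the $\mathscr C^{l,\beta;m}$ scaling while being $C^{l,\beta}$ overall. A smaller technical point is that mollifying tensor components is not invariant under change of background chart; this is why the reduction to a single background chart via a partition of unity on $\bar M$ is performed at the outset.
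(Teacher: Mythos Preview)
Your argument has a genuine gap: the scaling claim $\|\partial^\gamma f_{i,I,J}\|_{L^\infty}\le C\rho_{0,i}^{|\gamma|}$ for $1\le|\gamma|\le m$ is false, and everything downstream of it fails. The problem is the M\"obius--chart partition of unity $\chi_i$. The bound in Lemma~\ref{lemma:properties-of-scriptC}\eqref{part:TripleComponents} applies to $\tau\circ\Phi_i$, not to $(\chi_i\tau)\circ\Phi_i$. By the product rule, $\partial^\gamma f_{i,I,J}$ contains terms $\partial^\mu(\chi_i\circ\Phi_i)\cdot\partial^\nu(\tau^J_I\circ\Phi_i)$; your hypothesis $\|\chi_i\circ\Phi_i\|_{C^N}\le C_N$ gives only $\partial^\mu(\chi_i\circ\Phi_i)=O(1)$, so the term with $\nu=0$ is $O(1)$, not $O(\rho_{0,i}^{|\gamma|})$. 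Equivalently, $|\bar\nabla\chi_i|_{\bar h}\sim\rho^{-1}$, so $\chi_i\notin\mathscr C^{k,\alpha;1}(M)$ and multiplying by $\chi_i$ does not preserve the class. Hoping for cancellation from $\sum_i\chi_i\equiv 1$ does not work either: in your construction each piece is mollified at its own scale $\rho_{0,i}$, and since overlapping charts only satisfy $\rho_{0,i}/\rho_{0,j}\in[c,C]$ (not $=1$), the shifted cutoffs $\chi_i(\,\cdot\,-\rho_{0,i}z)$ do not sum to $1$. A direct check with $\tau\equiv 1$ and two overlapping charts at scales $\rho_0$ and $2\rho_0$ shows $\widetilde\tau-1$ is of order one, not $O(\rho)$, already for $m=1$.

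The paper avoids this obstruction by never localizing to M\"obius charts. After reducing to a single background chart, it uses the \emph{group}-theoretic convolution $\tau*\psi$ on $\mathbb H$ (Lemma~\ref{lemma:prop-conv}), whose key property is that the background--coordinate derivatives $\partial_{\theta^\alpha}$ and $\partial_\rho$ commute with the convolution up to replacing $\psi$ by other fixed compactly supported functions; this is exactly what is needed to propagate membership in $\mathscr C^{k,\alpha;m}$. The error bound $\tau-\tau*\psi=O(\rho)$ comes from the single step~\ref{lemma:prop-conv}\eqref{part:conv-boundarydecay}, and the full order--$m$ statement is then obtained by an induction that peels off one normal Taylor coefficient at a time, correcting $\tau$ by $\rho^q$ times a regularized $q$th normal derivative and invoking Lemma~\ref{lemma:properties-of-scriptC}\eqref{part:BoundaryLemma}. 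If you want to repair your approach, you must replace the chart--by--chart mollification (with its incompatible scales) by a single convolution operator in the background chart whose kernel is adapted to the affine structure of $\mathbb H$; that is precisely the group convolution.
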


The construction of $\tilde\tau$ makes use of the group-theoretic convolution operation on hyperbolic space, which we now describe.

Let $\mathbb H$ be the $(n+1)$-dimensional upper half-space with coordinates $\Theta = (\theta,\rho)$. 
Note that $\mathbb H$ is a group under the multiplication $(\theta,\rho)\cdot(\theta',\rho') =
(\theta +\rho\theta',\rho\rho')$, with identity $(0,1)$ and  inverses given by
$(\theta,\rho)^{-1} = (-\theta/\rho,1/\rho)$. The hyperbolic metric $\breve g$ is left-invariant
under this group structure. (Geometrically, the group structure arises from identifying
$\mathbb H$ with the set of isometries of hyperbolic space generated by dilations
and horizontal translations.)

For any bounded integrable functions $\tau$ 
and $\psi$, at least one of which is compactly supported, we define the group-theoretic convolution $\tau\ast\psi$ by  $(\tau*\psi)(q) = \int_{\mathbb H}\tau(p)\psi(p^{-1}q)\,dV_{\breve g}(p)$.
More explicitly, this is 
\begin{equation}\label{eq:def-regularization}
(\tau*\psi)(\theta,\rho) = \int_{\mathbb H} \tau(u,v) \psi\left(\frac{\theta-u}{v},\frac{\rho}{v}\right) v^{-(n+1)}\,du^1\dots du^n\,dv.
\end{equation}
The change of variables $u^i=\theta^i+\rho x^i$, $v = \rho y$ converts this to the 
alternative form
\begin{equation}\label{eq:def-regularization2}
(\tau*\psi)(\theta,\rho) = \int_{\mathbb H} \tau(\theta+\rho  x,\rho  y) \psi\left(-\frac{ x}{ y},\frac{1}{ y}\right) { y}^{-(n+1)}\,d x^1\dots d x^n\,d y.
\end{equation}

\begin{lemma}[Properties of Group Convolution]
\label{lemma:prop-conv}
Let $\mathcal U$ and $\mathcal V$ be open subsets of $\mathbb H$.
Suppose $\psi\in C^\infty_c(\mathcal V)$ and $\tau$ is a bounded
integrable function supported in $\mathcal U$.
\begin{enumerate}
\item\label{part:conv-supp}
$\supp\tau*\psi\subseteq \mathcal U \mathcal V = \{pq: p\in \mathcal U\text{ and }q\in\mathcal V\}$. 

\item\label{part:conv-regularity}
If $\tau\in \mathscr C^{m,0;m}(\mathbb H)$, then $\tau*\psi\in  \mathscr C^{k,\alpha;m}(\mathbb H)$ for all $k\geq 0$ and $0\leq \alpha<1$,
with 
$$
\gNorm{\tau*\psi}_{k,\alpha;m}
\leq C\gNorm{\tau}_{m,0;m} \|\psi\|_{C^{k+1}(\mathbb H)}
$$
for some constant $C$ depending only on $k,\alpha,m$.
\item\label{part:conv-boundarydecay}
If $\tau\in \mathscr C^{1,0;1}(\mathbb H)$ and $\int_{\mathbb H}\psi(q^{-1})dV_{\breve g}(q) =1$, then $\tau-\tau*\psi=\mathcal O(\rho)$.
\end{enumerate}
\end{lemma}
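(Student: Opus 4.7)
The plan is to dispatch (a) by inspection, deduce (b) from a smoothing estimate for $L^\infty$ kernels combined with an inductive expansion of iterated Lie derivatives of $\tau*\psi$, and establish (c) by a Lipschitz estimate enabled by the normalization. For (a), the integrand $\tau(p)\psi(p^{-1}q)$ vanishes unless both $p\in\mathcal U$ and $p^{-1}q\in\mathcal V$, so $q=p\cdot(p^{-1}q)\in\mathcal U\mathcal V$.

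The core of (b) is the following smoothing estimate, which I would prove first: for any $\sigma\in C^0_r(\mathbb H)$ of weight $r$ and any $\psi\in C_c^\infty(\mathbb H)$,
$$\|\sigma*\psi\|_{C^{k,\alpha}_r(\mathbb H)}\le C\,\|\sigma\|_{C^0_r(\mathbb H)}\,\|\psi\|_{C^{k+1}(\mathbb H)}$$
for all $k\ge 0$ and $\alpha\in[0,1)$. The key observation is that each M\"obius parametrization $\Phi_{p_0}(x,y)=(\theta_0+\rho_0 x,\rho_0 y)$ coincides with left multiplication by $p_0=(\theta_0,\rho_0)$ in the hyperbolic group structure, so the left-invariance of $dV_{\breve g}$ yields
$$\Phi_{p_0}^*(\sigma*\psi)=(\Phi_{p_0}^*\sigma)*\psi.$$
Differentiating under the integral in \eqref{eq:def-regularization} transfers all $q$-derivatives onto $\psi$; compactness of $\supp\psi$ confines the $p$-integration to a fixed compact subset of $\mathbb H$ on which $\Phi_{p_0}^*\sigma$ is uniformly bounded by $\|\sigma\|_{C^0_r(\mathbb H)}$, producing the estimate uniformly in $p_0$.

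Next I would show that iterated Lie derivatives of $\tau*\psi$ decompose into finite sums of convolutions of Lie derivatives of $\tau$ against modified test functions. Direct computation in \eqref{eq:def-regularization2} gives $\mathcal L_{\partial_{\theta^i}}(\tau*\psi)=(\mathcal L_{\partial_{\theta^i}}\tau)*\psi$ together with
$$\mathcal L_{\partial_\rho}(\tau*\psi)=\sum_i(\mathcal L_{\partial_{\theta^i}}\tau)*\tilde\psi_i+(\mathcal L_{\partial_\rho}\tau)*\tilde\psi_0,$$
where $\tilde\psi_i(s,t)=-(s^i/t)\psi(s,t)$ and $\tilde\psi_0(s,t)=(1/t)\psi(s,t)$ are smooth and compactly supported in $\mathbb H$ because $\supp\psi$ avoids $\{t=0\}$. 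Iterating and expanding a general $Y\in\mathscr V$ as a smooth combination of $\partial_{\theta^i}$ and $\partial_\rho$ shows that $\mathcal L_{Y_1}\cdots\mathcal L_{Y_j}(\tau*\psi)$ for $j\le m$ is a finite sum of terms $c\cdot\bigl((\mathcal L_{X_1}\cdots\mathcal L_{X_{j'}}\tau)*\psi'\bigr)$ with $j'\le j\le m$, $c\in C^\infty(\bar{\mathbb H})$, and $\psi'\in C_c^\infty(\mathbb H)$. The hypothesis $\tau\in\mathscr C^{m,0;m}$ controls each Lie derivative factor in $C^0_r$, so the smoothing estimate places each such convolution in $C^{k-j,\alpha}_r$ for every $k,\alpha$, yielding $\tau*\psi\in\mathscr C^{k,\alpha;m}$ with the advertised norm bound.

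For (c), the substitution $p=(\theta+\rho x,\rho y)$ rewrites $dV_{\breve g}(p)=y^{-(n+1)}\,dx\,dy$ and identifies $p^{-1}q=(x,y)^{-1}$, so the normalization $\int_{\mathbb H}\psi(q^{-1})\,dV_{\breve g}(q)=1$ gives $\int_{\mathbb H}\psi(p^{-1}q)\,dV_{\breve g}(p)=1$ for every $q$, whence
$$\tau(q)-(\tau*\psi)(q)=\int_{\mathbb H}\bigl(\tau(q)-\tau(p)\bigr)\psi(p^{-1}q)\,dV_{\breve g}(p).$$
Compactness of $\supp\psi$ forces $(x,y)$ into a bounded region, so $|p-q|_{\mathrm{Euc}}=\mathcal O(\rho)$ uniformly on the integration domain. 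By Lemma \ref{lemma:properties-of-scriptC}\eqref{part:intermediate} with $m=1$, the assumption $\tau\in\mathscr C^{1,0;1}$ provides a Lipschitz extension of $\tau$ to $\bar{\mathbb H}$, so $|\tau(q)-\tau(p)|=\mathcal O(\rho)$ and the claim follows. The main technical obstacle is the bookkeeping in (b): since $\partial_\rho$ is not left-invariant, its action does not commute past the convolution onto $\tau$, and the inductive decomposition must be tracked carefully to verify both that no step generates more than $m$ Lie derivatives on $\tau$ and that the modified test functions remain smooth and compactly supported in $\mathbb H$.
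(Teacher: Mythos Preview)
Your proposal is correct and follows essentially the same route as the paper. The paper obtains the smoothing estimate by observing that $X(\tau*\psi)=\tau*(X\psi)$ for the left-invariant fields $\rho\partial_\rho,\rho\partial_{\theta^\alpha}$, which is the infinitesimal version of your pullback identity $\Phi_{p_0}^*(\sigma*\psi)=(\Phi_{p_0}^*\sigma)*\psi$; and the paper's formulas \eqref{eq:partial-ftwiddle-theta}--\eqref{eq:partial-ftwiddle-rho} with $\psi^\alpha(u,v)=-(u^\alpha/v)\psi$ and $\psi^0(u,v)=(1/v)\psi$ are exactly your $\tilde\psi_i,\tilde\psi_0$, after which both arguments iterate and finish (c) by the same Lipschitz bound.
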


\begin{proof}
Claim \eqref{part:conv-supp}
follows from \eqref{eq:def-regularization2},
as does the fact that $\tau*\psi$ is bounded by a constant multiple of $\|\tau\|_{L^\infty(\mathbb H)}\|\psi\|_{L^\infty(\mathbb H)}$.

A direct computation using \eqref{eq:def-regularization} shows that $X(\tau*\psi) = \tau*(X\psi)$ if $X$ is one
of the vector fields $\rho\partial/\partial\rho$, $\rho\partial/\partial \theta^\alpha$. 
Note that these are orthonormal vector fields that form a basis for the Lie algebra of $\mathbb H$.
Therefore the $C^{k,0}$ norm of a function $u$ is equivalent
to the supremum of $|X_{i_1}\cdots X_{i_j}u|$ over all $j$-tuples of these vector fields, $0\le j\le k$.
Since $X_{i_1}\cdots X_{i_j}\psi$ is also smooth and compactly supported in $\mathcal V$,
it follows that $\tau*\psi$ remains bounded
after any number of applications of these vector fields, so $ \tau\ast\psi\in C^{k,\alpha}(\mathbb H)$
for all $k$ and all $\alpha$, with $\|\tau*\psi\|_{C^{k,\alpha}(\mathbb H)}\leq C \|\tau\|_{L^\infty(\mathbb H)}\|\psi\|_{C^{k+1}(\mathbb H)}$.

Next assume that $\tau\in \mathscr C^{m,0;m}(\mathbb H)$ for some $m\ge 0$.
If $m=0$, there is nothing more to prove, so assume $m\ge 1$.
A simple computation using \eqref{eq:def-regularization2} shows that 
\begin{equation}\label{eq:partial-ftwiddle-theta}
\frac{\partial(\tau*\psi)}{\partial \theta^\alpha} = 
\frac{\partial \tau}{\partial \theta^\alpha}*\psi
\end{equation}
for $\alpha=1,\dots, n$. 
A slightly more involved computation shows 
\begin{equation}\label{eq:partial-ftwiddle-rho}
\frac{\partial(\tau*\psi)}{\partial \rho} =
\sum_{\alpha=1}^n \frac{\partial \tau}{\partial\theta^\alpha}*\psi^\alpha + \frac{\partial \tau}{\partial\rho}*\psi^0,
\end{equation}
where $\psi^\alpha,\psi$ are the compactly supported functions defined by
\begin{equation*}
\psi^\alpha(u,v) = -\frac{u^\alpha}{v}\psi(u,v),\qquad
\psi^0(u,v) = \frac{1}{v}\psi(u,v).
\end{equation*}
Iterating these computations shows that for any multi-index $I$ with $|I|\le m$,
we have
\begin{equation*}
\frac{\partial^I(\tau*\psi)}{\partial \Theta^I} =
\sum_{J: |J|=|I|} \frac{\partial^J \tau}{\partial\Theta^J}*\psi^J
\end{equation*}
for some $\psi^J\in C^\infty_c(\mathcal V)$.
The fact that 
$\tau\in \mathscr C^{m,0;m}(\mathbb H) $ with $m\ge |J|$ implies that 
each derivative $\partial^J \tau/\partial\Theta^J$
is bounded, so the argument above shows that ${\partial^I(\tau*\psi)}/{\partial \Theta^I}\in 
C^{k,\alpha}(\mathbb H)$ for all $k$ and all $\alpha$,
and thus $\tau*\psi \in \mathscr C^{k,\alpha;m}(\mathbb H)$, with norm 
bounded by a constant multiple
of $\gNorm{\tau}_{m,0;m}\|\psi\|_{C^{k+1}(\mathbb H)}$;
this proves
\eqref{part:conv-regularity}.

Finally, assume the hypotheses of 
\eqref{part:conv-boundarydecay} and let $\tilde\tau = \tau\ast\psi$.
The fact that the first derivatives of $\tau$ with respect to
$(\theta,\rho)$ are bounded implies that $\tau$ is Lipschitz continuous in
these coordinates, so $|\tau(\theta+\rho x,\rho y) - \tau(\theta,\rho)|\le C\rho(|x|+|y-1|)$. 
Since $|x|$ and $|y-1|$ are bounded on the support of $\psi(-x/y,1/y)$, we have
\begin{align*}
|\widetilde \tau(\theta,\rho) &- \tau(\theta,\rho)|
\\
&\le  \int_{\mathbb H} \left|
\tau(\theta+\rho  x,\rho  y) - \tau(\theta,\rho)\right|
\psi\left(-\frac{ x}{ y},\frac{1}{ y}\right) { y}^{-(n+1)}\,d x^1\dots d x^n\,d y\\
&\le  C\rho\int_{\mathbb H} \left(|x| + |y-1|\right)
\psi\left(-\frac{ x}{ y},\frac{1}{ y}\right) { y}^{-(n+1)}\,d x^1\dots d x^n\,d y\\
&= \mathcal O(\rho).
\tag*{\qedsymbol}
\end{align*}
\let\qed\relax
\end{proof}

\begin{proof}[Proof of Theorem \ref{thm:Regularization}]
By means of a partition of unity, we may restrict attention to a tensor field
supported in a single
background chart $(\mathcal U,\Theta)$, and we may assume that the background coordinates
extend to a larger open set $\mathcal U'\supseteq \overline{\mathcal U}$. 
To further simplify, we prove the theorem in the case that $\tau$ is a function;
applying the same argument to the components of an arbitrary tensor
field in background coordinates easily yields the analogous result in the higher-rank tensor case. 
We denote the background coordinates by $\Theta = (\theta,\rho)$,
and use them to identify $\mathcal U'$ with an open subset of the upper half-space $\mathbb H$.

We prove by induction on $q$ that for each $q=0, \dots, m$ there exists $\tilde\tau_q\in \bigcap_{k,\alpha}\mathscr C^{k,\alpha;m}(M)$ such that $\tau - \tilde\tau_q\in C^{l,\beta}_{q}(M)$ and such that
\begin{equation*}
\gNorm{\tilde\tau_q}_{k,\alpha;m} \leq C \gNorm{\tau}_{l,\beta;m}.
\end{equation*}
When $q=0$, we just set $\tilde\tau_q = 0$.
Then assume, for some $0\leq q \leq m-1$, the existence of $\tilde\tau_q$ satisfying the above conditions and set $u = \tau - \tilde\tau_q$.
Thus $u\in \mathscr C^{l,\beta;m}(M) \cap C^{l,\beta}_q(M)$ and 
$$w := \frac{1}{q!}\frac{\partial^q u}{\partial\rho^q} \in \mathscr C^{l-q,\beta;m-q}(M).$$

Let $\phi$ be a smooth function on $\mathbb H$ that 
satisfies $\int_{\mathbb H} \phi(p^{-1})\,dV_{\breve g}(p)=1$,
and that is compactly supported in a neighborhood 
$\mathcal V$ 
of $(0,1)$ small enough that
$\mathcal U\mathcal V\subset\mathcal U'$.
Let $\tilde w = w\ast \phi$.
By Lemma \ref{lemma:prop-conv} we have $\tilde w \in \bigcap_{k,\alpha} \mathscr C^{k,\alpha;m-q}(M)$
and 
$\gNorm{\tilde w}_{k,\alpha;m-q} \leq C \gNorm{w}_{m-q,0,m-q}$.
Since $m-q \geq 1$, 
Lemma \ref{lemma:prop-conv}\eqref{part:conv-boundarydecay} implies that $w-\widetilde w = \mathcal O(\rho)$.

We now seek to apply Lemma \ref{lemma:properties-of-scriptC}\eqref{part:BoundaryLemma} to 
show that 
$u - \rho^q\tilde w\in C^{l,\beta}_{q+1}(M)$.
By Lemma \ref{lemma:properties-of-scriptC}\eqref{part:nablabar-mapping} applied to $\tilde w$, 
we have $u - \rho^q\tilde w\in \mathscr C^{l,\beta;m}(M)\cap C^{l,\beta}_q(M)$.
Thus it remains to show that derivatives of $u-\rho^q \tilde w$ having order $q$ vanish at $\rho =0$.
When $|J| \leq q-1\leq m-2$, 
\begin{equation}\label{eq:J-derivs}
\frac{\partial^J}{\partial\Theta^J}\left(u - \rho^q\tilde w\right)\in \mathscr C^{2,\beta;2}(M)\cap C^{2,\beta}_{1}(M),
\end{equation}
and therefore all such derivatives vanish at $\rho=0$.
To handle the derivatives of order $q$, note that each such derivative can be expressed in
one of the following forms:
\begin{equation}\label{qth-derivs}
\frac{\partial}{\partial\theta^j}\frac{\partial^J}{\partial\Theta^J}\left(u - \rho^q\tilde w\right)
\quad \text{ or }\quad 
\frac{\partial^q}{\partial \rho^q}\left(u - \rho^q\tilde w \right)
\end{equation}
for some multi-index $J$ of length $q-1$.
It follows from 
Lemma \ref{lemma:properties-of-scriptC}\eqref{part:intermediate}
that the expression in \eqref{eq:J-derivs} is in $C^{1,1}(\overline M)$ and vanishes
on $\partial M$, so the first expression in \eqref{qth-derivs} vanishes on $\partial M$
as well. 
Since $q\leq m-1$ and $\tilde w \in \mathscr C^{m,0;m-q}(M)$, we have
$\rho^{j-1}{\partial^j \tilde w}/{\partial \rho^{j}}$ bounded for any $1\leq j \leq m$.
Thus
\begin{equation*}
\begin{aligned}
\frac{\partial^q}{\partial \rho^q}\left(u - \rho^q\tilde w \right)
&=
\frac{\partial^q u}{\partial \rho^q} - q! \tilde w + \mathcal O(\rho)
\\
&=
\frac{\partial^q u}{\partial \rho^q} - q! w + \mathcal O(\rho)\\
&=
\mathcal O(\rho),
\end{aligned}
\end{equation*}
where the second equality comes from $\tilde w - w=\mathcal O(\rho)$
and the third from the definition of $w$.
Thus Lemma \ref{lemma:properties-of-scriptC}\eqref{part:BoundaryLemma} implies that $u - \rho^q\tilde w\in C^{l,\beta}_{q+1}(M)$.

We now set $\tilde\tau_{q+1} = \tilde\tau_q + \rho^q \tilde w$.
By Lemma \ref{lemma:properties-of-scriptC}\eqref{part:nablabar-mapping} and the estimates recorded above we have
\begin{equation*}
\gNorm{\rho^q \tilde w}_{k,\alpha;m}
\leq C\gNorm{\tilde w}_{k,\alpha;m-q}
\leq C\gNorm{ w}_{m-q,0;m-q}
\leq C \gNorm{u}_{m,0,m},
\end{equation*}
from which we obtain 
$\gNorm{\tilde\tau_{q+1}}_{k,\alpha;m} \leq C \gNorm{\tau}_{l,\beta;m}.$
\end{proof}

\section{Properties of weakly asymptotically hyperbolic metrics}
\label{Section-WAH}

Recall that a metric $g$ on $M$ is said to be \Defn{conformally compact}
if $\bar g=\rho^2 g$ extends continuously to a nondegenerate metric on $\overline M$.
The next lemma describes the behavior of the curvature operator 
$\Riem[\bar g]$ (viewed as a $(2,2)$ tensor) of the
conformal compactification in case $\bar g$ is in one of the spaces $\mathscr C^{k,\alpha;m}(M)$.

\begin{lemma}
\label{BarRiemLemma}
Let $k\geq 2$ and $\alpha \in [0,1)$, and suppose $\bar g$ is a Riemannian metric on $\overline M$.
\begin{enumerate}
\item If $\bar g \in  \mathscr C^{k,\alpha;1}(M)$, then $\Riem[\bar g]\in C^{k-2,\alpha}_{-1}(M)$.
\item If $\bar g \in \mathscr C^{k,\alpha;2}(M)$, then $\Riem[\bar g]\in C^{k-2,\alpha}_{0}(M)$.
\end{enumerate}
\end{lemma}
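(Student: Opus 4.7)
The plan is to handle both parts by a single computation that tracks how regularity propagates through the formula relating the curvatures of $\bar g$ and $\bar h$.  Let $\bar D := {}^{\bar g}\bar\nabla - \bar\nabla$ denote the difference tensor of the two Levi-Civita connections; it is a $(2,1)$ tensor of weight $1$ whose coordinate formula
$$\bar D^k_{ij} = \tfrac{1}{2}\bar g^{km}\bigl(\bar\nabla_i \bar g_{jm}+\bar\nabla_j \bar g_{im}-\bar\nabla_m \bar g_{ij}\bigr)$$
exhibits it as an algebraic contraction of $\bar g^{-1}$ with $\bar\nabla \bar g$.  The standard identity for the change of curvature under a change of connection then yields
$$\Riem[\bar g] = \Riem[\bar h] + \bar\nabla \bar D + \bar D\ast\bar D$$
as $(1,3)$ tensors (with $\ast$ denoting antisymmetrized tensor contractions); passing to the $(2,2)$ operator form amounts to raising one further index with $\bar g^{-1}$.

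The first step is to put $\bar D$ into the $\mathscr C^{k,\alpha;m}$ framework.  Since $\bar g$ extends to a nondegenerate metric on $\overline M$, matrix inversion depends smoothly on $\bar g$ in any background chart, so the algebra property of Lemma \ref{lemma:properties-of-scriptC}\eqref{part:algebras} shows $\bar g^{-1}\in\mathscr C^{k,\alpha;m}(M)$ as a weight $-2$ tensor.  Lemma \ref{lemma:properties-of-scriptC}\eqref{part:nablabar-mapping} then gives $\bar\nabla \bar g\in \mathscr C^{k-1,\alpha;m-1}(M)$ (weight $3$), and the algebra property places $\bar D \in \mathscr C^{k-1,\alpha;m-1}(M)\subset C^{k-1,\alpha}_{1}(M)$.

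The second step handles $\bar\nabla \bar D$, and here the argument bifurcates.  When $m=2$ (part~(b)), Lemma \ref{lemma:properties-of-scriptC}\eqref{part:nablabar-mapping} applies once more to give $\bar\nabla \bar D \in \mathscr C^{k-2,\alpha;0}(M)=C^{k-2,\alpha}_{2}(M)$.  When $m=1$ (part~(a)), the Lie-derivative reserve is already exhausted in constructing $\bar D$, so I would fall back on the cruder Lemma \ref{DLemma-ListOfHolderFacts}\eqref{DLemma-DifferenceTensorH}, which only gives $\bar\nabla \bar D \in C^{k-2,\alpha}_{1}(M)$.  The quadratic contribution $\bar D\ast\bar D$ lies in $\mathscr C^{k-1,\alpha;m-1}(M)\subset C^{k-1,\alpha}_{2}(M)$ by the algebra property, and $\Riem[\bar h]$ is smooth on $\overline M$ of weight $2$.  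Combining these, the $(1,3)$ form of $\Riem[\bar g]$ lies in $C^{k-2,\alpha}_{m}(M)$.

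The final step raises an index: $\bar g^{-1}\in\mathscr C^{k,\alpha;m}(M)\subset C^{k,\alpha}_{-2}(M)$, so by Lemma \ref{DLemma-ListOfHolderFacts}\eqref{HolderMultiplication} the $(2,2)$ operator form of $\Riem[\bar g]$ lands in $C^{k-2,\alpha}_{m-2}(M)$, which is exactly $C^{k-2,\alpha}_{-1}(M)$ when $m=1$ and $C^{k-2,\alpha}_{0}(M)$ when $m=2$.  The main obstacle is the $m=1$ case: the weakness of $\mathscr C^{k,\alpha;1}$ (metrics need only be Lipschitz up to the boundary, cf.\ Remark \ref{Remark-TripleExample}) means that $\bar\nabla \bar D$ cannot be controlled through the Lie-derivative characterization, and one genuinely loses a factor of $\rho$ at that step; this loss accounts precisely for the $\rho^{-1}$ blow-up allowed by part~(a), and explains why upgrading $\bar g$ from $\mathscr C^{k,\alpha;1}$ to $\mathscr C^{k,\alpha;2}$ recovers boundedness of the curvature operator.
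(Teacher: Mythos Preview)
Your proof is correct and follows essentially the same strategy as the paper's: expand $\Riem[\bar g]$ via the difference tensor $\bar D$ and the curvature of the background $\bar h$, then track regularity through the mapping properties of $\bar\nabla$ on the $\mathscr C^{k,\alpha;m}$ spaces, with the sole bottleneck being the second-derivative term $\bar\nabla\bar D$ (equivalently $\bar\nabla^2\bar g$). The paper organizes the bookkeeping slightly differently, writing the $(2,2)$ operator directly as a sum of schematic terms $A^r(\bar g)\otimes(\text{derivatives of }\bar g)$ rather than invoking the curvature-change identity and raising an index at the end, but the content and the key step (using Lemma~\ref{lemma:properties-of-scriptC}\eqref{part:nablabar-mapping} to bound $\bar\nabla^2\bar g$) are identical.
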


\begin{proof}
Let $\bar D[\bar g] = \bar\nabla - \Nabla{\bar g}$ be the difference tensor between the Levi-Civita connections of the compactified background metric $\bar h$ and of $\bar g$; we easily see that $\bar D[\bar g]$ is the sum of (contractions of) terms of the form $\rho^{-1} (\bar g)^{-1}\otimes \bar g \otimes \bar\nabla{}\bar g$.
Thus $\Riem[\bar g]$ is the sum of (contractions of) terms of the form 
\begin{equation}
\label{BargTerms}
 A^{-4}(\bar g)\otimes \bar\nabla^2\bar g,
\quad
A^{-6}(\bar g)\otimes \bar\nabla\,\bar g\otimes \bar\nabla\,\bar g,
\quad
 A^{-3}(\bar g) \otimes \bar\nabla\,\bar g,
\quad
 A^0(\bar g);
\end{equation}
here $A^r(\bar g)$ represents a tensor of weight $r$ which, in any coordinate system, is a smooth polynomial in $\bar g$ and $(\bar g)^{-1}$ with coefficients in $C^\infty(\bar M)$.
If $\bar g\in \mathscr C^{k,\alpha;m}(M)$, then $(\bar g)^{-1}\in \mathscr C^{k,\alpha;m}(M)$ and thus the fact that $\mathscr C^{k,\alpha;m}(M)$ is an algebra implies that $A^r(\bar g) \in \mathscr C^{k,\alpha;m}(M)\subset C^{k,\alpha}_r(M)$.
The desired estimates for the final three terms of \eqref{BargTerms} follow immediately from Lemma \ref{lemma:properties-of-scriptC}.

We now estimate the first term in \eqref{BargTerms}.
If  $\bar g\in  \mathscr C^{k,\alpha;1}(M)$, then Lemma \ref{lemma:properties-of-scriptC}\eqref{part:nablabar-mapping} implies 
$
\rho\bar\nabla{}^2\bar g \in \mathscr C^{k-2,\alpha;0}(M) = C^{k,\alpha}_4(M)
$
and thus $\bar\nabla{}^2\bar g\in C^{k-2,\alpha}_3(M)$.
If $\bar g\in  \mathscr C^{k,\alpha;2}(M)$ then $ \bar\nabla{}^2\bar g \in C^{k-2,\alpha}_4(M)$ and the desired result follows immediately.
\end{proof}

For $k\geq 2$, $1\le m\le k$, and $\alpha\in [0,1)$ we define   $\WAH^{k,\alpha;m}$ to be the 
set of Riemannian metrics $g$ on $M$ such that $\bar g = \rho^2 g\in \mathscr C^{k,\alpha;m}(M)$ extends to a non-degenerate metric on $\bar M$, and such that
$\Riem[g]\to -\Id$ as $\rho \to 0$.
(Recall that $\rho$ is a fixed defining function in $C^\infty(\bar M)$.)
As in \S\ref{Results}, metrics in  $\WAH^{k,\alpha;1}$ are called \Defn{weakly $\boldsymbol C^{k,\alpha}$ asymptotically hyperbolic}.

The following version of Taylor's theorem is used below.

\begin{lemma}\label{lemma:taylor}
Suppose $g\in \WAH^{k,\alpha;2}$ for some $k\geq 2$ and $\alpha\in [0,1)$.
Then for any function $u\in \mathscr C^{k,\alpha;2}(M)\cap C^{k,\alpha}_1(M)$ we have $ u-\rho \langle d\rho, d u\rangle_{\bar g}
 \in C^{k-1,\alpha}_2(M)$ with 
\begin{equation*}
\| u-\rho \langle d\rho, d u\rangle_{\bar g}\|_{ C^{k-1,\alpha}_2(M)}\leq C \gNorm{ u }_{k,\alpha;2}
\end{equation*}
for some constant $C$ depending only on $\gNorm{\bar g}_{k,\alpha;2}$. 
\end{lemma}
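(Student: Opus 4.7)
Set $v := u - \rho\langle d\rho, du\rangle_{\bar g}$. The plan is to show that $v\in \mathscr{C}^{k-1,\alpha;2}(M)$ and that both $v$ and $\bar\nabla v$ vanish on $\partial M$; the conclusion will then follow directly from Lemma \ref{lemma:properties-of-scriptC}\eqref{part:BoundaryLemma} applied with weight $r=0$ and $m=2$, which yields $v \in C^{k-1,\alpha}_{0+2}(M) = C^{k-1,\alpha}_2(M)$ together with the norm bound $\|v\|_{C^{k-1,\alpha}_2(M)}\le C\gNorm{v}_{k-1,\alpha;2}$.

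First I would verify the regularity. Since $g\in \WAH^{k,\alpha;2}$ we have $\bar g\in \mathscr{C}^{k,\alpha;2}(M)$; because this space is an algebra in which nondegenerate symmetric tensors admit inverses of comparable regularity, $\bar g^{-1}\in \mathscr{C}^{k,\alpha;2}(M)$ as well, with $\gNorm{\bar g^{-1}}_{k,\alpha;2}$ controlled by $\gNorm{\bar g}_{k,\alpha;2}$. By Lemma \ref{lemma:properties-of-scriptC}\eqref{part:nablabar-mapping} the one-form $du = \bar\nabla u$ lies in $\mathscr{C}^{k-1,\alpha;1}(M)$; the algebra property then gives $\langle d\rho, du\rangle_{\bar g} = \bar g^{-1}(d\rho,du)\in \mathscr{C}^{k-1,\alpha;1}(M)$, and multiplication by $\rho$ (which, again by Lemma \ref{lemma:properties-of-scriptC}\eqref{part:nablabar-mapping}, raises the third index by one) places $\rho\langle d\rho, du\rangle_{\bar g}$ in $\mathscr{C}^{k-1,\alpha;2}(M)$. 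Thus $v\in \mathscr{C}^{k-1,\alpha;2}(M)$, and chasing the constants produces $\gNorm{v}_{k-1,\alpha;2}\le C\gNorm{u}_{k,\alpha;2}$ with $C$ depending only on $\gNorm{\bar g}_{k,\alpha;2}$.

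The critical step is the vanishing of $\bar\nabla v$ on $\partial M$. Expanding,
\begin{equation*}
\bar\nabla v = du - \langle d\rho, du\rangle_{\bar g}\, d\rho - \rho\,\bar\nabla\langle d\rho, du\rangle_{\bar g}.
\end{equation*}
The last term vanishes on $\partial M$ thanks to the prefactor $\rho$ (the remaining factor being a weight-$1$ one-form whose components in background coordinates are bounded). For the first two, work in background coordinates $(\theta,\rho)$: the hypothesis $u\in C^{k,\alpha}_1(M)$ forces $u|_{\rho=0}\equiv 0$, so every tangential derivative $\partial_{\theta^j}u$ vanishes at the boundary, giving $du|_{\partial M} = \partial_\rho u|_{\rho=0}\, d\rho|_{\partial M}$. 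The $\WAH$ condition provides $|d\rho|^2_{\bar g} = \bar g^{\rho\rho} = 1$ on $\partial M$, and the vanishing of tangential derivatives kills the off-diagonal contributions in $\bar g^{\rho i}\partial_i u$, so $\langle d\rho, du\rangle_{\bar g}|_{\partial M} = \partial_\rho u|_{\rho=0}$. The first two terms therefore cancel on $\partial M$. This boundary matching is the most delicate point, and it is precisely what identifies $\rho\langle d\rho, du\rangle_{\bar g}$ as the correct first-order Taylor approximant to $u$ near $\partial M$: the defining function $\rho$ plays the role of a ``normal coordinate'' only approximately, but to first order the normalization $|d\rho|_{\bar g}=1$ together with the flatness of $u$ along $\partial M$ forces the correction to collapse.

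Finally, $v|_{\partial M}=0$ is immediate, since $u$ vanishes on $\partial M$ and $\rho\langle d\rho, du\rangle_{\bar g}$ is a factor $\rho$ times a bounded function. With both vanishing conditions verified, Lemma \ref{lemma:properties-of-scriptC}\eqref{part:BoundaryLemma} applied with $m=2$ gives $v\in C^{k-1,\alpha}_2(M)$ and the stated norm estimate, with the constant controlled, via the regularity step, in terms of $\gNorm{\bar g}_{k,\alpha;2}$ alone.
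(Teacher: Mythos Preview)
Your argument is clean and essentially correct for $k\ge 3$, and in that range it is more direct than the paper's proof, since it avoids the Regularization Theorem~\ref{thm:Regularization} entirely. The boundary-vanishing computation for $\bar\nabla v$ is the heart of the matter, and you have handled it correctly.

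However, there is a genuine gap at the endpoint $k=2$. Your plan is to place $v$ in $\mathscr C^{k-1,\alpha;2}(M)$ and then invoke Lemma~\ref{lemma:properties-of-scriptC}\eqref{part:BoundaryLemma} with $m=2$. But the spaces $\mathscr C^{j,\alpha;m}(M)$ are defined only for $0\le m\le j$, so when $k=2$ the space $\mathscr C^{1,\alpha;2}(M)$ does not exist, and the lemma cannot be applied with $m=2$. Concretely, $\eta=\langle d\rho,du\rangle_{\bar g}$ lies only in $\mathscr C^{k-1,\alpha;1}(M)$, so the best you can say directly about $\rho\eta$ (and hence about $v$) loses one order of differentiability relative to $u$; at $k=2$ that loss is fatal for the $m=2$ application.

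The paper's proof is engineered precisely to avoid this loss. It invokes Theorem~\ref{thm:Regularization} to replace $\eta\in\mathscr C^{k-1,\alpha;1}(M)$ by a regularized $\tilde\eta\in\mathscr C^{k,\alpha;1}(M)$ with $\eta-\tilde\eta\in C^{k-1,\alpha}_1(M)$. Then $u':=u-\rho\tilde\eta$ lies in $\mathscr C^{k,\alpha;2}(M)$, which \emph{is} well-defined for all $k\ge 2$, and Lemma~\ref{lemma:properties-of-scriptC}\eqref{part:BoundaryLemma} with $m=2$ yields $u'\in C^{k,\alpha}_2(M)$. The original quantity is recovered via $u-\rho\eta = u' + \rho(\tilde\eta-\eta)\in C^{k-1,\alpha}_2(M)$. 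So the Regularization Theorem is not a detour here; it is what makes the borderline case $k=2$ go through. To repair your proof you would either need to restrict to $k\ge 3$, or insert this regularization step (or give a separate ad hoc argument at $k=2$).
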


\begin{proof}
The assumptions on $u$ imply that $\eta:=\langle\D\rho, \D u\rangle_{\bar g}$ is in $\mathscr C^{k-1,\alpha;1}(M)$.
By Theorem \ref{thm:Regularization} there exists $\tilde \eta\in \mathscr C^{k,\alpha;1}(M)$ such that $\eta-\tilde \eta\in C^{k-1,\alpha}_1(M)$.
By Lemma \ref{lemma:properties-of-scriptC}\eqref{part:BoundaryLemma} and by the estimate in Theorem \ref{thm:Regularization} we have 
$$\|\eta - \tilde\eta\|_{C^{k-1,\alpha}_1(M)}\leq C \gNorm{\eta - \tilde\eta}_{k-1,\alpha;1}\leq C \gNorm{u}_{k,\alpha;2},$$ where here and throughout the proof $C$ represents any constant depending on $\gNorm{\bar g}_{k,\alpha;2}$.

We now seek to apply Lemma \ref{lemma:properties-of-scriptC}\eqref{part:BoundaryLemma} to the function $u^\prime:= u - \rho\tilde \eta$, which is an element of $\mathscr C^{k,\alpha;2}(M)$
by Lemma
\ref{lemma:properties-of-scriptC}\eqref{part:nablabar-mapping}.
Consequently $\D u^\prime$ extends continuously to $\bar M$; note also that $u^\prime \in C^{k,\alpha}_1(M)$.
Thus at $\rho=0$ both $u^\prime$ and the restriction of $\D u^\prime$ to $T\partial M$ vanish.
Direct computation, using the definitions of $u^\prime$ and $\tilde v$, shows that
\begin{equation*}
\langle \D u^\prime, \D\rho\rangle_{\bar g}\big|_{\partial M}
= \langle \D u, \D\rho\rangle_{\bar g}\big|_{\partial M}
-\tilde \eta\big|_{\partial M}
=
\langle \D u, \D\rho\rangle_{\bar g}\big|_{\partial M}
- \eta\big|_{\partial M}
=0.
\end{equation*}
Thus we may invoke Lemma \ref{lemma:properties-of-scriptC}\eqref{part:BoundaryLemma} to conclude that $u^\prime \in C^{k,\alpha}_2(M)$ and that $\|u^\prime \|_{C^{k,\alpha}_2(M)}\leq C \gNorm{u^\prime}_{k,\alpha;2}$.
The proof now follows from the identity $u-\rho \langle \D\rho, \D u\rangle_{\bar g} = u^\prime + \rho (\tilde \eta - \eta)$.
\end{proof}

We are now ready to prove Theorems \ref{thm:WAH-TFAE}--\ref{thm:HAH-TFAE}.

\begin{proof}[Proof of Theorem \ref{thm:WAH-TFAE}]
Since  $\bar g\in  \mathscr C^{k,\alpha;1}(M)$, we have $\Hess_{\bar g}\rho \in C^{k-1,\alpha}_2(M)$.
Thus $(\Hess_{\bar g}\rho)^\sharp \in C^{k-1,\alpha}(M)$ (where the sharp operator is with
respect to $\bar g=\rho^2 g$).  
Because a $(1,1)$ tensor has weight $0$, this implies that $|(\Hess_{\bar g}\rho)^\sharp|_{\bar h}$
is bounded by a constant multiple of $|(\Hess_{\bar g}\rho)^\sharp|_{\bar g}= |(\Hess_{\bar g}\rho)^\sharp|_{ g}$,
which is bounded.

Lemma \ref{BarRiemLemma}
shows that $\Riem[\bar g]$,  $\Ric[\bar g]$, and $\R[\bar g]$ are all $\mathcal O(\rho^{-1})$.
Thus the equivalence of parts \eqref{NEWWAH1}--\eqref{NEWWAH4} of Theorem  \ref{thm:WAH-TFAE}
follows immediately from \eqref{FirstRiemID}, \eqref{RicciID}, and \eqref{ScalarID}.
\end{proof}

\begin{proof}[Proof of Theorem \ref{thm:properties-of-WAH}]
Let $f:=1-|\D\rho|^2_{\bar g}$.
Since $\bar g$ and $\D\rho$ are in $\mathscr C^{k,\alpha;1}(M)$, we have $f\in \mathscr C^{k,\alpha;1}(M)$ as well.
The assumption that $g$ is weakly asymptotically hyperbolic means that $f\to 0$ as $\rho\to 0$, and thus
Lemma \ref{lemma:properties-of-scriptC}\eqref{part:BoundaryLemma}
shows that $f\in C^{k,\alpha}_1(M)$; this is \eqref{PropWAH1}.
Properties \eqref{PropWAH2}, \eqref{PropWAH3}, and \eqref{PropWAH4} then follow
from \eqref{FirstRiemID}, \eqref{RicciID}, \eqref{ScalarID}, respectively,
together with 
Lemma \ref{BarRiemLemma}.

To prove \eqref{PropWAH5}, note that \eqref{PropWAH2} implies 
$({}^g\nabla)^j\Riem[g]$ is a tensor of weight $j$ in $C^{k-j,\alpha}_1(M)$, and the $\bar h$-norm
of such a tensor is $\mathcal O(\rho^{-j+1})$.
\end{proof}

\begin{proof}[Proof of Theorem \ref{thm:HAH-TFAE}]
Equation \eqref{ScalarID} 
can be written 
\begin{equation}
\label{eq:new-1.11}
\R[g]+n(n+1) = -n(n+1) f + \rho^2 \R[\bar g],
\end{equation}
where 
\begin{equation}
\label{little-f}
f:=|d\rho|^2_{\bar g}-1-\frac{2}{n+1}\rho\,\Delta_{\bar g}\rho.
\end{equation}
Lemma \ref{BarRiemLemma} shows 
that
$\rho^2 \R[\bar g]\in C^{k-2,\alpha}_2(M)$, and  it 
follows immediately that \eqref{HAH(2)} $\Rightarrow$ \eqref{HAH(4)} $\Rightarrow$ \eqref{HAH(0)} 
$\Rightarrow$
\eqref{HAH(3)}.

We complete the proof by showing that \eqref{HAH(3)} implies \eqref{HAH(2)}.
Assume therefore that $f=\mathcal O(\rho^2)$.
Since $\bar g\in \mathscr C^{k,\alpha;2}(M)$, we have 
$(\bar g)^{-1}\in \mathscr C^{k,\alpha;2}(M)$. 
Thus the function $u: = |d\rho|_{\bar g}^2 - 1$ is in $\mathscr C^{k,\alpha;2}(M)$ and, due to  
Theorem \ref{thm:properties-of-WAH}\eqref{PropWAH1}, $u\in C^{k,\alpha}_1(M)$.  
Therefore, by Lemma \ref{lemma:taylor}, we can write 
$u= \rho\langle d\rho,du\rangle _{\bar g} + v$,
for some $v\in C^{k-1,\alpha}_2(M)$.
Consequently,
\begin{equation*}
f  = \rho\langle d\rho,du\rangle _{\bar g} -\frac{2}{n+1}\rho\,\Delta_{\bar g}\rho + v.
\end{equation*}
On the other hand, the fact that $g\in \mathscr M^{k,\alpha;2}_{\text{weak}}$ also implies
\begin{equation*}
w:= \langle d\rho,du\rangle _{\bar g} -\frac{2}{n+1}\,\Delta_{\bar g}\rho \in  \mathscr C^{k-1,\alpha;1}(M),
\end{equation*}
and the assumption that $f=\mathcal O(\rho^2)$ implies $w=\mathcal O(\rho)$. Therefore, 
Lemma \ref{lemma:properties-of-scriptC}\eqref{part:BoundaryLemma} implies  
$w\in C^{k-1,\alpha}_1(M)$, from which it follows that $f = \rho w + v\in C^{k-1,\alpha}_2(M)$. 
\end{proof}

\begin{remark}
The proof of Theorem \ref{thm:HAH-TFAE} above invokes both Lemma \ref{lemma:properties-of-scriptC}\eqref{part:BoundaryLemma} and Lemma \ref{lemma:taylor} in order to establish that $f\in C^{k-1,\alpha}_2(M)$ under the hypothesis that $\R[g]+n(n+1) = \mathcal O(\rho^2)$.
The estimates in those lemmas imply that $g \mapsto \R[g]+n(n+1)$ is locally Lipschitz continuous, viewed as a map taking metrics in $\WAH^{k,\alpha;2}$ satisfying $\R[g]+n(n+1) = \mathcal O(\rho^2)$ to functions in   $C^{k-2,\alpha}_2(M)$.
\end{remark}

\section{The tensor $\B_{\bar g}(\omega)$}
\label{IvaTensorSection}

Let $(\bar M, \bar g)$ be a $(n+1)$-dimensional Riemannian manifold and let $\omega\colon\bar M\rightarrow \mathbb{R}$ be any $C^2$ function.
Then the vector field $|\D\omega|^{-2}_{\bar g} \grad_{\bar g}\omega$ is conformally invariant in the sense that for any positive function $\theta$ we have 
\begin{equation*}
|\D\omega|^{-2}_{\theta \bar g} \grad_{\theta \bar g}\omega
=
|\D\omega|^{-2}_{\bar g} \grad_{\bar g}\omega.
\end{equation*}
Let $\mathcal D_{\bar g}$ be the \Defn{conformal Killing (or Alhfors)} operator, taking vector fields to symmetric tracefree covariant $2$-tensor fields, defined by
\begin{equation*}
\mathcal D_{\bar g}X 
= \frac12 \mathcal L_X{\bar g} - \frac{1}{n+1} (\Div_{\bar g}X)\bar g.
\end{equation*}
The operator $\mathcal D_{\bar g}$ transforms under conformal changes of $\bar g$ as follows: For any positive $C^1$ function $\theta$ we have 
\begin{equation*}
\mathcal D_{\theta \bar g} X = \theta\, \mathcal D_{\bar g} X.
\end{equation*}
Thus the map
\begin{equation*}
\omega \mapsto |\D\omega|^{}_{\bar g}\mathcal D_{\bar g}(|\D\omega|^{-2}_{\bar g} \grad_{\bar g}\omega)
\end{equation*}
is a conformally invariant operator taking the function $\omega$ to a symmetric tracefree covariant $2$-tensor field.

Another such operator can be constructed as follows. Observe that the $p$-Laplacian
\begin{equation*}
\Div_{\bar g}\left[ |\D\omega|_{\bar g}^p \grad_{\bar g}\omega\right]
\end{equation*}
is conformally invariant for $p= n-1$, in the sense that 
\begin{equation*}
|\D\omega|^{-(n+1)}_{\theta \bar g} \Div_{\theta \bar g}\left[ |\D\omega|_{\theta \bar g}^{n-1} \grad_{\theta \bar g}\omega\right]
=
|\D\omega|^{-(n+1)}_{\bar g} \Div_{\bar g}\left[ |\D\omega|_{\bar g}^{n-1} \grad_{\bar g}\omega\right].
\end{equation*}
Multiplying by 
\begin{equation*}
\D\omega \otimes \D\omega - \frac{1}{n+1}|\D\omega|^2_{\bar g}\, \bar g
\end{equation*}
yields a conformally invariant operator taking 
a function $\omega$ to a symmetric tracefree covariant $2$-tensor field.

We now combine the two conformally invariant operators above, first multiplying by powers of  $|\D\omega|_{\bar g}$ in order to avoid negative powers and in order to achieve homogeneity in $\omega$, and define the tensor $\B_{\bar g}(\omega)$ by
\begin{equation}
\label{DefineB1}
\B_{\bar g}(\omega)
:=
|\D\omega|_{\bar g}^6\,\mathcal D_{\bar g}(|\D\omega|^{-2}_{\bar g} \grad_{\bar g}\omega)
+ A_{\bar g}(\omega) \left( \D\omega \otimes \D\omega - \frac{1}{n+1}|\D\omega|^2_{\bar g} \bar g \right),
\end{equation}
where
\begin{equation*}
A_{\bar g}(\omega) 
:= \frac{1}{n} |\D\omega|^{3-n}_{\bar g} \Div_{\bar g}\left[ |\D\omega|_{\bar g}^{n-1}\grad_{\bar g}\omega\right].
\end{equation*}
We remark that this definition of the tensor field $\B_{\bar g}(\omega)$ makes sense for manifolds with or without boundary. 

One may readily verify by direct computation that
\begin{multline}
\label{DefineB2}
\B_{\bar g}(\omega) 
= |\D\omega|_{\bar g}^4\left(\Hess_{\bar g}\omega-\frac{1}{n+1}(\Delta_{\bar g}\omega)\bar g\right)
\\
-|\D\omega|_{\bar g}^2\,\, \Nabla{\bar g}_{\grad_{\bar g}\omega} \left[\D\omega\otimes \D\omega-\frac{1}{n+1}|\D\omega|^2_{\bar g}\,\bar g\right]
\\
+A_{\bar g}(\omega)\left(\D\omega\otimes \D\omega -\frac{1}{n+1}|\D\omega|^2_{\bar g}\,\bar g\right),
\end{multline}
where $\Nabla{\bar g}$ is the Levi-Civita connection associated to $\bar g$.

The following basic properties of $\B_{\bar g}(\omega)$, which are immediate from the definition, show that it is a conformally invariant version of the trace-free Hessian.
\begin{proposition}\hfill
\label{B-BasicProperties}
\begin{enumerate}
\item $\B_{\bar g}(\omega)$ is symmetric and trace-free.

\item\label{B-TransverseProperty} $\B_{\bar g}(\omega)(\grad_{\bar g}\omega, \cdot)=0$.

\item\label{B-Scaling} $\B_{\bar g}(c \omega)=c^5\B_{\bar g}(\omega)$ for all constants $c$.

\item\label{B-ConformalScaling} 
If $\tilde g = \theta^{}\bar g$ for a strictly positive function $\theta$, then $\B_{\tilde g}(\omega)=\theta^{-2}\B_{\bar g}(\omega)$ and $A_{\tilde g}(\omega) = \theta^{-2}A_{\bar g}(\omega)$.
\end{enumerate}
\end{proposition}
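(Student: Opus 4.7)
The plan is to verify each of the four properties directly from the two expressions \eqref{DefineB1} and \eqref{DefineB2} for $\B_{\bar g}(\omega)$, using the conformal identities already assembled in the text: the conformal invariance of the vector field $|\D\omega|^{-2}_{\bar g}\grad_{\bar g}\omega$, the scaling law $\mathcal D_{\theta\bar g}=\theta\,\mathcal D_{\bar g}$, and the conformal invariance of $|\D\omega|^{-(n+1)}_{\bar g}\Div_{\bar g}\bigl(|\D\omega|^{n-1}_{\bar g}\grad_{\bar g}\omega\bigr)$. Three of the four claims are essentially bookkeeping; only the transversality statement (b) requires a genuine algebraic cancellation.

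For (a), I would read off symmetry and trace-freeness from \eqref{DefineB1}: the image of $\mathcal D_{\bar g}$ consists of symmetric traceless tensors by construction, and $\D\omega\otimes\D\omega-\tfrac{1}{n+1}|\D\omega|_{\bar g}^2\bar g$ is manifestly symmetric with $\bar g$-trace $|\D\omega|_{\bar g}^2-\tfrac{1}{n+1}(n+1)|\D\omega|_{\bar g}^2=0$. For (c), I would substitute $c\omega$ into \eqref{DefineB1} and track homogeneity in $c$: the identities $|\D(c\omega)|_{\bar g}=|c|\,|\D\omega|_{\bar g}$, $\grad_{\bar g}(c\omega)=c\grad_{\bar g}\omega$, linearity of $\mathcal D_{\bar g}$, and a short computation yielding $A_{\bar g}(c\omega)=c^3A_{\bar g}(\omega)$, together produce an overall factor of $c^5$ on each summand. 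For (d), I would substitute $\tilde g=\theta\bar g$ into \eqref{DefineB1}, using $|\D\omega|^2_{\tilde g}=\theta^{-1}|\D\omega|^2_{\bar g}$ together with the three recalled conformal identities to compute
\begin{equation*}
|\D\omega|^6_{\tilde g}\mathcal D_{\tilde g}\bigl(|\D\omega|^{-2}_{\tilde g}\grad_{\tilde g}\omega\bigr)
=\theta^{-3}|\D\omega|^6_{\bar g}\cdot\theta\,\mathcal D_{\bar g}\bigl(|\D\omega|^{-2}_{\bar g}\grad_{\bar g}\omega\bigr)
=\theta^{-2}|\D\omega|^6_{\bar g}\mathcal D_{\bar g}\bigl(|\D\omega|^{-2}_{\bar g}\grad_{\bar g}\omega\bigr);
\end{equation*}
a parallel manipulation using the $(n-1)$-Laplacian invariance gives $A_{\tilde g}(\omega)=\theta^{-2}A_{\bar g}(\omega)$; and the tensor $\D\omega\otimes\D\omega-\tfrac{1}{n+1}|\D\omega|^2_{\tilde g}\tilde g$ equals $\D\omega\otimes\D\omega-\tfrac{1}{n+1}|\D\omega|^2_{\bar g}\bar g$ because the two factors of $\theta$ cancel.

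The only part that does not reduce to mere bookkeeping is (b). Here I would evaluate each of the three summands in \eqref{DefineB2} on $(\grad_{\bar g}\omega,\,\cdot\,)$ using the identity $\Hess_{\bar g}\omega(\grad_{\bar g}\omega,Y)=\tfrac12 d|\D\omega|^2_{\bar g}(Y)$, the Leibniz rule applied to $\Nabla{\bar g}_{\grad_{\bar g}\omega}(\D\omega\otimes\D\omega)$ and $\Nabla{\bar g}_{\grad_{\bar g}\omega}(|\D\omega|^2_{\bar g}\bar g)$, and the expansion
\begin{equation*}
\Div_{\bar g}\bigl(|\D\omega|^{n-1}_{\bar g}\grad_{\bar g}\omega\bigr)=|\D\omega|^{n-1}_{\bar g}\Delta_{\bar g}\omega+\tfrac{n-1}{2}|\D\omega|^{n-3}_{\bar g}\,d|\D\omega|^2_{\bar g}(\grad_{\bar g}\omega).
\end{equation*}
After substitution, the $\tfrac12|\D\omega|^4_{\bar g}d|\D\omega|^2_{\bar g}(Y)$ piece produced by the first summand is cancelled by the corresponding contribution from the Leibniz expansion in the second summand, and the combined first-and-second-summand contribution collapses to
\begin{equation*}
-\tfrac{|\D\omega|^4_{\bar g}}{n+1}(\Delta_{\bar g}\omega)\,d\omega(Y)-\tfrac{n-1}{2(n+1)}|\D\omega|^2_{\bar g}\,d|\D\omega|^2_{\bar g}(\grad_{\bar g}\omega)\,d\omega(Y),
\end{equation*}
which is exactly the negative of the third summand after the divergence is expanded. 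The main (and only) obstacle is keeping track of the coefficients $\tfrac12$, $\tfrac{1}{n+1}$, and $\tfrac{n-1}{2(n+1)}$ through these cancellations; the argument uses no geometric input beyond the Leibniz rule and the three identities just listed.
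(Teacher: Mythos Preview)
Your proposal is correct and follows the same route as the paper: direct verification from the defining formulas \eqref{DefineB1}--\eqref{DefineB2} together with the conformal identities assembled just before the proposition. The paper itself does not give a proof, simply declaring the properties ``immediate from the definition''; your write-up supplies exactly the computations that justify that claim, including the one nontrivial cancellation in part~\eqref{B-TransverseProperty}.
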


In the asymptotically hyperbolic setting, we make use of $\B_{\bar g}(\omega)$  with $\omega$ replaced by the defining function $\rho$. 
We first note the following regularity properties.
\begin{lemma}\label{B-in-WAH}
Let $g\in \WAH^{k,\alpha;1}$ be a weakly asymptotically hyperbolic metric on $M$ for $k\geq 1$ and $\alpha\in [0,1)$, and let $\bar g = \rho^2 g$.
Then $\B_{\bar g}(\rho)\in C^{k-1,\alpha}_2(M)$. 

If furthermore $g\in \WAH^{k,\alpha;2}$ and $k\geq 2$, then $\bar\nabla \B_{\bar g}(\rho)\in C^{k-2,\alpha}_3(M)$ and thus $\Div_{\bar g}\B_{\bar g}(\rho) \in C^{k-2,\alpha}_1(M)$.
\end{lemma}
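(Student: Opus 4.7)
The plan is to reduce the statement to a bookkeeping exercise based on the explicit formula \eqref{DefineB2}, together with the product-rule expansion
\begin{equation*}
A_{\bar g}(\rho) = \tfrac{1}{n}|\D\rho|_{\bar g}^{2}\,\Delta_{\bar g}\rho + \tfrac{n-1}{2n}\bigl\langle \D\,|\D\rho|_{\bar g}^{2},\,\D\rho\bigr\rangle_{\bar g}.
\end{equation*}
Combined, these two formulas express $\B_{\bar g}(\rho)$ as a universal polynomial (with tensor contractions) in $\bar g$, $\bar g^{-1}$, $\D\rho$, and $\Hess_{\bar g}\rho$, containing no negative powers of $|\D\rho|_{\bar g}$. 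The regularity of $\B_{\bar g}(\rho)$ will then follow from the regularities of its factors via the algebra property of $\mathscr C^{k,\alpha;m}(M)$ (Lemma \ref{lemma:properties-of-scriptC}\eqref{part:algebras}) and the multiplication rule for weighted H\"older spaces (Lemma \ref{DLemma-ListOfHolderFacts}\eqref{HolderMultiplication}). Working with \eqref{DefineB2} rather than \eqref{DefineB1} is essential here, since \eqref{DefineB1} hides negative powers of $|\D\rho|_{\bar g}$ that would obstruct a purely polynomial analysis.

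For the first assertion, I would use $\bar g\in \mathscr C^{k,\alpha;1}(M)$ together with Cramer's rule and the algebra property to infer $\bar g^{-1}\in \mathscr C^{k,\alpha;1}(M)$. Writing $\Hess_{\bar g}\rho = \bar\nabla \D\rho - \bar D[\bar g](\D\rho,\cdot)$, where, exactly as in the proof of Lemma \ref{BarRiemLemma}, the difference tensor $\bar D[\bar g]$ is a contraction of $\bar g^{-1}$ with $\bar\nabla \bar g\in C^{k-1,\alpha}_3(M)$, gives $\Hess_{\bar g}\rho\in C^{k-1,\alpha}_2(M)$ and hence $\Delta_{\bar g}\rho\in C^{k-1,\alpha}_0(M)$. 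Tracking weights through the polynomial expansion, every summand of \eqref{DefineB2} is a weight-$2$ tensor lying in $C^{k-1,\alpha}_2(M)$, so $\B_{\bar g}(\rho)\in C^{k-1,\alpha}_2(M)$ as claimed.

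For the second assertion, with $\bar g\in \mathscr C^{k,\alpha;2}(M)$ and $k\geq 2$, Lemma \ref{lemma:properties-of-scriptC}\eqref{part:nablabar-mapping} upgrades $\bar\nabla\bar g$ to $\mathscr C^{k-1,\alpha;1}(M)$, and combined with the algebra structure this places $\bar D[\bar g]$ and $\Hess_{\bar g}\rho$ in $\mathscr C^{k-1,\alpha;1}(M)$. Repeating the polynomial argument inside $\mathscr C^{k-1,\alpha;1}(M)$ yields $\B_{\bar g}(\rho)\in \mathscr C^{k-1,\alpha;1}(M)$, and a second application of Lemma \ref{lemma:properties-of-scriptC}\eqref{part:nablabar-mapping} produces $\bar\nabla\B_{\bar g}(\rho)\in \mathscr C^{k-2,\alpha;0}(M) = C^{k-2,\alpha}_3(M)$. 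The divergence claim then follows upon writing $\Nabla{\bar g}\B_{\bar g}(\rho) = \bar\nabla\B_{\bar g}(\rho) - \bar D[\bar g]\ast \B_{\bar g}(\rho)$ and contracting with $\bar g^{-1}\in C^{k,\alpha}_{-2}(M)$, so that the multiplication property delivers $\Div_{\bar g}\B_{\bar g}(\rho)\in C^{k-2,\alpha}_1(M)$.

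The main obstacle is purely notational: one has to consistently keep track of the weight of each factor, noting that contraction with $\bar g^{-1}$ shifts the weight down by $2$ while $\bar\nabla$ shifts it up by $1$ and reduces the H\"older index by $1$, so that the final exponents $2$, $3$, and $1$ emerge. The most delicate point will be verifying that the improvement from $\mathscr C^{k,\alpha;1}$ to $\mathscr C^{k,\alpha;2}$ is inherited by every factor in the polynomial expansion, including the nonlinear contractions $|\D\rho|_{\bar g}^2$, $\Delta_{\bar g}\rho$, and $A_{\bar g}(\rho)$, rather than being lost in the contractions with $\bar g^{-1}$.
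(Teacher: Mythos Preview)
Your proposal is correct and follows essentially the same route as the paper: both arguments observe that $\B_{\bar g}(\rho)$ is a universal polynomial contraction of $\bar g$, $\bar g^{-1}$, $\D\rho$, and $\Hess_{\bar g}\rho$ (the paper records this as the schema \eqref{B-Schema}), reduce $\Hess_{\bar g}\rho$ to $\bar\nabla\D\rho$ plus the difference tensor $\bar D[\bar g]$, and then track weights through the multiplication rule. Your packaging of the second assertion via the algebra $\mathscr C^{k-1,\alpha;1}(M)$ followed by a single application of the mapping property \eqref{eq:nablabar} is slightly more systematic than the paper's terse ``direct computation,'' but the underlying bookkeeping is identical.
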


\begin{proof}
Observe that $\B_{\bar g}(\rho)$ consists of terms which are contractions of
\begin{equation}
\label{B-Schema}
(\bar g)^{-1}\otimes (\bar g)^{-1}\otimes (\bar g)^{-1}\otimes \bar g\otimes 
\D\rho \otimes \D\rho \otimes \D\rho \otimes \D\rho \otimes \Nabla{{\bar g}}(\D\rho).
\end{equation} 
Noting that $\D\rho\in C^{k,\alpha}_1(M)$, $\bar\nabla\D\rho\in C^{k,\alpha}_2(M)$,  $\bar\nabla{}^2\D\rho\in C^{k,\alpha}_3(M)$, and  observing that the difference tensor $\Nabla{\bar g} - \bar\nabla$ consists of contractions of $(\bar g)^{-1}\otimes \bar\nabla\,\bar g$, the lemma follows from direct computation.
\end{proof}

We now show that $\B_{\bar g}(\rho)$ agrees with the trace-free Hessian of $\rho$ along $\partial M$ if the scalar curvature decays to $-n(n+1)$ as $\mathcal O(\rho^2)$.
\begin{proposition}
\label{B-DefiningFunctionProperties}
Suppose  $g\in \WAH^{k,\alpha;2}$ for $k\geq 2$ and $\alpha\in [0,1)$.
If $\R[g]+n(n+1)\in C^{k-2,\alpha}_2(M)$, then $\B_{\bar g}(\rho)$ extends continuously to $\bar M$ and satisfies
\begin{equation}
\label{BisHessian}
\B_{\bar g}(\rho) - \left(\Hess_{\bar g}\rho - \frac{1}{n+1}(\Delta_{\bar g}\rho)\bar g\right)\in C^{k-1,\alpha}_3(M).
\end{equation}
In particular \eqref{BatB} holds.
\end{proposition}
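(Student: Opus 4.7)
The plan is to exploit the intermediate regularity space $\mathscr C^{k-1,\alpha;1}(M)$ introduced in \S\ref{RegularityClasses}, together with the boundary-vanishing criterion of Lemma \ref{lemma:properties-of-scriptC}\eqref{part:BoundaryLemma}. Writing $T := \Hess_{\bar g}\rho - \tfrac{1}{n+1}(\Delta_{\bar g}\rho)\bar g$, the strategy has three steps: (i) show that both $\B_{\bar g}(\rho)$ and $T$ lie in $\mathscr C^{k-1,\alpha;1}(M)$ as weight-$2$ tensor fields; (ii) verify that they agree on $\partial M$; and (iii) upgrade the resulting boundary vanishing of $\B_{\bar g}(\rho)-T$ to the claimed $C^{k-1,\alpha}_3(M)$ decay via Lemma \ref{lemma:properties-of-scriptC}\eqref{part:BoundaryLemma}.

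For step (i), the hypothesis $\bar g\in \mathscr C^{k,\alpha;2}(M)\subseteq \mathscr C^{k-1,\alpha;1}(M)$ together with Lemma \ref{lemma:properties-of-scriptC}\eqref{part:nablabar-mapping} implies that $F := |d\rho|^2_{\bar g}$, $dF$, $\Hess_{\bar g}\rho$, $\Delta_{\bar g}\rho$, and $A_{\bar g}(\rho)$ all lie in $\mathscr C^{k-1,\alpha;1}(M)$. Because this space is an algebra closed under contractions (Lemma \ref{lemma:properties-of-scriptC}\eqref{part:algebras}), the explicit formula \eqref{DefineB2} then gives $\B_{\bar g}(\rho)\in \mathscr C^{k-1,\alpha;1}(M)$, and the same obviously holds for $T$. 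In particular, both $\B_{\bar g}(\rho)$ and $T$ extend Lipschitz-continuously to $\overline M$, which establishes the continuous extension asserted in the proposition.

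For step (ii), I would use Theorem \ref{thm:HAH-TFAE} to convert the hypothesis $\R[g]+n(n+1)\in C^{k-2,\alpha}_2(M)$ into $f := F - 1 - \tfrac{2}{n+1}\rho\,\Delta_{\bar g}\rho \in C^{k-1,\alpha}_2(M)$.  This forces $F|_{\partial M}=1$ and, since both $f$ and $\rho\,d\Delta_{\bar g}\rho$ have vanishing $\bar h$-norm on $\partial M$,
\begin{equation*}
dF|_{\partial M} \;=\; \tfrac{2}{n+1}(\Delta_{\bar g}\rho)|_{\partial M}\,d\rho|_{\partial M}.
\end{equation*}
Combined with the sub-identities
\begin{equation*}
\Nabla{\bar g}_{\grad_{\bar g}\rho}(d\rho\otimes d\rho) = \tfrac{1}{2}(dF\otimes d\rho + d\rho\otimes dF), \qquad \Nabla{\bar g}_{\grad_{\bar g}\rho}(F\bar g) = \langle dF,d\rho\rangle_{\bar g}\,\bar g
\end{equation*}
(the latter using $\Nabla{\bar g}\bar g = 0$) and $A_{\bar g}(\rho) = \tfrac{1}{n}[F\Delta_{\bar g}\rho + \tfrac{n-1}{2}\langle dF,d\rho\rangle_{\bar g}]$, direct evaluation at $\partial M$ shows that the second and third terms of \eqref{DefineB2} cancel one another exactly, leaving $\B_{\bar g}(\rho)|_{\partial M} = F^2|_{\partial M}\,T|_{\partial M} = T|_{\partial M}$.

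Step (iii) then applies Lemma \ref{lemma:properties-of-scriptC}\eqref{part:BoundaryLemma} to the weight-$2$ tensor field $\B_{\bar g}(\rho) - T\in \mathscr C^{k-1,\alpha;1}(M)$, whose $\bar h$-norm vanishes on $\partial M$ by step (ii), to conclude $\B_{\bar g}(\rho) - T \in C^{k-1,\alpha}_3(M)$; the identity \eqref{BatB} is then immediate by restriction. The main obstacle is the boundary-cancellation calculation in step (ii): the two expressions to be cancelled have coefficients involving $\langle dF, d\rho\rangle_{\bar g}|_{\partial M}$ and $(\Delta_{\bar g}\rho)|_{\partial M}$, and balancing them requires the precise value of $dF|_{\partial M}$ supplied by Theorem \ref{thm:HAH-TFAE}---without the scalar-curvature hypothesis, $\B_{\bar g}(\rho)|_{\partial M}\ne T|_{\partial M}$ in general.
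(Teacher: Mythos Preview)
Your proof is correct and follows essentially the same route as the paper's: both use Theorem \ref{thm:HAH-TFAE} to extract the boundary behavior of $|d\rho|^2_{\bar g}$, verify that the second and third terms of \eqref{DefineB2} cancel to leading order, and then invoke Lemma \ref{lemma:properties-of-scriptC}\eqref{part:BoundaryLemma} with $m=1$. The only difference is organizational: you establish $\B_{\bar g}(\rho)-T\in\mathscr C^{k-1,\alpha;1}(M)$ first and then do a pointwise boundary computation, whereas the paper computes the difference modulo $C^{k-2,\alpha}_3(M)$ terms (obtaining boundary vanishing) and separately checks that $\bar\nabla\big(\B_{\bar g}(\rho)-T\big)\in C^{k-2,\alpha}_3(M)$ before applying the lemma.
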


\begin{proof}
From Theorem \ref{thm:HAH-TFAE} we have
\begin{equation}\label{d-rho-to-B}
|\D\rho|_{\bar g}^2 - 1 - \frac{2}{n+1}\rho(\Delta_{\bar g}\rho) \in C^{k-1,\alpha}_2(M).
\end{equation}
Note that, as in the proof of Lemma \ref{B-in-WAH}, we have $\Nabla{\bar g} - \bar\nabla\in C^{k-1,\alpha}_1(M)$ and $\bar\nabla\left(\Nabla{\bar g} - \bar\nabla\right)\in C^{k-2,\alpha}_2(M)$; consequently, $d(\Delta_{\bar g}\rho)\in C^{k-2,\alpha}_1(M)$. Taking the differential of \eqref{d-rho-to-B} we  find
\begin{equation*}
\Hess_{\bar g}\rho (\grad_{\bar g}\rho, \cdot) 
- \frac{1}{n+1}(\Delta_{\bar g}\rho) \D\rho 
\in  C^{k-2,\alpha}_2(M). 
\end{equation*}
Since $\Nabla{\bar g}_{\grad_{\bar g}\rho}\D\rho = \Hess_{\bar g}\rho (\grad_{\bar g}\rho, \cdot)$ and $\D\rho\in C^{k,\alpha}_1(M)$, we may by direct computation verify that
\begin{multline}
\Nabla{\bar g}_{\grad_{\bar g}\rho} \left[\D\rho\otimes \D\rho-\frac{1}{n+1}|\D\rho|^2_{\bar g}\,\bar g\right]
\\
= \frac{2}{n+1}(\Delta_{\bar g}\rho) \left[\D\rho\otimes \D\rho-\frac{1}{n+1}|\D\rho|^2_{\bar g}\,\bar g\right] + C^{k-2,\alpha}_3(M)
\end{multline}
and
\begin{equation*}
A_{\bar g}(\rho) = \frac{2}{n+1}(\Delta_{\bar g}\rho)+ C^{k-2,\alpha}_1(M).
\end{equation*}
Inserting this information into the expression for $\B_{\bar g}(\rho)$ we obtain 
\begin{equation}\label{BisHess1}
\B_{\bar g}(\rho) - \left(\Hess_{\bar g}\rho - \frac{1}{n+1}(\Delta_{\bar g}\rho)\bar g\right)\in C^{k-2,\alpha}_3(M).
\end{equation}
On the other hand, the facts that $\B_{\bar g}(\rho)$ consists of terms of the form \eqref{B-Schema} and that $\bar g\in  \mathscr C^{k,\alpha;2}(M)$ imply that 
\begin{equation}\label{BisHess2}
\bar\nabla \left(\B_{\bar g}(\rho) - \left(\Hess_{\bar g}\rho - \frac{1}{n+1}(\Delta_{\bar g}\rho)\bar g\right)\right)\in C^{k-2,\alpha}_3(M).
\end{equation}
Claim \eqref{BisHessian} is immediate from \eqref{BisHess1} and \eqref{BisHess2}, together with Lemma \ref{lemma:properties-of-scriptC}\eqref{part:BoundaryLemma}.
\end{proof}

We now present the proof of Theorem \ref{FH}.

\begin{proof}[Proof of Theorem \ref{FH}]
We first recall Lemma \ref{BarRiemLemma}, which implies that the ultimate term in \eqref{SecondRiemID} is a $(2,2)$ tensor field of class $C^{k-2,\alpha}_2(M)$.
We proceed by showing \eqref{FH1} $\Rightarrow$ \eqref{FH2} $\Rightarrow$ \eqref{FH3} $\Rightarrow$ \eqref{FH4} $\Rightarrow$ \eqref{FH1}.

The condition \eqref{FH1} immediately implies \eqref{FH2}. 
Note that if $h$ is a $(1,1)$ tensor field then the contraction of first upper and first lower indices of $\delta\KN h$ is  $\frac{n-1}{2} h + \frac12 (\tr h)\delta$.
Thus supposing that \eqref{FH2} holds, we may take a contraction of \eqref{SecondRiemID}, and then contract with $\bar g$, to conclude that 
\begin{equation*}
\left(\Hess_{\bar g}\rho - \frac{1}{n+1}(\Delta_{\bar g}\rho)\bar g\right) \in C^{k-2,\alpha}_3(M).
\end{equation*}
In view of Proposition \ref{B-DefiningFunctionProperties}, this implies  \eqref{FH3}.

To see that \eqref{FH3}  implies \eqref{FH4} we note that $g\in  \WAH^{k,\alpha;2}$ implies $ \B_{\bar g}(\rho) \in C^{k-1,\alpha}_2(M)$ and $\bar\nabla \B_{\bar g}(\rho) \in C^{k-2,\alpha}_3(M)$; see Lemma \ref{B-in-WAH}.
Thus applying Lemma \ref{lemma:properties-of-scriptC} \eqref{part:BoundaryLemma} with $u =\B_{\bar g}(\rho)$ gives the desired implication.

Finally, assuming \eqref{FH4} we may use Proposition \ref{B-DefiningFunctionProperties}, together with \eqref{SecondRiemID}, to deduce \eqref{FH1}.
\end{proof}

\section{Fredholm results}
\label{Fredholm}

%

The proof of Theorem \ref{WeaklyFredholmTheorem} consists of adapting the arguments in \cite{Lee-FredholmOperators} to the weakly asymptotically hyperbolic setting.
The arguments in \cite{Lee-FredholmOperators} rely on the fact that a strongly asymptotically hyperbolic metric $g$ of class $C^{l,\beta}$ satisfies 
\begin{equation}
\label{MetricBounds}
\sup_i\| \Phi_i^*g - \breve g\|_{C^{l,\beta}(B_2)}\leq C
\quad\text{ and }\quad 
\sup_i\|(\Phi_i^*g)^{-1}\breve{g}\|_{C^{0}(B_2)}\leq C.
\end{equation} 
An important observation is that \eqref{MetricBounds} holds under the hypothesis that $g\in \WAH^{l,\beta;1}$; the first estimate is a consequence of $\bar g\in C^{l,\beta}_2(M)$, while the second follows from $(\bar g)^{-1}\in C^0(\bar M)$.
The estimates \eqref{MetricBounds} are a key ingredient in the proof of the following elliptic regularity estimates for geometric operators.
\begin{lemma}[Lemma 4.8 of \cite{Lee-FredholmOperators}]
\label{EllipticRegularity}
Suppose that $g$ satisfies \eqref{MetricBounds}, and let $\mathcal P$ satisfy part \eqref{Assume-P-Basic} of Assumption \ref{Assume-P}.
\begin{enumerate}
\item 
Suppose that $\beta\in [0,1)$, $\delta\in \bbR$, $1<p<\infty$, and $2\leq k \leq l$. 
For each $u\in H^{0,p}_\delta(M)$ with $\mathcal P u \in H^{k-2,p}_\delta(M)$, we have $u\in H^{k,p}_\delta(M)$ with
\begin{equation*}
\|u\|_{H^{k,p}_\delta(M)}\leq C\left(\|\mathcal P u\|_{H^{k-2,p}_\delta(M)} + \|u\|_{H^{0,p}_\delta(M)} \right).
\end{equation*}

\item Suppose that $\beta\in (0,1)$,  $\delta\in \bbR$, $0<\alpha<1$, and $2 < k+\alpha  \leq l+\beta$. 
For each $u\in C^{0}_\delta(M)$ with $\mathcal P u \in C^{k-2,\alpha}_\delta(M)$, we have $u\in C^{k,\alpha}_\delta(M)$ with
\begin{equation*}
\|u\|_{C^{k,\alpha}_\delta(M)}\leq C\left(\|\mathcal P u\|_{C^{k-2,\alpha}_\delta(M)} 
+ \|u\|_{C^{0}_\delta(M)} \right).
\end{equation*}
\end{enumerate}
\end{lemma}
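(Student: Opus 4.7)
The plan is to reduce the global weighted regularity estimate to standard interior elliptic regularity on the Möbius ball $B_2$, following the strategy of the proof of Lemma 4.8 in \cite{Lee-FredholmOperators}, and to verify that the key inputs to that strategy depend only on the Möbius-ball bounds \eqref{MetricBounds}, rather than on any stronger assumption such as the metric being $C^2$ conformally compact.

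Concretely, I would pull $g$ and $\mathcal P$ back via each Möbius parametrization $\Phi_i\colon B_2\to M$. Because $\mathcal P$ is geometric in the sense of Assumption \ref{Assume-P}, the coefficients of $\Phi_i^*\mathcal P$ in the standard coordinates of $B_2\subset \mathbb H$ are universal polynomials in $\Phi_i^*g$, $(\Phi_i^*g)^{-1}$, $\sqrt{\det(\Phi_i^*g)}$, and partial derivatives of $\Phi_i^*g$ of order at most $2-j$ in the coefficient of the $j$th derivative. The first estimate in \eqref{MetricBounds} then yields a uniform bound on these coefficients in the appropriate Hölder norm on $B_2$, while the second gives uniform ellipticity of $\Phi_i^*\mathcal P$ with a constant independent of $i$. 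Standard Schauder estimates in the Hölder case and Calderón--Zygmund $L^p$ theory in the Sobolev case, applied to the pair $B_1\subset B_2$, then furnish
\begin{equation*}
\|\Phi_i^*u\|_{C^{k,\alpha}(B_1)}\leq C\bigl(\|\Phi_i^*\mathcal P u\|_{C^{k-2,\alpha}(B_2)}+\|\Phi_i^*u\|_{C^0(B_2)}\bigr)
\end{equation*}
and its Sobolev counterpart, with $C$ independent of $i$.

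To incorporate the weight $\delta$, observe that $\Phi_i^*\rho = \rho_i y$, where $y$ is the last coordinate on $B_2$ and $\rho_i$ is the $\rho$-coordinate of the center of the parametrization, so $\Phi_i^*(\rho^{-\delta}) = \rho_i^{-\delta}\,y^{-\delta}$ with $y^{-\delta}$ smooth and uniformly bounded on $B_2$. The pullbacks of $\rho^{-\delta}u$ and $\rho^{-\delta}\mathcal Pu$ thus differ from $\rho_i^{-\delta}\Phi_i^*u$ and $\rho_i^{-\delta}\Phi_i^*\mathcal Pu$ only by multiplication by uniformly smooth factors on $B_2$, and the local estimates above pass unchanged to the weighted spaces $C^{k,\alpha}_\delta$ and $H^{k,p}_\delta$. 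Since $\{\Phi_i(B_1)\}$ covers $M$ with uniform local finiteness, taking a supremum over $i$ in the Hölder case and summing $p$-th powers in the Sobolev case yields the claimed global weighted estimate, with the lower-order term $\|\Phi_i^*u\|_{C^0(B_2)}$ absorbed into $\|u\|_{C^0_\delta(M)}$ or $\|u\|_{H^{0,p}_\delta(M)}$ after enlarging the covering slightly if necessary.

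The main obstacle is the bookkeeping required to ensure that, at each step of the bootstrap from $k=2$ up to $k=l$, the regularity of the coefficients of $\Phi_i^*\mathcal P$ inherited from \eqref{MetricBounds} is sufficient for the interior regularity theorem to apply: one must verify that the precise hypotheses $2\le k\le l$ and $2<k+\alpha\le l+\beta$ are exactly what the standard theory demands once the order-dependent regularity of coefficients (principal part in $C^{l,\beta}$, first-order in $C^{l-1,\beta}$, zeroth-order in $C^{l-2,\beta}$) is tracked carefully. Once this verification is in place, the entire argument becomes purely local on the fixed hyperbolic ball $B_2$, and the sole ingredient specific to the weakly asymptotically hyperbolic setting is the observation, already recorded above, that $g\in \WAH^{l,\beta;1}$ implies \eqref{MetricBounds}.
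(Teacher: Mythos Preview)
Your proposal is correct and is precisely the approach taken in the cited proof of Lemma~4.8 in \cite{Lee-FredholmOperators}; the present paper does not supply its own argument but merely records that the only metric hypothesis used there is \eqref{MetricBounds}, which you have correctly identified. Your sketch---pulling back to $B_2$ via M\"obius parametrizations, invoking uniform Schauder or $L^p$ interior estimates with constants controlled by \eqref{MetricBounds}, handling the weight via $\Phi_i^*\rho=\rho_i y$, and globalizing by sup/sum using uniform local finiteness---matches that proof in structure and detail.
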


The regularity estimates above can be improved if $\mathcal P$ is semi-Fredholm, meaning that the kernel of $\mathcal P$ is finite-dimensional and the image of $\mathcal P$ is closed.
\begin{proposition}
\label{StrongRegularity}
Suppose that $g$ satisfies \eqref{MetricBounds}, and let $\mathcal P$ satisfy part \eqref{Assume-P-Basic} of Assumption \ref{Assume-P}.

\begin{enumerate}
\item Suppose $\beta\in [0,1)$, $\delta\in \mathbb R$,  $1<p<\infty$, and $2\leq k \leq l$.
If $\mathcal P\colon H^{k,p}_\delta(M)\to H^{k-2,p}_\delta(M)$ is semi-Fredholm, then there exist a compact set $K\subset M$ and a constant $C$ such that for each $u\in H^{k,p}_\delta(M)$ we have
\begin{equation}
\label{SobolevSemiFredholm}
\|u\|_{H^{k,p}_\delta(M)} \leq C\left( \|\mathcal P u\|_{H^{k-2,p}_\delta(M)} + \|u\|_{H^{k,p}(K)}\right).
\end{equation}

\item Suppose that $\beta\in (0,1)$, $\delta\in \mathbb R$,  $0<\alpha<1$, and $2< k+\alpha \leq l+\beta$. 
If $\mathcal P\colon C^{k,\alpha}_\delta(M)\to C^{k-2,\alpha}_\delta(M)$ is semi-Fredholm, then there exist a compact set $K\subset M$ and a constant $C$ such that for each $u\in C^{k,\alpha}_\delta(M)$ we have
\begin{equation}
\label{HolderSemiFredholm}
\|u \|_{C^{k,\alpha}_\delta(M)}
\leq C \left(
\|\mathcal P u \|_{C^{k-2,\alpha}_\delta(M)} 
+ \|u\|_{C^{k,\alpha}_\delta(K)}
\right).
\end{equation}

\end{enumerate}
\end{proposition}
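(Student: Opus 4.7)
The plan is to prove both halves of the proposition by the same contradiction argument, combining the semi-Fredholm hypothesis with the basic elliptic estimate of Lemma \ref{EllipticRegularity}. I will describe the Sobolev case in detail; the Hölder case is identical after replacing $H^{k,p}_\delta$ by $C^{k,\alpha}_\delta$ throughout. Fix an exhaustion $K_1\subset K_2\subset\cdots$ of $M$ by compact sets and suppose, for contradiction, that no choice of compact $K$ and constant $C$ makes \eqref{SobolevSemiFredholm} hold. A standard diagonal extraction then yields a sequence $u_n\in H^{k,p}_\delta(M)$ with $\|u_n\|_{H^{k,p}_\delta(M)}=1$, $\|\mathcal P u_n\|_{H^{k-2,p}_\delta(M)}\to 0$, and $\|u_n\|_{H^{k,p}(K_m)}\to 0$ for every fixed $m$.

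The semi-Fredholm hypothesis (finite-dimensional kernel and closed range) allows one to apply the open mapping theorem to the induced bijection $H^{k,p}_\delta(M)/\ker\mathcal P\to \mathrm{image}(\mathcal P)$, yielding the Peetre-type estimate
\begin{equation*}
\inf_{v\in\ker\mathcal P}\|u-v\|_{H^{k,p}_\delta(M)} \leq C_0 \|\mathcal P u\|_{H^{k-2,p}_\delta(M)}
\end{equation*}
for all $u\in H^{k,p}_\delta(M)$. Choose $v_n\in\ker\mathcal P$ with $\|u_n-v_n\|_{H^{k,p}_\delta(M)}\to 0$; then $\|v_n\|_{H^{k,p}_\delta(M)}\to 1$. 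Since $\ker\mathcal P$ is finite-dimensional, its closed unit ball is compact, so a subsequence of $(v_n)$ converges to some $v_\infty\in\ker\mathcal P$ with $\|v_\infty\|_{H^{k,p}_\delta(M)}=1$.

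On the other hand, $\rho^{-\delta}$ is bounded on each compact set $K_m$, so the restriction map $H^{k,p}_\delta(M)\to H^{k,p}(K_m)$ is continuous. The triangle inequality then gives
\begin{equation*}
\|v_\infty\|_{H^{k,p}(K_m)} \leq \lim_{n\to\infty}\bigl(\|u_n\|_{H^{k,p}(K_m)} + \|v_n-u_n\|_{H^{k,p}_\delta(M)}\bigr) = 0
\end{equation*}
for every $m$, so $v_\infty$ vanishes on every $K_m$, hence $v_\infty\equiv 0$ on $M$, contradicting $\|v_\infty\|_{H^{k,p}_\delta(M)}=1$. The main technical point — arguably the only nontrivial step — is the availability of the Peetre estimate in the weighted Sobolev and Hölder categories; this reduces to observing that $\ker\mathcal P$ is a closed subspace (being finite-dimensional) of the Banach space $H^{k,p}_\delta(M)$ or $C^{k,\alpha}_\delta(M)$, so the quotient is Banach and the open mapping theorem applies directly to the induced bijection onto the closed range. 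Everything else is soft functional analysis combined with the norm equivalences already recorded in \S\ref{RegularityClasses}.
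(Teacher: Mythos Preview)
Your proof is correct and takes essentially the same approach as the paper's: both argue by contradiction, use the open mapping theorem to control the distance to the finite-dimensional kernel, and obtain a contradiction from a sequence with unit norm whose mass escapes every compact set. The only cosmetic difference is that the paper fixes a single compact $K$ and uses norm equivalence on $\ker\mathcal P$ restricted to $K$, whereas you extract a convergent subsequence in $\ker\mathcal P$; note also that neither argument actually invokes Lemma~\ref{EllipticRegularity}, despite its mention in your opening sentence.
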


\begin{remark}\hfill
\label{RemarkSemiFredholm}
\begin{enumerate}
\item In the Sobolev case it follows from \eqref{SobolevSemiFredholm} that 
\begin{equation*}
\|u\|_{H^{0,p}_\delta(M)} \leq C\|\mathcal P u\|_{H^{0,p}_\delta(M)} 
\end{equation*}
for all $u\in C^\infty_c(M\setminus K)$, which is equivalent to $\mathcal P$ being semi-Fredholm. If the estimate also holds with $p$ replaced by $p^* = p/(1-p)$ and $\delta$ replaced by $-\delta$, then $\mathcal P$ is in fact Fredholm; see \cite[Lemma 4.10]{Lee-FredholmOperators}.

\item The estimates \eqref{SobolevSemiFredholm} and \eqref{HolderSemiFredholm} are related, but not equivalent, to the ``strong regularity intervals'' of \cite{AnderssonChrusciel-Dissertationes}.

\item 
The only properties of $\mathcal P$ used in the proof of Proposition \ref{StrongRegularity} are the semi-Fredholm property and boundedness in the appropriate spaces.
Thus for any compact operator $\mathcal K\colon H^{k,p}_\delta(M)\to H^{k-2,p}_\delta(M)$, the estimate \eqref{SobolevSemiFredholm} holds with $\mathcal P$ replaced by $\mathcal P+\mathcal K$.
Similarly, for any compact operator $\mathcal K\colon C^{k,\alpha}_\delta(M) \to C^{k-2,\alpha}_\delta(M)$, the estimate \eqref{HolderSemiFredholm} holds with $\mathcal P$ replaced by $\mathcal P+\mathcal K$.

\end{enumerate}
\end{remark}

\begin{proof}[Proof of Proposition \ref{StrongRegularity}]
We prove only the H\"older norm estimate \eqref{HolderSemiFredholm}.
The Sobolev estimate follows from analogous reasoning; see also \cite[Lemma 4.10]{Lee-FredholmOperators}.

We first show that sections of $E$ supported near the boundary can be estimated by their distance to the kernel of $\mathcal P$.
Since $\mathcal P$ is semi-Fredholm there exists $\epsilon>0$ such that no non-trivial element of $\ker(\mathcal P)\cap C^{k,\alpha}_\delta(M)$ vanishes identically on the compact set $K = {M\setminus \mathcal C_{\epsilon}}$.
As all norms on a finite-dimensional vector space are equivalent, we see that there exists $c>0$ such that
\begin{equation}
\label{SemiFredholmNormEquivalence}
c^{-1}\|v\|_{C^{k,\alpha}_\delta(K)}
\leq \|v\|_{C^{k,\alpha}_\delta(M)}
\leq c \|v\|_{C^{k,\alpha}_\delta(K)}
\end{equation}
for all $v\in \ker(\mathcal P)\cap C^{k,\alpha}_\delta(M)$.

Let $Y$ be a topological complement of $\ker(\mathcal P)$ in $C^{k,\alpha}_\delta(M)$ so that each $u\in C^{k,\alpha}_\delta(M)$ may be uniquely written as $u = u_0 + u_Y$ with $u_0\in \ker(\mathcal P)$ and $u_Y\in Y$.
The open mapping theorem implies that $\mathcal P\colon Y\to \ran(\mathcal P)\subset C^{k-2,\alpha}_\delta(M)$ is a bijection with bounded inverse.
In particular there exists $C^\prime>0$ such that for all $u = u_0+u_Y\in C^{k,\alpha}_\delta(M)$ we have
\begin{equation}
\label{SemiFredholm-EstimateComplement}
\|u_Y\|_{C^{k,\alpha}_\delta(M)}
\leq C^\prime \|\mathcal P u \|_{C^{k-2,\alpha}_\delta(M)}.
\end{equation}

Now suppose that \eqref{HolderSemiFredholm} fails.
Then, setting $K_m = M\setminus \mathcal C_{1/m}$, there exists a sequence $u_m\in C^{k,\alpha}_\delta(M)$ having unit norm and such that
\begin{equation}
\label{SemiFredholm-AssumeNot}
1=\|u_m \|_{C^{k,\alpha}_\delta(M)}
\geq m \left(
\|\mathcal P u_m \|_{C^{k-2,\alpha}_\delta(M)} 
+ \|u_m\|_{C^{k,\alpha}_\delta(K_m)}
\right).
\end{equation}
Writing $u_m = v_m + u_{m,Y}$, with $u_{m,Y}\in Y$ and $v_m$ in $\ker(\mathcal P)$, we conclude from \eqref{SemiFredholm-EstimateComplement} and \eqref{SemiFredholm-AssumeNot} that 
\begin{equation}
\label{SemiFredholm-BetterComplementEstimate}
\|u_{m,Y}\|_{C^{k,\alpha}_\delta(M)}\leq C^\prime \|\mathcal P u_m \|_{C^{k-2,\alpha}_\delta(M)} \leq C^\prime/m.
\end{equation}
Thus from the reverse triangle inequality we have, for sufficiently large $m$, that 
\begin{equation}
\label{SemiFredholm-LowerBound}
\|v_m\|_{C^{k,\alpha}_\delta(M)}
\geq \left| 1-\|u_{m,Y}\|_{C^{k,\alpha}_\delta(M)}\right|
\geq\frac12.
\end{equation}
For sufficiently large $m$, $K \subset K_m$, and hence the $C^{k,\alpha}_\delta(K_m)$ norm dominates the $C^{k,\alpha}_\delta(K)$ norm.
For such $m$ the norm equivalence \eqref{SemiFredholmNormEquivalence}, together with \eqref{SemiFredholm-AssumeNot} and \eqref{SemiFredholm-BetterComplementEstimate}, imply that
\begin{equation*}
\begin{aligned}
\|v_m\|_{C^{k,\alpha}_\delta(M)}
&\leq c \|v_m\|_{C^{k,\alpha}_\delta(K)}
\\
&\leq c \|v_m\|_{C^{k,\alpha}_\delta(K_m)}
\\
&= c \|u_m - u_{m,Y}\|_{C^{k,\alpha}_\delta(K_m)}
\\
&\leq
c\|u_m \|_{C^{k,\alpha}_\delta(K_m)}
+c\| u_{m,Y}\|_{C^{k,\alpha}_\delta(M)}
\\
&\leq \frac{c}{m}(1+C^\prime).
\end{aligned}
\end{equation*}
However, this contradicts \eqref{SemiFredholm-LowerBound}.
\end{proof}

We now turn to the proof of Theorem \ref{WeaklyFredholmTheorem}, and assume that $g\in \WAH^{l,\beta;1}$ for some $l\geq 2$ and $\beta\in [0,1)$.
We first verify that $\mathcal P$ is indeed a uniformly degenerate operator, and that the indicial map $I_s(\mathcal P)$, defined in \eqref{DefineIndicialMap},  is a $C^0$ bundle map.
\begin{lemma}[Lemmas 4.1 and 4.2 of \cite{Lee-FredholmOperators}]
\label{I-Lemma}
Suppose $g\in \WAH^{l,\beta;1}$ for $l\geq 2$ and $\beta \in [0,1)$, and let $\mathcal P$ satisfy part \eqref{Assume-P-Basic} of Assumption \ref{Assume-P}.
Then in background coordinates we may write
\begin{equation}
\label{BackgroundCoordinateP}
\mathcal P = a^{ij}(\rho\partial_i)(\rho\partial_j) + b^i(\rho\partial_i) + c,
\end{equation}
where the matrix-valued functions $a^{ij}$, $b^i$, $c$ extend continuously to $\bar M$.

Furthermore,  the indicial map $I_s(\mathcal P)\colon (E_\mathbb{C})\big|_{\partial M}\to (E_\mathbb{C})\big|_{\partial M}$ is a $C^0$ bundle map for each $s\in \mathbb C$.
\end{lemma}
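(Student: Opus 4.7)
The plan is to establish the coordinate formula \eqref{BackgroundCoordinateP} by direct calculation, and then to read off the structure of the indicial map from it. This is essentially the same computation as in \cite[Lemmas 4.1 and 4.2]{Lee-FredholmOperators}; our task is to verify that Lee's argument, designed for the strongly asymptotically hyperbolic setting, continues to go through under the weaker hypothesis $\bar g \in \mathscr C^{l,\beta;1}(M)$.

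First, using Assumption \ref{Assume-P}\eqref{Assume-P-Basic}, expand $\mathcal P$ in background coordinates $\Theta = (\theta^1,\dots,\theta^n,\rho)$ as
$$\mathcal P u = A^{ij}\partial_i\partial_j u + B^i\partial_i u + Cu,$$
where $A^{ij}, B^i, C$ are universal polynomials in $g_{kl}$, $\sqrt{\det g_{kl}}$, and in at most $0$, $1$, $2$ derivatives of $g_{kl}$ respectively. Substituting $g_{ij} = \rho^{-2}\bar g_{ij}$, $g^{ij} = \rho^2\bar g^{ij}$, and $\sqrt{\det g} = \rho^{-(n+1)}\sqrt{\det\bar g}$, then distributing derivatives via the product rule, collect the resulting expression into the form
$$\mathcal P = a^{ij}(\rho\partial_i)(\rho\partial_j) + b^i(\rho\partial_i) + c,$$
where $a^{ij}$ depends only on $\bar g$ and $\sqrt{\det \bar g}$; $b^i$ depends on these together with $\partial\rho$ and $\partial\bar g$; and $c$ absorbs the remaining contributions, including those involving $\partial^2 g$ and $(\partial g)^2$, bundled with the powers of $\rho$ produced by the rescaling.

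Second, verify the coefficient regularity. By Lemma \ref{lemma:properties-of-scriptC}\eqref{part:intermediate}, $\bar g$ (and hence $\bar g^{-1}$) extends to a Lipschitz tensor on $\bar M$; by Lemma \ref{lemma:properties-of-scriptC}\eqref{part:TripleComponents}, in background coordinates we have $\partial_\Theta \bar g_{ij}$ bounded and $\rho\,\partial_\Theta\partial_\Theta \bar g_{ij}$ bounded. Consequently $a^{ij}, b^i, c$ are each universal polynomials in bounded and continuous quantities on $\bar M$, and thus extend continuously to $\bar M$. For the indicial map, let $\bar u$ be an extension of a section of $(E\otimes \mathbb C)|_{\partial M}$ to a neighborhood that is independent of $\rho$ in background coordinates. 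Using $(\rho\partial_{\theta^\alpha})(\rho^s\bar u) = \rho^{s+1}\partial_{\theta^\alpha}\bar u$ for $\alpha=1,\dots,n$ and $(\rho\partial_\rho)(\rho^s\bar u) = s\rho^s\bar u + \rho^{s+1}\partial_\rho\bar u$, all terms with at least one horizontal derivative or at least one extra $\rho\partial_\rho$ applied to $\bar u$ vanish after dividing by $\rho^s$ and restricting to $\rho = 0$, leaving
$$I_s(\mathcal P)\bar u = \bigl(a^{\rho\rho}s^2 + b^\rho s + c\bigr)\big|_{\partial M}\,\bar u,$$
which is a polynomial in $s$ with $C^0$ bundle-map coefficients on $\partial M$.

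The main obstacle is the boundedness of $c$. After substituting $g = \rho^{-2}\bar g$ into an expression involving $\partial^2 g$, one encounters a term of the form $\rho^{-2}\partial^2\bar g$ which, under the weakly asymptotically hyperbolic hypothesis alone, is not bounded; for $c$ to extend continuously to $\bar M$ the coefficient of this term in $\mathcal P$ must come with a compensating factor of at least $\rho^2$ produced by the geometric structure of $\mathcal P$. In Lee's strongly asymptotically hyperbolic setting $\partial^2\bar g$ is itself bounded and this issue does not arise; in our setting we must exploit the fact that $\rho\,\partial^2\bar g$ is bounded, which is precisely the content of $\bar g \in \mathscr C^{l,\beta;1}(M)$ as recorded by Lemma \ref{lemma:properties-of-scriptC}\eqref{part:TripleComponents}. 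This is the single place where the strengthened boundary regularity \eqref{FirstTriple}, rather than just $\bar g \in C^{l,\beta}_2(M)$, is genuinely needed.
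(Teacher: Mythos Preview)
Your argument has a genuine gap. You correctly identify that under the weak hypothesis $\bar g\in\mathscr C^{l,\beta;1}(M)$, the quantities $\partial_\Theta\bar g_{ij}$ and $\rho\,\partial_\Theta^2\bar g_{ij}$ are \emph{bounded}, but boundedness is not the same as continuous extension to~$\bar M$: recall from Remark~\ref{Remark-TripleExample} that $\partial_\rho\bigl(\rho\sin(\log\rho)\bigr)$ oscillates and does not extend continuously. Thus your sentence ``$a^{ij},b^i,c$ are each universal polynomials in bounded and continuous quantities on $\bar M$, and thus extend continuously to $\bar M$'' does not follow: a polynomial in merely bounded ingredients is merely bounded. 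You notice this obstacle for $\partial^2\bar g$ in your final paragraph, but you only assert that ``the geometric structure'' supplies a compensating factor of~$\rho$; you do not prove it. Moreover, the same difficulty already afflicts the first-order coefficient~$b^i$, which involves $\partial\bar g$, and you do not address that case at all.

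The paper closes this gap by abandoning the raw coordinate polynomials and using the tensorial structure of a geometric operator instead. Since $\mathcal Pu$ is built from contractions of $\rho^j\Nabla{g}^ju$, $(\rho\Nabla{g})^j\Riem[g]$, $\bar g$, $(\bar g)^{-1}$, and $\rho^{n+1}dV_g$, one only needs to track two objects: the difference tensor $\rho\,\DifferenceTensor{g}=\rho(\Nabla{g}-\bar\nabla)$, and the curvature. The key computation is that $\rho\,\DifferenceTensor{g}$ splits as a sum of contractions of $\rho(\bar g)^{-1}\otimes\bar\nabla\bar g$ (which is $\mathcal O(\rho)$, hence extends continuously by zero) and $(\bar g)^{-1}\otimes\bar g\otimes d\rho$ (genuinely continuous on~$\bar M$); iterating shows $(\rho\Nabla{g})^j[\rho\,\DifferenceTensor{g}]$ is $\mathcal O(\rho)$ as well. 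Together with $(\rho\Nabla{g})^j\Riem[g]=\mathcal O(\rho)$ from Theorem~\ref{thm:properties-of-WAH}, this shows exactly why every occurrence of $\partial\bar g$ and $\partial^2\bar g$ in the coefficients of \eqref{BackgroundCoordinateP} carries the extra power of~$\rho$ needed for continuous extension. Your coordinate approach could in principle be salvaged, but it would amount to reproving this tensorial bookkeeping by hand.
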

\begin{proof}
The proof in the strongly asymptotically hyperbolic setting, as presented in \cite{Lee-FredholmOperators}, relies on the fact that $\rho^2 g$ extends to a $C^{l,\beta}$ metric on $\bar M$.
Here we present those modifications necessary to adapt the arguments in \cite{Lee-FredholmOperators} to the weakly asymptotically hyperbolic setting.

As $\mathcal P$ is geometric, the operator $\mathcal Pu$ is obtained from contractions of tensors formed from $\rho^j\Nabla{g}^j u$, $(\rho \Nabla{g})^j\Riem[g]$, $\bar g$, $(\bar g)^{-1}$,  and $\rho^{n+1}\dVol{g}$; see Chapter 4 of \cite{Lee-FredholmOperators}.
It follows from the definition of $\WAH^{k,\alpha;1}$ that $\bar g$, $(\bar g)^{-1}$, $\rho^{n+1}\dVol{g}$, and
$(\rho \Nabla{g})^j \Riem[g]$ extends continuously to $\bar M$, and that
\begin{equation*}
\left| (\rho \Nabla{g})^j \Riem[g]\right|_{\bar h}
= \mathcal O(\rho)
\qquad \text{ as }\rho \to 0.
\end{equation*}

Thus we focus our attention on $\rho^j\Nabla{g}^j u$, and let $\DifferenceTensor{g} = \Nabla{g} - \bar\nabla$ be the tensor describing the difference between the Levi-Civita connections of $g$ and $\bar h$.
Note that $\rho\DifferenceTensor{g}$ is a tensor field of weight $1$ which is a sum of contractions of 
\begin{equation}
\label{I-D-Parts}
\rho (\bar g)^{-1}\otimes \bar\nabla \bar g
\quad\text{ and }\quad
(\bar g)^{-1} \otimes \bar g \otimes \D\rho;
\end{equation}
the first term is in $C^{l-1,\beta}_2(M) \subset C^0(\bar M)$ and is $\mathcal O(\rho)$ as $\rho\to 0$, and the second term is continuous on $\bar M$.

We claim for $1\leq j \leq l-1$ that the tensor $(\rho \Nabla{g})^j [\rho\DifferenceTensor{g}]$ extends continuously to $\bar M$ and satisfies
\begin{equation*}
\left| (\rho \Nabla{g})^j [\rho\DifferenceTensor{g}] \right|_{\bar h} = \mathcal O(\rho)\quad \text{ as }\rho\to 0.
\end{equation*}
To see this, note that applying $\rho\Nabla{g}$ to the first term in \eqref{I-D-Parts} yields a tensor field in $C^{l-2,\beta}_3(M)\subset C^0(\bar M)$ that is $\mathcal O(\rho)$ as $\rho\to 0$.
Applying $\rho\Nabla{g}$ to the second term in \eqref{I-D-Parts} yields contractions of 
\begin{equation*}
\rho (\bar g)^{-1}\otimes (\bar g)^{-1}\otimes \bar g \otimes (\bar\nabla{}\bar g)\otimes \D\rho
\quad\text{ and }\quad
\rho (\bar g)^{-1} \otimes \bar g \otimes \bar\nabla \D\rho,
\end{equation*}
both of which are in $C^{l-1,\beta}_3(M)$.
The claim regarding higher derivatives follows by induction.

The proof of the lemma now follows exactly as in the proofs of Lemmas 4.1 and 4.2 in \cite{Lee-FredholmOperators}.
\end{proof}

We now extend  the results in Chapter 6 of \cite{Lee-FredholmOperators}, in which a parametrix for $\mathcal P$ is constructed, to the weakly asymptotically hyperbolic setting. 
The construction relies on an estimate for the metric using  boundary M\"obius parametrizations, which we now describe.

Recall from \S\ref{RegularityClasses} that we identify a collar neighborhood $\underline{\mathcal C}_{\rho_*}$ of the boundary with $\partial M \times [0,\rho_*)$.
For each point $\hat p = (\hat\theta,0)\in \partial M$, let $\tilde\Theta = (\tilde\theta,\rho)$ be local coordinates, related to the background coordinates $\Theta$ by an affine transformation of the half space $\mathbb R^n\times [0,\infty)$, such that at $\hat p$ the $\tilde\Theta$ coordinate representation of the metric $\bar g$ is $\delta_{ij}$ and $\hat p$ corresponds to $\tilde\Theta = (0,0)$.
The coordinates $\tilde\Theta$ are uniformly equivalent to the coordinates $\Theta$.
For sufficiently small $r>0$, we define the \Defn{boundary M\"obius parametrization} $\Psi_{ r}\colon Y \to M$ by $(\tilde\theta, \rho)=\Psi_r(x,y) = (rx, ry)$, where $Y$ is the rectangle $Y = \{(x,y)\mid |x|<1, \,0<y<1\}\subset \mathbb H$.
For any choice of  $r>0$, there exists a finite number of boundary M\"obius parametrizations such that the $\{\Psi_r(Y)\}$ cover 
the open set $\mathcal C_r = \partial M\times (0,r)$ 
and are uniformly locally finite; this uniformity is independent of the choice of $r$.

The following estimate of the difference  $\Psi_r^* g - \breve g$, with respect to the intrinsic H\"older norm on $Y\subset \mathbb H$, plays the role of Lemma 6.1 in \cite{Lee-FredholmOperators}.
\begin{lemma}
\label{BoundaryMobiusLemma}
Suppose $g\in \WAH^{l,\beta;1}$ and let $\Psi_r$ be a boundary M\"obius parame\-trization as described above.
Then there is a constant $C>0$, independent of $\hat p$, and a sufficiently small $r$, such that
\begin{equation}
\label{BoundaryMobiusEstimate}
\|\Psi_r^*g - \breve g\|_{C^{l,\beta}(Y)}\leq Cr.
\end{equation}
\end{lemma}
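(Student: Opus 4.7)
\emph{Plan.} The idea is to exploit the M\"obius scale-invariance of hyperbolic space together with the extra regularity $\bar g \in \mathscr C^{l,\beta;1}$, closely following the proof of Lemma~6.1 in \cite{Lee-FredholmOperators}. Work in the $\tilde\Theta = (\tilde\theta,\rho)$ coordinates in which $\bar g(\hat p) = \delta$, and introduce the flat hyperbolic model $h_{\mathrm{flat}} := \rho^{-2}\bigl((d\tilde\theta^1)^2 + \cdots + (d\tilde\theta^n)^2 + d\rho^2\bigr)$. Because the dilation $\Psi_r\colon(x,y)\mapsto (rx,ry)$ is a hyperbolic isometry in these coordinates, one has $\Psi_r^*h_{\mathrm{flat}} = \breve g$ and hence $\Psi_r^*g - \breve g = \Psi_r^*(g - h_{\mathrm{flat}})$. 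In components, writing $F := \bar g - \delta$, we have $g - h_{\mathrm{flat}} = \rho^{-2}F$ in $\tilde\Theta$ coordinates, so
\[
  \bigl(\Psi_r^*(g - h_{\mathrm{flat}})\bigr)_{\alpha\beta}(x,y) = y^{-2}\,F_{\alpha\beta}(rx,ry).
\]

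To compute $\|\cdot\|_{C^{l,\beta}(Y)}$ I compare with the M\"obius norms on $Y\subset \mathbb H$, i.e., the supremum over hyperbolic M\"obius parametrizations $\Phi_{(x_0,y_0)}\colon B_2 \to \mathbb H$, $(x',y')\mapsto (x_0 + y_0 x', y_0 y')$, with $(x_0,y_0)\in Y$. The key observation is that
\[
  \widetilde\Phi := \Psi_r\circ \Phi_{(x_0,y_0)} \colon (x',y') \longmapsto (rx_0+ry_0x',\,ry_0y')
\]
is itself a M\"obius parametrization of $M$ (in $\tilde\Theta$ coordinates) centered at $(rx_0,ry_0)$ with scale $\rho_0 := ry_0 \le r$. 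A direct calculation shows
\[
  \bigl(\Phi_{(x_0,y_0)}^*\Psi_r^*(g - h_{\mathrm{flat}})\bigr)_{\alpha\beta}(x',y') = (y')^{-2}\, F_{\alpha\beta}\bigl(\widetilde\Phi(x',y')\bigr),
\]
and since $(y')^{-2}$ is a smooth, bounded factor on $B_2$, the task reduces to proving $\|F\circ\widetilde\Phi\|_{C^{l,\beta}(B_2)} \le Cr$ uniformly in $(x_0,y_0)$.

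For this, first apply Lemma~\ref{lemma:properties-of-scriptC}\eqref{part:TripleComponents} to $\bar g\in \mathscr C^{l,\beta;1}(M)$ and the M\"obius parametrization $\widetilde\Phi$ of scale $\rho_0 = r y_0$, which yields
\[
  \|\partial(\bar g\circ \widetilde\Phi)\|_{C^{l-1,\beta}(B_2)} \le \rho_0\,\gNorm{\bar g}_{l,\beta;1} \le C r;
\]
this controls every derivative of $F\circ\widetilde\Phi = \bar g\circ\widetilde\Phi - \delta$ of order $\ge 1$, as well as the $C^{0,\beta}$ seminorm of $\partial^l(F\circ\widetilde\Phi)$, by $Cr$. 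For the $C^0$ piece, use Lemma~\ref{lemma:properties-of-scriptC}\eqref{part:intermediate}, which guarantees that $\bar g$ extends to a Lipschitz tensor on $\bar M$; combined with $F(\hat p) = 0$ and the fact that $\widetilde\Phi(B_2)$ lies within $\tilde\Theta$-distance $\mathcal O(r)$ of $\hat p$, this gives $\|F\circ\widetilde\Phi\|_{C^0(B_2)}\le Cr$. Combining these bounds produces $\|F\circ\widetilde\Phi\|_{C^{l,\beta}(B_2)} \le Cr$, and taking the supremum over the M\"obius cover gives \eqref{BoundaryMobiusEstimate}. The principal obstacle, and what makes the hypothesis $\bar g \in \mathscr C^{l,\beta;1}(M)$ essential rather than merely $\bar g\in C^{l,\beta}_2(M)$, is the extraction of the factor $\rho_0 \le r$ from derivatives of $\bar g\circ\widetilde\Phi$: without the additional derivative of regularity such derivatives would only be $\mathcal O(1)$. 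The requirement that $r$ be sufficiently small ensures that $\widetilde\Phi(B_2)$ remains in the $\tilde\Theta$-coordinate neighborhood for all $(x_0,y_0)\in Y$, and uniformity of $C$ in $\hat p$ follows from the compactness of $\partial M$ together with the uniform equivalence of $\tilde\Theta$ and $\Theta$.
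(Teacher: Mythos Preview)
Your proof is correct and follows essentially the same approach as the paper's: both reduce the estimate, via M\"obius parametrizations $\breve\Phi\colon B_2\to Y$, to bounding $\|F_{ij}\circ(\Psi_r\circ\breve\Phi)\|_{C^{l,\beta}(B_2)}$ for $F=\bar g-\delta$, and both obtain the $C^0$ bound from $F(\hat p)=0$ together with the Lipschitz continuity of $\bar g$, and the higher-order H\"older bounds from Lemma~\ref{lemma:properties-of-scriptC}\eqref{part:TripleComponents} applied at scale $\rho_0=ry_0\le r$. Your framing via the identity $\Psi_r^*h_{\mathrm{flat}}=\breve g$ is a touch more explicit than the paper's direct computation, but the argument is the same.
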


\begin{proof}
It suffices to consider a M\"obius parametrization $\breve\Phi\colon B_2\to \mathbb H$ centered at some $(x_0, y_0)\in \mathbb H$ and to estimate 
\begin{equation*}
(\Psi_r \circ \breve\Phi)^*g - \breve g
\end{equation*}
in $C^{l,\beta}(B_2)$.
Note that  $(\tilde\theta, \rho) = (\Psi_r \circ \breve\Phi)(x,y) = (rx_0 + ry_0 x, r y_0 y)$, and therefore
\begin{equation*}
(\Psi_r \circ \breve\Phi)^*g = (\bar g_{ij} \circ \Psi_r \circ \breve\Phi) \frac{\D\tilde\Theta ^i \otimes \D\tilde\Theta ^j}{y^2}.
\end{equation*}
Note also that $y$ is bounded above and below, and that $\breve\Phi$ is an isometry of $(\mathbb H, \breve g)$.

Let  $f$ be any of the component functions $\bar g_{ij} - \delta_{ij}$  in $\tilde\Theta$ coordinates.
We seek to show
\begin{equation*}
\| f\circ (\Psi_r \circ \breve\Phi)\|_{C^{l,\beta}(B_2)} \leq C r.
\end{equation*}
Since $f$ vanishes at $\tilde\Theta =(0,0)$, the $C^0$ estimate follows from the boundedness of $\partial_{\tilde\Theta} \bar g_{ij}$ and the mean value theorem.
The H\"older estimates of derivatives of $f\circ (\Psi_r \circ \breve\Phi)$ follow from Lemma \ref{lemma:properties-of-scriptC}\eqref{part:TripleComponents}.
\end{proof}

With \eqref{BoundaryMobiusEstimate} established, the parametrix construction of \cite{Lee-FredholmOperators} follows using Lemma \ref{BoundaryMobiusLemma} in place of \cite[Lemma 6.1]{Lee-FredholmOperators}.
In particular, we obtain improved regularity of solutions to $\mathcal P u = f$. 

\begin{lemma}[Lemma 6.4 of \cite{Lee-FredholmOperators}]
\label{ImprovedRegularity}
Suppose $g\in \WAH^{l,\beta;1}$, let $\mathcal P$ satisfy Assumption \ref{Assume-P}, and let $R$ be the indicial radius of $\mathcal P$ as defined in \S\ref{Results}.
\begin{enumerate}
\item 
Suppose that $\beta\in [0,1)$, $1<p<\infty$, $2\leq k \leq l$, $|\delta + n/p - n/2|<R$, and ${|\delta^\prime + n/p - n/2|}<R$. 
Then for each $u\in H^{0,p}_\delta(M;E)$ with $\mathcal P u \in H^{k-2,p}_{\delta^\prime}(M;E)$ we have $u\in H^{k,p}_{\delta^\prime}(M;E)$.

\item 
Suppose $\beta\in (0,1)$, $0<\alpha<1$, $2\leq k+\alpha  \leq l+\beta$, $|\delta - n/2|<R$, and $|\delta^\prime -n/2|<R$. 
Then for each $u\in C^{0}_\delta(M;E)$ with $\mathcal P u \in C^{k-2,\alpha}_{\delta^\prime}(M;E)$ we have $u\in C^{k,\alpha}_{\delta^\prime}(M;E)$.
\end{enumerate}
\end{lemma}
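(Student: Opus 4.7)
The plan is to follow the proof of Lemma 6.4 in \cite{Lee-FredholmOperators} almost verbatim, substituting our Lemma \ref{BoundaryMobiusLemma} for the strongly asymptotically hyperbolic metric estimate (Lee's Lemma 6.1) at the one place where it is actually used. All other ingredients of Lee's argument are already available in our setting: the form of $\mathcal P$ as a uniformly degenerate operator with continuous coefficients (Lemma \ref{I-Lemma}), the continuity of the indicial map at each boundary point (Lemma \ref{I-Lemma}), the interior elliptic regularity estimate (Lemma \ref{EllipticRegularity}), and the semi-Fredholm regularity estimates (Proposition \ref{StrongRegularity}).

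The first step is to upgrade $u$ to the highest interior regularity at its \emph{original} weight $\delta$. In the Sobolev case, applying Lemma \ref{EllipticRegularity} on each M\"obius ball (using that $\mathcal Pu\in H^{k-2,p}_{\delta'}(M;E)$ is locally controlled in $H^{k-2,p}_\delta$ on such a ball by interior equivalence of weighted norms) yields $u\in H^{k,p}_\delta(M;E)$; the H\"older case is analogous. The second and decisive step is to construct, via Lee's parametrix machinery from Chapter 6 of \cite{Lee-FredholmOperators}, a right parametrix $\mathcal Q_{\delta'}$ mapping $H^{k-2,p}_{\delta'}(M;E)\to H^{k,p}_{\delta'}(M;E)$ (and similarly for H\"older spaces) satisfying $\mathcal P\mathcal Q_{\delta'}=\Id+\mathcal K$ for some operator $\mathcal K$ that is smoothing and compactly supported. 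This construction proceeds by pulling $\mathcal P$ back under the boundary M\"obius parametrizations $\Psi_r$ and comparing to the corresponding model operator on hyperbolic space; the model is precisely invertible between the analogous weighted spaces when $|\delta'+n/p-n/2|<R$ (respectively $|\delta'-n/2|<R$), and the estimate $\|\Psi_r^* g-\breve g\|_{C^{l,\beta}(Y)}\le Cr$ from Lemma \ref{BoundaryMobiusLemma} ensures that $\Psi_r^*\mathcal P$ is a small perturbation of the model, hence invertible by Neumann series for $r$ sufficiently small. Patching the local inverses with a uniformly locally finite partition of unity subordinate to $\{\Psi_r(Y)\}$ produces the global parametrix.

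With $\mathcal Q_{\delta'}$ in hand, the lemma is proved by writing $u = \mathcal Q_{\delta'}\mathcal Pu + (u-\mathcal Q_{\delta'}\mathcal Pu)$. The first summand lies in $H^{k,p}_{\delta'}(M;E)$ (respectively $C^{k,\alpha}_{\delta'}(M;E)$) directly from the hypothesis on $\mathcal Pu$ and the mapping properties of $\mathcal Q_{\delta'}$. The second summand satisfies $\mathcal P(u-\mathcal Q_{\delta'}\mathcal Pu)=-\mathcal K\mathcal Pu$, which is compactly supported and as smooth as desired; since the difference also lies in the ambient space of $u$, a finite bootstrap using interior elliptic regularity places it in the target space as well. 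The main obstacle is verifying that Lee's parametrix construction survives the weakening of the metric regularity from $C^{l,\beta}$ conformal compactness to the weakly asymptotically hyperbolic condition: every step relies on uniform pointwise control of the metric and its derivatives in M\"obius parametrizations, and Lemma \ref{BoundaryMobiusLemma}---which itself rests on the characterization of $\mathscr C^{l,\beta;1}(M)$ in background coordinates provided by Lemma \ref{lemma:properties-of-scriptC}\eqref{part:TripleComponents}---delivers exactly the same quantitative estimate as in the strongly asymptotically hyperbolic case. Once that single estimate is in place, none of Lee's subsequent arguments require modification.
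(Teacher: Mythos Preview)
Your proposal is correct and takes the same approach as the paper. The paper's own argument is even more terse: immediately after Lemma \ref{BoundaryMobiusLemma} it simply states that ``the parametrix construction of \cite{Lee-FredholmOperators} follows using Lemma \ref{BoundaryMobiusLemma} in place of \cite[Lemma 6.1]{Lee-FredholmOperators}'' and then records Lemma \ref{ImprovedRegularity} as a consequence, without spelling out the parametrix details you sketch; your outline fills in exactly what that sentence abbreviates.
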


Subsequently, the proofs of Proposition 6.5, Theorem 6.6 and the affirmative portion of Theorem C in \cite{Lee-FredholmOperators}, which corresponds to Theorem \ref{WeaklyFredholmTheorem} above, proceed with no further modifications. 
We have not pursued the possibility of extending the negative portion of Theorem C to the weakly asymptotically hyperbolic setting.

\section{The Yamabe problem}
\label{Yamabe}
We now address the solvability of \eqref{YP}.
In fact, we construct positive solutions to the more general Lichnerowicz-type equation appearing in general relativity (see, for example, \cite{ChoquetBruhat-GRBook}):
\begin{multline}
\label{SimpleLichPhi}
\Delta_g\phi = \frac{n-1}{4n} \R[g]\phi 
\\
- A\phi^{-({3n+1})/({n-1})} - B\phi^{-({n+1})/({n-1})} + \frac{n^2-1}{4} \phi^{({n+3})/({n-1})},
\end{multline}
where $A,B$ are non-negative functions.
Solutions to \eqref{YP} can then be obtained by  taking $A =0$ and $B =0$.

In order to address the solvability of \eqref{SimpleLichPhi} we first use Theorem \ref{WeaklyFredholmTheorem} to establish an existence result for linear scalar equations.
We remind the reader that our sign convention for the Laplace operator is opposite to that of \cite{Lee-FredholmOperators}.

\begin{proposition}
\label{PerturbedPoisson}
Suppose that $g \in \WAH^{l,\beta;1}$ for  $l\geq 2$ and $\beta\in (0,1)$.
Let $k\in \mathbb N$ and $\alpha\in (0,1)$ satisfy $2\leq k+\alpha \leq l+\beta$.
Suppose also that $\kappa\in C^{k-2,\alpha}_\sigma(M)$ for some $\sigma>0$, and that  $c$ is a constant satisfying $c>-n^2/4$ and $c-\kappa\geq 0$.
Then so long as 
\begin{equation}
\label{PP-Radius}
\left|\delta-\frac{n}{2}\right|<\sqrt{\frac{n^2}{4}+c},
\end{equation}
the map
\begin{equation*}
\Delta_g - (c-\kappa)\colon C^{k,\alpha}_\delta(M)\to C^{k-2,\alpha}_\delta(M)
\end{equation*}
is invertible.

Furthermore, if $\rho^2g\in C^2_\phg(\bar M)$, $\rho^{-\nu}f\in C^0_\phg(\bar M)$ for some $\nu >n/2-\sqrt{n^2/4+c}$, and $\kappa$ is a polyhomogeneous function \(which necessarily vanishes on $\partial M$\), then the unique function $u\in C^{2,\alpha}_\delta(M)$ such that
\begin{equation}
\label{PP}
\Delta_g u + (\kappa-c) u = f
\end{equation} 
is polyhomogeneous and satisfies the following boundary regularity conditions:
\begin{itemize}
\item If $\nu > n/2+\sqrt{n^2/4+c}$, then $\rho^{-n/2-\sqrt{n^2/4+c}}\, u \in C^0_\phg(\bar M)$.
\item If $|\nu - n/2|<\sqrt{n^2/4+c}$, then $\rho^{-\nu} u\in C^0_\phg(\bar M)$.
\item If $\nu = n/2+\sqrt{n^2/4+c}$, then $\rho^{-\mu} u \in C^0_\phg(\bar M)$ for all $\mu< \nu$.
\end{itemize}
\end{proposition}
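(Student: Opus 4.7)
The plan is to set $\mathcal P = \Delta_g - (c - \kappa)$, verify that $\mathcal P$ falls under the scope of Theorem~\ref{WeaklyFredholmTheorem}, and then bootstrap from Fredholm of index zero to full invertibility by showing that the kernel is trivial. First I would check Assumption~\ref{Assume-P}. The Laplacian $\Delta_g$ is geometric; the additional zeroth-order term $c - \kappa$, although not strictly polynomial in $g$, introduces only a bounded continuous multiplication and is readily accommodated by the extension in Remark~\ref{Generalize} (the adaptation of Lemma~\ref{I-Lemma} is trivial since $\kappa$ vanishes at $\partial M$). Self-adjointness is immediate. For the basic $L^2$ estimate \eqref{BasicL2Estimate}, I would use that in boundary M\"obius parametrizations $g$ is arbitrarily close to the hyperbolic metric, so the spectral gap $\int|\nabla u|^2 \ge (n^2/4-\epsilon)\int u^2$ holds for $u\in C^\infty_c(M\setminus K)$ for $K$ sufficiently large; combined with $\kappa=O(\rho^\sigma)$, integration by parts yields
\[
-\langle\mathcal P u,u\rangle = \int|\nabla u|^2\,dV_g + \int(c-\kappa)u^2\,dV_g \ge \left(\tfrac{n^2}{4}+c-2\epsilon\right)\|u\|_{L^2}^2,
\]
which gives the required estimate since $c>-n^2/4$.

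Next I would identify the indicial radius. Because $\kappa$ vanishes on $\partial M$, the indicial operator coincides with that of $\Delta_{\breve g}-c$ on hyperbolic space, with characteristic equation $s(s-n)-c=0$ and roots $s_\pm = n/2 \pm \sqrt{n^2/4+c}$; thus $R = \sqrt{n^2/4+c}$, and the hypothesis \eqref{PP-Radius} is precisely $|\delta+\tfrac{n}{p}-\tfrac{n}{2}|<R$ (here $p=\infty$, so $|\delta-n/2|<R$). Theorem~\ref{WeaklyFredholmTheorem} then gives $\mathcal P\colon C^{k,\alpha}_\delta(M)\to C^{k-2,\alpha}_\delta(M)$ Fredholm of index zero, with kernel equal to the $L^2$ kernel. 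To establish invertibility, I need only show this kernel is trivial. If $u \in \ker\mathcal P$, then $u\in L^2$; applying Lemma~\ref{ImprovedRegularity} (improved regularity) I can upgrade $u \in C^{k,\alpha}_{\delta'}(M)$ for any $\delta'<s_+$. Since $s_+ > n/2$, this forces enough decay that integration by parts against $u$ with a smooth cutoff and passage to the limit is justified, giving
\[
0 = -\int u\,\mathcal P u\,dV_g = \int|\nabla u|^2\,dV_g + \int(c-\kappa)u^2\,dV_g.
\]
Both terms are non-negative by the hypothesis $c-\kappa\ge 0$, so $\nabla u \equiv 0$; $u$ is constant, and since $u$ decays at infinity, $u\equiv 0$. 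Combined with index zero, this yields invertibility.

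For the polyhomogeneity portion, I would invoke the polyhomogeneous regularity theory developed in Appendix~\ref{appendix}. Under the standing polyhomogeneous hypotheses on $\rho^2 g$ and $\kappa$, the operator $\mathcal P$ has polyhomogeneous coefficients in the appropriate sense; together with polyhomogeneity of $f$, this forces $u$ to be polyhomogeneous. The boundary regularity is then a matter of reading off the index set of $u$. The expansion of $u$ is generated by two ingredients: contributions propagating from the inhomogeneity $f$ (leading order $\rho^\nu$) and contributions from the indicial roots (leading order $\rho^{s_+}$, since $\rho^{s_-}$-type terms are excluded by our decay $\delta>s_-$). When $s_-<\nu<s_+$ I can invert directly at $\delta=\nu$ using Lemma~\ref{ImprovedRegularity}, so $u\in C^{k,\alpha}_\nu(M)$ and $\rho^{-\nu}u\in C^0_\phg(\bar M)$. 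When $\nu>s_+$ the inhomogeneous contribution is subdominant and the leading term $\rho^{s_+}$ from the indicial kernel dictates that $\rho^{-s_+}u\in C^0_\phg(\bar M)$. The resonant case $\nu=s_+$ produces a $\rho^{s_+}\log\rho$ term in the expansion, yielding the $\mu<\nu$ statement.

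The main obstacle will be the resonant case $\nu = s_+$ and, more broadly, a careful bookkeeping of index sets for the polyhomogeneous expansion in the presence of the source $f$. This is handled in the edge-calculus framework in a standard way, and the appendix provides the necessary extraction of leading orders; everything else in the argument is a direct application of Theorem~\ref{WeaklyFredholmTheorem} together with a straightforward energy estimate.
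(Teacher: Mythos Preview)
Your overall strategy—Fredholm of index zero, trivial kernel via integration by parts, polyhomogeneity from the appendix—matches the paper's, and your kernel and boundary-regularity arguments are essentially the same as theirs. The one substantive difference is how you obtain the Fredholm property. You treat $\mathcal P=\Delta_g-(c-\kappa)$ as a single operator and try to push it through Theorem~\ref{WeaklyFredholmTheorem} via Remark~\ref{Generalize}; the paper instead observes that multiplication by $\kappa\in C^{k-2,\alpha}_\sigma(M)$ factors as a bounded map $C^{k-2,\alpha}_\delta\to C^{k-2,\alpha}_\delta$ composed with the compact inclusion $C^{k,\alpha}_\delta\hookrightarrow C^{k-2,\alpha}_\delta$ (Rellich), so $\kappa$ is a compact perturbation of the genuinely geometric operator $\Delta_g-c$, and Fredholm of index zero follows immediately.

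Your route is not wrong in spirit, but the invocation of Remark~\ref{Generalize} is off: that remark covers coefficients that are universal polynomials in $\rho$ and the components of $g$, whereas $\kappa$ is an arbitrary element of $C^{k-2,\alpha}_\sigma(M)$. You would need to argue separately that the parametrix construction and Lemma~\ref{ImprovedRegularity} survive the addition of such a zeroth-order term (which they do, since $\kappa$ vanishes at $\partial M$ and has enough interior regularity, but this is extra work). The paper's compact-perturbation argument sidesteps this entirely and also makes your $L^2$ spectral-gap computation unnecessary, since Assumption~\ref{Assume-P} need only be checked for $\Delta_g-c$. Either way you arrive at the same Fredholm statement; the paper's path is just shorter and cleaner.
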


\begin{proof}
Since $\kappa\in C^{k-2,\alpha}_\sigma(M)$, multiplication by $\rho^{-\sigma}\kappa$ is a continuous map 
\begin{gather}
C^{k-2,\alpha}_\delta(M)\to C^{k-2,\alpha}_{\delta}(M).
\end{gather} 
By the Rellich Lemma \cite[Lemma 3.6(d)]{Lee-FredholmOperators}, multiplication by $\rho^\sigma$ is a compact operator 
\begin{gather}
C^{k,\alpha}_\delta(M)\to C^{k-2,\alpha}_{\delta}(M).
\end{gather}
Thus multiplication by $\kappa$, as the composition of a continuous operator and a compact operator, is a compact operator 
\begin{gather}
C^{k,\alpha}_\delta(M)\to C^{k-2,\alpha}_{\delta}(M).
\end{gather}

The Laplacian $\Delta_g$ is well known to be a formally self-adjoint elliptic geometric operator.
From Corollary 7.4 of \cite{Lee-FredholmOperators} we have that the indicial radius of $\Delta_g - c$ is $\sqrt{n^2/4+c}$. 
Hence
\begin{equation}
\label{PerturbedPoissonMap}
\Delta_g  -(c- \kappa)\colon C^{k,\alpha}_\delta(M)\to C^{k-2,\alpha}_{\delta}(M)
\end{equation} 
is Fredholm of index zero so long as \eqref{PP-Radius} holds.
To show that $\Delta_g - (c-\kappa)$ is invertible, it is sufficient to verify that the kernel is trivial.
Suppose, therefore, that $v$ is in the kernel; by  Lemma \ref{ImprovedRegularity} we have $v\in C^{l,\beta}_\delta(M)$ for all $\delta$ satisfying \eqref{PP-Radius}.
In particular, $v$ has sufficient decay that we may integrate by parts to conclude
\begin{equation*}
0 = \int_M \left( |\D v|_{g}^2 + (c-\kappa) v^2 \right) \dVol{g}, 
\end{equation*}
from which we deduce that $v =0$.
Note that in the case $c-\kappa=0$, we must have $c=0$ and thus $\delta>0$ by \eqref{PP-Radius}; since the only constant function in $C^{k,\alpha}_\delta(M)$ is the zero function, we find $v=0$.

Suppose now that $\rho^2g \in C^2_\phg(\bar M)$.
If $f$ is a polyhomogeneous function with $\rho^{-\nu}f \in C^0_\phg(\bar M)$, then $f\in C^{k,\alpha}_\nu(M)$ for all $k\in \mathbb N_0$ and $\alpha\in [0,1)$. 
Thus \eqref{PP} has a unique solution $u\in C^{k,\alpha}_\delta(M)$ for all $0<\delta\leq \nu$ satisfying \eqref{PP-Radius}, and for all $k\geq 2$. 
Theorem \ref{SolutionsArePolyhomogeneous} ensures that the solution $u$ is polyhomogeneous.
 
The boundary regularity follows from inserting the expansion \eqref{GenericTensorExpansion} of $u$ into \eqref{PP} and carrying out a formal asymptotic computation using Lemma \ref{LocalApproximationLemma}: If $\nu$ is in the Fredholm range, then $u$ has the same behavior as $f$, but if $f$ asymptotically decays as $\rho^{n/2+\sqrt{n^2/4+c}}$ there is a resonance, leading to terms with logarithms.
Finally, if $f$ decays faster than $\rho^{n/2+\sqrt{n^2/4+c}}$, then the leading behavior of $u$ is $\rho^{n/2+\sqrt{n^2/4+c}}$, as such terms are annihilated by the indicial operator of $\Delta_g$.
\end{proof}

In order to construct solutions to \eqref{SimpleLichPhi}, it is useful to first make a conformal change of the metric so that it has negative scalar curvature.

\begin{lemma}
\label{WeakYamabe}
Suppose that $g \in \WAH^{l,\beta;1}$ for  $l\geq 2$ and $\beta\in (0,1)$.
Then there exists a positive function $\psi$ with $\psi -1 \in C^{l,\beta}_1(M)$ such that the scalar curvature of $\psi^{4/(n-1)} g$ is strictly negative.

Furthermore, if $\rho^2g\in C^2_\phg(\bar M)$, then $\psi\in C^2_\phg(\bar M)$. 
\end{lemma}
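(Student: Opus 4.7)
The plan is to use the linear Fredholm theory of Proposition \ref{PerturbedPoisson} to solve an auxiliary scalar equation whose solution $u$ produces $\psi := 1 + u$ with the desired properties. By Theorem \ref{thm:properties-of-WAH}\eqref{PropWAH4}, $\R[g] + n(n+1) \in C^{l-2,\beta}_1(M)$, so the potential $V := \tfrac{n-1}{4n}\R[g]$ is bounded on $M$ with $V \to -\tfrac{n^2-1}{4}$ at $\partial M$, and $\tilde r := V + \tfrac{n^2-1}{4}$ lies in $C^{l-2,\beta}_1(M)$. In particular, the set $K := \{V \ge 0\}$ is a compact subset of $M^\circ$ disjoint from $\partial M$.

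I would fix a smooth cutoff $\chi \in C^\infty_c(M^\circ)$ with $\chi \equiv 1$ on an open neighborhood of $K$, and set $\xi := \chi(V + C)$ for a constant $C > \tfrac{n^2-1}{4}$; then $\xi \ge 0$ on $M$ and a direct check shows $\xi - V > 0$ throughout (equal to $C$ on $\{\chi = 1\}$, positive on $\supp\chi \cap \{V \le 0\}$, and equal to $-V > 0$ off $\supp\chi$). Proposition \ref{PerturbedPoisson} with $c = 0$ and $\kappa = -\xi$---whose hypotheses hold because $\xi \ge 0$ lies in $C^{l-2,\beta}_\sigma(M)$ for every $\sigma > 0$ (having compact support in the interior), and $\delta = 1 \in (0, n)$ lies in the Fredholm range whenever $n \ge 2$---then yields an isomorphism $-\Delta_g + \xi \colon C^{l,\beta}_1(M) \to C^{l-2,\beta}_1(M)$. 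Solving $(-\Delta_g + \xi) u = -\tilde r$ for $u \in C^{l,\beta}_1(M)$, a direct computation using $-\Delta_g u = -\tilde r - \xi u$ gives
\begin{equation*}
L\psi = -\Delta_g u + V(1 + u) = -\tfrac{n^2-1}{4} + (V - \xi)\,u,
\end{equation*}
where $L := -\Delta_g + V$ is the conformal Laplacian. Since $V - \xi < 0$ pointwise, $(V - \xi)u$ is non-positive wherever $u \ge 0$, and the maximum principle for $-\Delta_g + \xi$ combined with the decay built into the weight $\delta = 1$ controls the sign and $L^\infty$-norm of $u$; after possibly perturbing the right-hand side by a compactly-supported positive auxiliary function, both $\psi > 0$ and $L\psi < 0$ are secured. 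The conformal change formula \eqref{ChangeSC} then yields $\R[\psi^{4/(n-1)} g] = \tfrac{4n}{n-1}\psi^{-(n+3)/(n-1)} L\psi < 0$.

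If $\rho^2 g \in C^2_\phg(\overline M)$, then the coefficients of $-\Delta_g + \xi$ and the right-hand side $-\tilde r$ are all polyhomogeneous, so Theorem \ref{SolutionsArePolyhomogeneous} gives a polyhomogeneous solution $u$ and hence polyhomogeneity of $\psi = 1 + u$.

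The hardest step is the simultaneous quantitative control of the sign and $L^\infty$-norm of $u$: the maximum principle yields pointwise bounds on $u$ only where $\xi$ is strictly positive, so controlling $u$ in the complementary region (where $\xi = 0$) requires a separate argument based on elliptic regularity together with the decay encoded by the weight $\delta = 1$. In the worst case where the sign of $\tilde r$ drives $u$ to become negative in some region, one introduces a small compactly-supported positive perturbation of the right-hand side to compensate, taking care to verify that the perturbed problem still falls within the Fredholm framework of Proposition \ref{PerturbedPoisson}.
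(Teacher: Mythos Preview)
Your proposal has a genuine gap precisely where you acknowledge difficulty: you never actually establish that $\psi>0$ and $L\psi<0$ simultaneously. After solving $(-\Delta_g+\xi)u=-\tilde r$ you obtain
\[
L\psi = -\tfrac{n^2-1}{4} + (V-\xi)\,u,
\]
and since $V-\xi<0$, the term $(V-\xi)u$ is positive wherever $u<0$; you need the quantitative bound $|V-\xi|\,|u|<\tfrac{n^2-1}{4}$ there, and nothing in the linear theory gives it. Your suggested fix---adding a compactly supported positive perturbation $h$ to the right-hand side---does not help: it changes the identity to $L\psi=-\tfrac{n^2-1}{4}+h+(V-\xi)u$, so a positive $h$ pushes $L\psi$ in the \emph{wrong} direction, while a negative $h$ would push $u$ (and hence $\psi$) downward. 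There is no evident rescaling or perturbation that secures both signs at once within this setup.

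The paper avoids this entirely by choosing a different auxiliary equation. Rather than modifying the operator, it picks a \emph{target} function $\widetilde R$ with $\widetilde R\le\min(\R[g]-\rho^3,-1)$ and $\R[g]-\widetilde R\in C^{l-2,\beta}_2(M)$, then solves (via Proposition~\ref{PerturbedPoisson} with $c=0$, $\kappa=\tfrac{n-1}{4n}(\widetilde R-\R[g])$) the equation
\[
\Delta_g u - \tfrac{n-1}{4n}(\R[g]-\widetilde R)\,u = \tfrac{n-1}{4n}(\R[g]-\widetilde R).
\]
The point is that $\psi=1+u$ then satisfies the \emph{homogeneous} equation $\Delta_g\psi = \tfrac{n-1}{4n}(\R[g]-\widetilde R)\,\psi$ with strictly positive zeroth-order coefficient; the strong maximum principle gives $\psi>0$ in one stroke, and the conformal change formula yields $\R[\psi^{4/(n-1)}g]=\widetilde R\,\psi^{-4/(n-1)}<0$ directly, with no further sign analysis needed. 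The key idea you are missing is to arrange the linear problem so that the equation for $\psi$ (not $u$) is linear and homogeneous.
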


\begin{proof}
Since $g$ is weakly asymptotically hyperbolic, Theorem \ref{thm:properties-of-WAH} implies that the scalar curvature satisfies  $\R[g]+n(n+1) \in C^{l-2,\beta}_1(M)$.
Using a smooth cutoff function, we may construct a  function $\widetilde R$ such that $\widetilde R + n(n+1)\in C^{l-2,\beta}_1$,  such that $\widetilde R\leq \min{(\R[g]-\rho^3,-1)}$ on $\bar M$, and such that  $\R[g] - \widetilde R\in C^{l-2}_2(M)$.
Applying Proposition \ref{PerturbedPoisson} with $\kappa = ((n-1)/4n)(\widetilde R - \R[g])$ and $c=0$, we obtain the existence of a function $u\in C^{l,\beta}_1(M)$ satisfying
\begin{equation*}
\Delta_g u -\frac{n-1}{4n}(\R[g]-\tilde R)u =\frac{n-1}{4n}(\R[g]-\widetilde R).
\end{equation*}

Thus $\psi = 1+u$ satisfies $\Delta_g \psi = ((n-1)/4n)(\R[g]-\widetilde R)\psi$.
As $\R[g]-\widetilde R\geq\rho^3 >0$ on $M$ and $\left.\psi\right|_{\partial M} =1$, the strong  (Hopf) maximum principle implies $\psi >0$.
Thus from \eqref{ChangeSC} we have
\begin{equation}
\label{ChangeScalarID}
\R[\psi^{4/(n-1)}g] 
= \left( -\frac{4n}{n-1}\Delta_g \psi + \R[g]\psi\right)\psi^{-({n+3})/({n-1})}
 = \widetilde R \psi^{-4/(n-1)} <0.
\end{equation}
In the case that $\rho^2g\in C^2_\phg(\bar M)$, the regularity of $\psi$ follows from the latter part of Proposition \ref{PerturbedPoisson}.
\end{proof}

\begin{remark}\hfill
\label{ConformalChangeStillWeak}
 If $g \in \WAH^{l,\beta;1}$ for  $l\geq 2$ and $\beta\in (0,1)$ and $\psi$ is a positive function with $\psi -1\in C^{l,\beta}_1(M)$, then $\psi^{4/(n-1)}g \in \WAH^{l,\beta;1}$ as well.
\end{remark}

We now address the solvability of  \eqref{SimpleLichPhi}, following the standard method of super- and subsolutions \cite{Isenberg-CMCclosed};
see \cite{AnderssonChruscielFriedrich} and \cite{AnderssonChrusciel-Dissertationes} for a related discussion in the asymptotically hyperbolic setting; see \cite{ChoquetBruhat-GRBook}, and the references therein, for analogous treatments in the compact and asymptotically Euclidean settings.

\begin{proposition}
\label{SolveLichPhi}
Suppose that $g \in \WAH^{l,\beta;1}$ for  $l\geq 2$ and $\beta\in (0,1)$.
Suppose furthermore that $A,B\in C^{l-2,\beta}_1( M)$ are nonnegative functions.
Then there exists a unique positive function $\phi$ with $\phi-1\in C^{l,\beta}_1(M)$ satisfying \eqref{SimpleLichPhi}.

Furthermore:
\begin{enumerate}
\item 
\label{part:SLP-High}
 If $g \in \WAH^{l,\beta;2}$ with $\R[g]+n(n+1)\in C^{l-2,\beta}_2(M)$ and $A,B\in C^{l-2,\beta}_2(M)$, then $\phi-1\in C^{l,\beta}_2(M)$ and thus $\phi^{4/(n-1)}g\in \WAH^{l,
\beta;2}$.
\item 
\label{part:SLP-Phg}
If $\rho^2g \in C^2_\phg(\bar M)$ and $\rho^{-2}A, \rho^{-2}B\in C^0_\phg(\bar M)$,  then $\phi\in C^2_\phg(\bar M)$ and thus $\rho^2 \phi^{4/(n-1)}g\in C^2_\phg(\bar M)$.
\end{enumerate}
\end{proposition}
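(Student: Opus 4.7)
The plan is to adapt the classical sub- and super-solution method for Lichnerowicz-type equations, used in \cite{Isenberg-CMCclosed,AnderssonChruscielFriedrich,AnderssonChrusciel-Dissertationes}, to the weakly asymptotically hyperbolic setting, invoking Lemma~\ref{WeakYamabe} to normalize the sign of the scalar curvature and Proposition~\ref{PerturbedPoisson} to solve the linear steps of a monotone iteration.

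First I reduce to the case $\R[g]<0$. A direct computation using the conformal transformation law for the conformal Laplacian $L_g = \Delta_g - \tfrac{n-1}{4n}\R[g]$ shows that \eqref{SimpleLichPhi} is conformally covariant: if $\tilde g = \psi^{4/(n-1)}g$ and $\tilde\phi$ satisfies the analogue of \eqref{SimpleLichPhi} for $\tilde g$ with sources $\tilde A = \psi^{-4(n+1)/(n-1)}A$ and $\tilde B = \psi^{-(2n+4)/(n-1)}B$, then $\phi = \psi\tilde\phi$ solves \eqref{SimpleLichPhi} for $g$ with the original $A,B$. Choosing $\psi$ as in Lemma~\ref{WeakYamabe} makes $\R[\tilde g]<0$; because $\psi-1\in C^{l,\beta}_1(M)$ and $\psi$ is bounded above and away from zero, the weighted function spaces are preserved, the boundary conditions $\phi-1\in C^{l,\beta}_1(M)$ and $\tilde\phi-1\in C^{l,\beta}_1(M)$ are equivalent, and the higher-regularity hypotheses needed for parts~(a) and~(b) are preserved as well. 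I therefore assume $\R[g]<0$ in what follows.

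Denoting the right-hand side of \eqref{SimpleLichPhi} by $\mathcal N(\phi)$, the identity
\begin{equation*}
\mathcal N(1) = \tfrac{n-1}{4n}(\R[g]+n(n+1)) - A - B \in C^{l-2,\beta}_1(M)
\end{equation*}
shows that $\phi=1$ is an approximate solution. Using $\R[g]<0$, $A,B\ge 0$, and the superlinear growth of $\phi\mapsto\tfrac{n^2-1}{4}\phi^{(n+3)/(n-1)}$, any sufficiently small constant $\epsilon>0$ is a constant sub-solution and any sufficiently large constant $M$ is a constant super-solution of \eqref{SimpleLichPhi}. These are upgraded to sub- and super-solutions $\phi_\pm\in 1+C^{l,\beta}_1(M)$ with $\epsilon\le\phi_-\le 1\le\phi_+\le M$ by solving appropriate auxiliary linear equations of the form $(\Delta_g-K)v_\pm=f_\pm$ via Proposition~\ref{PerturbedPoisson} (with $c=K>0$, $\kappa=0$, and $\delta=1$ in the Fredholm range for $K$ large) and invoking the maximum principle for $\Delta_g-K$. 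Choosing $K$ further so that $\phi\mapsto \mathcal N(\phi)-K\phi$ is non-increasing on $[\epsilon,M]$, I define the iteration
\begin{equation*}
(\Delta_g-K)(\phi_{k+1}-1) = \mathcal N(\phi_k) - K(\phi_k-1),\qquad \phi_0=\phi_+,
\end{equation*}
each step of which is well-posed in $1+C^{l,\beta}_1(M)$ by Proposition~\ref{PerturbedPoisson}, since the right-hand side lies in $C^{l-2,\beta}_1(M)$ by the Taylor expansion of $\mathcal N$ at $\phi=1$. A standard comparison argument based on the weighted maximum principle shows $\{\phi_k\}$ is monotone decreasing and bounded below by $\phi_-$; uniform elliptic estimates in $C^{l,\beta}_1(M)$ and passage to the pointwise limit yield a solution $\phi\in[\epsilon,M]$ with $\phi-1\in C^{l,\beta}_1(M)$.

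Uniqueness follows from the strict monotonicity of $\mathcal N(\cdot)$ in $\phi$: the difference $v$ of two positive solutions in $1+C^{l,\beta}_1(M)$ satisfies a linear equation $(\Delta_g-\tilde c)v=0$ with $\tilde c\to n+1>0$ at $\partial M$, and the weighted maximum principle (or integration by parts exploiting the decay of $v$) forces $v=0$. For part~(a), the improved hypotheses give $\mathcal N(1)\in C^{l-2,\beta}_2(M)$, and rerunning the argument with weight $\delta=2$ (in the Fredholm range of $\Delta_g-K$ for $K$ large) yields $\phi-1\in C^{l,\beta}_2(M)$, from which $\phi^{4/(n-1)}g\in\WAH^{l,\beta;2}$. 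Part~(b) follows by applying the polyhomogeneity conclusion of Proposition~\ref{PerturbedPoisson} to the linear equation satisfied by $u=\phi-1$, whose coefficients and right-hand side inherit polyhomogeneity from $\R[g]$, $A$, $B$, and (inductively along the iteration) $\phi$ itself. The main technical obstacle is the construction of sub- and super-solutions lying in $1+C^{l,\beta}_1(M)$---naive constant sub- and super-solutions fail to satisfy the boundary-decay condition---and the simultaneous selection of $\epsilon,M,K$ and the Fredholm weight $\delta$ so that the iteration remains trapped in $[\epsilon,M]$ and preserves the weighted regularity at every step.
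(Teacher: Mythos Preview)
Your overall strategy---conformal reduction to $\R[g]<0$ via Lemma~\ref{WeakYamabe}, then monotone iteration between sub- and super-solutions using Proposition~\ref{PerturbedPoisson} for the linear steps---matches the paper's approach. However, several steps are underspecified or do not work as you describe.

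\textbf{Convergence of the iteration.} You write that ``uniform elliptic estimates in $C^{l,\beta}_1(M)$ and passage to the pointwise limit'' suffice. Pointwise monotone convergence together with uniform $C^{l,\beta}_1$ bounds does not yield $C^{l,\beta}_1$ convergence, and the inclusion $C^{l,\beta}_1\hookrightarrow C^{l,\beta}_1$ is not compact. The paper avoids this by using pointwise bounds of the form $|u_i|\le N\rho$ (coming from barriers $1\pm N\rho$ near the boundary and a constant in the interior) to obtain uniform $H^{0,p}_\delta$ bounds for some $\delta\in(\tfrac12,1)$, then elliptic regularity in $H^{2,p}_\delta$, Sobolev embedding into $C^{1,\beta}_\delta$, and finally the Rellich lemma with a loss of weight $\delta\to\delta'<\delta$ to extract a convergent subsequence. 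The regularity $\phi-1\in C^{l,\beta}_1$ is recovered afterward by a bootstrap using Lemma~\ref{ImprovedRegularity}, not directly from the iteration.

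\textbf{Part (a).} ``Rerunning the argument with weight $\delta=2$'' would require sub- and super-solutions in $1+C^{l,\beta}_2(M)$, which you have not constructed; the barriers $1\pm N\rho$ or your unspecified $\phi_\pm$ are only in $1+C^{l,\beta}_1$. The paper's route is cleaner: once $\phi-1\in C^{l,\beta}_1$ is in hand, write $w=\phi-1$ as a solution of $\Delta_g w-(n+1)w=f'$, observe that under the hypotheses of (a) the quadratic and higher terms in $w$ contribute weight $\ge 2$ so that $f'\in C^{l-2,\beta}_2$, and apply Lemma~\ref{ImprovedRegularity} to conclude $w\in C^{l,\beta}_2$.

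\textbf{Part (b).} This is a genuine gap. The equation satisfied by $u=\phi-1$ is nonlinear in $u$, so Proposition~\ref{PerturbedPoisson} does not apply; the right-hand side depends on $\phi$, whose polyhomogeneity is exactly what you are trying to prove. Your suggestion to proceed ``inductively along the iteration'' does not help: each iterate is polyhomogeneous, but the index sets grow with the iteration and a limit of polyhomogeneous functions need not be polyhomogeneous. The paper handles this with a separate nonlinear result, Proposition~\ref{Lich-BoundaryRegularity}, which adapts both the conormality argument (via difference quotients) and the polyhomogeneous expansion argument to equations of the form $\mathcal P u = f(u)$ with $f$ analytic.
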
 

We remark that if $g$ is smoothly conformally compact, the solution $\phi$ may nevertheless be polyhomogeneous, rather than smooth, on $\bar M$; see \cite{AnderssonChrusciel-Dissertationes}.

\begin{proof}
It follows from  Lemma \ref{WeakYamabe} that there exists a positive function $\psi$ with $\psi -1\in C^{l,\beta}_1(\bar M)$ such that $\R[\psi^{4/(n-1)} g] <0$, and from Remark \ref{ConformalChangeStillWeak} that $\psi^{4/(n-1)}g\in \WAH^{l,\beta;1}$.
Setting ${\gamma} = \psi^{4/(n-1)}g$, $a = \psi^{-4(n+1)/(n-1)}A$, and $b = \psi^{-2(n+2)/(n-1)}B$ we easily verify that a function $\theta$ satisfies
\begin{multline}
\label{ScalarNegativeLich}
\Delta_{\gamma}\theta = F(\theta) 
:= \frac{n-1}{4n} \R[{\gamma}]\theta 
- a\theta^{-(3n+1)/(n-1)} 
\\
- b \theta^{-(n+1)/(n-1)}
+\frac{n^2-1}{4}\theta^{(n+3)/(n-1)}
\end{multline}
if and only if $\phi = \psi\theta$ satisfies \eqref{SimpleLichPhi}; we further require $\left.\theta\right|_{\partial M} =1$ and $\theta>0$.
Note that while $F\colon M\times (0,\infty) \to \mathbb R$, we suppress explicit dependence on $M$.
Note also that any function $v$ with $v = 1 + \mathcal O(\rho)$ satisfies $F(v) = \mathcal O(\rho)$ as $\rho\to 0$.

We show that there exists a solution to \eqref{ScalarNegativeLich} by constructing barriers.
We first note that there exists a constant $C>0$ such that
\begin{equation*}
-C\leq \R[{\gamma}] \leq -\frac{1}{C}
\quad\text{ and }\quad
0\leq a, b \leq {C}.
\end{equation*}
Thus there exists a constant $u_*\in(-1,0)$ with $F(1+u_*)\leq 0$.

Without loss of generality, we may assume that $\Delta_\gamma\rho <0$ on $M$; see the construction in  \cite[Section 4.1]{IsenbergLeeStavrov-AHP}.
Since $\R[\gamma] = -n(n+1) + \mathcal O(\rho)$ and $\R[\gamma]$ is strictly negative, there exists a constant $N>0$ sufficiently large so that
\begin{equation*}
(1-N\rho)^{4/(n-1)}< \frac{-1}{n(n+1)}\R[\gamma]
\quad\text{ when }\rho < N^{-1}.
\end{equation*}
Thus
\begin{equation*}
\Delta_{\gamma}(1-N\rho)\geq 0 \geq F( 1-N\rho)
\qquad\text{ when }\rho < N^{-1}.
\end{equation*}
A similar argument shows that we may furthermore choose $N>0$ such that 
\begin{equation*}
\Delta_{\gamma}(1+N\rho)\leq 0 \leq F(1+N\rho)
\qquad \text{ on }M.
\end{equation*}
Since $(x,u)\mapsto F(1+u)(x)$ and $(x,u)\mapsto \frac{\partial }{\partial u}F(1+u)(x)$ are continuous functions on $\bar M \times [u_*,\max_M(1+N\rho) ]$ we can choose $\Lambda>0$ sufficiently large so that $F(1+u) <\Lambda u$ and $\frac{\partial}{\partial u}F(1+u)<\Lambda$ on that domain.

Define $G(u) = F(1+u) - \Lambda u$; note that $G(u)$ is monotone decreasing in $u$ and that \eqref{ScalarNegativeLich} is satisfied by $\theta=1+u$ if and only if $u$ satisfies
\begin{equation}
\label{uLich}
\Delta_\gamma u - \Lambda u = G(u),
\qquad
\left.u\right|_{\partial M} = 0,
\qquad
u>-1.
\end{equation}

Fix $\delta\in(1/2,1)$ and note that if $v\in C^{2,\beta}_\delta(M)$, then $G(v)\in C^{2,\beta}_\delta(M)$.
Thus by Proposition \ref{PerturbedPoisson} we may define a sequence of functions $\{u_i\}_{i=0}^\infty\subset C^{2,\beta}_\delta(M)$ with $u_0 = N\rho$ and
\begin{equation*}
\Delta_{\gamma} u_{i+1} - \Lambda u_{i+1} = G(u_i),
\quad i\in \mathbb N_0.
\end{equation*}

Since
\begin{equation*}
(\Delta_\gamma-\Lambda)(N\rho) \leq G(N\rho) 
\quad\text{ and }\quad
(\Delta_\gamma-\Lambda)(u_*) \geq G(u_*)
\end{equation*}
and $G$ is monotone decreasing, 
the maximum principle implies that
\begin{equation}
\label{BoundedIterates}
N\rho\geq u_i \geq u_{i+1} \geq u_*\quad i\in \mathbb N_0.
\end{equation}
If $\rho <N^{-1}$ then we have $(\Delta_\gamma-\Lambda)(-N\rho) \geq G(-N\rho)$.
Using the maximum principle, together with the lower bound in \eqref{BoundedIterates}, we conclude that 
\begin{equation}
\label{BetterBoundedIterates}
N\rho \geq u_i \geq \max{(-N\rho,u_*)}
\end{equation}
 for all $i\in \mathbb N_0$.

For any $p>{n}/({1-\delta})$, we have $\rho\in H^{k,p}_\delta(M)$ for all $k\in\mathbb N$; see \cite[Lemma 3.2]{Lee-FredholmOperators}.
Thus the $u_i$ are uniformly bounded in $H^{0,p}_\delta(M)$. 
The monotonicity of $G$ then implies that $G(u_i)$ is uniformly bounded in $H^{0,p}_\delta(M)$ as well.
Thus we may apply elliptic regularity to conclude that the $u_i$ are uniformly bounded in $H^{2,p}_\delta(M)$.

Note that if we also choose $p$ such that $p\geq {(n+1)}/{(1-\beta)} $, then we have $H^{2,p}_\delta(M)\subset C^{1,\beta}_\delta(M)$; see \cite[Lemma 3.6(c)]{Lee-FredholmOperators}.
Thus $\|u_i\|_{C^{1,\beta}_\delta(M)}$ is uniformly bounded in $i$ and the Rellich Lemma \cite[Lemma 3.6(d)]{Lee-FredholmOperators} implies that for any choice of $\delta^\prime\in (1/2,\delta)$ we may pass to a subsequence, which we also denote $\{u_i\}$, that converges to some $u\in C^{0,\beta}_{\delta^\prime}(M)$.
From \eqref{BetterBoundedIterates} we have $u\in C^{0}_1(M)$.
The elliptic estimate of Lemma \ref{EllipticRegularity} implies
\begin{equation*}
\| u_j - u_i\|_{C^{2,\beta}_{\delta^\prime}(M)}
\leq C\left(\| F(u_j) - F(u_i)\|_{C^{0,\beta}_{\delta^\prime}(M)} 
+\| u_j - u_i\|_{C^{0,\beta}_{\delta^\prime}(M)}
\right);
\end{equation*}
thus $\{u_i\}$ is Cauchy in $C^{2,\beta}_{\delta^\prime}(M)$, whence $u\in C^{2,\beta}_{\delta^\prime}(M)$.

For any smooth, compactly supported test function $w$ we have
\begin{equation*}
\begin{aligned}
\int_M &\left(u\Delta_\gamma w - F( 1+u) w\right)\,dV_{\gamma}
\\
&= \lim_{i\to\infty} \int_M \left (u_i\Delta_\gamma w - F( 1+u_i) w\right)\,dV_{\gamma}
\\
&= \lim_{i\to\infty} \int_M \left (\Delta_\gamma u_i - F(1+u_i) \right)w \,dV_{\gamma}
\\
&= \lim_{i\to\infty} \int_M \left (\Lambda (u_i- u_{i-1})+ F( 1+u_{i-1}) - F( 1+u_i) \right)w \,dV_{\gamma}
\\
&=0.
\end{aligned}
\end{equation*}
Thus $u\in C^{2,\beta}_{\delta^\prime}(M)\cap C^0_1(M)$ is a weak, and hence strong, solution to \eqref{uLich}.

To see that $u\in C^{l,\beta}_1(M)$ we note that
\begin{equation*}
\Delta_\gamma u - (n+1)u  = f
\end{equation*}
where
\begin{multline*}
f = \frac{n-1}{4n}\left( \R[\gamma] + n(n+1)\right)(1+u)
- a(1+u)^{-(3n+1)/(n-1)} 
\\
- b(1+u)^{-(n+1)/(n-1)}
+\frac{n^2-1}{4}\left[(1+u)^{(n+3)/(n-1)} -1 -\frac{n+3}{n-1}\,u \right].
\end{multline*}
Since $\delta^\prime>1/2$ we have $f\in C^{0,\beta}_1(M)$. 
Thus Lemma \ref{ImprovedRegularity} implies that $u\in C^{l,\beta}_1(M)$.
Consequently, $\phi = \psi(1+u)$ satisfies \eqref{SimpleLichPhi} and $\phi -1\in C^{l,\beta}_1(M)$.

In the case that $g\in \WAH^{l,\beta;2}$ and $\R[g]+n(n+1)\in C^{l-2,\beta}_2(M)$ we set $w=\phi-1$ and note that $w\in C^{l,\beta}_1(M)$ and that $w$ satisfies
\begin{equation*}
\Delta_g w - (n+1)w  = f^\prime,
\end{equation*}
where
\begin{multline*}
f^\prime = \frac{n-1}{4n}\left( \R[g] + n(n+1)\right)(1+w)
- A(1+w)^{-(3n+1)/(n-1)} 
\\
- B(1+w)^{-(n+1)/(n-1)}
+\frac{n^2-1}{4}\left[(1+w)^{(n+3)/(n-1)} -1 -\frac{n+3}{n-1}\,w \right].
\end{multline*}
If  $A,B\in C^{l-2,\beta}_2(M)$ then $f^\prime\in C^{l-2,\beta}_2(M)$ and hence we conclude that $w\in C^{l,\beta}_2(M)$ as desired.

To show uniqueness we follow the argument in \cite{ChoquetBruhat-GRBook}: Suppose that $\phi,\tilde\phi$ both satisfy \eqref{SimpleLichPhi} and $\phi -1,\tilde\phi -1\in C^{l,\beta}_1(M)$.
Setting $\tilde \gamma = \tilde \phi^{4/(n-1)} g$, $\tilde\theta = \tilde\phi^{-1}\phi$, $\tilde a = \tilde\phi^{-4(n+1)/(n-1)}A$, and $\tilde b = \tilde\phi^{-2(n+2)/(n-1)}B$, 
we have
\begin{equation}
\label{FirstLichRatio}
\begin{aligned}
\Delta_{\tilde \gamma}(\tilde\theta -1)
= \tilde F(\tilde\theta)
&:= \frac{n-1}{4n} \R[\tilde \gamma]\tilde\theta 
- \tilde a \,\tilde\theta^{-(3n+1)/(n-1)} 
\\
&\qquad
- \tilde b \,\tilde\theta^{-(n+1)/(n-1)} 
+\frac{n^2-1}{4} \tilde\theta^{(n+3)/(n-1)}
\\
&= -\tilde a\left( \tilde\theta^{-(3n+1)/(n-1)}-1\right)
-\tilde b\left(\tilde\theta^{-(n+1)/(n-1)}-1\right)
\\
&\qquad
+\frac{n^2-1}{4}\left(\tilde\theta^{(n+3)/(n-1)}-1\right),
\end{aligned}
\end{equation}
where in the second line we have used \eqref{ChangeScalarID} and the fact that $\tilde\phi$ satisfies \eqref{SimpleLichPhi}.
Since $\tilde\theta>0$, for any real number $r$ we have
\begin{equation*}
\tilde\theta^r - 1 = (\tilde\theta -1) f_r
\end{equation*}
for some function $f_r$ with the same sign as $r$.
Therefore, we may express \eqref{FirstLichRatio} in the form
\begin{equation*}
\Delta_{\tilde \gamma}(\tilde\theta-1) - (c-\tilde\kappa )(\tilde\theta-1)=0,
\end{equation*}
where  $\tilde\kappa$ is a $C^{l-1,\beta}_1(M)$ function and $c= (n+1)(n+3)/4$ is a constant with $c\geq \tilde\kappa$.
Thus we may apply Proposition \ref{PerturbedPoisson} to conclude that $\tilde\theta-1=0$.

Suppose now that $\rho^2g \in C^2_\phg(\bar M)$ and $\rho^{-2}A, \rho^{-2}B\in C^0_\phg(\bar M)$.
Then $\psi\in C^2_\phg(\bar M)$ and iteratively applying the elliptic regularity estimates of Lemma \ref{EllipticRegularity} implies that $\phi-1\in C^\infty_{1}(M)$.
From Proposition \ref{Lich-BoundaryRegularity} we have that $\theta$, and thus $\phi$, is polyhomogeneous.
It readily follows from a straightforward asymptotic computation that $\phi\in C^2_\phg(\bar M)$.
\end{proof}

In the case that $g\in \WAH^{k,\alpha;2}$, Proposition \ref{SolveLichPhi} only implies $\phi^{4/(n-1)}g\in \WAH^{k,\alpha;2}$ under the condition that $\R[g]+n(n+1)\in C^{k-2,\alpha}_2(M)$.
To remove this condition, and thus complete the proof of Theorem \ref{YamabeTheorem}, we use the following lemma.

\begin{lemma}
\label{Finally!}
Suppose $g\in \WAH^{k,\alpha;2}$ with $k\ge 2$ and $\alpha\in [0,1)$.
Then the conformal class of $g$ contains a representative $\tilde g\in \WAH^{k,\alpha;2}$ such that $\R[\tilde g] +{n(n+1)}\in C^{k-2,\alpha}_2(M)$.
Furthermore, $\tilde g$ can be chosen so that $g\mapsto \tilde g$ is a locally Lipschitz map $\WAH^{k,\alpha;2}\to \WAH^{k,\alpha;2}$.
\end{lemma}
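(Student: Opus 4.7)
The plan is to obtain $\tilde g$ as a small conformal perturbation $\tilde g=(1+u)^{4/(n-1)}g$ in which $u\in \mathscr C^{k,\alpha;2}(M)\cap C^{k,\alpha}_1(M)$ vanishes on $\partial M$. Using the conformal transformation law \eqref{ChangeSC} together with the fact that $u\in C^{k,\alpha}_1(M)$ forces both the quadratic remainder $(1+u)^{(n+3)/(n-1)}-1-\tfrac{n+3}{n-1}u$ and the product $(\R[g]+n(n+1))\,u$ into $C^{k-2,\alpha}_2(M)$, a direct Taylor expansion yields
\begin{equation*}
\R[\tilde g]+n(n+1)\;\equiv\; -\tfrac{4n}{n-1}\bigl[\Delta_g u-(n+1)u\bigr]+\bigl(\R[g]+n(n+1)\bigr)\pmod{C^{k-2,\alpha}_2(M)}.
\end{equation*}
Thus the problem reduces to finding $u$ solving the linear equation $\Delta_g u-(n+1)u = \tfrac{n-1}{4n}\bigl(\R[g]+n(n+1)\bigr)$ modulo $C^{k-2,\alpha}_2(M)$, subject to $u$ having the correct boundary regularity and $1+u>0$.

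For the existence of such a $u$, I invoke Proposition \ref{PerturbedPoisson} with $c=n+1$ and $\kappa=0$: the indicial radius is $\sqrt{n^2/4+(n+1)}=(n+2)/2$, and $\delta=1$ satisfies $|1-n/2|<(n+2)/2$ for every $n\ge 1$. Since Theorem \ref{thm:properties-of-WAH}\eqref{PropWAH4} supplies $\R[g]+n(n+1)\in C^{k-2,\alpha}_1(M)$, the proposition produces a unique $u\in C^{k,\alpha}_1(M)$. Positivity of $\phi=1+u$ is then automatic in a neighborhood of $\partial M$ because $u|_{\partial M}=0$; global positivity can be enforced by multiplying $u$ against a smooth cutoff supported in a small collar of $\partial M$, since such a modification only perturbs $\R[\tilde g]+n(n+1)$ by a $C^\infty_c(M)$ contribution and so is harmless for the target $C^{k-2,\alpha}_2(M)$ statement. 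Combined with the expansion above, this produces a metric $\tilde g$ with $\R[\tilde g]+n(n+1)\in C^{k-2,\alpha}_2(M)$.

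The main obstacle will be upgrading the regularity from $u\in C^{k,\alpha}_1(M)$ to $u\in \mathscr C^{k,\alpha;2}(M)$, which is needed to conclude via the algebra property of Lemma \ref{lemma:properties-of-scriptC}\eqref{part:algebras} that $\tilde{\bar g}=(1+u)^{4/(n-1)}\bar g\in\mathscr C^{k,\alpha;2}(M)$ and hence $\tilde g\in\WAH^{k,\alpha;2}$. My proposed approach is a bootstrap: for a smooth vector field $X\in\mathscr V$ on $\bar M$, commuting the PDE with $\mathcal L_X$ produces an elliptic equation
\begin{equation*}
(\Delta_g-(n+1))\mathcal L_X u \;=\; \tfrac{n-1}{4n}\,\mathcal L_X\bigl(\R[g]+n(n+1)\bigr)+[\Delta_g,\mathcal L_X]u,
\end{equation*}
whose commutator $[\Delta_g,\mathcal L_X]$ is a second-order operator with coefficients algebraic in $\mathcal L_X\bar g$ and its first derivatives. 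Because $g\in\WAH^{k,\alpha;2}$ supplies $\mathcal L_X\bar g\in\mathscr C^{k-1,\alpha;1}(M)$, the right-hand side has just enough regularity to admit a second application of Proposition \ref{PerturbedPoisson}, yielding $\mathcal L_X u\in C^{k-1,\alpha}_1(M)$; iterating once more with a second vector field $Y\in\mathscr V$ gives $\mathcal L_Y\mathcal L_X u\in C^{k-2,\alpha}_0(M)$, establishing the desired $\mathscr C^{k,\alpha;2}$ regularity. Finally, local Lipschitz dependence of $g\mapsto \tilde g$ follows essentially for free, because the coefficients of the Fredholm operator $\Delta_g-(n+1)$ depend locally Lipschitz on $g\in\WAH^{k,\alpha;2}$ and the corresponding bounded inverse supplied by Proposition \ref{PerturbedPoisson} varies continuously, hence locally Lipschitz, under such perturbation.
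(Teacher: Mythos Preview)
Your overall strategy---conformal perturbation by a function $u\in C^{k,\alpha}_1(M)$ that kills the scalar curvature defect to order $\rho^2$---is sound, and the Taylor expansion you write for $\R[\tilde g]+n(n+1)$ is correct. The gap is in the bootstrap step, which does not work as claimed.

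The problem is the commutator $[\Delta_g,\mathcal L_X]$ for $X\in\mathscr V$. Write $\Delta_g = a^{ij}(\rho\partial_i)(\rho\partial_j)+b^i(\rho\partial_i)$ in background coordinates. For $X\in\mathscr V$ that is \emph{not} tangent to $\partial M$ (say $X=\partial_\rho$), one computes $[X,\rho\partial_i]=X(\rho)\partial_i+(\text{terms in }\mathscr V_0)$, and the piece $X(\rho)\partial_i = X(\rho)\rho^{-1}(\rho\partial_i)$ carries an unavoidable factor of $\rho^{-1}$. Consequently $[\Delta_g,\mathcal L_X]$ maps $C^{k,\alpha}_1(M)$ only into $C^{k-2,\alpha}_0(M)$, not into $C^{k-2,\alpha}_1(M)$. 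Likewise $\mathcal L_X(\R[g]+n(n+1))$ is only in $C^{k-3,\alpha}_0(M)$: the scalar curvature defect lies in $\mathscr C^{k-2,\alpha;1}(M)$ but not in $\mathscr C^{k-2,\alpha;2}(M)$, because it contains the term $\rho\Delta_{\bar g}\rho$ whose second $\bar\nabla$-derivative involves $\bar\nabla^3\bar g$, which for $\bar g\in\mathscr C^{k,\alpha;2}(M)$ has no extra decay. So your right-hand side lands at weight $0$, Proposition~\ref{PerturbedPoisson} returns $\mathcal L_Xu\in C^{k-1,\alpha}_0(M)$---which you already knew from $u\in C^{k,\alpha}_1(M)$---and a second commutation drops the weight to $-1$. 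The iteration therefore never reaches $\bar\nabla^2 u\in C^{k-2,\alpha}_2(M)$. (There is also a minor issue: Proposition~\ref{PerturbedPoisson} is stated for $\beta\in(0,1)$, while the lemma allows $\alpha=0$.)

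The paper avoids this entirely by \emph{not} solving a PDE. It works instead with the equivalent criterion of Theorem~\ref{thm:HAH-TFAE}: one needs only to arrange $F(\theta):=|d(\theta\rho)|_{\bar g}^2-1-\tfrac{2}{n+1}(\theta\rho)\Delta_{\bar g}(\theta\rho)\in C^{k-1,\alpha}_2(M)$ for some $\theta=1+w$. Since $F(1)=f\in\mathscr C^{k-1,\alpha;1}(M)\cap C^{k-1,\alpha}_1(M)$, the Regularization Theorem~\ref{thm:Regularization} produces $\hat f\in\mathscr C^{k,\alpha;2}(M)$ with $f-\hat f\in C^{k-1,\alpha}_2(M)$; taking $w$ to be a cutoff of $-\tfrac{n+1}{4n}\hat f$ gives $\theta\in\mathscr C^{k,\alpha;2}(M)$ by construction, and a short Taylor computation using Lemma~\ref{lemma:taylor} shows $F(\theta)\in C^{k-1,\alpha}_2(M)$. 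The $\mathscr C^{k,\alpha;2}$ regularity of the conformal factor is built in from the start, so no bootstrap is needed, and local Lipschitz dependence follows directly from the linearity of the regularization map.
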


\begin{proof}
Let $g\in \WAH^{k,\alpha;2}$ for $k\geq 2$ and $\alpha \in [0,1)$. 
We seek a positive function $\theta\in \mathscr C^{k,\alpha;2}(M)$ such that $\left.\theta\right|_{\partial M} =1$ and $\R[\theta^{-2}g] + n(n+1) \in C^{k-2,\alpha}_2(M)$.
Due to Theorem \ref{thm:HAH-TFAE}, it suffices to show that
\begin{equation*}
F(\theta) := |\D(\theta \rho)|^2_{\bar g} -1 - \frac{2}{n+1}(\theta\rho) \Delta_{\bar g}(\theta \rho) \in C^{k-1,\alpha}_2(M).
\end{equation*}

Note that $F(1)=f$, where $f$ is defined by \eqref{little-f}, and that $g\mapsto f$ is locally Lipschitz continuous as a map $\WAH^{k,\alpha;2}\to \mathscr C^{k-1,\alpha;2}(M)$.
By Theorem \ref{thm:Regularization}, there exists $\hat f \in \mathscr C^{k,\alpha;2}$ such that $f - \hat f \in C^{k-1,\alpha}_2(M)$; furthermore, $g\mapsto \hat f$ is locally Lipschitz continuous as a map $\WAH^{k,\alpha;2}\to 
\mathscr C^{k,\alpha;2}$.

Let $\chi\colon \mathbb R \to [0,1]$ be a smooth cutoff function with $\chi=1$ on $(-1/3, \infty)$ and  $\supp{\chi} \subset (-2/3,\infty)$.
Define
\begin{equation*}
\hat w = -\frac{n+1}{4n}\hat f
\quad\text{ and }\quad
w = \chi(\hat w)\hat w,
\end{equation*}
and set $\theta = 1+ w$.
Direct computation using Lemma \ref{lemma:taylor} shows that
\begin{equation*}
\begin{aligned}
F(\theta)
&= f + 2w + \frac{2(n-1)}{n+1}\rho \left\langle d\rho, dw\right\rangle_{\bar g}
+ C^{k-1,\alpha}_2(M)
\\
&= f + \frac{4n}{n+1} w + C^{k-1,\alpha}_2(M)
\\
&\in C^{k-1,\alpha}_2(M)
\end{aligned}
\end{equation*}
as desired.
Finally, as $x\mapsto x^{-2}$ is Lipschitz continuous on $[1/3,\infty)$, the map $g \mapsto \theta^{-2}g$ is locally Lipschitz continuous as claimed.
\end{proof}

\begin{corollary}
\label{cor:ImprovedSLP}
Suppose $g\in \WAH^{k,\alpha;2}$ for some $k\geq 2$ and $\alpha\in (0,1)$.
Then there exists a positive solution $\phi$ to \eqref{SimpleLichPhi} such that $\phi^{4/(n-1)}g\in \WAH^{k,\alpha;2}$.
\end{corollary}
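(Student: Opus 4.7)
The plan is to combine Lemma \ref{Finally!} with Proposition \ref{SolveLichPhi}\eqref{part:SLP-High} via a preliminary conformal change that puts $g$ into a gauge with the scalar-curvature decay needed by \eqref{part:SLP-High}.

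First I would apply Lemma \ref{Finally!} to obtain a positive $\theta\in \mathscr C^{k,\alpha;2}(M)$ with $\theta|_{\partial M}=1$ such that $\tilde g := \theta^{-2}g$ lies in $\WAH^{k,\alpha;2}$ and satisfies $\R[\tilde g]+n(n+1)\in C^{k-2,\alpha}_2(M)$. Writing $\mu := \theta^{(n-1)/2}$, one has $g = \mu^{4/(n-1)}\tilde g$ with $\mu$ positive and in the algebra $\mathscr C^{k,\alpha;2}(M)$ (see Lemma \ref{lemma:properties-of-scriptC}\eqref{part:algebras}).

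Next, the conformal covariance $L_g\phi = \mu^{-(n+3)/(n-1)}L_{\tilde g}(\mu\phi)$ of the Yamabe operator $L_g := \Delta_g - \tfrac{n-1}{4n}\R[g]$, together with the scaling $(\tilde\phi/\mu)^r = \mu^{-r}\tilde\phi^r$ of each nonlinear term in \eqref{SimpleLichPhi}, lets me recast the equation: multiplying \eqref{SimpleLichPhi} by $\mu^{(n+3)/(n-1)}$ and substituting $\tilde\phi := \mu\phi$ shows that $\phi$ solves \eqref{SimpleLichPhi} with coefficients $A, B$ for the metric $g$ if and only if $\tilde\phi$ solves the same equation with modified coefficients $\tilde A := \mu^{4(n+1)/(n-1)}A$ and $\tilde B := \mu^{2(n+2)/(n-1)}B$ for the metric $\tilde g$. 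Because $\mu$ is positive and lies in $\mathscr C^{k,\alpha;2}(M)$, the new coefficients $\tilde A, \tilde B$ retain nonnegativity and inherit the regularity of $A, B$, so the hypotheses of Proposition \ref{SolveLichPhi}\eqref{part:SLP-High} for $\tilde g$ are in place.

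Applying Proposition \ref{SolveLichPhi}\eqref{part:SLP-High} to $\tilde g$, $\tilde A$, $\tilde B$ then yields a unique positive $\tilde\phi$ with $\tilde\phi - 1\in C^{k,\alpha}_2(M)$ such that $\tilde\phi^{4/(n-1)}\tilde g\in \WAH^{k,\alpha;2}$. Setting $\phi := \tilde\phi/\mu$ produces the claimed positive solution of \eqref{SimpleLichPhi}, and the identity $\phi^{4/(n-1)}g = \tilde\phi^{4/(n-1)}\tilde g$ gives $\phi^{4/(n-1)}g\in \WAH^{k,\alpha;2}$. I expect the only real obstacle to be the conformal bookkeeping in the second step --- checking that the exponents in the nonlinear terms combine so that the transformed equation has the same algebraic form as \eqref{SimpleLichPhi} rather than picking up stray powers of $\mu$ that would take it outside the scope of Proposition \ref{SolveLichPhi}; once that is in hand, every remaining step is a direct application of results already developed.
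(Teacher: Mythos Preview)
Your proof is correct and follows essentially the same approach as the paper: apply Lemma \ref{Finally!} to pass to a conformal representative $\tilde g\in \WAH^{k,\alpha;2}$ with $\R[\tilde g]+n(n+1)\in C^{k-2,\alpha}_2(M)$, transform the Lichnerowicz equation under this conformal change (your $\mu$ is the paper's $\psi^{-1}$, and your $\tilde A,\tilde B$ agree with the paper's), and then invoke Proposition \ref{SolveLichPhi}\eqref{part:SLP-High}. The conformal bookkeeping you flag as the only potential issue is handled exactly as you indicate, and the identity $\phi^{4/(n-1)}g=\tilde\phi^{4/(n-1)}\tilde g$ closes the argument.
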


\begin{proof}
From Lemma \ref{Finally!} there exists positive function $\psi\in\mathscr C^{k,\alpha;2}$ such that $\tilde g = \psi^{4/(n-1)}g\in \WAH^{k,\alpha;2}$ and such that $\R[\tilde g]+n(n-1)\in C^{k-2,\alpha}_2(M)$.

Setting $\tilde A = \psi^{-4(n+1)/(n-1)}A$ and $\tilde B = \psi^{-2(n+2)/(n-1)}B$ we easily verify that a function $\tilde\phi$ satisfies
\begin{multline*}
\Delta_{\tilde g}\tilde\phi
= \frac{n-1}{4n} \R[{\tilde g}]\tilde\phi 
\\
- \tilde A\tilde\phi^{-(3n+1)/(n-1)} 
- \tilde B \tilde\phi^{-(n+1)/(n-1)}
+\frac{n^2-1}{4}\tilde\phi^{(n+3)/(n-1)}
\end{multline*}
if and only if $\phi = \psi\tilde\phi$ satisfies \eqref{SimpleLichPhi}.
From Proposition \ref{SolveLichPhi}\eqref{part:SLP-High} there exists $\tilde\phi \in 1+ C^{k,\alpha}_2(M)$ satisfying this equation and such that $\phi^{4/(n-1)} g=\tilde\phi^{4/(n-1)}\tilde g\in \WAH^{k,\alpha;2}$.
\end{proof}

\begin{proof}[Proof of Theorem \ref{YamabeTheorem}]
Suppose that $g\in \WAH^{k,\alpha;1}$ and let $\phi$ be the unique solution to \eqref{YP} provided by  Proposition \ref{SolveLichPhi}.
As $\phi-1\in C^{k,\alpha}_1(M)$, we have $\phi \in\mathscr C^{k,\alpha;1}$ and thus $\hat g = \phi^{4/(n-1)}g \in \WAH^{k,\alpha;1}$ and $\R[\hat g] = -n(n+1)$.

In the case that $g\in \WAH^{k,\alpha;2}$, the result is a consequence of Corollary \ref{cor:ImprovedSLP}.
Finally, in the case that $g$ is polyhomogeneous, the polyhomogeneity of $\phi$, and hence $\hat g$, follows from Proposition \ref{SolveLichPhi}\eqref{part:SLP-Phg}.
\end{proof}

\appendix
\section{Polyhomogeneity and boundary regularity}
\label{appendix}

Our purpose in this appendix is  to give a self-contained account of the boundary regularity of solutions to equations of the form 
\begin{equation}
\label{GenericEllipticEquation}
\mathcal{P} u = f
\end{equation}
in the polyhomogeneous setting; here $\mathcal P$ is a linear geometric operator acting on sections of tensor bundle $E$  arising from a metric $g$ that is polyhomogeneous in the sense defined below.
We further assume that $\mathcal P$ satisfies Assumption \ref{Assume-P}.
Many of the methods employed here have been used elsewhere to obtain related results;  we note in particular \cite{AnderssonChruscielFriedrich}, \cite{LeeMelrose-MongeAmpere},  \cite{Mazzeo-Edge},  \cite{Melrose-TransformationMethods}, and \cite{Melrose-Transformation}.

\subsection{The conormal and polyhomogeneous spaces}
\label{S-DefinePolyhomogeneity}
We first define conormality classes for tensor fields on $M$ using the collection $\mathscr V_b$ of smooth vector fields on $\bar M$ tangent to the boundary $\partial M$.
In background coordinates $\Theta = (\theta^i,\rho)$, a vector $V\in \mathscr V_b$ can be expressed as $V^i\partial_{\theta^i} + V^\rho \rho\partial_\rho$ where $V^i$, $V^\rho$ are smooth functions on $\bar M$.
Define  $\mathcal A(M)$ to be the class of smooth tensor fields $u$ on $M$ satisfying $\mathcal L_{V_1}\dots\mathcal L_{V_k}u \in L^\infty(M)$ for any finite set $\{V_1,\dots, V_k\}\subset \mathscr V_b$.

\begin{remark}
\label{NormalizedFrameRemark}
Direct computation shows that $u\in \mathcal A(M)$ if and only if in any background coordinate chart $(\mathcal U,\Theta)$ the functions expressing $u$ in terms of the `normalized' background coordinate frame $\{\rho\partial_{\Theta^\mu}\}$ and associated dual frame $\{\rho^{-1}\D\Theta^\mu\}$ extend to elements of $\mathcal A(M)$.
\end{remark}

For $\delta\in\mathbb R$ we set $\mathcal A_\delta(M) = \bigcap_{t<\delta} \rho^t\mathcal A(M)$ and $\mathcal A_{-\infty}(M) = \bigcup_{\delta\in \mathbb R}\mathcal A_\delta(M)$.
We emphasize that $\rho^\delta \mathcal A(M)$ is a proper subset of $\mathcal A_\delta(M)$; see Remark \ref{WhatsInAdelta} below.
Sections of class $\mathcal A_{-\infty}$ are called \Defn{conormal};
classes analogous to $\mathcal A$, $\mathcal A_\delta$, and $\mathcal A_{-\infty}$ have been employed elsewhere; see e.g.~\cite{LeeMelrose-MongeAmpere}, \cite{Mazzeo-Edge}, \cite{Melrose-TransformationMethods}, \cite{Melrose-Transformation}.

We now define an important subset of $\mathcal A_{-\infty}(M)$, the polyhomogeneous sections.
First, we consider functions on a background coordinate chart $(\mathcal U,\Theta)$.
We say a complex-valued function $f$ is \Defn{polyhomogeneous} on $\mathcal U$ if
\begin{enumerate}
\item there exist sequences $s_i\in \mathbb C$ and $p_i\in \mathbb N_0$ with $\Re(s_i)$ non-decreasing and diverging to $+\infty$ as $i\to\infty$,
\item there exist smooth functions $\bar f_{ip}(\theta)$, $p=0,\dots,p_i$, defined on an open neighborhood of $\bar U$, and
\item for each $k\in \mathbb N$ there exists $N\in \mathbb N$ such that
\begin{equation*}
f - \sum_{i=0}^N \sum_{p=0}^{p_i} \rho^{s_i} (\log \rho)^p \bar f_{ip} \in \rho^k\mathcal A(\mathcal U),
\end{equation*}
where we extend each $\bar f_{ip}$ to functions on $\mathcal U$ that are independent of $\rho$.
\end{enumerate}
In this case we write
\begin{equation*}
f\sim \sum_{i=0}^\infty \sum_{p=0}^{p_i} \rho^{s_i} (\log \rho)^p \bar f_{ip}.
\end{equation*}
Denote by $\mathcal A_\phg(\mathcal U)$ the collection of polyhomogeneous functions on $\mathcal U$.
We remark that this definition is somewhat more general that those used in  \cite{AnderssonChrusciel-Obstructions}, \cite{AnderssonChrusciel-Dissertationes}, \cite{IsenbergLeeStavrov-AHP}, where $s_i$ are assumed to be real; see \cite{Mazzeo-Edge}.

We call a smooth section $u$ of tensor bundle $E$ on $M$ polyhomogeneous if in each background coordinate chart $(\mathcal U,\Theta)$ the functions that describe the components of $u$ with respect to the normalized background coordinate frame (see Remark \ref{NormalizedFrameRemark}) are in $\mathcal A_\phg(\mathcal U)$ and if the sequences $\{s_i\}$, $\{p_i\}$ are the same in each chart.
Thus in each background coordinate chart, we may write
\begin{equation}
\label{GenericTensorExpansion}
u\sim \sum_{i=0}^\infty \sum_{p=0}^{p_i} \rho^{s_i-r} (\log \rho)^p \bar{u}_{ip}
\end{equation}
for some matrix-valued functions $\bar u_{ip}$; here $r$ is the weight of the bundle $E$.
Note that in fact these matrix-valued functions are the expression in coordinates of smooth sections of $\left.E\right|_{\partial M}$.

Let  $\mathcal A_\phg(M)$ denote the collection of polyhomogeneous tensor fields.
Note that $\mathcal A_\phg(M)\subset \mathcal A_{-\infty}(M)$; see Lemma \ref{InclusionLemma} below. 

It is sometimes convenient to restrict attention to polyhomogeneous fields with exponents $s_i$ in a particular set; thus for $S\subset \mathbb C$ we denote by $\mathcal A_\phg^S(M)$ those elements of $\mathcal A_\phg(M)$ for which the expansion \eqref{GenericTensorExpansion} has $\{s_i\}\subset S$.

We set $C^{k,\alpha}_\phg(\bar M)=C^{k,\alpha}(\bar M)\cap \mathcal A_\phg(M)$.

\begin{remark}
\label{DamnBundleWeights}
 The factor of $\rho^{-r}$ in \eqref{GenericTensorExpansion} is motivated by the fact that if the tensor bundle $E$ has weight $r$ then sections $u$ satisfy $|u|_g = \rho^r|u|_{\bar g}$.
This convention implies that if a tensor $u$ has expansion \eqref{GenericTensorExpansion} then $|u|_g$ behaves as $\rho^{\Re(s_0)}(\log\rho)^{p_0}$ for $\rho$ small; see part \eqref{Inclusion-PhgConormal} of Lemma \ref{InclusionLemma} below.
We further note that $u\in \mathcal A_\phg^{S+r}(M)$ precisely if the functions describing $u$ in any background coordinate chart $(\mathcal U,\Theta)$ are in $\mathcal A_\phg^S(\mathcal U)$.  
\end{remark}

\begin{remark}\hfill
\label{PhgSplitting}

\begin{enumerate} 

\item It follows directly from the definition that if  $u\in \mathcal A_\phg(M)$ then for any $\delta\in \mathbb R$ one may choose a finite set $S\subset \mathbb C$ such that $u = u^\text{fin} + u^\text{rem}$ with $u^\text{fin} \in \mathcal A^S_\phg(M)$ and $u^\text{rem}\in \mathcal A_\delta(M)$.

\item Observe that polyhomogeneous expansions are unique in the sense that if $u = u^\phg + u^\text{rem}$ with $u^\phg\in \mathcal A_\phg(M)$ and $u^\text{rem}\in \mathcal A_\delta(M)$ for some $\delta\in \mathbb R$, then the tensors $\bar u_{ip}$ of the terms $\rho^{s_i}(\log\rho)^p \bar u_{ip}$  
with $\Re (s_i)<\delta$ are uniquely determined.

\end{enumerate}
\end{remark}

\begin{remark}
\label{WhatsInAdelta}
It is helpful to have some examples to distinguish the various regularity classes above.
\begin{enumerate}
\item If $s\in \mathbb C$ then for any $l\in \mathbb N$ we have $\rho^s(\log\rho)^l\in \mathcal A_\delta(M)$ if $\delta = \Re(s)$, but $\rho^s(\log\rho)^l$ is not in $\rho^\delta \mathcal A(M)$.

\item Furthermore, $\rho^s(\log\rho)^l$ is polyhomogeneous, but is in $C^{k,\alpha}_\phg(\bar M)$ only if $\Re(s)>k+\alpha$.

\item Finally, if $\epsilon>0$ and $v\in C^\infty(\bar M)$ is not constant along $\partial M$, then the function $\rho^\epsilon\sin{(v\log\rho)}$ is an element of both $\mathcal A_\epsilon(M)$ and $C^{k,\alpha}_0(M)$ for all $k$ and $\alpha$, but is neither in $\rho^\epsilon \mathcal A(M)$ nor in $\mathcal A_\phg(M)$.

\end{enumerate}
\end{remark}

The following lemma records several important relationships among these regularity classes.

\begin{lemma}
\label{InclusionLemma}\hfill
\begin{enumerate}
\item 
\label{Inclusion-PhgConormal}
If $u\in \mathcal A_\phg(M)$ with leading exponent $s_0$ in expansion \eqref{GenericTensorExpansion}, then $u\in \mathcal A_\delta(M)$ for $\delta = \Re(s_0)$; thus
\begin{equation*}
\mathcal A_\phg(M)\subset\mathcal A_{-\infty}(M).
\end{equation*}

\item 
\label{Inclusion-PhgWeightDelta}
If $\alpha \in [0,1)$ and $\delta\in \mathbb R$ then
for tensor fields of weight $r$ we have
\begin{equation*}
\mathcal A_{\phg}(M) \cap C^{0,\alpha}_{\delta}(M)
\subset \rho^\delta \mathcal A(M)
\cap \rho^{\delta-r} C^0_\phg(\bar M).
\end{equation*}

\item
\label{Inclusion-DeltaIntrinsic}
If $k\in \mathbb N_0$, $\alpha \in [0,1)$, and $\delta^\prime <\delta$, then
\begin{equation*}
\mathcal A_\delta(M)
\subset \rho^{\delta^\prime}\mathcal A(M)
\subset C^{k,\alpha}_{\delta^\prime}(M).
\end{equation*}

\item
\label{Inclusion-Compact}
If $k\in \mathbb N_0$ and $\alpha\in [0,1)$ then for tensor fields of weight $r$ we have
\begin{equation*}
C^{0}_\phg(\bar M) \subset   C^{k,\alpha}_r(M).
\end{equation*}

\item
\label{Inclusion-PhgImprovement}
If $u\in C^{k}_\phg(\bar M)$ then there exists $\gamma\in(0,1)$ such that $u\in C^{k,\gamma}_\phg(\bar M)$.

\end{enumerate}

\end{lemma}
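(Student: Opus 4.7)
The plan is to treat the five parts in the order stated, as each feeds into the next. The technical backbone consists of two elementary observations. The first is that $\rho\partial_\rho$ preserves the power of $\rho$ when acting on the model monomials $\rho^s(\log\rho)^p$, sending them to $s\rho^s(\log\rho)^p+p\rho^s(\log\rho)^{p-1}$; the second is that under any M\"obius parametrization $\Phi(x,y)=(\theta_0+\rho_0 x,\rho_0 y)$, the b-vector fields $\rho\partial_\rho$ and $\partial_{\theta^i}$ pull back to smooth vector fields on $B_2$ whose coefficients are bounded independently of $\rho_0$.

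For part \eqref{Inclusion-PhgConormal}, I would fix $t<\Re(s_0)$, use Remark~\ref{PhgSplitting} to write $u=u^{\text{fin}}+u^{\text{rem}}$ with $u^{\text{rem}}\in\rho^k\mathcal{A}(M)$ for $k$ much larger than $t$, and apply the first observation iteratively to each monomial in $u^{\text{fin}}$ to conclude $\rho^{-t}u^{\text{fin}}\in\mathcal{A}(M)$. Part \eqref{Inclusion-DeltaIntrinsic} is immediate from the definition for the first inclusion; for the second, one writes $u=\rho^{\delta'}v$ with $v\in\mathcal{A}(M)$ and uses the second observation to see that $\Phi_i^*(\rho^{-\delta'}u)$ has $C^{k,\alpha}(B_2)$-norm bounded independently of $i$, after tracking the $\rho_0^r$ scaling associated to the tensor weight.

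For part \eqref{Inclusion-PhgWeightDelta}, the plan is to split $u=u^{\text{fin}}+u^{\text{rem}}$ with $u^{\text{rem}}\in\mathcal{A}_{\delta''}\subset\rho^\delta\mathcal{A}(M)$ for some $\delta''>\delta$ by part \eqref{Inclusion-DeltaIntrinsic}, and then to exploit the uniqueness half of Remark~\ref{PhgSplitting} on the remaining finite sum: the $C^{0,\alpha}_\delta$ control on $u^{\text{fin}}$ propagates, term by term, to the constraint $\Re(s_i)\ge\delta$ with $p=0$ whenever $\Re(s_i)=\delta$, so each term individually lies in $\rho^\delta\mathcal{A}(M)$ by the first observation. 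The resulting expansion of $\rho^{-(\delta-r)}u$ then has leading behaviour that extends continuously to $\partial M$, placing $u$ in $\rho^{\delta-r}C^0_\phg(\bar M)$. Part \eqref{Inclusion-Compact} follows from the same machinery: a weight-$r$ tensor $u\in C^0(\bar M)\cap\mathcal{A}_\phg(M)$ satisfies $|u|_h\le C\rho^r$, so $u\in C^0_r(M)$, and part \eqref{Inclusion-PhgWeightDelta} with $\delta=r$ together with part \eqref{Inclusion-DeltaIntrinsic} lifts this to $C^{k,\alpha}_r(M)$.

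For part \eqref{Inclusion-PhgImprovement}, I would separate the expansion of $u$ into a smooth sub-sum arising from exponents with $s_i-r\in\mathbb{N}_0$ and $p=0$ and a nonsmooth tail. The hypothesis $u\in C^k(\bar M)$, combined with uniqueness of polyhomogeneous expansions, forces each nonsmooth term to be individually $C^k(\bar M)$, which rules out $\log\rho$ factors when $\Re(s_i-r)\le k$ and in fact forces $\Re(s_i-r)>k$ on every nonsmooth term. Only finitely many exponents satisfy $\Re(s_i-r)\le k+1$, and each contributes some $\gamma_i\in(0,1)$ of H\"older regularity determined by the fractional part of $\Re(s_i-r)-k$, while the deeper tail lies in $\rho^{k+1}\mathcal{A}(M)\subset C^{k,\gamma}(\bar M)$ for any $\gamma\in(0,1)$ by part \eqref{Inclusion-DeltaIntrinsic}. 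Setting $\gamma$ to be the minimum over this finite collection yields $u\in C^{k,\gamma}_\phg(\bar M)$. The main obstacle I anticipate lies in the uniqueness arguments behind parts \eqref{Inclusion-PhgWeightDelta} and \eqref{Inclusion-PhgImprovement}: individually isolating the terms of the polyhomogeneous expansion, and excluding logarithmic factors on the critical lines $\Re(s_i)=\delta$ and $\Re(s_i-r)=k$, will require inductively peeling off the leading term and using the weighted-norm decay of what remains.
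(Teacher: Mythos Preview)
Your proposal is correct and follows essentially the same route as the paper's proof, which is extremely terse and invokes exactly the observations you have identified---the action of $\rho\partial_\rho$ on the model monomials, the M\"obius scaling behaviour, the H\"older control at weight $\delta$ ruling out a leading log term, and the discreteness of $\{s_i\}$. The paper gives no more detail than you have already supplied; your anticipated inductive peeling argument for parts \eqref{Inclusion-PhgWeightDelta} and \eqref{Inclusion-PhgImprovement} is exactly what is needed to make the paper's one-line justifications rigorous.
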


\begin{proof}
The first claim follows from Remarks \ref{WhatsInAdelta} and \ref{PhgSplitting}.

To prove the remaining claims, first observe that in the image of a M\"obius param\-e\-tri\-zation $\Phi_{p_0}$ the defining function $\rho$ is comparable to $\rho_0 = \rho(p_0)$ and $|\Phi_{p_0}^*u(x,y)| = \rho_0^r |u(\theta_0 + \rho_0x,\rho_0y)|$; thus for scalar functions $|\partial_{y} \Phi_{p_0}^*f(x,y)|\approx  \left|\rho\partial_{\rho} f\right|_{\Phi(x,y)}$, etc.
Claim \eqref{Inclusion-PhgWeightDelta} then follows by noting that the weight $\delta$ places restrictions on the leading exponent of the polyhomogeneous expansion; H\"older continuity implies that there can be no ``leading log term'' and thus $\rho^{-\delta+r}u$ extends continuously to $\bar M$.
The third claim follows from the definitions of the spaces involved, while direct computation shows that $C^{0,\alpha}(\bar M) \subset  C^{0,\alpha}_{r}(M)$ and the fourth claim follows from considering M\"obius parametrizations as above.

The final claim is due to the discreteness of the sequence $\{s_i\}$ appearing in the polyhomogeneous expansion of $u$.
\end{proof}

The following is an immediate consequence of Lemma \ref{InclusionLemma}(\ref{Inclusion-PhgWeightDelta},\ref{Inclusion-Compact}) and Proposition \ref{thm:properties-of-WAH}.

\begin{corollary}
\label{PhgMetricsAreWeak}
Suppose that $\bar g\in C^{m}_\phg(\bar M)$ is a Riemannian metric on $\bar M$ for some $m\geq 1$, and that $|\D\rho|_{\bar g} =1$ along $\partial M$.
Then $g = \rho^{-2}\bar g \in \WAH^{k,\alpha;m}$ for all $k\geq m$ and for all $\alpha \in [0,1)$.

Conversely if $g\in \WAH^{k,\alpha;m}$ for some $m\geq 1$ and $g\in \mathcal A_\phg(M)$, then $\bar g = \rho^2 g\in C^{m}_\phg(\bar M)$.
\end{corollary}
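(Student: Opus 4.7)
The statement splits into two directions. For the forward direction, the plan is to verify the three defining conditions of $\WAH^{k,\alpha;m}$: (i) $\bar g$ extends to a nondegenerate metric on $\bar M$, (ii) $\bar g\in\mathscr C^{k,\alpha;m}(M)$, and (iii) $\Riem[g]\to-\Id$ as $\rho\to0$. Condition (i) is immediate from $\bar g\in C^m(\bar M)$ with $m\geq 1$. Given (ii), condition (iii) will follow from the hypothesis $|\D\rho|_{\bar g}=1$ on $\partial M$ via the implication \eqref{NEWWAH4}$\Rightarrow$\eqref{NEWWAH1} of Theorem \ref{thm:WAH-TFAE} (applied with any $k\geq 2$ compatible with the regularity established in (ii)).

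The substantive step is (ii), namely that $\mathcal L_{X_1}\dots\mathcal L_{X_j}\bar g\in C^{k-j,\alpha}_2(M)$ for $0\leq j\leq m$ and all $X_1,\dots,X_j\in\mathscr V$. Since each $X_i$ is smooth on $\bar M$, iterated Lie differentiation acts on $\bar g$ by composing differentiations in the smooth variables $(\theta,\rho)$ with multiplications by $C^\infty(\bar M)$ functions; such operations preserve polyhomogeneity and lower the classical $C^j(\bar M)$ order by at most one per application. Hence $\mathcal L_{X_1}\dots\mathcal L_{X_j}\bar g\in C^{m-j}_\phg(\bar M)$ for $0\leq j\leq m$. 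Noting that this tensor has bundle weight $r=2$, Lemma \ref{InclusionLemma}\eqref{Inclusion-Compact} yields $\mathcal L_{X_1}\dots\mathcal L_{X_j}\bar g\in C^{k-j,\alpha}_2(M)$ for every $k-j\in\bbN_0$ and every $\alpha\in[0,1)$, establishing (ii) for all $k\geq m$ and $\alpha\in[0,1)$.

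For the converse, assume $g\in\WAH^{k,\alpha;m}$ and $g\in\mathcal A_\phg(M)$. Because $\rho^2\in C^\infty(\bar M)$ and polyhomogeneity is preserved under multiplication by smooth functions, $\bar g=\rho^2 g\in\mathcal A_\phg(M)$. The hypothesis $g\in\WAH^{k,\alpha;m}$ directly provides $\bar g\in\mathscr C^{k,\alpha;m}(M)$, and since $1\leq m\leq k$ (by the definition of $\WAH^{k,\alpha;m}$), Remark \ref{WeaklyPolyhomogeneous} — which asserts that polyhomogeneous members of $\mathscr C^{k,\alpha;m}(M)$ lie in $C^m_\phg(\bar M)$ — delivers the conclusion $\bar g\in C^m_\phg(\bar M)$.

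No step presents a serious obstacle; the proof is essentially an assembly of tools already developed, and the principal point requiring care is correctly tracking the bundle weight $r=2$ when invoking Lemma \ref{InclusionLemma}\eqref{Inclusion-Compact}, since that inclusion depends on the weight of the tensor field. The heavy lifting — constructing the spaces $\mathscr C^{k,\alpha;m}(M)$, relating polyhomogeneous regularity to the weighted H\"older scale, and verifying the curvature characterization — has already been carried out in Theorem \ref{thm:WAH-TFAE}, Lemma \ref{InclusionLemma}, and Remark \ref{WeaklyPolyhomogeneous}.
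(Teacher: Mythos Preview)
Your proposal is correct and follows essentially the same route as the paper, which simply declares the corollary an immediate consequence of Lemma~\ref{InclusionLemma}\eqref{Inclusion-PhgWeightDelta},\eqref{Inclusion-Compact} and Theorem~\ref{thm:properties-of-WAH}; you have merely unpacked the argument, using Lemma~\ref{InclusionLemma}\eqref{Inclusion-Compact} for the forward regularity inclusion and Remark~\ref{WeaklyPolyhomogeneous} (itself a consequence of Lemma~\ref{InclusionLemma}) for the converse. Your citation of Theorem~\ref{thm:WAH-TFAE} for the implication $|\D\rho|_{\bar g}=1\Rightarrow\Riem[g]\to-\Id$ is in fact the more precise reference for that step.
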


\subsection{Analysis of the indicial operator}
We now restrict attention to the case where $g\in \WAH^{k,\alpha;2}\cap\mathcal A_\phg(M)$, and thus $\bar g = \rho^2 g\in C^2_\phg(\bar M)$, and investigate the boundary regularity of solutions $u$ to \eqref{GenericEllipticEquation}.

We first construct from the indicial map $I_s(\mathcal P)$, defined in \eqref{DefineIndicialMap}, a differential operator which, in the polyhomogeneous setting, approximates $\mathcal P$ in the $\rho$ direction; see Lemma \ref{LocalApproximationLemma} below.
Following \cite{Mazzeo-Edge}, we define the \Defn{indicial operator} $I(\mathcal P)$ to be the unique dilation-invariant operator on $\partial M\times (0,\infty)$ satisfying $\rho^{-s}I(\mathcal P)(\rho^s u) = I_s(\mathcal P)u$ for all smooth sections $u$ of $\left.E\right|_{\partial M}$.
 
In background coordinates $\Theta = (\theta,\rho)$, in which  $\mathcal P$ takes the form \eqref{OperatorInCoordinates}, we have by direct computation that
\begin{equation}
\label{ComputeIndicialOperator}
I_s(\mathcal P) = \left( s^2 \bar a + s \bar b + \bar c \right),
\end{equation}
where we have set $\bar a =  \left.a^{\rho\rho}\right|_{\rho=0}$, $\bar b  =\left.b^{\rho}\right|_{\rho=0}$, and $\bar c =\left.c\right|_{\rho=0}$.
Thus the operator $I(\mathcal P)$ is given by
\begin{equation}
\label{LocalIndicialOperator}
I(\mathcal P) = \bar a (\rho \partial_\rho)^2 + \bar b (\rho\partial_\rho) + \bar c.
\end{equation}
We emphasize that the coefficient matrices $\bar a$, $\bar b$, $\bar c$ are the expressions in coordinates of endomorphisms of $\left.E\right|_{\partial M}$ and thus are functions only of $\theta$; we furthermore note that the ellipticity of $\mathcal P$ implies that $\bar a$ is invertible.

Identifying, as above, the collar neighborhood $\mathcal C$ with $\partial M\times (0,\rho_*)$ we extend $I(\mathcal P)$ to an operator $\mathcal I(\mathcal P)$ on $M$ by choosing a smooth cutoff function $\varphi\colon (0,\infty)\to [0,\infty)$ satisfying $\varphi \equiv 1$ on $(0,\frac12 \rho_*]$ and $\varphi \equiv 0$ for $\rho \geq \frac23\rho_*$ and setting $\mathcal I(\mathcal P) = \varphi\, I(\mathcal P)$.
We furthermore define $\mathcal R := \mathcal P - \mathcal I(\mathcal P)$.
The operator $\mathcal I(\mathcal P)$ approximates $\mathcal P$ in the following sense.

\begin{lemma}
\label{LocalApproximationLemma}
Suppose that $g\in \WAH^{k,\alpha;2}\cap\mathcal A_\phg(M)$, and that $\mathcal P$ satisfies Assumption \ref{Assume-P}.
There exists $\gamma \in (0,1]$ such that if $u\in \mathcal A_\delta(M)$ for some $\delta\in \mathbb R$, then $\mathcal R u\in  \mathcal A_{\delta+\gamma}(M)$.
\end{lemma}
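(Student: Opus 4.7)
The plan is to work in background coordinates in the boundary collar where $\varphi\equiv 1$ (outside this neighborhood the question is vacuous, as $\mathcal Ru$ is a smooth tensor field vanishing near $\partial M$), and to decompose $\mathcal R = \mathcal P - I(\mathcal P)$ into summands each of which improves the conormal weight by a definite amount.

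In such coordinates $(\theta,\rho)$, write $\mathcal P$ as in \eqref{BackgroundCoordinateP} and $I(\mathcal P)$ as in \eqref{LocalIndicialOperator}, with $\bar a = a^{\rho\rho}|_{\partial M}$, $\bar b = b^\rho|_{\partial M}$, $\bar c = c|_{\partial M}$. Since $\mathcal P$ is geometric in the sense of Assumption~\ref{Assume-P}\eqref{Assume-P-Basic} and $g\in\mathcal A_\phg(M)$, the coefficients $a^{ij}$, $b^i$, $c$ are polyhomogeneous: they are polynomial expressions (with smooth coefficients) in the polyhomogeneous quantities $\bar g_{ij}$, $\bar g^{ij}$, derivatives of $\bar g$, and $\sqrt{\det \bar g_{ij}}$, and each of these operations preserves $\mathcal A_\phg$ (for the square root, factor out the positive leading value of $\det\bar g_{ij}$ along $\partial M$ and expand the remainder in a binomial series). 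Setting $\tilde a := a^{\rho\rho}-\bar a$, $\tilde b := b^\rho - \bar b$, and $\tilde c := c - \bar c$, these remainders are polyhomogeneous and vanish on $\partial M$, hence lie in $\mathcal A_{\gamma'}(M)$ for some $\gamma'>0$ (the minimum, over the three expansions, of the smallest positive real part of an exponent).

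The crucial observation concerns the action of the uniformly-degenerate derivatives on the conormal classes. Since $\rho\partial_\rho\in\mathscr V_b$, it preserves $\mathcal A$ and hence each $\mathcal A_\delta$. The \emph{horizontal} operator $\rho\partial_{\theta^i}$, however, improves the weight by one: if $u\in\rho^t\mathcal A$ for $t<\delta$, then $\rho^{-t}u\in\mathcal A$, and since $\partial_{\theta^i}\in\mathscr V_b$ we get $\partial_{\theta^i}(\rho^{-t}u) = \rho^{-t}\partial_{\theta^i}u\in\mathcal A$, whence $\rho\partial_{\theta^i}u\in\rho^{t+1}\mathcal A$; taking the intersection over $t<\delta$ gives $\rho\partial_{\theta^i}\colon\mathcal A_\delta\to\mathcal A_{\delta+1}$. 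With this in hand, we decompose
\begin{equation*}
\mathcal R = \tilde a\,(\rho\partial_\rho)^2 + \tilde b\,\rho\partial_\rho + \tilde c + \!\!\!\sum_{(i,j)\neq(\rho,\rho)}\!\!\! a^{ij}(\rho\partial_i)(\rho\partial_j) + \sum_{i\neq \rho}b^i\,\rho\partial_i.
\end{equation*}
The first three (``indicial-error'') terms send $\mathcal A_\delta$ into $\mathcal A_{\delta+\gamma'}$, since they multiply weight-preserving $(\rho\partial_\rho)^j$-derivatives by coefficients in $\mathcal A_{\gamma'}$; the remaining terms each carry at least one horizontal factor $\rho\partial_{\theta^i}$ and so land in $\mathcal A_{\delta+1}$ (or $\mathcal A_{\delta+2}$ for the purely horizontal second-order terms). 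Setting $\gamma := \min(1,\gamma')\in(0,1]$ then yields $\mathcal Ru\in \mathcal A_{\delta+\gamma}$, as required.

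The main subtlety is the horizontal-improvement claim. Although $\rho\partial_{\theta^i}$ belongs to $\mathscr V_b$ and so, as a single conormal derivative, only preserves $\mathcal A_\delta$, the additional gain of one weight emerges precisely because $\mathscr V_b$ is closed under multiplication by smooth functions: $\partial_{\theta^i}$ is itself already in $\mathscr V_b$, so the explicit factor of $\rho$ can be pulled off to produce the improvement. This asymmetry between the tangential and normal conormal directions is ultimately what makes $\mathcal I(\mathcal P)$ a good approximation to $\mathcal P$ on conormal tensor fields.
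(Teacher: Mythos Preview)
Your proof is correct and follows essentially the same approach as the paper's: localize to the collar where $\varphi\equiv 1$, split $\mathcal R$ into (i) the terms containing at least one tangential derivative $\rho\partial_{\theta^i}$, which gain a full power of $\rho$ because $\partial_{\theta^i}\in\mathscr V_b$ already preserves $\mathcal A_\delta$, and (ii) the ``indicial-error'' terms $\tilde a(\rho\partial_\rho)^2+\tilde b(\rho\partial_\rho)+\tilde c$, whose polyhomogeneous coefficients vanish on $\partial M$ and hence lie in $\rho^{\gamma'}\mathcal A$ for some $\gamma'>0$. The paper phrases (i) slightly differently, writing such terms as $\rho f(\rho\partial_\rho)^k\partial_\theta^l$ with $f\in\mathcal A$, but the mechanism is identical; your explicit discussion of why $\rho\partial_{\theta^i}$ gains a weight is a welcome clarification of the paper's ``clearly''. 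One small wording issue: in your parenthetical, ``$\mathcal R u$ is a smooth tensor field vanishing near $\partial M$'' is not literally true; the correct statement is that the part of $\mathcal R u$ supported in the region $\{\varphi<1\}$ vanishes near $\partial M$ and hence lies in $\mathcal A_\nu$ for every $\nu$, so only the collar contribution matters.
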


\begin{proof}
It suffices to work in that portion of a background coordinate chart $(\mathcal U, \Theta)$ where  $\mathcal I(\mathcal P) = I(\mathcal P)$.
The claim then follows from carefully examining the background coordinate expression \eqref{OperatorInCoordinates} of $\mathcal P$, which is a sum of
\begin{enumerate}
\item terms of the form $\rho \,f\, (\rho\partial_\rho)^k \partial_{\theta^1}^{l_1}\dots \partial_{\theta^m}^{l_m}$, where $1\leq l_1+\dots+l_m$ and $l_1+\dots+l_m+k\leq 2$ and $f\in \mathcal A(\mathcal U)$, and
\item the operator $a^{\rho\rho}(\rho\partial_\rho)^2 + b^\rho(\rho\partial_\rho) + c$.
\end{enumerate}
Operators of the first type clearly map $\mathcal A_\delta(\mathcal U)$ to $\mathcal A_{\delta+1}(\mathcal U)$.
The polyhomogeneity of $g$, and thus of the coefficients of $\mathcal P$, implies that for some $\gamma \in(0,1]$ we can write
\begin{equation*}
a^{\rho\rho} = \bar{a} + \rho^\gamma \tilde a, \qquad
b^{\rho} = \bar{b} + \rho^\gamma \tilde b, \qquad
c = \bar c + \rho^\gamma \tilde c,
\end{equation*}
with $\tilde a, \tilde b, \tilde c \in \mathcal A(\mathcal U)$ and $\bar a, \bar b, \bar c$ as in \eqref{ComputeIndicialOperator}.
Thus 
$
a^{\rho\rho}(\rho\partial_\rho)^2 + b(\rho\partial_\rho) + c
= I(\mathcal P) +  J,
$
where $ J$ takes $\mathcal A_\delta(\mathcal U)$ to $\mathcal A_{\delta+\gamma}(\mathcal U)$ for all $\delta\in \mathbb R$.
\end{proof}

\begin{remark}
We remark that if $\{s_i\}$ is the sequence of exponents appearing in the polyhomogeneous expansion of the coefficients of $\mathcal P$, then the constant $\gamma$ appearing in the lemma is simply a lower bound on the ``first gap'' in the sequence $\{\Re(s_i)\}$.
\end{remark}

The previous lemma suggests that the boundary behavior of solutions to \eqref{GenericEllipticEquation} can be understood by studying $\mathcal I(\mathcal P)$.
We proceed by first showing that on the collar neighborhood $\mathcal C$ of $\partial M$, $\mathcal I(\mathcal P)$ is comparable to the corresponding operator in hyperbolic space.
To this end, denote by $\breve E$ the tensor bundle over $(\mathbb H,\breve g)$ corresponding to the same representation of $O(n+1)$ as $E$, and define $\breve{\mathcal P} = \mathcal P[\breve g]$ to be the geometric operator on $\breve E$ given in coordinates by the same formula as $\mathcal P$.
The operator $\breve{\mathcal P}$ is invariant under isometries of $(\mathbb H,\breve g)$; 
thus the indicial map $I_s(\breve{\mathcal P})$ is translation-invariant along $\{y=0\}$.
Consequently the characteristic exponents of $\breve{\mathcal P}$ and their multiplicities, as well as the coefficients (in Cartesian coordinates) of the indicial operator $I(\breve{\mathcal P})$, are constant as well.

\begin{lemma}
\label{IndicialComparisonLemma}
Suppose $g\in \WAH^{k,\alpha;2}$ and $\mathcal P$ satisfies Assumption \ref{Assume-P}.
\begin{enumerate}
\item The characteristic exponents of $\mathcal P$ and their multiplicities are constant along $\partial M$, and agree with those of $\breve{\mathcal P}$.

\item 
\label{LocalIndicialMapComparison}
Assume that $\left.\bar g\right|_{\partial M} \in C^{l}(\partial M)$.
Then for each of the finitely many coordinate charts $(U,\theta)$ on $\partial M$ used to construct the background coordinate charts there exists an invertible matrix $S\in C^{l}(U)$ such that on $U$ we have
\begin{equation*}
I_s(\mathcal P) = S^{-1} I_s(\breve{\mathcal P}) S.
\end{equation*}

\item Assume that $\left.\bar g\right|_{\partial M} \in C^{l}(\partial M)$, and let $(U,\theta)$ and $S$ be as in point \eqref{LocalIndicialMapComparison} above.
Then the restriction of $I(\mathcal P)$ to $U\times (0,\infty)$ satisfies 
\begin{equation*}
I(\mathcal P) = S^{-1} I(\breve{\mathcal P}) S.
\end{equation*}
\end{enumerate}
\end{lemma}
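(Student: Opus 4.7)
The plan is to prove part (2) first and then deduce parts (1) and (3). The essential fact I need to establish is that at any point $p_0 \in \partial M$, the matrices $\bar a$, $\bar b$, $\bar c$ appearing in the indicial operator \eqref{LocalIndicialOperator} depend on $\bar g$ only through its pointwise value $\bar g(p_0)$; no derivatives of $\bar g$ enter. To see this, I would write $\mathcal P$ in the uniformly degenerate form \eqref{BackgroundCoordinateP} and track how the conformal relation $g = \rho^{-2}\bar g$ redistributes factors of $\rho$ in the coefficients. Because $\mathcal P$ is geometric of second order, Assumption~\ref{Assume-P}\eqref{Assume-P-Basic} bounds the number of metric derivatives appearing; after substituting $g = \rho^{-2}\bar g$, every term involving a derivative of $\bar g$ carries a compensating factor of $\rho$---this is a manifestation of the conformal transformation law $\Gamma[g] = \Gamma[\bar g] - \rho^{-1}(\text{terms in }\partial\rho)$ visible in the Laplacian computation earlier. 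Such terms vanish at $\rho = 0$, leaving $\bar a, \bar b, \bar c$ at $p_0$ as universal polynomials in $\bar g(p_0)$, $\bar g^{-1}(p_0)$, and $\sqrt{\det\bar g_{ij}(p_0)}$.

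For part (2), I would construct $S$ from an orthonormal frame. Since $|d\rho|_{\bar g} = 1$ on $\partial M$, the vector $\nu = \grad_{\bar g}\rho$ is a unit normal to $\partial M$ with respect to $\bar g$. Applying Gram--Schmidt to $(\partial_{\theta^1}, \dots, \partial_{\theta^n})$ using $\bar g|_{\partial M} \in C^l(\partial M)$ produces a $C^l$ orthonormal frame $(e_1, \dots, e_n)$ of $T\partial M|_U$, and then $(e_1, \dots, e_n, \nu)$ is a $C^l$ orthonormal frame of $T\bar M|_U$ for $\bar g$. The change-of-frame matrix relating this to the coordinate frame $(\partial_{\theta^1}, \dots, \partial_{\theta^n}, \partial_\rho)$ is a $C^l(U; GL(n+1))$-valued function, and since $E$ is associated to the frame bundle of $T\bar M$ via an $O(n+1)$-representation, it induces a $C^l$-smooth invertible bundle map $S$ on $E|_U$. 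In the orthonormal frame, $\bar g$ has components $\delta_{\mu\nu}$ at every point of $U$, matching the Cartesian-frame components of $\bar{\breve g}$ along $\partial\mathbb H$. By the key observation and the frame-equivariance of a geometric operator, the matrices $\bar a, \bar b, \bar c$ in the orthonormal frame at each point of $U$ equal the Cartesian-frame coefficients of $I(\breve{\mathcal P})$; reverting to the coordinate frame yields the desired conjugation identity $I_s(\mathcal P) = S^{-1} I_s(\breve{\mathcal P}) S$.

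Parts (1) and (3) will then be short. For (1), the same orthonormal-frame construction can be carried out pointwise at each $p_0 \in \partial M$ (with no $C^l$ regularity required), showing that $I_s(\mathcal P)(p_0)$ is conjugate to the constant matrix $I_s(\breve{\mathcal P})$. Thus the kernel dimension is constant in $\theta$ and equal to that of $I_s(\breve{\mathcal P})$, whose characteristic exponents are themselves constant by translation invariance on $\partial\mathbb H$. For (3), the matrix $S$ depends only on $\theta$, so $S^{-1} I(\breve{\mathcal P}) S$ restricted to $U \times (0,\infty)$ is dilation-invariant in $\rho$ and its indicial map on $U$ is $S^{-1} I_s(\breve{\mathcal P}) S = I_s(\mathcal P)$; the uniqueness characterization of the indicial operator then forces it to coincide with $I(\mathcal P)$.

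The main obstacle will be establishing the key observation rigorously. For the scalar Laplacian the calculation is transparent, since the derivation in the excerpt yields $\bar a = 1$, $\bar b = -n$, $\bar c = 0$ simply from $|d\rho|_{\bar g} = 1$ on $\partial M$, with all Christoffel contributions dropping out. The general case requires exploiting the geometricity hypothesis to show that derivatives of $\bar g$ enter the uniformly degenerate form only through universal polynomials whose factors of $\rho$ are positioned so as to vanish at the boundary. Once this structural fact is in hand, the remaining content of the lemma reduces to the change-of-frame calculation and the uniqueness property of dilation-invariant operators.
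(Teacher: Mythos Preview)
Your proposal is correct and follows essentially the same route as the paper. Both arguments hinge on the same two ingredients: (i) the ``key observation'' that the indicial coefficients $\bar a,\bar b,\bar c$ depend only on the pointwise value of $\bar g$ at the boundary (the paper phrases this as showing that terms like the difference tensor $\nabla-\breve\nabla$ contribute $\mathcal O(\rho)$ and hence vanish in the indicial map), and (ii) the Gram--Schmidt construction to produce a frame in which $\bar g_{ij}=\delta_{ij}$, with the regularity of $S$ inherited from that of $\bar g|_{\partial M}$. The paper presents this as an affine change of background coordinates at each boundary point, while you describe it as passing to an orthonormal frame $(e_1,\dots,e_n,\nu)$; these are the same thing, since your frame is exactly the coordinate frame of the affine change $\tilde\theta^i=\theta^i+c^i\rho$, $\tilde\rho=\rho$ (with $c^i$ chosen so that $\partial_{\tilde\rho}=\nu$) followed by a linear change of $\tilde\theta$. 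Your ordering (establishing (b) first and deducing (a) and (c)) and your appeal to the uniqueness of the dilation-invariant operator for (c) are cosmetic rearrangements of the paper's argument.
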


\begin{proof}
The first claim is the content of Lemma 4.3 of \cite{Lee-FredholmOperators}, the proof of which we summarize here.
First, fix $\hat p \in U\subset \partial M$ and use $\theta$ to identify $U$ with an open subset of $\mathbb R^n = \{y=0\}\subset \bar{\mathbb H}$.
Through an affine change of coordinates $\theta$, we may arrange that $\hat p$ corresponds to the origin and that $\bar g_{ij} = \delta_{ij}$ there.

The proof of the first claim follows by showing that $I_s(\mathcal P) = I_s(\breve{\mathcal P})$ at the origin.
This, in turn, is obtained by carefully examining the various types of terms which may appear in a geometric operator and showing that for each type the difference between a term arising from $g$ and the corresponding term arising from $\breve g$ has vanishing indicial map.
For example, the difference tensor $\nabla - \breve\nabla$ has components $E^i_{jk} = \rho^{-1}\partial_j\rho(\bar g^{il}\bar g_{kl} - \delta^{il}\delta_{kl})+\mathcal O(\rho)$, and thus the fact that 
$\bar g_{ij} = \delta_{ij}$ at the origin implies that the map $u\mapsto \rho\nabla u - \rho\breve{\nabla} u$ vanishes there.

The second claim relies on observing that the aforementioned affine change of coordinates is based on the Gram-Schmidt algorithm and therefore consists of rational functions of the components of $\left.\bar g\right|_{\partial M}$.
Thus at each point the matrix taking the background coordinate frame to the standard Cartesian coordinate frame is as regular as the metric $\left.\bar g\right|_{\partial M}$.

The third claim follows from the coordinate expressions for the indicial map \eqref{ComputeIndicialOperator} and for the corresponding indicial operator \eqref{LocalIndicialOperator}.
\end{proof}

The previous lemma allows us to understand, in the polyhomogeneous setting, solutions to $\mathcal I(\mathcal P)u = f$ if $f$ vanishes near the boundary.
Let $C\subset \mathbb C$ be the (finite) collection of characteristic exponents of $\mathcal P$.

\begin{lemma}
\label{Lemma-AlmostHomogeneous}
Suppose $g\in \WAH^{k,\alpha;2}\cap\mathcal A_\phg(M)$, and suppose that $\mathcal P$ satisfies Assumption \ref{Assume-P}.
If  $w\in C^\infty(M)$ satisfies $\mathcal I(\mathcal P)w = f$ with $f$ vanishing on the collar neighborhood $\mathcal C_{a}$ for some $a\in (0,\rho_*)$, then $w\in \mathcal A_\phg^{C+r}(M)$, where $r$ is the weight of $w$.
\end{lemma}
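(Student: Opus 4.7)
The plan is to reduce $\mathcal I(\mathcal P)w=f$ to a homogeneous constant-coefficient ODE in $\rho$ on a collar near the boundary and to solve it explicitly using the conjugation to the hyperbolic indicial operator supplied by Lemma~\ref{IndicialComparisonLemma}. First I would observe that since the cutoff $\varphi$ in the definition of $\mathcal I(\mathcal P)$ equals $1$ on $\mathcal C_{\rho_*/2}$ and $f$ vanishes on $\mathcal C_a$ by hypothesis, setting $a'=\min(a,\rho_*/2)$ yields $I(\mathcal P)w=0$ on $\mathcal C_{a'}$. Because $w$ is smooth on $M$ and polyhomogeneity is a boundary condition, it suffices to produce an expansion of the form \eqref{GenericTensorExpansion} on $\mathcal U\cap\mathcal C_{a'}$ in each background chart $(\mathcal U,\Theta)$.

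In such a chart I would invoke Lemma~\ref{IndicialComparisonLemma}. Since $\bar g\in C^2_\phg(\bar M)$ by Corollary~\ref{PhgMetricsAreWeak}, its restriction $\bar g|_{\partial M}$ is smooth on $\partial M$, so the lemma provides a smooth invertible matrix-valued function $S(\theta)$ with $I(\mathcal P)=S^{-1}I(\breve{\mathcal P})S$. Extending $S$ to be $\rho$-independent and setting $\tilde w=Sw$, the equation becomes $I(\breve{\mathcal P})\tilde w=0$, where $I(\breve{\mathcal P})=\breve a(\rho\partial_\rho)^2+\breve b(\rho\partial_\rho)+\breve c$ has \emph{constant} matrix coefficients (independent of both $\theta$ and $\rho$).

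Changing variables $t=\log\rho$, I would convert this into a first-order linear ODE system $\partial_t U=AU$, where $U=(\tilde w,\partial_t\tilde w)^{T}$ and $A$ is a constant matrix built from $\breve a,\breve b,\breve c$. Its general solution is $U(\theta,t)=e^{(t-t_0)A}U_0(\theta)$, with $U_0(\theta):=U(\theta,t_0)$ at $t_0=\log(a'/2)$ smooth in $\theta$ by smoothness of $w$ and $S$. Jordan decomposition of $A$ then yields
\begin{equation*}
\tilde w(\theta,\rho)=\sum_{s\in C}\sum_{j=0}^{m_s-1}\rho^{s}(\log\rho)^{j}\,\zeta_{s,j}(\theta),
\end{equation*}
where $C$ is the set of characteristic exponents (the eigenvalues of $A$, which by Lemma~\ref{IndicialComparisonLemma} are constant along $\partial M$ and coincide with those of $\mathcal P$), $m_s$ is the associated Jordan block size, and the coefficients $\zeta_{s,j}(\theta)$ arise from \emph{constant} linear operations on the smooth datum $U_0(\theta)$ and are therefore smooth. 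Multiplication by $S^{-1}$ preserves the form, giving an analogous expansion for $w$; accounting for the $\rho^{-r}$ shift in the convention \eqref{GenericTensorExpansion} for a bundle of weight $r$, the admissible exponents lie in $C+r$, so $w\in\mathcal A_\phg^{C+r}(M)$.

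The main obstacle I anticipate is ensuring that the Jordan decomposition can be performed smoothly in $\theta$: in general, even for a smooth family of matrices with constant eigenvalues, the generalized eigenprojectors need not depend smoothly on parameters. Lemma~\ref{IndicialComparisonLemma} sidesteps this entirely by conjugating $I(\mathcal P)$ to the $\theta$-independent operator $I(\breve{\mathcal P})$, so the Jordan analysis is carried out on a single constant matrix $A$. A secondary bookkeeping check is that the coefficient tensors $\zeta_{s,j}$ produced in distinct charts assemble into globally defined smooth sections of $E|_{\partial M}$; this is automatic because $w$ is a global smooth section, the polyhomogeneous expansion is unique (Remark~\ref{PhgSplitting}), and the exponents $C$ are invariantly attached to $\mathcal P$.
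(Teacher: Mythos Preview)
Your proposal is correct and follows essentially the same approach as the paper's own proof: reduce to the homogeneous equation $I(\mathcal P)w=0$ on a collar, conjugate to the constant-coefficient model $I(\breve{\mathcal P})$ via the smooth matrix $S(\theta)$ from Lemma~\ref{IndicialComparisonLemma}, rewrite as a first-order system $\rho\partial_\rho\mathbf v=\mathbf A\mathbf v$ with constant $\mathbf A$, and read off the polyhomogeneous structure from the Jordan form of $\exp(\mathbf A\log\rho)$. Your explicit discussion of why the Jordan analysis is performed on a single constant matrix (rather than a $\theta$-dependent family) and of the global coherence of the expansion is a bit more careful than the paper's, but the argument is the same.
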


\begin{proof}
It suffices to work in that part of the background coordinate chart $(\mathcal U, \Theta)$ where  $\mathcal I(\mathcal P) = I(\mathcal P)$ and $f=0$.
Working in coordinates, we view $w$ as a matrix-valued function; note that this involves a shift by $r$ in the set of exponents in polyhomogeneous expansion of $w$ that we construct; see Remark \ref{DamnBundleWeights}.

In view of Lemma \ref{IndicialComparisonLemma}, we have that $I(\mathcal P)w=0$  precisely if $v=Sw$ is a solution to
\begin{equation}
\label{HomogeneousModelODE}
I(\breve{\mathcal P})v:= \breve a(\rho\partial_\rho)^2v + \breve b \rho\partial_\rho v + \breve c v =0.
\end{equation}
Note that the polyhomogeneity of $g$ implies that $\left.\bar g\right|_{\partial M}\in C^\infty(\partial M)$ and thus $S$ is smooth.

We now analyze \eqref{HomogeneousModelODE}, expressing it as the first order system
\begin{equation}
\label{HomogeneousFirstOrderSystem}
\rho\partial_\rho \mathbf v = \mathbf A \mathbf v
\end{equation}
by introducing the auxiliary variable $w = \rho\partial_\rho v$ and setting $\mathbf v = (v,w)^t$; here $\mathbf A$ is the matrix of constants given by 
\begin{equation*}
\mathbf A=\begin{pmatrix} 0 & 1 \\ -\breve a^{-1} \breve c & -\breve a^{-1}\breve b\end{pmatrix}.
\end{equation*}

The eigenvalues of $\mathbf A$ are precisely the characteristic exponents of $\breve{\mathcal P}$ which, in view of Lemma \ref{IndicialComparisonLemma}, agree with those of $\mathcal P$.
All solutions to \eqref{HomogeneousFirstOrderSystem} take the form $\mathbf v = \exp(\mathbf A \log\rho)\mathbf v_0$, where $\mathbf v_0 = \mathbf v_0(\theta)$ is free.
The entries of the matrix exponential $\exp(\mathbf A \log\rho)$ are easily seen to be linear combinations of $\rho^s(\log\rho)^k$ with $s\in C$ and non-negative integers $k$ less than the dimension of $E$; this follows from analyzing the exponential of the Jordan form of $\mathbf A$ (see, for example, Chapter 3 of \cite{Teschl-ODEs}).
Consequently, if the free data $\mathbf v_0$ is smooth in $\theta$ then the corresponding homogeneous solution lies in $\mathcal A_\phg^C(\mathcal U)$.
Finally, note that $v$, the first component of $\mathbf v$, satisfies $I(\breve{\mathcal P})v =0$, and thus $w = S^{-1}v\in \mathcal A_\phg^C(\mathcal U)$ is the corresponding solution to $I(\mathcal P)w =0$.
Adapting the expansion to the normalized background coordinate frame yields the result; see Remark \ref{DamnBundleWeights}.
\end{proof}

We now define an operator $\mathcal G$ which we use below to study solutions to $\mathcal I(\mathcal P)u =f$.

\begin{proposition}
\label{Proposition-DefineG}
Suppose $g\in \WAH^{k,\alpha;2}\cap\mathcal A_\phg(M)$, and suppose that $\mathcal P$ satisfies Assumption \ref{Assume-P} and has characteristic exponents  $C\subset\mathbb C$.
Then there exists an operator $\mathcal G\colon C^\infty(M)\to C^\infty(M)$ such that 
\begin{enumerate}

\item 
\label{VanishNearBdy}
for $a\in (0,\rho_*/2)$ we have that
\begin{equation*}
\left.(\mathcal I(\mathcal P)\circ \mathcal G)(f)\right|_{\mathcal C_a} = \left. f \right|_{\mathcal C_a},
\end{equation*} 

\item for any $\delta\in \mathbb R$ we have that  $f\in \mathcal A_\delta(M)$ implies $\mathcal G(f)\in \mathcal A_\delta(M)$, and

\item for any $S\subset \mathbb C$ we have that  $f\in \mathcal A_\phg^S(M)$ implies $\mathcal G(f)\in \mathcal A_\phg^{S\cup (C+r)}(M)$, where $r$ is the weight of tensor field $u$.

\end{enumerate}

\end{proposition}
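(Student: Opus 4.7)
The plan is to build $\mathcal G$ by explicitly inverting the indicial operator $I(\mathcal P)$ as an ODE in $\rho$ on the collar $\mathcal C_{\rho_*/2}$ and then patching and smoothly extending to all of $M$. Since $\mathcal I(\mathcal P) = \varphi I(\mathcal P)$ with $\varphi \equiv 1$ on $\mathcal C_{\rho_*/2}$, this reduces property (i) to the problem of finding a right inverse of $I(\mathcal P)$ on the collar. By \eqref{LocalIndicialOperator}, $I(\mathcal P)$ is a second-order ODE in $\rho$ whose coefficients depend only on $\theta$; Lemma \ref{IndicialComparisonLemma} further reduces this, in each background coordinate chart, to inverting the constant-coefficient operator $I(\breve{\mathcal P})$ after conjugation by a smooth $\theta$-dependent matrix $S$.

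Writing $I(\breve{\mathcal P})v = h$ as the first-order system $\rho\partial_\rho \mathbf v = \mathbf A\mathbf v + \mathbf h$ (with $\mathbf v = (v, \rho\partial_\rho v)^T$ and $\mathbf A$ a constant matrix whose eigenvalues are precisely the characteristic exponents $C$), I would apply variation of parameters with the fundamental matrix $\rho^{\mathbf A} := \exp(\mathbf A \log\rho)$. The local inverse then takes the form
\begin{equation*}
\mathbf v(\theta,\rho) = \rho^{\mathbf A}\int_{c_j}^{\rho} s^{-\mathbf A} \mathbf h(\theta,s)\,\frac{ds}{s},
\end{equation*}
where the base point $c_j$ is chosen componentwise in the generalized eigenspaces of $\mathbf A$: namely $c_j = 0$ when the associated eigenvalue has real part strictly below the decay rate of $\mathbf h$ (so that the integral converges at $s=0$), and $c_j = \rho_*/2$ otherwise. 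Undoing the conjugation by $S$, gluing with a finite partition of unity on $\partial M$, and smoothly extending off $\mathcal C_{\rho_*/2}$ produces a single global operator $\mathcal G$ satisfying property (i) on each $\mathcal C_a$ with $a < \rho_*/2$.

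The regularity claims (ii) and (iii) follow from inspecting this formula. Expanding $\rho^{\mathbf A}$ via the Jordan decomposition of $\mathbf A$ gives a finite sum of terms of the form $\rho^\lambda(\log\rho)^k$ with $\lambda\in C$ and $k$ bounded by the fiber dimension of $E$, exactly as in the proof of Lemma \ref{Lemma-AlmostHomogeneous}. Convolution with such kernels preserves the conormal weight, giving (ii). Applied to a polyhomogeneous $\mathbf h$ with exponent set $S$, the same convolution produces output with exponents in $S\cup(C+r)$, where the shift by $r$ arises from translating between the component-function representation and the tensorial expansion convention \eqref{GenericTensorExpansion} (see Remark \ref{DamnBundleWeights}), and the union with $C+r$ arises from the explicit contribution of the fundamental matrix. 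The main technical obstacle will be the resonance bookkeeping: when $\mathbf h$ already contains a polyhomogeneous term at a characteristic exponent, integration of $s^{-\mathbf A-1}$ against that term produces an additional $\log\rho$ factor which must be absorbed into the expansion by incrementing $p_i$; moreover, since the integration base points $c_j$ depend on $\mathbf h$, a posteriori independence of $\mathcal G$ from the patching choices has to be verified by identifying the ambiguity between different local inverses with homogeneous solutions lying in $\mathcal A_\phg^{C+r}(M)$, which are already absorbed into the claimed exponent set.
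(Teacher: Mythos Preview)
Your approach is essentially the paper's: reduce to the constant-coefficient model via Lemma \ref{IndicialComparisonLemma}, write $I(\breve{\mathcal P})v=h$ as a first-order system $\rho\partial_\rho\mathbf v=\mathbf A\mathbf v+\mathbf h$, and invert by variation of parameters using the fundamental matrix $\exp(\mathbf A\log\rho)$, with the Jordan form of $\mathbf A$ supplying the polyhomogeneous structure. The paper differs in two simplifying respects that are worth noting.

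First, the paper defines $\mathcal G(f)$ \emph{globally} as the unique solution of $I(\mathcal P)\tilde u=\varphi f$ on $\partial M\times(0,\infty)$ with vanishing Cauchy data at $\rho=\rho_*$ (so that $\tilde u\equiv 0$ for $\rho\ge \tfrac23\rho_*$ and extends trivially to $M$). Since this is a smooth linear ODE in $\rho$ with smooth $\theta$-dependent coefficients, Cauchy--Lipschitz gives a global smooth solution; the conjugation by $S$ appears only in the subsequent local analysis of the Duhamel integral, not in the construction itself. Your partition-of-unity gluing on $\partial M$ is therefore unnecessary.

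Second, the paper uses the single fixed base point $\rho_*$ rather than your adaptive $c_j$'s. This makes $\mathcal G$ an honest linear operator $C^\infty(M)\to C^\infty(M)$ independent of $f$, which is what the proposition asserts. Your construction, with base points chosen according to the decay rate of $\mathbf h$, is $f$-dependent and hence not literally a single operator. Your remedy---identifying the discrepancy between two choices of base point with a homogeneous solution in $\mathcal A_\phg^{C+r}(M)$---is correct and suffices for the downstream application (Remark \ref{ParticularDecomposition} and the iteration in Theorem \ref{SolutionsArePolyhomogeneous}), but it does not deliver the proposition as stated. The cleaner fix is simply to take all base points equal to $\rho_*$ from the outset.
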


\begin{proof}
Let $\varphi$ be the same cutoff function used to define $\mathcal I(\mathcal P)$.
Restrict $f\in C^\infty(M)$ to $\mathcal C$, which we identify with $\partial M\times (0,\rho_*)$, and extend $\varphi f$ to $\tilde f$, smoothly defined on $\partial M\times (0,\infty)$, by $\tilde f =0$ for $\rho\geq \rho_*$; note that $f$ agrees with $\tilde f$ on $\mathcal C_a$ for all $a\leq \rho_*/2$.

We now consider $I(\mathcal P) \tilde u = \tilde f$ as a second-order linear ordinary differential equation in $\rho$.
Existence of a unique, smooth solution $\tilde u$, defined for all $\rho>0$, satisfying $\left.\tilde u\right|_{\rho=\rho_*} =0$ and $\left.\partial_\rho \tilde u\right|_{\rho=\rho_*}=0$ is guaranteed by the classical Cauchy-Lipschitz-Picard-Lindel\"of theorem.

Note that $\tilde u =0$ for all $\rho \geq \frac23\rho_*$.
Thus restricting $\tilde u$ to $\mathcal C$ and then extending trivially we obtain $u\in C^\infty(M)$ such that
\begin{equation*}
\left.\mathcal I(\mathcal P)u\right|_{\mathcal C_a} = \left.f \right|_{\mathcal C_a}
\end{equation*}
for all $0<a<\frac12\rho_*$. Defining $\mathcal G$ by $f\mapsto u = \mathcal G(f)$, the first claim of the proposition holds by construction.

In order to verify the remaining claims, it suffices to study the behavior of $\mathcal G(f)$ in that portion of a background coordinate chart $(\mathcal U,\Theta)$ where $\mathcal I(\mathcal P) = I(\mathcal P)$.
To this end, with $\mathcal U = U\times(0,\rho_*)$, we study $I(\mathcal P)\tilde u = \tilde f$ on $U\times (0,\infty)$.

As in the proof of Lemma \ref{Lemma-AlmostHomogeneous}, it suffices to study the model problem $I(\breve{\mathcal P})v =\breve f$, where $v = S\tilde u$ and $\breve f = S\tilde f$ for smooth $S=S(\theta)$.
We write the model as the first order system
\begin{equation}
\label{ModelFirstOrderSystem}
\rho\partial_\rho \mathbf v = \mathbf A \mathbf v + \mathbf f
\end{equation}
with $\mathbf v$ and $\mathbf A$ as before, and $\mathbf f = (0, \breve a^{-1} \breve f)^t$.
The solution to \eqref{ModelFirstOrderSystem} corresponding to $\tilde u$ must satisfy $\left.\mathbf v\right|_{\rho=\rho_*}=0$ and thus is given by 
\begin{equation}
\label{Duhamel}
\mathbf v(\theta,\rho) = \exp{(\mathbf A\log\rho)}\int_{\rho_*}^\rho \exp{(-\mathbf A\log\sigma)} \mathbf f(\theta,\sigma)\frac{1}{\sigma}\,\D\sigma.
\end{equation}

In order to establish the second claim it suffices to consider the derivatives $\partial_{\theta^i}\mathbf v$ and $\rho\partial_\rho\mathbf v$, as well as higher-order derivatives $(\rho\partial_\rho)^l(\partial_\theta)^m\mathbf v$.
That these are bounded by the corresponding derivatives of $f$ follows from the translation invariance of $\mathbf A$ and $\breve a$, and the identity $\rho\partial_\rho\mathbf v = \mathbf A \mathbf v + \mathbf f$.

In the polyhomogeneous setting it suffices to understand the structure of \eqref{Duhamel} in the case that $\mathbf A$ is a single Jordan block $s \mathbf I + \mathbf N$, where $s$ an eigenvalue of $\mathbf A$ and $\mathbf N$  is nilpotent, and that $S$ is finite.
In this case $\exp{(\mathbf A\tau)}$ is an upper-triangular matrix with $e^{\tau s}$ along the diagonal and entries of the form $e^{\tau s}p(\tau)$, with $p$ some polynomial, above the diagonal.
Taking $\tau = \log\rho$ it is straightforward to verify that if $f\in \mathcal A_\phg^S(M)$, and hence $\mathbf f\in \mathcal A_\phg^S(\mathcal U)$, then $\mathbf v \in \mathcal A_\phg^{S\cup C}(\mathcal U)$.
The third claim follows from adapting the expansion to a normalized frame.
\end{proof}

\begin{remark}
\label{ParticularDecomposition}
Lemma \ref{Lemma-AlmostHomogeneous} and Proposition \ref{Proposition-DefineG} imply that if $\mathcal I(\mathcal P)u =f$, then $u = \mathcal G(f) + w$, where $w\in \mathcal A_\phg^{C+r}(M)$ and $\mathcal I(\mathcal P)w\in \mathcal A_{\delta}(M)$ for all $\delta\in \mathbb R$.
\end{remark}

\begin{remark}
\label{LogsComeFromMultiplicity}
As is evident from the proofs of Lemma \ref{Lemma-AlmostHomogeneous} and Proposition \ref{Proposition-DefineG}, the presence of logarithms in expansions of solutions to $\mathcal I(\mathcal P) u = f$ is a consequence of the algebraic structure of $\mathcal P$, and the exponents appearing in the expansion of $f$.
In particular, logarithms appear either if two characteristic exponents differ by an integer, or in the resonant case, if the expansion of $f$ includes a characteristic exponent.

\end{remark}

\subsection{Boundary regularity}
In this subsection we prove the following boundary regularity theorem.

\begin{theorem}
\label{SolutionsArePolyhomogeneous}
Suppose that $g\in \WAH^{k,\alpha;2}\cap\mathcal A_\phg(M)$, $\mathcal P$ satisfies Assumption \ref{Assume-P}, and that $f$ is polyhomogeneous. 
Suppose $\alpha \in (0,1)$ and $|\delta - \frac{n}{2}|<R$, where $R$ is the indicial radius of $\mathcal P$.
If $u\in C^{2,\alpha}_\delta (M)$ is a solution to $\mathcal P u = f$, then $u$ is polyhomogeneous.
\end{theorem}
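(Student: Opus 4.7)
The plan is to combine three ingredients: interior elliptic regularity together with Lemma \ref{ImprovedRegularity}, a commutator argument with $b$-vector fields to establish conormality of $u$, and an iterative extraction of polyhomogeneous leading terms using the indicial operator $\mathcal I(\mathcal P)$ and its near-boundary inverse $\mathcal G$ from Proposition \ref{Proposition-DefineG}.

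First I would observe that Corollary \ref{PhgMetricsAreWeak} gives $\bar g\in C^2_\phg(\bar M)$ and hence $g\in\WAH^{l,\beta;2}$ for every $l\geq 2$ and $\beta\in[0,1)$, so that interior elliptic regularity together with Lemma \ref{ImprovedRegularity} upgrade the hypothesis $u\in C^{2,\alpha}_\delta(M)$ to $u\in C^{l,\beta}_\delta(M)$ for every such $l$ and $\beta$. Conormality would then be deduced by commuting the equation with $b$-vector fields: writing $\mathcal P$ in background coordinates as in \eqref{OperatorInCoordinates}, a direct calculation shows that for any $V\in\mathscr V_b$ the commutator $[\mathcal P,V]$ is again a second-order uniformly degenerate geometric operator with polyhomogeneous coefficients, so that $\mathcal P(Vu)=Vf+[\mathcal P,V]u$ has right-hand side in $C^{l-2,\beta}_\delta(M)$. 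Iterating Lemma \ref{ImprovedRegularity} over arbitrary strings $V_1\cdots V_m\in\mathscr V_b$ yields $V_1\cdots V_m u\in C^{0}_\delta(M)$ for every $m$, which translates to $u\in\mathcal A_\delta(M)$.

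With conormality in hand, I would set up an induction producing polyhomogeneous approximants $u_N\in\mathcal A_\phg(M)$ with $r_N:=u-u_N\in\mathcal A_{\delta+N\gamma}(M)$, where $\gamma>0$ is the constant from Lemma \ref{LocalApproximationLemma}, starting from $u_0=0$. Given $u_N$, the residual $\tilde f_N:=\mathcal P r_N=f-\mathcal P u_N$ is polyhomogeneous (since $\mathcal P$ has polyhomogeneous coefficients and preserves this class), and splits by Remark \ref{PhgSplitting} as $\tilde f_N^{\mathrm{fin}}+\tilde f_N^{\mathrm{rem}}$, with $\tilde f_N^{\mathrm{fin}}$ a finite polyhomogeneous sum whose exponents have real part strictly less than $\delta+(N+1)\gamma$, and $\tilde f_N^{\mathrm{rem}}\in\mathcal A_{\delta+(N+1)\gamma}(M)$. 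Setting $v_N:=\mathcal G(\tilde f_N^{\mathrm{fin}})$ (polyhomogeneous by Proposition \ref{Proposition-DefineG}) and provisionally $u_{N+1}:=u_N+v_N$, the identity $\mathcal I(\mathcal P)v_N=\tilde f_N^{\mathrm{fin}}$ on $\mathcal C_a$ combined with Lemma \ref{LocalApproximationLemma} gives
\[
\mathcal I(\mathcal P)r_{N+1} \;=\; \tilde f_N^{\mathrm{rem}}-\mathcal R r_N \;\in\; \mathcal A_{\delta+(N+1)\gamma}
\]
on the collar.

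The main obstacle is promoting this improved decay of $\mathcal I(\mathcal P)r_{N+1}$ to improved decay of $r_{N+1}$ itself. I would handle this by inverting $\mathcal I(\mathcal P)$ at the conormal level: near the boundary the equation reduces to the Fuchsian first-order system in $\rho$ that underlies the construction of $\mathcal G$, and any two solutions differ by a homogeneous term that is polyhomogeneous with exponents in the characteristic set shifted by the weight of $E$ (cf.\ the proof of Lemma \ref{Lemma-AlmostHomogeneous}). Since $r_{N+1}$ is already known to lie in $\mathcal A_{\delta+N\gamma}$, only homogeneous terms whose exponents have real part in the finite strip $[\delta+N\gamma,\delta+(N+1)\gamma)$ can appear; uniqueness of polyhomogeneous expansions (Remark \ref{PhgSplitting}) ensures their coefficients patch across background charts into a globally defined polyhomogeneous tensor $w_N$. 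Replacing $u_{N+1}$ by $u_N+v_N+w_N$, the new remainder satisfies $r_{N+1}\in\mathcal A_{\delta+(N+1)\gamma}$ near $\partial M$, and interior elliptic regularity propagates this to all of $M$. Letting $N\to\infty$ and summing the increments $v_N+w_N$, whose exponents have real parts tending to $+\infty$, produces the polyhomogeneous expansion of $u$, with exponent sequences matching across background charts by the same uniqueness.
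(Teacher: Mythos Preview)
Your polyhomogeneity induction in the second half is essentially the paper's argument: what you call the homogeneous correction $w_N$ is exactly the content of Remark \ref{ParticularDecomposition}, and your splitting $\tilde f_N=\tilde f_N^{\mathrm{fin}}+\tilde f_N^{\mathrm{rem}}$ followed by $v_N=\mathcal G(\tilde f_N^{\mathrm{fin}})$ matches the paper's iteration step for step.

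The gap is in the conormality step. To invoke Lemma \ref{ImprovedRegularity} on $Vu$ you need $Vu\in C^0_{\delta_0}(M)$ for some $\delta_0$ with $|\delta_0-\tfrac{n}{2}|<R$. But for a tangential field $V=\partial_{\theta^j}\in\mathscr V_b$ the a priori information is only $Vu\in C^{l-1,\beta}_{\delta-1}(M)$: the intrinsic H\"older norm is built from M\"obius charts, in which $\partial_{\theta^j}=\rho_0^{-1}\partial_{x^j}$, so a bare $\partial_{\theta^j}$ costs one unit of weight (equivalently, $\rho\partial_\Theta$ preserves $C^{k,\alpha}_\delta$ but $\partial_{\theta^j}$ does not). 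When $R\le\tfrac12$, or when $\delta$ sits near the lower endpoint of $(\tfrac{n}{2}-R,\tfrac{n}{2}+R)$, the weight $\delta-1$ falls outside the Fredholm range and your bootstrap never gets started. The paper calls this out explicitly in the paragraph preceding Lemma \ref{DifferenceOperatorProperties} and sidesteps it with difference quotients: one proves $\|\Delta_V^\epsilon u\|_{C^{k}_\delta(M)}=\mathcal O(\epsilon)$ directly from the semi-Fredholm estimate of Proposition \ref{StrongRegularity}, using the commutator bound $\|[\mathcal P,\Delta_V^\epsilon]u\|_{C^{k-1}_\delta}\le C\epsilon\|u\|_{C^{k}_\delta}$ of Lemma \ref{CommutatorEstimates} and the compactly-supported error term, and then passes to the limit $\epsilon\to0$. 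This never requires placing $Vu$ in a weighted space a priori, and so delivers $\mathcal L_Vu\in C^k_\delta(M)$ (and inductively $u\in\mathcal A_\delta(M)$) for any indicial radius $R>0$.
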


We divide the proof of Theorem \ref{SolutionsArePolyhomogeneous} into two steps, showing first that $u$ is conormal and subsequently that it is polyhomogeneous.
Conormality is established by showing that, for $V\in \mathscr V_b$, $\mathcal L_Vu$ is in the same weighted H\"older space as $u$.
As $TM$ has weight $-1$, commuting $\mathcal L_V$ into the equation $\mathcal P u = f$ leads to a loss of weight; this loss can be recovered using Proposition \ref{ImprovedRegularity} if the indicial radius $R$ is greater than $1/2$; see e.g.~ \cite{AnderssonChruscielFriedrich,LeeMelrose-MongeAmpere}.
Here we follow an alternate approach, obtaining bounds on $\mathcal L_V$ by estimating difference quotients via Proposition \ref{StrongRegularity}; cf.~\cite{AnderssonChrusciel-Dissertationes}.

For $V\in \mathscr V_b$, denote by $\psi_V(\epsilon):\bar M\to \bar M$ the diffeomorphism obtained by flowing along integral curves of $V$ for time $\epsilon$.
Since $V$ is tangent to $\partial M$, and since $\bar M$ is compact, for each $V\in \mathscr V_b$ there exists some $\epsilon_*>0$ such that $\psi_V(t)$ is defined when $|\epsilon|\leq \epsilon_*$.
Define the difference operator, acting on a tensor field $u$, by $\Delta_V^\epsilon u = \psi_V(\epsilon)^*u - u$; thus
\begin{equation*}
\mathcal L_Vu 
= \frac{d}{d\epsilon}\left[\psi_V(\epsilon)^*u\right]_{\epsilon=0}
= \lim_{\epsilon \to 0}\left[\frac{\Delta_V^\epsilon u}{\epsilon}\right].
\end{equation*}
We record some elementary facts regarding difference operators; while stated for $V\in \mathscr V_b$, they hold for any vector field $V$, provided $\Delta_V^\epsilon$ is well-defined.
\begin{lemma}
\label{DifferenceOperatorProperties}
For each $V\in \mathscr V_b$ there exists $\epsilon_\ast>0$ such that we have the following.
\begin{enumerate}
\item 
\label{part:basic-difference-bound}
For each $k\geq 1$ there exists a constant $C$ such that for all  $ u\in C^{k}_\delta (M)$ we have 
$$
\|\Delta_V^\epsilon u\|_{C^{k-1}_\delta(M)} \leq \epsilon C \|\mathcal L_Vu\|_{C^{k-1}_\delta(M)}
$$
for all $\epsilon\in(0,\epsilon_\ast]$.

\item 
\label{part:compact-difference-bound}
For any $k\geq 1$ and for any compact set $K\subset M$ there exists a constant $C$ such that if $u\in C^{k}_\delta(M)$, then we have 
$$
\|\Delta_V^\epsilon u\|_{C^{k-1}_\delta(K)}\leq \epsilon C \|u\|_{C^{k}_\delta(M)}
$$
for all $\epsilon\in(0,\epsilon_\ast]$.

\end{enumerate}
\end{lemma}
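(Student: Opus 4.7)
The plan is to proceed from the fundamental-theorem-of-calculus identity along the flow:
\begin{equation*}
\Delta_V^\epsilon u \;=\; \psi_V(\epsilon)^* u - u \;=\; \int_0^\epsilon \frac{d}{ds}\bigl[\psi_V(s)^* u\bigr]\,ds \;=\; \int_0^\epsilon \psi_V(s)^*(\mathcal{L}_V u)\,ds.
\end{equation*}
Accepting this, part \eqref{part:basic-difference-bound} reduces to showing that, for $\epsilon_\ast>0$ sufficiently small, the pullback $\psi_V(s)^*$ is a bounded operator on $C^{k-1}_\delta(M)$ with norm uniformly bounded for $s\in[0,\epsilon_\ast]$; integrating in $s$ then gives the desired estimate.

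To verify this boundedness claim, I would exploit the hypothesis $V\in\mathscr{V}_b$ in two ways. First, $V\rho$ vanishes on $\partial M$, so $V\rho=\rho\eta$ for some $\eta\in C^\infty(\bar M)$; applying Gr\"onwall's inequality to $\frac{d}{ds}(\rho\circ\psi_V(s))=\eta(\psi_V(s))\cdot(\rho\circ\psi_V(s))$ gives $C^{-1}\rho(p)\le\rho(\psi_V(s)(p))\le C\rho(p)$ uniformly, which shows that $\psi_V(s)^*$ is compatible with the weight $\rho^\delta$. Second, in background coordinates $V=V^i\partial_{\theta^i}+V^\rho\rho\partial_\rho$ has bounded $h$-norm, so $\psi_V(s)$ moves each point by a uniformly bounded $h$-distance; consequently, for each M\"obius parametrization $\Phi_i\colon B_2\to M$ there is a nearby M\"obius parametrization $\Phi_j$ with $\psi_V(s)(\Phi_i(B_1))\subset \Phi_j(B_2)$, and the composite $\Phi_j^{-1}\circ\psi_V(s)\circ\Phi_i$ is a smooth diffeomorphism with $C^\infty$-bounds uniform in $i$ and in $s\in[0,\epsilon_\ast]$. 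Combining these two facts with the definition \eqref{DefineHolderNorms}--\eqref{DefineWeightedHolderNorms} of the weighted Hölder norms through M\"obius parametrizations gives the uniform boundedness of $\psi_V(s)^*$ on $C^{k-1}_\delta(M)$.

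For part \eqref{part:compact-difference-bound}, the situation simplifies because on any compact $K\subset M$ the weight $\rho^{-\delta}$ is bounded above and below. I would fix a slightly enlarged compact set $K'\subset M$ containing $\bigcup_{|s|\le\epsilon_\ast}\psi_V(s)(K)$ (possible since $\psi_V(s)$ is close to the identity and $V$ is tangent to $\partial M$), so that $C^{k-1}_\delta(K)$ and $C^k_\delta(K')$ are equivalent to the unweighted norms $C^{k-1}(K)$ and $C^k(K')$ with constants depending only on $K$, $K'$, and $\delta$. The same integral representation then yields
\begin{equation*}
\|\Delta_V^\epsilon u\|_{C^{k-1}_\delta(K)} \le C\epsilon\,\|\mathcal{L}_V u\|_{C^{k-1}(K')} \le C'\epsilon\,\|u\|_{C^k(K')} \le C''\epsilon\,\|u\|_{C^k_\delta(M)},
\end{equation*}
using that $\mathcal{L}_V$ is a first-order operator with coefficients smooth on $\bar M$.

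The main obstacle is the chart-by-chart verification that the flow $\psi_V(s)$ is uniformly tame in M\"obius parametrizations; this is where the condition $V\in\mathscr{V}_b$ (as opposed to a general smooth vector field on $\bar M$) is essential, since a general smooth vector field would fail to have bounded $h$-norm near the boundary and could dilate M\"obius balls by an unbounded factor. Once the two structural observations above are in place, the actual estimates follow from standard change-of-variables arguments on the model ball $B_2\subset\mathbb H$.
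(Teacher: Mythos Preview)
Your approach is correct and is essentially the same as the paper's: both start from the integral identity $\Delta_V^\epsilon u = \int_0^\epsilon \psi_V(\sigma)^*(\mathcal L_V u)\,d\sigma$ and then control the pullback, with the paper handling the latter tersely via ``differentiating in background coordinates and observing that $|\Delta_V^\epsilon\rho| = \mathcal O(\epsilon)$'' while you spell out the Gr\"onwall argument for $\rho$-comparability and the M\"obius-chart control of the flow. For part~\eqref{part:compact-difference-bound} your argument and the paper's coincide---both reduce to $\rho$ being bounded away from zero on $K$.
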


\begin{proof}
For any tensor field $w$, we may integrate $\mathcal L_V w$ along the flow associated to $V$, obtaining
\begin{equation}
\label{DifferenceIntegral}
\Delta_V^\epsilon w = \int_0^\epsilon \psi_V(\sigma)^*(\mathcal L_V w)\,d\sigma.
\end{equation}
This implies that $\|\Delta_V^\epsilon w\|_{C^0(M)} \leq \epsilon C \|\mathcal L_V w\|_{C^0(M)}$.
The first claim then follows from differentiating \eqref{DifferenceIntegral} in background coordinates and observing that $|\Delta_V^\epsilon\rho| = \mathcal O(\epsilon)$, while the second claim is a consequence of $\left.\rho\right|_K$ being uniformly bounded away from zero.
\end{proof}

The following commutator estimates rely essentially on $V$ being in $\mathscr V_b$.

\begin{lemma}
\label{CommutatorEstimates}
Suppose that $(M,g)$ and $\mathcal P$ satisfy the hypotheses of Theorem \ref{SolutionsArePolyhomogeneous}, and that $V\in \mathscr V_b$.
Furthermore, let $k\geq 2$. 

\begin{enumerate}
\item Let $\epsilon_*>0$ be as in Lemma \ref{DifferenceOperatorProperties}.
Then for any $u\in C^{k}_\delta(M)$ we have
\begin{equation*}
\|\left[ \mathcal P,\Delta_V^\epsilon\right]u\|_{C^{k-2}_\delta(M)}
\leq \epsilon C \|u\|_{C^{k}_\delta(M)}
\end{equation*}
for all $\epsilon\in(0,\epsilon_\ast]$.

\item The commutator $[\mathcal P, \mathcal L_V]$ is a uniformly degenerate operator and thus for any $u\in C^{k}_\delta(M)$ we have $[\mathcal P, \mathcal L_V] u \in C^{k-2}_\delta(M)$ and
\begin{equation*}
\|[\mathcal P, \mathcal L_V] u\|_{C^{k-2}_\delta(M)}
\leq
C \| u\|_{C^k_\delta(M)}.
\end{equation*}
Furthermore, if $k\geq 3$ and $W u \in C^{k-1}_\delta(M)$ for all $W\in \mathscr V_b$, then 
\begin{equation*}
\|\mathcal L_W [\mathcal P, \mathcal L_V] u\|_{C^{k-3}_\delta(M)}
\leq
C \| u\|_{C^k_\delta(M)}.
\end{equation*}

\end{enumerate}

\end{lemma}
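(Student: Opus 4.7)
The plan is to exploit the fact that $\mathcal P$ is a geometric second-order uniformly degenerate operator and that $V \in \mathscr V_b$ is tangent to $\partial M$, so that commutation with $V$ (or with its flow) preserves the uniformly degenerate structure on $\bar M$.

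For part (a), I will work from the identity
\begin{equation*}
[\mathcal P, \Delta_V^\epsilon]u
= \mathcal P(\psi_V(\epsilon)^* u) - \psi_V(\epsilon)^*(\mathcal P u)
= (\mathcal P - \mathcal P_\epsilon)(\psi_V(\epsilon)^* u),
\end{equation*}
where $\mathcal P_\epsilon := \mathcal P[\psi_V(\epsilon)^* g]$ is the geometric operator built from the pulled-back metric (using that $\mathcal P$ is geometric, so $\psi_V(\epsilon)^*\mathcal P = \mathcal P_\epsilon \psi_V(\epsilon)^*$). Since $\psi_V(\epsilon)$ is a self-diffeomorphism of $\bar M$ preserving $\partial M$ and (up to a smooth multiplicative factor) preserving $\rho$, pull-back will be uniformly bounded on $C^k_\delta(M)$ for small $\epsilon$. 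The operator $\mathcal P - \mathcal P_\epsilon$ is again second-order uniformly degenerate, with coefficients that are polynomial expressions in $g$, $g^{-1}$, $\partial g$, and $\sqrt{\det g_{ij}}$ minus the same polynomials in the pulled-back quantities; a multivariate Taylor expansion together with Lemma \ref{DifferenceOperatorProperties}\eqref{part:basic-difference-bound} applied to the metric coefficients will bound these differences by $\epsilon$ times a constant in $C^{k-2}$ norm, giving the claim.

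For the first claim of part (b), I plan to verify directly in background coordinates that $[\mathcal P, \mathcal L_V]$ is again a second-order uniformly degenerate operator. Writing $\mathcal P = a^{ij}(\rho\partial_i)(\rho\partial_j) + b^i(\rho\partial_i) + c$ and $V = V^k\partial_{\theta^k} + V^\rho \rho\partial_\rho$ with $V^k, V^\rho \in C^\infty(\bar M)$, a short computation shows that $[V, \rho\partial_i]$ is a $C^\infty(\bar M)$-linear combination of the basic fields $\rho\partial_j$ (the standard fact that $[\mathscr V_b, \mathscr V_0] \subseteq \mathscr V_0$). Consequently
\begin{equation*}
[\mathcal P, \mathcal L_V] = \tilde a^{ij}(\rho\partial_i)(\rho\partial_j) + \tilde b^i(\rho\partial_i) + \tilde c,
\end{equation*}
where the new coefficients are built from the original ones, the commutator coefficients above, $V$-derivatives of $a^{ij}, b^i, c$, and (in the tensor-valued case) algebraic contractions with $\bar\nabla V$. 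Polyhomogeneity of $g$ ensures that these $V$-derivatives extend continuously to $\bar M$ (indeed with $C^{k-2}$ regularity), so applying this operator to $u \in C^k_\delta(M)$ lands in $C^{k-2}_\delta(M)$ with the stated bound.

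For the final claim, I will use the decomposition
\begin{equation*}
\mathcal L_W [\mathcal P, \mathcal L_V] u
= [\mathcal P, \mathcal L_V] (\mathcal L_W u)
+ [\mathcal L_W, [\mathcal P, \mathcal L_V]] u.
\end{equation*}
By hypothesis $\mathcal L_W u \in C^{k-1}_\delta(M)$, and $[\mathcal P, \mathcal L_V]$ is second-order uniformly degenerate by the previous step, so the first term lies in $C^{k-3}_\delta(M)$ with the correct bound. For the second term, Jacobi's identity gives
\begin{equation*}
[\mathcal L_W, [\mathcal P, \mathcal L_V]]
= [[\mathcal L_W, \mathcal P], \mathcal L_V] + [\mathcal P, \mathcal L_{[W,V]}];
\end{equation*}
since $[W,V] \in \mathscr V_b$, iterating the previous argument shows that both right-hand operators are again second-order uniformly degenerate, and applied to $u \in C^k_\delta(M)$ they produce terms in $C^{k-2}_\delta(M) \subseteq C^{k-3}_\delta(M)$. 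The main obstacle throughout will be bookkeeping the regularity of the coefficients at each iteration; this is where polyhomogeneity of $g$ is essential, as $\mathscr V_b$-derivatives preserve polyhomogeneity and hence continuous extendability to the boundary.
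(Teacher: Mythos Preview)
Your proposal is correct and amounts to a fleshed-out version of the paper's very terse proof, which for part~(a) simply notes that $|\delta_\mu^\alpha - \partial_\mu\psi_V(\epsilon)^\alpha| = \mathcal O(\epsilon)$ (and similarly for higher derivatives) and then appeals to direct coordinate inspection of $\mathcal P(\psi_V(\epsilon)^*u) - \psi_V(\epsilon)^*(\mathcal Pu)$, while for part~(b) it just cites ``direct inspection of the commutator term, together with the fact that the coefficients of $\mathcal P$ are polyhomogeneous, and thus conormal.'' Your reformulation of (a) via $\mathcal P_\epsilon = \mathcal P[\psi_V(\epsilon)^*g]$ and your Jacobi-identity decomposition for the last claim are organizational refinements rather than a different strategy; the only point to be careful about is that in your first term $[\mathcal P,\mathcal L_V](\mathcal L_W u)$ the bound is naturally controlled by $\|\mathcal L_W u\|_{C^{k-1}_\delta}$ rather than $\|u\|_{C^k_\delta}$, but this is harmless since only the qualitative membership is used downstream.
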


\begin{proof}
In background coordinates $(\Theta^\mu)$ we have 
\begin{gather*}
\left|\delta_\mu^\alpha - \partial_\mu \psi_V(\epsilon)^\alpha\right| = \mathcal O(\epsilon),
\qquad
\left|\partial_\mu\partial_\nu\psi_V(\epsilon)^\alpha\right| = \mathcal O(\epsilon),
\end{gather*}
etc.;
see the proof of Theorem D.5 in \cite{Lee-SmoothManifolds2}.
Directly inspecting the background coordinate expression of
\begin{equation*}
\left[ \mathcal P,\Delta_V^\epsilon\right]u
= \mathcal P\left(\psi_V(\epsilon)^*u\right) - \psi_V(\epsilon)^*\left(\mathcal P u\right)
\end{equation*}
leads to the first estimate.

The second claim follows from direct inspection of the commutator term, together with fact that the coefficients of $\mathcal P$ are polyhomogeneous, and thus conormal.
\end{proof}

We now use difference operators to establish conormality of solutions to $\mathcal P u = f$.
\begin{proposition}
\label{SolutionIsConormal}
Suppose that $g\in \WAH^{k,\alpha;2}\cap\mathcal A_\phg(M)$ and that $\mathcal P$ satisfies Assumption \ref{Assume-P}.
Suppose furthermore that $\alpha\in (0,1)$ and that $|\delta - \frac{n}{2}|<R$, where $R$ is the indicial radius of $\mathcal P$.
Finally, suppose $u\in C^{2,\alpha}_\delta(M)$ satisfies $\mathcal P u = f$, with $f$ polyhomogeneous.
Then $u\in \mathcal A_\delta(M)$; i.e.~$u$ is conormal.
\end{proposition}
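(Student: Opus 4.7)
The plan is to prove conormality by showing, by induction on $j\geq 0$, that for every choice of $V_1,\ldots,V_j\in\mathscr V_b$ the iterated Lie derivative $\mathcal L_{V_1}\cdots\mathcal L_{V_j}u$ lies in $C^{k,\alpha}_\delta(M)$ with norm controlled by $\|u\|_{C^{k,\alpha}_\delta(M)}$, by polyhomogeneous seminorms of $f$, and by coordinate coefficients of the $V_i$.  Once this is in hand, $u\in\mathcal A_\delta(M)$ follows routinely from the definition, using the identity $\mathcal L_V(\rho^{-t}w)=\rho^{-t}(\mathcal L_V w-t(V\rho/\rho)w)$, where $V\rho/\rho$ is smooth on $\bar M$ since $V\in\mathscr V_b$ is tangent to $\partial M$.

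Before starting the induction, I will upgrade the base regularity.  Because $g$ is polyhomogeneous, Corollary \ref{PhgMetricsAreWeak} places $g$ in $\WAH^{l,\beta;2}$ for every $l\geq 2$ and $\beta\in[0,1)$, so Theorem \ref{WeaklyFredholmTheorem}, Lemma \ref{EllipticRegularity}, and Lemma \ref{ImprovedRegularity} are all available at arbitrarily high regularity.  Since $f=\mathcal Pu\in C^{0,\alpha}_\delta(M)\cap\mathcal A_\phg(M)$, Lemma \ref{InclusionLemma}\eqref{Inclusion-PhgWeightDelta} puts $f$ in $C^{l,\beta}_\delta(M)$ for all $l$ and $\beta$, and Lemma \ref{ImprovedRegularity} upgrades $u$ correspondingly.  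Fredholmness of $\mathcal P\colon C^{k,\alpha}_\delta(M)\to C^{k-2,\alpha}_\delta(M)$ triggers the semi-Fredholm estimate \eqref{HolderSemiFredholm} of Proposition \ref{StrongRegularity} on some compact $K\subset M$, which will be the main analytic tool.

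The inductive step is the crux.  Fix the hypothesis for step $j-1$, pick $V_1,\ldots,V_j\in\mathscr V_b$, and set $w=\mathcal L_{V_2}\cdots\mathcal L_{V_j}u\in C^{k,\alpha}_\delta(M)$.  I apply \eqref{HolderSemiFredholm} to the difference quotient $\Delta_{V_1}^\epsilon w$, splitting
\begin{equation*}
\mathcal P(\Delta_{V_1}^\epsilon w) = \Delta_{V_1}^\epsilon(\mathcal Pw) + [\mathcal P,\Delta_{V_1}^\epsilon]w.
\end{equation*}
The commutator is $\mathcal O(\epsilon)$ in $C^{k-2,\alpha}_\delta$ by the H\"older analogue of Lemma \ref{CommutatorEstimates}(1).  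Repeated use of $\mathcal L_V\mathcal P=\mathcal P\mathcal L_V+[\mathcal L_V,\mathcal P]$ expands $\mathcal Pw$ as $\mathcal L_{V_2}\cdots\mathcal L_{V_j}f$ plus a finite sum of terms consisting of iterated commutators $[\mathcal P,\mathcal L_{V_i}]$ applied to reduced Lie derivatives of $u$; each such term is bounded in $C^{k-2,\alpha}_\delta$ by the inductive hypothesis, Lemma \ref{CommutatorEstimates}(2), and the fact that polyhomogeneity places every $\mathscr V_b$-Lie derivative of $f$ back in $C^{k-2,\alpha}_\delta$ (via Lemma \ref{InclusionLemma}\eqref{Inclusion-PhgConormal}).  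Lemma \ref{DifferenceOperatorProperties}\eqref{part:basic-difference-bound} then yields an $\mathcal O(\epsilon)$ bound for $\Delta_{V_1}^\epsilon(\mathcal Pw)$, while Lemma \ref{DifferenceOperatorProperties}\eqref{part:compact-difference-bound} combined with interior elliptic regularity on a slightly larger compact set handles the $K$-term.  Dividing by $\epsilon$, sending $\epsilon\to 0$, and invoking lower semicontinuity of the H\"older norm under pointwise limits produces the desired bound on $\mathcal L_{V_1}w$ and closes the induction.

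The main obstacle is the bookkeeping in the inductive step: $\mathcal Pw$ is not simply $\mathcal L_{V_2}\cdots\mathcal L_{V_j}f$, and all iterated commutators with $\mathcal P$ must be controlled at every order.  Lemma \ref{CommutatorEstimates}(2)—asserting that $[\mathcal P,\mathcal L_V]$ remains a uniformly degenerate second-order operator (so no derivative of the metric is lost)—is exactly what makes the induction close.  The choice to estimate the difference quotient $\Delta_{V_1}^\epsilon w$ rather than $\mathcal L_{V_1}w$ directly is essential: a naive Lie-derivative commutation would land in $C^{k-2,\alpha}_{\delta-1}$ and thus demand $|\delta-1-n/2|<R$, i.e.\ $R>1/2$, which is precisely the restriction the present argument is designed to avoid.
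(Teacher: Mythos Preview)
Your proposal is correct and follows essentially the same approach as the paper: upgrade regularity via Lemma \ref{ImprovedRegularity}, then prove by induction on the number of Lie derivatives that $\mathcal L_{V_1}\cdots\mathcal L_{V_j}u\in C^{k,\alpha}_\delta(M)$ by applying the semi-Fredholm estimate \eqref{HolderSemiFredholm} to $\Delta_{V_1}^\epsilon w$, decomposing $\mathcal Pw$ into iterated Lie derivatives of $f$ plus iterated commutators $[\mathcal P,\mathcal L_{V_i}]$ acting on lower-order Lie derivatives of $u$, and bounding each piece via Lemmas \ref{DifferenceOperatorProperties} and \ref{CommutatorEstimates}. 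The only cosmetic difference is that the paper shuttles between $C^{k,\alpha}_\delta$ and $C^{k+1}_\delta$ norms (as in \eqref{EstimateDifferenceQuotient}) to invoke Lemma \ref{CommutatorEstimates} exactly as stated, whereas you appeal to its obvious H\"older analogue; your closing remark about why difference quotients avoid the restriction $R>1/2$ is exactly the point of the paper's approach.
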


\begin{proof}
We first note that $f = \mathcal P u \in C^{0,\alpha}_\delta(M)\cap \mathcal A_\phg(M)$; thus by Lemma \ref{InclusionLemma} we have $f\in C^{k,\alpha}_\delta(M)$ for all $k$. 
Lemma \ref{ImprovedRegularity} implies that  $u\in C^{k,\alpha}_\delta(M)$ for all $k$ as well.

Fixing $V\in \mathscr V_b$, we see that $\Delta_V^\epsilon u$ satisfies
\begin{equation*}
\mathcal P(\Delta_V^\epsilon u) = \left[\mathcal P, \Delta_V^\epsilon\right]u + \Delta_V^\epsilon f.
\end{equation*}
Using Proposition \ref{StrongRegularity} we have, for any $k\geq 2$ and $\alpha\in (0,1)$, that
\begin{equation}
\label{EstimateDifferenceQuotient}
\begin{aligned}
\|\Delta_V^\epsilon u\|_{C^{k}_\delta(M)} 
&\leq \|\Delta_V^\epsilon u\|_{C^{k,\alpha}_\delta(M)}
\\
&\leq C\left( \| \left[\mathcal P, \Delta_V^\epsilon \right]u\|_{C^{k-2,\alpha}_\delta(M)}
\right.
\\
&\qquad \left.
	+ \|\Delta_V^\epsilon f \|_{C^{k-2,\alpha}_\delta(M)}
	+ \|\Delta_V^\epsilon u\|_{C^{k,\alpha}_\delta(K)}
\right)
\\
&\leq C\left( \| \left[\mathcal P, \Delta_V^\epsilon \right]u\|_{C^{k-1}_\delta(M)}
\right.
\\
&\qquad \left.
	+ \|\Delta_V^\epsilon f \|_{C^{k-1}_\delta(M)}
	+ \|\Delta_V^\epsilon u\|_{C^{k+1}_\delta(K)}
\right)
\end{aligned}
\end{equation}
for some compact set $K\subset M$.
Using Lemma \ref{DifferenceOperatorProperties}\eqref{part:basic-difference-bound} we have
$
\|\Delta_V^\epsilon f \|_{C^{k-1}_\delta(M)} = \mathcal O(\epsilon).
$
Furthermore, Lemma \ref{DifferenceOperatorProperties}\eqref{part:compact-difference-bound} implies that
$
\|\Delta_V^\epsilon u\|_{C^{k+1}_\delta(K)}
= \mathcal O(\epsilon),
$
while Lemma \ref{CommutatorEstimates} implies that
$
\| \left[\mathcal P, \Delta_V^\epsilon \right]u\|_{C^{k-1}_\delta(M)}
= \mathcal O(\epsilon).
$
Consequently, from \eqref{EstimateDifferenceQuotient} we have $\|\Delta_V^\epsilon u\|_{C^{k}_\delta(M)} = \mathcal O(\epsilon)$ and hence $\mathcal L_Vu\in C^{k}_\delta(M)$ for all $k$.

Proceeding by induction, we assume for some integer $l$ that for any $\{V_1,\dots,V_m\}\subset \mathscr V_b$ we have $w_m = \mathcal L_{V_1}\cdots \mathcal L_{V_m}u \in C^{k}_\delta(M)$ for all $k\geq0$.
Fixing $V\in \mathscr V_b$, we see that $\Delta_V^\epsilon w_m$ satisfies
\begin{equation*}
\mathcal P(\Delta_V^\epsilon w_m)
= \left[\mathcal P, \Delta_V^\epsilon\right]w_m
+\Delta_V^\epsilon(\mathcal L_{V_1}\dots \mathcal L_{V_l} f)
+\Delta_V^\epsilon(\left[\mathcal P,\mathcal L_{V_1}\dots \mathcal L_{V_l}\right]u).
\end{equation*}
Using Lemma \ref{CommutatorEstimates}, we see that 
\begin{gather*}
\left[\mathcal P,\mathcal L_{V_1}\dots \mathcal L_{V_m}\right]u \in C^{k}_\delta(M),
\\
\mathcal L_V\left[\mathcal P,\mathcal L_{V_1}\dots \mathcal L_{V_m}\right]u \in C^{k}_\delta(M)
\end{gather*}
for all $k\geq 0$.
We now invoke Proposition \ref{StrongRegularity}, obtaining estimates analogous to \eqref{EstimateDifferenceQuotient} for $\Delta_V^\epsilon w_m$.
Proceeding as above, we find $\mathcal L_V w_m \in C^k_\delta(M)$ for all $k \geq 0$.
Thus by induction on $m$ we obtain  $u\in \mathcal A_\delta(M)$.
\end{proof}

\begin{proof}[Proof of Theorem \ref{SolutionsArePolyhomogeneous}]
In view of Proposition \ref{SolutionIsConormal}, we have that the solution $u$ to \eqref{GenericEllipticEquation} is conormal; thus $u\in \mathcal A_{\delta}(M)$ for some $\delta\in \mathbb R$.

Using Lemma \ref{LocalApproximationLemma}, we write $\mathcal P = \mathcal I(\mathcal P) + \mathcal R$ and fix $\gamma$ as in that lemma.
We proceed inductively, constructing a sequence of approximate solutions $u_k$ such that $u_k \in \mathcal A_\phg^{S_k}(M) \cap \mathcal A_\delta(M)$ for some finite sets $S_k\subset \mathbb C$, and such that
$f_k:= f - \mathcal P u_k \in \mathcal A_{\delta+k\gamma}(M)$.
We further arrange that $r_k:= u - u_k \in \mathcal A_{\delta + k\gamma}(M)$ and that $r_{k+1} - r_k \in \mathcal A_{\delta+k\gamma}(M)$ for sufficiently large $k$.

When $k=0$ we set $u_0 =0$ and, as $f = \mathcal P u \in \mathcal A_\delta(M)$, we have nothing to prove.
For convenience, we set $S_0 = C+r$, the finite collection of characteristic exponents of $\mathcal P$, shifted by the weight $r$ of $u$ (see Remark \ref{DamnBundleWeights}).

Suppose now that $u = u_k + r_k$ satisfies the inductive hypothesis above.
The remainder $r_k$ satisfies 
\begin{equation}
\label{InductiveMain}
\mathcal P r_k = f_k.
\end{equation}
Using Remark \ref{PhgSplitting}, we can write $f_k = f_k^\text{fin} + f_k^\text{rem}$, where $f_k^\text{fin}\in \mathcal A_\phg^{T_k}(M) \cap \mathcal A_{\delta + k\gamma}(M)$ for some finite set
\begin{equation*}
T_k \subset \{ s \in \mathbb C \mid \delta+k\gamma \leq \Re(s) \leq \delta+(k+1)\gamma\}
\end{equation*}
and $f_k^\text{rem}\in \mathcal A_{\delta+(k+1)\gamma}(M)$.
We rewrite \eqref{InductiveMain} as
\begin{equation*}
\mathcal I(\mathcal P) r_k = f_k^\text{fin} + f_k^\text{rem} - \mathcal R r_k.
\end{equation*}
Invoking Remark \ref{ParticularDecomposition}, we have $r_k = r_{k+1} + v_{k} + w_{k}$, where
\begin{equation*}
\begin{gathered}
r_{k+1} = \mathcal G\Big(f_k^\text{rem} +\mathcal R r_k \Big)
\in \mathcal A_{\delta+(k+1)\gamma}(M),
\\
v_{k} = \mathcal G\Big(f_k^\text{fin}\Big)
\in \mathcal A_\phg^{T_k \cup (C+r)}(M) \cap \mathcal A_{\delta+k\gamma}(M),
\\
\text{ and }\quad
w_{k}\in \mathcal A_\phg^{C+r}(M).
\end{gathered}
\end{equation*}
We set $u_{k+1} = u_k + v_{k} + w_{k}$ so that $u = u_{k+1} + r_{k+1}$.
Let $S_{k+1} = S_k \cup T_k$ so that $u_{k+1}\in \mathcal A_\phg^{S_{k+1}}(M)$.
Since $r_k$, $r_{k+1}$, and  $v_{k}$ are in $\mathcal A_{\delta+k\gamma}(M)$, we have $w_{k}\in \mathcal A_{\delta+k\gamma}(M)$ and therefore $u_{k+1} - u_k$ is in the same space.
This ensures that neither the exponents nor the log terms accumulate.

Finally, note that
\begin{equation*}
f_{k+1} 
= f_k - \mathcal I(\mathcal P)v_k
-\mathcal I(\mathcal P)w_k - \mathcal R(v_k + w_k).
\end{equation*}
By construction (see Proposition \ref{Proposition-DefineG}), we have
\begin{equation*}
\mathcal I(\mathcal P) v_k - f_k \in \mathcal A_{\delta+(k+1)\gamma}(M).
\end{equation*}
The remaining terms in $f_{k+1}$ are easily seen to be in $\mathcal A_{\delta+(k+1)\gamma}(M)$, which completes the proof.
\end{proof}

\subsection{Boundary regularity for nonlinear equations}
\label{S-NL}
The methods above can also be used to study the boundary regularity of solutions to many nonlinear elliptic equations.
Here we illustrate this by showing that solutions to the Lichnerowicz equation \eqref{SimpleLichPhi} are polyhomogeneous when the metric and coefficient functions are polyhomogeneous; see e.g.~\cite{AnderssonChrusciel-Dissertationes, AnderssonChruscielFriedrich, ChruscielDelayLeeSkinner,LeeMelrose-MongeAmpere,Mazzeo-Edge}
for other results of this nature.

We suppose that $g\in \WAH^{2,\alpha;1}$ with $\rho^2 g \in C^2_\phg(\bar M)$ and that the functions $A,B$ appearing in \eqref{SimpleLichPhi} are in $\rho C^0_\phg(\bar M)$.
Let $\phi$ be the solution to \eqref{SimpleLichPhi} guaranteed by the first part of Proposition \ref{SolveLichPhi}.
Note that $u=\phi-1\in C^{k}_1(M)$ for all $k$.

Setting $\mathcal P = \Delta_g -(n+1)$, we see that $u $ satisfies an equation of the form 
\begin{equation}
\label{nl-basic}
\mathcal P u = f(u)
\end{equation}
for some function $f$.
Since $u$ vanishes near $\partial M$, there exists some $\rho_*>0$ such that on the collar neighborhood of the boundary $\mathcal C_{\rho_*}$ the function $f$ may be represented by a uniformly and absolutely convergent series   
\begin{equation}
\label{nl-expansion}
f(u) = \sum_{l=0}^\infty a_l u^l
\end{equation}
with coefficient functions satisfying
\begin{equation}
\label{nl-coefficients}
\begin{gathered}
a_0, a_1 \in \rho C^0_\phg(\bar M),
\quad \text{ and }\quad
a_l \in C^0_\phg(\bar M),\quad l\geq 2.
\end{gathered}
\end{equation}
The polyhomogeneity of $u$, and hence $\phi$, is a consequence of the following.
\begin{proposition}
\label{Lich-BoundaryRegularity}
Suppose that $g\in \WAH^{2,\alpha;1}$ with $\rho^2 g \in C^2_\phg(\bar M)$, that $u$ satisfies \eqref{nl-basic} and 
$u\in C^k_1(M)$ for all $k\geq 0$,
and that $f$ is a function satisfying \eqref{nl-expansion} and \eqref{nl-coefficients} in a collar neighborhood of the boundary.
Then $u$ is polyhomogeneous.
\end{proposition}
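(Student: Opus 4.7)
The plan is to follow the two-step strategy used to prove Theorem \ref{SolutionsArePolyhomogeneous}: first establish that $u$ is conormal, then construct its polyhomogeneous expansion iteratively via the parametrix $\mathcal G$ from Proposition \ref{Proposition-DefineG}.

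For conormality I would adapt the difference-quotient argument of Proposition \ref{SolutionIsConormal}. Fix $V \in \mathscr V_b$; applying $\Delta_V^\epsilon$ to \eqref{nl-basic} yields
\begin{equation*}
\mathcal P(\Delta_V^\epsilon u) = [\mathcal P, \Delta_V^\epsilon] u + \Delta_V^\epsilon f(u),
\end{equation*}
and the commutator is handled exactly as in Lemma \ref{CommutatorEstimates}. I would decompose the nonlinear term as
\begin{equation*}
\Delta_V^\epsilon f(u) = \sum_{l \geq 0} (\Delta_V^\epsilon a_l)(\psi_V(\epsilon)^* u)^l + \Bigl(\sum_{l \geq 1} a_l \sum_{j=0}^{l-1} (\psi_V(\epsilon)^* u)^{l-1-j} u^j\Bigr) \Delta_V^\epsilon u.
\end{equation*}
Polyhomogeneity of each $a_l$ together with Lemma \ref{DifferenceOperatorProperties} makes the first sum $\mathcal O(\epsilon)$ in $C^{k-1}_1(M)$, while the prefactor of $\Delta_V^\epsilon u$ in the second sum lies in $\mathcal A_1(M)$---because $a_1 \in \rho C^0_\phg(\bar M)$ and every other summand contains a factor of $u$ or $\psi_V(\epsilon)^* u$---so it is absorbed as a zeroth-order perturbation of $\mathcal P$ with coefficient vanishing at $\partial M$. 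Applying the semi-Fredholm estimate of Proposition \ref{StrongRegularity}, available because the indicial radius $R = \sqrt{n^2/4+n+1}$ of $\Delta_g-(n+1)$ satisfies $|1-n/2|<R$, gives $\|\Delta_V^\epsilon u\|_{C^k_1(M)} = \mathcal O(\epsilon)$, hence $\mathcal L_V u \in C^k_1(M)$ for all $k$. Iterating this estimate over higher-order Lie derivatives, using a Faà di Bruno expansion of $\mathcal L_{V_1}\cdots \mathcal L_{V_m} f(u)$ to control the source at each stage exactly as in Proposition \ref{SolutionIsConormal}, then yields $u \in \mathcal A_1(M)$.

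With conormality in hand, I would produce the polyhomogeneous expansion by the inductive scheme of Theorem \ref{SolutionsArePolyhomogeneous}. Let $\gamma$ be the gap constant from Lemma \ref{LocalApproximationLemma}. Suppose inductively that there exists $u_k \in \mathcal A_\phg^{S_k}(M) \cap \mathcal A_1(M)$ with $r_k := u - u_k \in \mathcal A_{1+k\gamma}(M)$. Taylor-expanding $f$ in its argument,
\begin{equation*}
\mathcal P r_k = \bigl[f(u_k) - \mathcal P u_k\bigr] + f'(u_k)\, r_k + \mathcal N_k, \qquad \mathcal N_k := f(u_k+r_k) - f(u_k) - f'(u_k)\, r_k,
\end{equation*}
in which the bracketed term is polyhomogeneous, $f'(u_k) \in \mathcal A_1(M)$ so that $f'(u_k)\,r_k \in \mathcal A_{2+k\gamma}(M)$, and $\mathcal N_k \in \mathcal A_{2+2k\gamma}(M)$. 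Rewriting as $\mathcal I(\mathcal P) r_k = \mathcal R r_k + [f(u_k) - \mathcal P u_k] + f'(u_k) r_k + \mathcal N_k$ and applying $\mathcal G$ as in the final paragraph of the proof of Theorem \ref{SolutionsArePolyhomogeneous} peels off the next polyhomogeneous correction $u_{k+1} - u_k$ together with a remainder $r_{k+1} \in \mathcal A_{1+(k+1)\gamma}(M)$; the new exponents enter from $S_k$, from the polyhomogeneous source $f(u_k) - \mathcal P u_k$, and from translates of the characteristic set $C$ of $\mathcal P$.

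The main obstacle is the conormality step: propagating $\mathcal A_1$-regularity through the nonlinearity requires careful combinatorial bookkeeping of iterated Lie derivatives of $f(u) = \sum_l a_l u^l$, including uniform convergence of the differentiated series. The smallness of $u$ at $\partial M$---equivalently, that multiplication by $u$ improves conormal weight by one---is precisely what allows the nonlinear contributions, both in the conormality estimate and in the polyhomogeneous iteration, to lie in strictly better classes than the linear source terms and therefore not to disturb the resonance structure dictated by the indicial operator of $\mathcal P$.
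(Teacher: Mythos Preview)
Your proposal is correct and follows essentially the same two-step strategy as the paper: conormality via difference quotients (adapting Proposition~\ref{SolutionIsConormal}), then iterative peeling of the polyhomogeneous expansion via $\mathcal G$ (adapting Theorem~\ref{SolutionsArePolyhomogeneous}).

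Two minor points of comparison. In the conormality step, the paper does not absorb the term $b\cdot\Delta_V^\epsilon u$ into the operator; it simply observes that the product rule together with \eqref{nl-coefficients} and the a~priori bound $\mathcal L_V u\in C^{k-1}_0(M)$ give $\mathcal L_V f(u)\in C^{k-1}_1(M)$ directly, whence $\|\Delta_V^\epsilon f(u)\|_{C^{k-1}_1}=\mathcal O(\epsilon)$ by Lemma~\ref{DifferenceOperatorProperties}\eqref{part:basic-difference-bound}. This sidesteps any concern about the $\epsilon$-dependence of your perturbed operator $\mathcal P-b$ in the semi-Fredholm estimate. In the polyhomogeneity step, the paper writes $f(u)-f(u_k)=a_1 r_k+\sum_{l\ge2}a_l(u^l-u_k^l)$ and checks each piece lands in $\mathcal A_{1+(k+1)\gamma}(M)$; your Taylor decomposition $f'(u_k)r_k+\mathcal N_k$ is an equivalent repackaging of the same estimate.
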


\begin{proof}
We divide the proof in to two parts, first showing that the solution is conormal and subsequently showing that it is polyhomogeneous. 

In order to show $u$ is conormal, we adapt the proof of Proposition \ref{SolutionIsConormal}.
For any $V\in \mathscr V_b$, we see that $\mathcal L_V a_0, \mathcal L_V a_1 \in \rho C^0_\phg(\bar M)$ and $\mathcal L_V a_l \in C^0_\phg (\bar M)$ if $l\geq 2$.
If $w\in C^k_1(M)$ we have $\mathcal L_V w \in C^{k-1}_0(M)$; since $f(u)\in C^k_1(M)$ by \eqref{nl-expansion} and \eqref{nl-coefficients} it follows that $\mathcal L_V f(u) \in C^{k-1}_1(M)$.
Consequently, fixing $V\in \mathscr V_b$, we find that \eqref{EstimateDifferenceQuotient} holds with $\delta =1$ and $f$ replaced by $f(u)$. The subsequent argument shows that $\mathcal L_Vu \in C^k_1(M)$ for all $k\geq 0$.

Proceeding inductively, we assume that for any $\{V_1,\dots, V_m\} \subset \mathscr V_b$ we have $w_m = \mathcal L_{V_1}\cdots \mathcal L_{V_m}u \in C^k_1(M)$ for all $k\geq 0$. Fix $V\in \mathscr V_b$.
The properties \eqref{nl-coefficients} imply that
\begin{equation*}
\mathcal L_V \mathcal L_{V_1}\cdots \mathcal L_{V_m} f(u) \in C^k_1(M)
\end{equation*}
for all $k\geq 0$, and thus we may proceed exactly as in the proof of Proposition \ref{SolutionIsConormal}, establishing that $u$ is conormal.

To see that $u$ is polyhomogeneous, we adapt the proof of Theorem \ref{SolutionsArePolyhomogeneous}, constructing inductively an approximating sequence $u_k\in \mathcal A_\phg^{S_k}(M)\cap \mathcal A_1(M)$ for finite $S_k\subset \mathbb C$, such that $f_k = f(u_k)- \mathcal P u_k \in \mathcal A_{1+k\gamma}(M)$ and such that $r_k = u-u_k \in \mathcal A_{1+k\gamma}(M)$ with $r_{k+1} - r_k \in \mathcal A_{1+k\gamma}(M)$ when $k$ is large.
We may assume that $\gamma \in (0,1]$.

Setting $u_0 =0$, the properties \eqref{nl-coefficients} imply that there is nothing to prove; as before, we set $S_0 = C$, the 
set of characteristic exponents of $\mathcal P$.

Working under the inductive hypothesis, we see that the remainder $r_k = u-u_k$ satisfies
\begin{equation*}
\mathcal P r_k = f_k + f(u) - f(u_k).
\end{equation*}
Since $u_k\in \mathcal A_\phg^{S_k}(M) \cap \mathcal A_1(M)$ we have $f_k\in \mathcal A_\phg(M)$.
Furthermore, by inductive assumption we have $f_k\in \mathcal A_{1+k\gamma}(M)$.
Thus we may write $f_k = f_k^\text{fin} + f_k^\text{rem}$, where $f_k^\text{fin}\in \mathcal A_\phg^{T_k}(M)\cap \mathcal A_{1+k\gamma}(M)$ for some finite set
\begin{equation*}
T_k \subset \{ s \mid 1+k\gamma \leq \Re(s)\leq 1+(k+1)\gamma\}
\end{equation*}
and $f_k^\text{rem}\in \mathcal A_{1+(k+1)\gamma}(M)$.

We now analyze the difference $f(u) - f(u_k)$.
By assumption, $r_k = u-u_k\in \mathcal A_{1+k\gamma}(M)$, and thus $a_1 (u-u_k)\in \mathcal A_{1+(k+1)\gamma}(M)$.
For $l\geq 2$ we have $u^l - u_k^l\in \mathcal A_{1+(k+1)\gamma}(M)$, and therefore $f(u) - f(u_k)\in \mathcal A_{1+(k+1)\gamma}(M)$.
Re\-writing the equation as
\begin{equation*}
\mathcal I(\mathcal P) r_k = f_k^\text{fin}
+ f(u) - f(u_k) - \mathcal R r_k
\end{equation*}
and applying Remark \ref{ParticularDecomposition} we
obtain $r_k = r_{k+1} + v_k + w_k$, where
\begin{gather*}
r_{k+1} = \mathcal G\Big(f_k^\text{rem} + f(u) - f(u_k)- \mathcal R r_k\Big)\in \mathcal A_{1+(k+1)\gamma}(M),
\\
v_k = \mathcal G \Big(f_k^\text{fin}\Big)\in \mathcal A_\phg^{T_k\cup C}(M)\cap \mathcal A_{1+k\gamma}(M),
\\
\text{ and }\quad
w_k \in \mathcal A_\phg^C(M).
\end{gather*}
Proceeding as in the proof of Theorem \ref{SolutionsArePolyhomogeneous} completes the argument.
\end{proof}

\section{Corrections to \cite{Lee-FredholmOperators}}
\label{CorrectFred}

As pointed out to us by David Maxwell, Lemma 3.4(a) in \cite{Lee-FredholmOperators} is incorrectly stated.
The following corrections need to be made to \cite{Lee-FredholmOperators}:

\begin{description}
\item[Page 16]
The formula displayed in Lemma 3.4(a) should be replaced by
\begin{equation*}
\left( \sum_{0\leq j\leq k}\big\|\rho^{-\delta}\nabla^j u\big\|_{0,p;U}^p \right)^{1/p}.
\end{equation*}

\item[Page 17]
Inequality (3.6) should be replaced by
\begin{equation*}
C^{-1}\sum_i \rho(p_i)^{-\delta p} \big\|\Phi_i^*u\big\|^p_{k,p;B_r}
\leq
\|u\|^p_{k,p,\delta}
\leq 
C\sum_i \rho(p_i)^{-\delta p} \big\|\Phi_i^*u\big\|^p_{k,p;B_r}.
\end{equation*}
The proof of Lemma 3.5 can then be readily corrected
by inserting the exponent $p$ in appropriate places.

\item[Page 25]
Two of the formulas near the top of the page need to be changed 
as follows:
\begin{itemize}
\item[(c)]
$u \mapsto u \otimes \rho^2 g$;
\item[(d)]
$u\mapsto u \otimes \rho^{-2}g^{-1}$.
\end{itemize}

\item[Page 29]
Each term in the first series of inequalities should be raised to the $p$th power.

\item[Page 30]
Each term in the first series of inequalities should be raised to the $p$th power;
the second inequality is then justified by 
using the elementary inequality $(a+b)^p\le 2^{p-1}(a^p+b^p)$.

\item[Page 48]
Lines 7--12 should be replaced by the following:

\noindent
Then Lemma 6.1 implies that $P_i$ is close to $\breve P$ in the following sense:
For each $\delta\in\mathbb R$, $0<\alpha<1$, $1<p<\infty$, and $k$ such that $m\le k\le l$ 
and $m<k+\alpha\le l+\beta$, there is a constant $C$ (independent of $r$ or $i$)
such that for all compactly supported $u\in C^{k,\alpha,\delta}(Y_1,\breve E)$,
\begin{equation}\tag{6.5}
\|P_i - \breve P\|_{k-m,\alpha,\delta} \le Cr\|u\|_{k,\alpha,\delta},
\end{equation}
and for all compactly supported $u\in H^{k,p,\delta}(Y_1,\breve E)$,
\begin{equation}\tag{6.6}
\|P_i - \breve P\|_{k-m,p,\delta} \le Cr\|u\|_{k,p,\delta}.
\end{equation}

\item[Page 49] On line 13 from the bottom, ``Proposition 5.8" should be ``Proposition 5.6"; 
and on line 11 from the bottom, ``(6.5) implies (6.8)" should 
be ``(6.6) implies (6.8)."

\end{description}

\bibliographystyle{plain}
\bibliography{WAH}

\end{document}